\newtheorem{thm}{Theorem}[section]
\newtheorem{prop}[thm]{Proposition}
\newtheorem{cor}[thm]{Corollary}
\newtheorem{lem}[thm]{Lemma}
\newtheorem{sublem}[thm]{Sub-Lemma}
\newtheorem{conj}[thm]{Conjecture}
\theoremstyle{definition}
\newtheorem{dfn}[thm]{Definition}
\newtheorem{cons}[thm]{Construction}
\theoremstyle{remark}
\newtheorem{remark}[thm]{Remark}
\newtheorem{exm}[thm]{Example}
\tikzset{
	0c/.style={circle, draw, fill, inner sep=1.5pt},
	1c/.style={->, thick, shorten <=2pt, shorten >=2pt},
	1clong/.style={->, thick},
	edge/.style={thick, shorten <=2pt, shorten >=2pt},
	1cdash/.style={->, densely dashed, thick, shorten <=2pt, shorten >=2pt},
	1cdot/.style={->, dotted, thick, shorten <=2pt, shorten >=2pt},
	1cinc/.style={right hook->, thick, shorten <=2pt, shorten >=2pt},
	1cincl/.style={left hook->, thick, shorten <=2pt, shorten >=2pt},
	2c/.style={double, thick, shorten <=6pt, shorten >=8pt, decoration={markings,mark=at position -6pt with {\arrow[scale=1.75]{>}}}, preaction={decorate}},
	3c1/.style={thick, double, double distance=3pt, shorten <=9pt, shorten >=11pt},
    	3c2/.style={thick, shorten <=9pt, shorten >=10pt},
	3c3/.style={shorten <=9pt, shorten >=10pt, decoration={markings,mark=at position -8pt with {\arrow[scale=3]{>}}},preaction={decorate}},
	follow/.style={->, >=stealth, ultra thick, shorten <=3pt, shorten >=3pt, color=gray!70},
	every node/.style={scale=.8},
}
\newcommand\cp[1]{\,{\scriptstyle\#}_{#1}\,}
\newcommand\comp[1]{\triangleright_{#1}}
\newcommand\bord[2]{\partial_{#1}^{#2}}
\newcommand\idd[1]{\varepsilon_{#1}}
\newcommand\opp[1]{{#1}^\mathrm{op}}
\newcommand\coo[1]{{#1}^\mathrm{co}}
\newcommand\oppall[1]{{#1}^\circ}
\newcommand\oppn[2]{D_{#1}({#2})}
\newcommand\invrs[1]{#1^{-1}}
\newcommand\skel[2]{\sigma_{\leq #1}#2}
\newcommand\nbd{\nobreakdash-\hspace{0pt}}
\newcommand\homset[1]{\mathrm{Hom}_{#1}}
\newcommand\sbord[2]{\Delta_{#1}^{#2}}
\newcommand\dmn[1]{\mathrm{dim}(#1)}
\newcommand\frdmn[1]{\mathrm{frdim}(#1)}
\newcommand\clos[1]{\mathrm{cl}#1}
\newcommand\hass[1]{\mathcal{H}#1}
\newcommand\hasso[1]{\mathcal{H}^o#1}
\newcommand\glob[2]{\mathcal{CM}o\ell_{#1}{#2}}
\newcommand\atom[2]{\mathcal{A}_{#1}{#2}}
\newcommand\mrg{\stackrel{*}{\leadsto}}
\newcommand\pfun{\rightharpoonup}
\newcommand\molec[2]{\mathcal{M}o\ell_{#1}{#2}}
\newcommand\submol{\sqsubseteq_\mathcal{M}}
\newcommand\loopd[2]{\mathcal{L}_{#1}#2}
\newcommand\maxd[2]{\mathcal{M}ax_{#1}#2}
\newcommand\hatto[1]{{#1_{\omega}}}
\newcommand\tensor{\,{\scaleobj{0.75}{\boxtimes}}\,}
\newcommand\join{\,{\star}\,}
\newcommand\jointil{\,{\tilde{\star}}\,}
\newcommand\face[1]{\mathfrak{F}{#1}}
\newcommand\incl{\hookrightarrow}
\newcommand\incliso{\stackrel{\sim}{\hookrightarrow}}
\newcommand\homoplax[2]{[#1,#2]_l}
\newcommand\homlax[2]{[#1,#2]_r}
\newcommand\joinl[2]{\langle#1,#2\rangle_l}
\newcommand\joinr[2]{\langle#1,#2\rangle_r}
\newcommand\colim[1]{\underset{#1}{\mathrm{colim}}\,}
\newcommand\celto{\Rightarrow}
\newcommand\divis[2]{\mathcal{U}niv_{#1}{#2}}
\newcommand\equi[2]{\mathcal{E}q_{#1}{#2}}
\newcommand\slice[2]{{#1}/{\raisebox{-2pt}{$#2$}}}
\newcommand\sliceco[2]{{#1}\!\!\stackrel{\mathrm{co}}{\scriptstyle/}\!\!{\raisebox{-2pt}{$#2$}}}
\newcommand\cat[1]{\mathbf{#1}}
\newcommand\globe{\cat{CAtom}}
\newcommand\globefree{\cat{CAtom}_\textit{free}}
\newcommand\globset{\omega\cat{Gph}}
\newcommand\pope{\cat{pOpe}}
\newcommand\popeset{\cat{pOpeSet}}
\newcommand\cpol{\cat{CPol}}
\newcommand\cpolfree{\cat{CPol}_\textit{free}}
\newcommand\rcpol{\cat{RCPol}}
\newcommand\pol{\cat{Pol}}
\newcommand\pos{\cat{Pos}}
\newcommand\ogpos{\cat{ogPos}_\textit{in}}
\newcommand\globpos{\cat{CDCpx}}
\newcommand\omegacat{\omega\cat{Cat}}
\newcommand\cghaus{\cat{cgHaus}}
\newcommand\sset{\cat{sSet}}
\newcommand\pomegacat{\cat{p}\omega\cat{Cat}}
\newcommand\adc{\cat{ADC}}
\newcommand\realis[1]{|#1|}
\title{\Large\bfseries A combinatorial-topological \\shape category for polygraphs}
\author{Amar Hadzihasanovic\\[3pt] RIMS, Kyoto University}
\date{v1: June 2018 --- v2: July 2019}
\begin{document}
\maketitle

\vspace{-7pt}
\begin{minipage}{0.9\linewidth}
\small We introduce constructible directed complexes, a combinatorial presentation of higher categories inspired by constructible complexes in poset topology. Constructible directed complexes with a greatest element, called atoms, encompass common classes of higher-categorical cell shapes, including globes, cubes, oriented simplices, and a large sub-class of opetopes, and are closed under lax Gray products and joins. We define constructible polygraphs to be presheaves on a category of atoms and inclusions, and extend the monoidal structures. 

We show that constructible directed complexes are a well-behaved subclass of Steiner's directed complexes, which we use to define a realisation functor from constructible polygraphs to $\omega$\nbd categories. We prove that the realisation of a constructible polygraph is a polygraph in restricted cases, and in all cases conditionally to a conjecture. Finally, we define the geometric realisation of a constructible polygraph, and prove that it is a CW complex with one cell for each of its elements.
\end{minipage}

\tableofcontents

\section{Introduction}

This article is a study in the combinatorics of higher categories, in view of applications in higher-dimensional rewriting theory based on polygraphs \cite{burroni1993higher,lafont2007algebra,guiraud2016polygraphs}, also known as computads \cite{street1976limits}. It is technically most indebted to two different, and separate traditions of research:
\begin{enumerate}
	\item combinatorial presentations of strict $\omega$\nbd categories, such as pasting schemes \cite{johnson1989combinatorics,power1991pasting}, parity complexes \cite{street1991parity}, and most of all Steiner's directed complexes \cite{steiner1993algebra} and augmented directed complexes \cite{steiner2004omega};
	\item poset topology, in particular the theory of shellable and CW posets \cite{bjorner1980shellable,bjorner1983lexico,bjorner1984posets}, and the theory of constructible complexes \cite{hachimori2000constructible}.
\end{enumerate}
The two have much in common, yet as far as we know no significant connection has been made before.

The starting point of our work is the observation, made already by Burroni \cite{burroni1993higher}, that a polygraph is a ``formal CW complex'' inside the category $\omegacat$ of strict $\omega$\nbd categories, with certain models of directed $n$\nbd cells standing in for the topological $n$\nbd balls. This analogy has been effectively developed in a series of works \cite{metayer2003resolutions,metayer2008cofibrant}, culminating in the definition, by Lafont, M\'etayer, and Worytkiewicz, of a model structure on $\omegacat$ \cite{lafont2010folk} where polygraphs have the same role, as cofibrant objects, that CW complexes have in the classical Quillen-Serre model structure on topological spaces \cite[Section II.3]{quillen1967homotopical}.

Under some version of Grothendieck's homotopy hypothesis --- that the homotopy theory of topological spaces can be formulated in terms of higher groupoids or, \emph{a fortiori}, higher categories --- we would expect a direct relation between polygraphs and CW complexes, with polygraphs the strictly more general notion, admitting directed as well as undirected cells. However, this comparison is made difficult by the unrestricted combinatorics of $\omega$\nbd categorical (globular) pasting, which even for ``good'' classes of polygraphs that form presheaf categories may allow models of $n$\nbd cells whose natural geometric realisation is non-contractible; see the discussion in \cite[Appendix A]{henry2019non}.

Independently, and at the same time, S.\ Henry and the author came to the \emph{regularity} constraint as a solution \cite[Section 3]{hadzihasanovic2017algebra}. A regular polygraph is one whose $n$\nbd cell shapes have $k$\nbd dimensional boundaries shaped as $k$\nbd dimensional balls. Intuitively, any $\omega$\nbd categorical pasting diagram can be turned into a ball-shaped diagram by inserting enough units or degenerate cells: in this sense, regular polygraphs together with a notion of weak units should be sufficient as a foundation for a theory of higher categories. In \cite{henry2018regular}, Henry proved a version of the homotopy hypothesis for regular polygraphs together with an algebraic notion of ``regular composition'' and a non-algebraic notion of weak units. 

In contrast with the algebraic approach of Henry's work, where classes of polygraphs are proved to form presheaf categories before presenting a specific site of definition, we take a combinatorial approach, starting from an explicit description of cell shapes. In higher-dimensional rewriting based on strict $\omega$\nbd categories, the $\omega$\nbd categorical algebra structure is used fruitfully to describe the pasting of generators, but has also unintended effects:
\begin{enumerate}
	\item there is no internal way to distinguish generators from other cells, and one is forced to use complex syntactic arguments to perform basic operations, such as ``surgery'' of cells or reversing the direction of a generator;
	\item one may unwittingly introduce ``Eckmann-Hilton''-type degeneracies. 
\end{enumerate}
The second point is related to the well-known fact that strict $\omega$\nbd categories are strictly less general than weak higher categories \cite[Chapter 4]{simpson2009homotopy}, which also limits the contexts in which higher-dimensional theories presented as polygraphs can be intepreted. 

We suggest that one should separate the combinatorics of pasting of generators of a polygraph from the algebra of composition in its intended models. In this approach, the combinatorics of pasting are handled by the shape category, so that elements of a presheaf on the shape category correspond to generators of a polygraph, and manipulating generators is easy. Such presheaves are then used as underlying structures of a model of higher categories (which also ``has composition''), in which one can interpret higher-dimensional theories even if they do not internally ``have composition''. 

The idea of basing a model of higher categories on combinatorial presentations of pasting diagrams (richer than the ``special'' shapes such as globes, cubes, oriented simplices) was present in Kapranov and Voevodsky \cite{kapranov1991infty}, whose main results are notoriously incorrect, and seems to not have been pursued afterwards. We outline such a model in Appendix \ref{sec:represent}, and develop the idea further in a subsequent paper \cite{hadzihasanovic2019rds}.

\subsubsection*{Constructibility and lax Gray products}

Another motivation of this work is the computation of lax Gray products of polygraphs; in \cite[Chapter 2]{hadzihasanovic2017algebra}, we gave evidence of their relevance to categorical universal algebra and rewriting. 

The lax Gray product of $\omega$-categories \cite{al2002multiple}, an oriented, non-symmetric counterpart to the cartesian product of spaces, is notoriously difficult to express in the algebra of globular composition. Steiner's definition \cite{steiner2004omega}, while elegant, requires a long detour via augmented directed complexes, and the proof of its validity has been completed only recently, with some effort, by Ara and Maltsiniotis \cite{ara2016joint}. The \emph{join} is another monoidal structure which can be derived from lax Gray products, and similar considerations apply.

The problem may be characterised as follows. An $\omega$\nbd categorical $n$\nbd cell $x$ has, for $k < n$, an input and output $k$\nbd boundary, $\bord{k}{+}x$ and $\bord{k}{-}x$, which in good cases is $k$\nbd dimensional; a globular pasting of two cells is, topologically, a gluing along an entire $k$\nbd boundary (output for one cell, input for the other). If $x$ is an $n$\nbd cell and $y$ an $m$\nbd cell, their lax Gray product $x \tensor y$ is $(n+m)$\nbd dimensional, and its input $(n+m-1)$\nbd boundary splits as
\begin{equation} \label{eq:laxgray_decomp}
	(\bord{n-1}{-}x \tensor y) \cup (x \tensor \bord{m-1}{\pm}y),
\end{equation}
the second sign depending on the parity of $n$. But this decomposition is \emph{not} a globular pasting: the two parts only share a portion of their $(n+m-2)$\nbd boundary. The situation only becomes more complicated for lower-dimensional boundaries. 

With that in mind, we turned to the combinatorics of face posets of regular CW complexes, or CW posets \cite{bjorner1984posets}. Certain conditions that have been studied for CW posets, such as shellability \cite{bjorner1980shellable}, turn out to have clear oriented analogues, suggesting a notion of composition alternative to the globular model. 

In particular, the oriented analogue of \emph{constructibility} \cite{hachimori2000constructible} individuates a class of pasting diagrams which are iterated composites of $n$\nbd cells along an $(n-1)$\nbd dimensional diagram in their boundary; we call this kind of composition a \emph{merger}. The decomposition (\ref{eq:laxgray_decomp}) is, in fact, a merger; thus the oriented version of constructibility is very naturally compatible with lax Gray products and joins. In \cite{hadzihasanovic2018weak}, we gave a reformulation of bicategory theory where the algebra of composition is based on mergers instead of globular composition, and showed that it is equivalent to the standard one. 

We introduce \emph{constructible directed complexes} as a combinatorial presentation of pasting diagrams obtained by iterated mergers, which we call \emph{constructible molecules}. The terminology is a twist on Steiner's theory of directed complexes \cite{steiner1993algebra}, whose ``molecules'' are pasting diagrams obtained by iterated globular compositions. It is not immediately obvious that constructible molecules are molecules, and that constructible directed complexes are directed complexes; we prove it in Section \ref{sec:omegacat}. Moreover, constructible molecules are regular: the geometric realisation of a constructible $n$\nbd molecule is a regular CW $n$\nbd ball, as shown in Section \ref{sec:geometric}.

We define \emph{constructible polygraphs} as a generalisation of constructible directed complexes: they are presheaves on a category of constructible atoms (``indecomposable'' constructible molecules) and inclusions. The monoidal structures of lax Gray products and joins canonically extend, and so does geometric realisation: the geometric realisation of a constructible polygraph is a CW complex with one cell for each of its elements. 

Moreover, constructible polygraphs can be realised as $\omega$\nbd categories, but we do not have a complete understanding of this construction yet: in particular, while we conjecture that the $\omega$\nbd categorical realisation of a constructible polygraph always admits the structure of a polygraph with one generator for each of its elements, we were only able to prove it in restricted cases.

\subsubsection*{Structure of the article}
We start in Section \ref{sec:basic} by recalling some notions of order theory and poset topology. Then, we define an orientation on graded posets, in the form of a labelling of their Hasse diagrams, and progressively introduce the components of the definition of constructible directed complex: oriented thinness (Definition \ref{dfn:thinness}) and constructible molecules (Definition \ref{dfn:globes}). We define the category $\globpos$ of constructible directed complexes and inclusions, and prove some of its properties, most importantly that constructible molecules do not have non-trivial automorphisms (Lemma \ref{lem:noautomorph}). We discuss some special classes of constructible molecules, including a sub-class of the opetopes, and then define the category $\cpol$ of constructible polygraphs.

In Section \ref{sec:properties}, we expand on the combinatorics of constructible directed complexes with the use of two main tools: substitutions (Construction \ref{cons:substitution}) and mergers (Construction \ref{cons:mergers}), where constructible molecules with two maximal elements are sequentially replaced with \emph{atoms}, constructible molecules with a greatest element. We prove that any constructible molecule can be reduced by mergers to an atom (Lemma \ref{lem:atomicmerge}), and we use this to prove that constructible molecules satisfy the globularity condition: their input and output boundary have the same boundaries (Theorem \ref{thm:globularity}). We then consider some consequences of globularity: $n$\nbd dimensional atoms are classified by pairs of constructible $(n-1)$\nbd molecules with isomorphic boundaries (Proposition \ref{prop:atomicnglobe}), and any constructible molecule can be reduced to an atom by a sequence of mergers (Lemma \ref{lem:standardmerge}). We also give a direct characterisation of the $k$\nbd dimensional boundaries of a constructible molecule (Proposition \ref{prop:globelike}).

In Section \ref{sec:monoidal}, we focus on monoidal structures and constructions. We define the suspension (Construction \ref{cons:suspension}), the lax Gray product (Construction \ref{cons:laxgray}), the join (Construction \ref{cons:join}), and the duals (Construction \ref{cons:jdual}) of oriented graded posets, and prove that constructible directed complexes are closed under each of them. We also show that some common classes of higher-categorical cell shapes --- the globes, the cubes, and the oriented simplices --- are constructible molecules. We then study the extensions of lax Gray products and joins to constructible polygraphs, and prove that the first are part of a monoidal biclosed structure, and the second of a locally biclosed structure (Proposition \ref{prop:locallybiclosed}) on $\cpol$. The latter result relies on a \emph{slice} construction for constructible polygraphs (Construction \ref{cons:slices}).

In Section \ref{sec:omegacat}, we first recall the definitions of $\omega$\nbd category and of polygraph. Then, we rephrase Steiner's definition of directed complexes in our notation (Definition \ref{dfn:directedcomplex}), and prove that constructible directed complexes are directed complexes (Theorem \ref{thm:globmolec}). We use this to define a realisation functor for constructible polygraphs in the category $\omegacat$ of $\omega$\nbd categories, and give some partial results on the relation between the monoidal structures of $\cpol$ and those of $\omegacat$.

In Section \ref{sec:polygraphs}, we study the relation between constructible polygraphs and polygraphs as freely generated $\omega$\nbd categories. The realisation of a constructible polygraph in $\omegacat$ admits the structure of a polygraph if constructible directed complexes are ``freely generating'' in the sense of Definition \ref{dfn:freegen} (Theorem \ref{thm:rpolintopol}), which we conjecture to be true, but were not able to prove in the general case. Nevertheless, we give some criteria for free generation which are weaker than loop-freeness as considered by Steiner (Proposition \ref{prop:max_free} and Corollary \ref{cor:flow-conn-split}), and may serve as intermediate steps towards a general proof.

In Section \ref{sec:geometric}, we define the geometric realisation of an oriented graded poset. We prove that the underlying poset of a constructible molecule is recursively dividable in the sense of Hachimori (Proposition \ref{prop:recdivid}), and derive that the geometric realisation of a constructible $n$\nbd molecule is an $n$\nbd ball (Theorem \ref{thm:regularball}). We extend the geometric realisation to constructible polygraphs, and show that the geometric realisation of a constructible polygraph is a CW complex with one cell for each of its elements (Theorem \ref{thm:georegpoly}). 

In Appendix \ref{sec:appendix}, we give a more detailed proof of the fact that lax Gray products of constructible molecules are constructible molecules. In Appendix \ref{sec:represent}, we present the first steps in an approach to weak higher categories based on constructible polygraphs, via the key notion of universality of a cell at a constructible submolecule of its boundary (Definition \ref{dfn:universality}), up to the definition of representable constructible polygraph (Definition \ref{dfn:represent}).

\subsubsection*{Outlook and open problems}

Our first concern is to develop the ideas of this work into a good framework for higher-dimensional rewriting, including a suitable model of higher categories to serve as semantics. The main limitation that we see is that operations or rewrites with a ``nullary'' input cannot be directly modelled in a constructible or regular polygraph; this should be overcome with an adequate theory of units or degeneracies simulating nullary inputs. We do that in \cite{hadzihasanovic2019rds}.

We have stated three conjectures of quite different nature. Conjecture \ref{conj:subglobe} is a problem in the same vein as many ``hierarchy problems'' in combinatorial topology, investigating the relation between different properties of posets or simplicial complexes (being shellable, partitionable, Cohen-Macaulay...); it may be rephrased as (oriented) constructibility not being the same as \emph{extendable} constructibility. We believe that a counterexample should exist, but were not able to find one.

On the other hand, Conjecture \ref{conj:monoidality}, that the lax Gray product and join of constructible polygraphs coincide with those of $\omega$\nbd categories \cite{ara2016joint}, is almost obviously true, and the fact that there is no obvious proof says more about how difficult it is to work with these structures on $\omegacat$ than anything else. Similarly, we are almost certain that our positive opetopes (Definition \ref{dfn:pope}) coincide with those defined by Zawadowski \cite{zawadowski2017positive}, and the only effort required is setting up a framework in which to compare the two.

The most important open problem is Conjecture \ref{conj:freegen}. Because of Example \ref{exm:nonsplit}, the strategy used essentially in all previous proofs of free generation does not apply to general constructible directed complexes. Proving the conjecture would be a useful sanity check, reassuring us that whatever we do is a continuation of the general theory of polygraphs. A refutation, on the other hand, would not rule out constructible polygraphs as a notion of ``directed space'', but it would make them somewhat incomparable with strict $\omega$\nbd categories, and indicate that the combinatorics of higher-categorical pasting are even subtler than we think.

Finally, given that constructible polygraphs rely on quite simple data structures, it would be interesting to pursue a formal implementation in the vein of \cite{curien2019syntactic} for opetopes and opetopic sets. A potential obstacle is that our definition of constructible molecules is combinatorial, but it is not enumerative: so far, we have no inductive procedure for enumerating isomorphism classes of constructible molecules without repetition.

\subsubsection*{Errata and notes on an earlier version}
With respect to the first preprint version of this article, we have made extensive changes to the terminology: what are now \emph{constructible molecules} and \emph{constructible directed complexes} used to be called \emph{globes} and \emph{globular posets}, while \emph{constructible polygraphs} used to be \emph{regular polygraphs}. 

The fact that constructible directed complexes are directed complexes in Steiner's sense is not obvious, and a good part of this article is devoted to its proof; but once it is established, it seems unnecessary to introduce an entirely separate set of terms. We thus suggest adopting Steiner's ``molecule'' for the most general combinatorial presentation of an $\omega$\nbd categorical composite, and prefixing it with various adjectives. It seems likely that there should be a hierarchy of classes of molecules as rich as that of their unoriented counterparts in poset topology. 

We use ``constructible'' here instead of ``regular'' to avoid confusion with S.\ Henry's notion of regular polygraph, which we find is the more general and appropriate use of the term: constructibility implies regularity, but the converse is not true (see Remark \ref{rmk:nonconstructible}). We discuss ``regular directed complexes'' in the follow-up \cite{hadzihasanovic2019rds}.

The first version also contained some mistakes: Lemma 2.23 was wrong, and so were Theorem 4.41 and the lemmas leading to it, whose proof made crucially use of it. The corresponding results in Section \ref{sec:polygraphs} here have now additional hypotheses, which do not hold for all constructible directed complexes. In particular, it is not true, as claimed previously, that all molecules in a constructible directed complex are split. The validity of Theorem 4.43 from the first version, that the $\omega$\nbd categorical realisations of constructible polygraphs are all polygraphs, is now conditional on Conjecture \ref{conj:freegen}, which will need a different proof strategy.

\subsubsection*{Acknowledgements}
This work was supported by a JSPS Postdoctoral Research Fellowship and by JSPS KAKENHI Grant Number 17F17810. Many thanks to Joachim Kock and Jamie Vicary for their feedback on the parts which overlap with my thesis. For the revised version, I would like to thank Dimitri Ara, Yves Guiraud, Simon Henry, and Georges Maltsiniotis for helpful feedback and suggestions.

\section{Basic definitions} \label{sec:basic}

In this section, we introduce our basic definitions. We borrow extensively from poset topology, introducing ``oriented'' or ``directed'' versions of several notions, in order to model the directionality of higher-categorical cells. The key definitions are those of a \emph{constructible molecule} and of a \emph{constructible directed complex}. For now we do not go beyond some basic properties of constructible directed complexes; instead we list some familiar examples, and proceed to describe several constructions by which we can build new constructible directed complexes from simpler ones.

First of all, we recall some basic notions from order theory and poset topology; we refer to \cite{wachs2006poset} for an overview of the subject. As a rule, we will not notationally distinguish a poset from its underlying set. 

\begin{dfn}
Let $P$ be a finite poset with order relation $\leq$. For all elements $x, y \in P$, we say that $y$ \emph{covers} $x$ if $x < y$ and, for all $y' \in X$, if $x < y' \leq y$, then $y' = y$. 

The \emph{Hasse diagram} of $P$ is the finite directed graph $\hass{P}$ with $\hass{P}_0 := P$ as set of vertices, and $\hass{P}_1 := \{c_{y,x}: y \to x \,|\, y \text{ covers } x\}$ as set of edges. We can reconstruct the partial order on $P$ from its Hasse diagram, letting $x \leq y$ in $P$ if and only if there is a path from $y$ to $x$ in $\hass{P}$.

Let $P_\bot$ be $P$ extended with a least element $\bot$. We say that $P$ is \emph{graded} if, for each $x \in P$, all paths from $x$ to $\bot$ in $\hass{P}_\bot$ have the same length. If $P$ is graded, for each $x \in P$, let $n+1$ be the length of paths from $x$ to $\bot$; then, we define $\dmn{x} := n$, the \emph{dimension} of $x$, and let $P^{(n)} := \{x \in P \,|\, \dmn{x} = n\}$. 

For all $x, y \in P$ such that $x \leq y$, we call the subset $[x,y] := \{z \in P \,|\, x \leq z \leq y\}$ the \emph{interval} from $x$ to $y$. If $P$ is graded, all paths from $y$ to $x$ in $\hass{P}$ have length $\dmn{y} - \dmn{x}$; this is the \emph{length} of the interval $[x,y]$. A graded poset $P$ is \emph{thin} if all intervals of length 2 in $P_\bot$ contain precisely 4 elements, that is, they are of the form
\begin{equation*}
\begin{tikzpicture}[baseline={([yshift=-.5ex]current bounding box.center)}]
	\node[scale=1.25] (0) at (0,2) {$y$};
	\node[scale=1.25] (1) at (-1,1) {$z_1$};
	\node[scale=1.25] (1b) at (1,1) {$z_2$};
	\node[scale=1.25] (2) at (0,0) {$x$};
	\draw[1c] (0) to (1);
	\draw[1c] (0) to (1b);
	\draw[1c] (1) to (2);
	\draw[1c] (1b) to (2);
\end{tikzpicture}
\end{equation*}
in the Hasse diagram $\hass{P}_\bot$.
\end{dfn}

\begin{remark}The Hasse diagram may have the opposite direction elsewhere in the literature.
\end{remark}

\begin{dfn}
Let $P$ be a poset, and $U \subseteq P$. The \emph{closure} of $U$ is the subset $\clos{U} := \{x \in P \,|\, \exists y \in U \,x \leq y\}$ of $P$. We say that $U$ is \emph{closed} if $U = \clos{U}$. 

Suppose $P$ is graded, and $U \subseteq P$ is closed; $U$ is also graded with the partial order inherited from $P$. We write $\dmn{U} := \mathrm{max}\{\dmn{x} \,|\, x \in U\}$ if $U$ is inhabited, and $\dmn{\emptyset} = -1$; in particular, $\dmn{\clos{\{x\}}} = \dmn{x}$. We say that $U$ is \emph{pure} if all its maximal elements have dimension $n = \dmn{U}$, or equivalently if $U = \clos{(U^{(n)})}$.
\end{dfn}

The intuition should be that elements $x$ of dimension $n$ in a graded poset correspond to $n$\nbd cells in a pasting diagram, and $(n-1)$\nbd dimensional elements covered by $x$ to $(n-1)$\nbd cells in the boundary of $x$. The boundary of a higher-categorical cell is partitioned into an input and an output part: we express this through a labelling of Hasse diagrams.

\begin{dfn}
An \emph{oriented graded poset} is a finite graded poset $P$ together with an edge-labelling $o: \hass{P}_1 \to \{+,-\}$ of the Hasse diagram of $P$ (an \emph{orientation}). 

If $P, Q$ are oriented graded posets, an \emph{inclusion} $\imath: P \hookrightarrow Q$ is a closed embedding of posets that is compatible with the orientations, that is, $o_Q(c_{\imath(y),\imath(x)}) = o_P(c_{y,x})$ for all $y, x \in P$ such that $y$ covers $x$. An inclusion is an \emph{isomorphism} if it is surjective. 

We write $\ogpos$ for the category of oriented graded posets and inclusions. There is an obvious forgetful functor $\ogpos \to \pos$ to the category of posets and order-preserving maps.
\end{dfn}

\begin{remark}
As a closed embedding of graded posets, an inclusion preserves dimensions and preserves and reflects the covering relation: if $y' = \imath(y)$ and $y'$ covers $x'$, then there is a unique $x$ such that $y$ covers $x$ and $x' = \imath(x)$. 

Any closed subset $U \subseteq P$ inherits the structure of an oriented graded poset, which makes the inclusion of subsets an inclusion $U \hookrightarrow P$ of oriented graded posets.
\end{remark}

\begin{cons}
Given an oriented graded poset $P$, let $\hasso{P}$ be the directed graph obtained from $\hass{P}$ by reversing the edges labelled $-$: that is, for all $c_{y,x}: y \to x$ in $\hass{P}$,
\begin{itemize}
	\item if $o(c_{y,x}) = +$, there is an edge $c^+_{y,x}: y \to x$ in $\hasso{P}$;
	\item if $o(c_{y,x}) = -$, there is an edge $c^-_{y,x}: x \to y$ in $\hasso{P}$.
\end{itemize}
If $P$ is graded, we can reconstruct $P$ and its orientation from $\hasso{P}$ and the vertex-labelling $\mathrm{dim}: P \to \mathbb{N}$: it suffices to reverse the edges going from a vertex of lower dimension to one of higher dimension in order to recover $\hass{P}$, and those edges are precisely the ones labelled $-$ by the orientation.

In fact, oriented graded posets correspond bijectively to finite directed graphs $H$ with a vertex-labelling $\mathrm{dim}: H_0 \to \mathbb{N}$, satisfying the following two properties:
\begin{enumerate}
	\item for all edges $y \to x$, either $\dmn{x} = \dmn{y} - 1$ or $\dmn{x} = \dmn{y} + 1$;
	\item for all vertices $y$, if $\dmn{y} > 0$, there exist a vertex $x$ with $\dmn{x} = \dmn{y} - 1$ and an edge $y \to x$ or $x \to y$.
\end{enumerate}

To depict an oriented graded poset, we will use interchangeably $\hass{P}$ with an edge-labelling or $\hasso{P}$ with relative vertical placement indicating dimension.
\end{cons}

\begin{dfn}
Let $U$ be a closed $n$\nbd dimensional subset of an oriented graded poset. For $\alpha \in \{+,-\}$, let
\begin{align*}
	\sbord{}{\alpha} U & := \{x \in U \,|\, \dmn{x} = n-1 \text{ and, for all $y \in U$, if $y$ covers $x$, then $o(c_{y,x}) = \alpha$} \}, \\
	\bord{}{\alpha} U & := \clos{(\sbord{}{\alpha} U)} \cup \{ x \in U \,|\, \text{for all $y \in U$, if $x \leq y$, then $\dmn{y} < n$} \}, \\
	\sbord{}{} U & := \sbord{}{+}U \cup \sbord{}{-}U, \quad \qquad \quad \bord{}{}U := \bord{}{+}U \cup \bord{}{-} U.
\end{align*}
In particular, when $U$ is pure, $\bord{}{\alpha} U = \mathrm{cl}(\sbord{}{\alpha} U)$.

We call $\bord{}{-}U$ the \emph{input boundary} and $\bord{}{+}U$ the \emph{output boundary} of $U$; together, they form the \emph{boundary} $\bord{}{}U$ of $U$. For all $x \in P$, we will use the short-hand notation $\sbord{}{\alpha}x := \sbord{}{\alpha}\clos{\{x\}}$ and $\bord{}{\alpha}x := \bord{}{\alpha}\clos{\{x\}}$.
\end{dfn}

\begin{remark} \label{rmk:precomplex}
Another equivalent presentation of an oriented graded poset consists of the function $\mathrm{dim}: P \to \mathbb{N}$ together with disjoint sets $\sbord{}{+}x$ and $\sbord{}{-}x$ of $(n-1)$\nbd dimensional elements, for all $n$ and elements $x \in P^{(n)}$. If we remove the requirement that $\sbord{}{+}x$ and $\sbord{}{-}x$ be disjoint, we obtain what Steiner calls a \emph{directed precomplex} in \cite{steiner1993algebra}.
\end{remark}

Given a graded poset $P$ with orientation $o$, we extend $o$ to $P_\bot$, by setting $o(c_{x,\bot}) := +$ for all minimal elements $x$ of $P$. This turns $(-)_\bot$ into an endofunctor on $\ogpos$. 

In what follows, we assume the usual ``sign rule'' multiplication on $\{+,-\}$. We will let variables $\alpha, \beta, \ldots$ range implicitly over $\{+,-\}$.

\begin{dfn} \label{dfn:thinness}
An \emph{oriented thin poset} is an oriented graded poset $P$ with the following property: $P$ is thin, and for each interval $[x,y]$ of length 2 in $P_\bot$, the labelling
\begin{equation} \label{eq:signed}
\begin{tikzpicture}[baseline={([yshift=-.5ex]current bounding box.center)}]
	\node[scale=1.25] (0) at (0,2) {$y$};
	\node[scale=1.25] (1) at (-1,1) {$z_1$};
	\node[scale=1.25] (1b) at (1,1) {$z_2$};
	\node[scale=1.25] (2) at (0,0) {$x$};
	\draw[1c] (0) to node[auto,swap] {$\alpha_1$} (1);
	\draw[1c] (0) to node[auto] {$\alpha_2$} (1b);
	\draw[1c] (1) to node[auto,swap] {$\beta_1$} (2);
	\draw[1c] (1b) to node[auto] {$\beta_2$} (2);
\end{tikzpicture}
\end{equation}
satisfies $\alpha_1\beta_1 = -\alpha_2\beta_2$. 
\end{dfn}

\begin{exm} \label{exm:standard}
For each $n \in \mathbb{N}$, let $O^n$ be the poset with a pair of elements $\underline{k}^+, \underline{k}^-$ for each $k < n$ and a greatest element $\underline{n}$, with the partial order defined by $\underline{j}^\alpha \leq \underline{k}^\beta$ if and only if $j \leq k$. This is a graded poset, with $\dmn{\underline{n}} = n$ and $\dmn{\underline{k}^\alpha} = k$ for all $k < n$. 

With the orientation $o(c_{y,x}) := \alpha$ if $x = \underline{k}^\alpha$, and $\alpha \in \{+,-\}$, it becomes an oriented thin poset; in fact, it is the smallest $n$\nbd dimensional oriented thin poset. The following are depictions of $\hass{O^2}$ and of $\hasso{O^2}$, together with the corresponding pasting diagram. 

\begin{equation*}
\begin{tikzpicture}[baseline={([yshift=-.5ex]current bounding box.center)}]
\begin{scope}
	\node[scale=1.25] (0) at (0,2.5) {$\underline{2}$};
	\node[scale=1.25] (1) at (-1,1.5) {$\underline{1}^-$};
	\node[scale=1.25] (1b) at (1,1.5) {$\underline{1}^+$};
	\node[scale=1.25] (2) at (-1,0) {$\underline{0}^-$};
	\node[scale=1.25] (2b) at (1,0) {$\underline{0}^+$};
	\draw[1clong] (0) to node[auto,swap] {$-$} (1);
	\draw[1clong] (0) to node[auto] {$+$} (1b);
	\draw[1clong] (1) to node[auto,swap] {$-$} (2);
	\draw[1clong] (1b) to node[auto, pos=0.8] {$\!\!-$} (2);
	\draw[1clong] (1) to node[auto, pos=0.2] {$\!\!+$} (2b);
	\draw[1clong] (1b) to node[auto] {$+$} (2b);
	\node[scale=1.25] at (1.5,0) {$,$};
\end{scope}
\begin{scope}[shift={(4,0)}]
	\node[scale=1.25] (0) at (0,2.5) {$\underline{2}$};
	\node[scale=1.25] (1) at (-1,1.5) {$\underline{1}^-$};
	\node[scale=1.25] (1b) at (1,1.5) {$\underline{1}^+$};
	\node[scale=1.25] (2) at (-1,0) {$\underline{0}^-$};
	\node[scale=1.25] (2b) at (1,0) {$\underline{0}^+$};
	\draw[1clong] (1) to (0);
	\draw[1clong] (0) to (1b);
	\draw[1clong] (2) to (1);
	\draw[1clong] (2) to (1b);
	\draw[1clong] (1) to (2b);
	\draw[1clong] (1b) to (2b);
	\node[scale=1.25] at (1.5,0) {$,$};
\end{scope}
\begin{scope}[shift={(8,1.2)}]
	\node[0c] (a0) at (-1,0) [label=left:$\underline{0}^-$] {};
	\node[0c] (b0) at (1,0) [label=right:$\underline{0}^+$] {};
	\draw[1c, out=75, in=105, looseness=1.5] (a0) to node[auto] {$\underline{1}^+$} (b0);
	\draw[1c, out=-75, in=-105, looseness=1.5] (a0) to node[auto,swap] {$\underline{1}^-$} (b0);
	\draw[follow] (a0) to (-.1,-.9);
	\draw[follow, out=120, in=-105, looseness=1.2] (0,-0.9) to (-.1,0);
	\draw[follow, out=105, in=-120, looseness=1.2] (-.1,0) to (0,.9);
	\draw[follow] (0,.9) to (b0);
	\draw[2c] (0,-0.8) to node[auto] {$\underline{2}\;$} (0,0.8);
	\node[scale=1.25] at (1.5,-1.2) {.};
\end{scope}
\end{tikzpicture}
\end{equation*}
We can see the graph $\hasso{O^2}$ as describing a ``flow'' on the cells of $O^2$, from the input boundary of a cell, into the cell, into the output boundary; this is indicated by the grey arrows.
\end{exm}

\begin{dfn}
We call the oriented graded poset $O^n$ the \emph{$n$\nbd globe}.
\end{dfn}

\begin{remark}
Let $\cat{O}$ be the full subcategory of $\ogpos$ whose objects are the globes. The category $\globset$ of presheaves on $\cat{O}$ is the category of $\omega$\nbd graphs \cite[Section 1.4]{leinster2004higher}, also known as globular sets.
\end{remark}

Thinness (also known as the \emph{diamond property} in the theory of abstract polytopes \cite{mcmullen2002abstract}) is a local condition, which imposes that the cells be ``manifold-like'' by ruling out irregular situations, such as three 1-cells in the boundary of a 2-cell meeting at a single point. Compatibility of the orientation will be necessary for globularity (case $\alpha_1 \neq \alpha_2$), and composability of cells in the same half of the boundary (case $\alpha_1 = \alpha_2$). 

In order to obtain regular pasting diagrams, we need to complement oriented thinness with a global condition, preventing cells from having globally non-spherical (for example, toroidal) boundaries, and ensuring that they have two composable, ball-shaped hemispheres.

\begin{dfn} \label{dfn:globes}
Let $P$ be an oriented thin poset. We define for each $n \in \mathbb{N}$ a family $\glob{n}{P}$ of pure $n$\nbd dimensional subsets of $P$, the \emph{constructible $n$\nbd molecules} of $P$, together with a partial order $\sqsubseteq$ on $\glob{n}{P}$, to be read ``is a constructible submolecule of''.

The family $\glob{0}{P}$ is $\{\{x\} \,|\, \dmn{x} = 0\}$, with the discrete order. Let $\glob{n-1}{P}$ be defined, and $U$ be pure and $n$\nbd dimensional. Then $U \in \glob{n}{P}$ if and only if $\bord{}{+}U$ and $\bord{}{-}U$ are constructible $(n-1)$\nbd molecules, and, inductively on the number of maximal elements of $U$, either
\begin{itemize}
	\item $U$ has a greatest element, in which case we call it an \emph{atom}, or
	\item $U = U_1 \cup U_2$, where $U_1$ and $U_2$ are constructible $n$\nbd molecules with fewer maximal elements, such that
	\begin{enumerate}
		\item $U_1 \cap U_2 = \bord{}{+}U_1 \cap \bord{}{-}U_2$ is a constructible $(n-1)$\nbd molecule, and
		\item $\bord{}{-}U_1 \sqsubseteq \bord{}{-}U$, $\bord{}{+}U_2 \sqsubseteq \bord{}{+}U$, $U_1 \cap U_2 \sqsubseteq \bord{}{+}U_1$, and $U_1 \cap U_2 \sqsubseteq \bord{}{-}U_2$.
	\end{enumerate}
\end{itemize}
We define $\sqsubseteq$ to be the smallest partial order relation on $\glob{n}{P}$ such that $U_1, U_2 \sqsubseteq U$, for all triples $U, U_1, U_2$ in the latter situation.

We say that the oriented thin poset $P$ is a constructible $n$\nbd molecule if $P \in \glob{n}{P}$, and a constructible $n$\nbd atom if it is a constructible $n$\nbd molecule with a greatest element.
\end{dfn}

\begin{remark}
Our notion of constructible molecule is an oriented version of a recursively dividable poset, which in turn is the order\nbd theoretic version of a constructible complex, as defined in \cite{hachimori2000constructible}. We will make the connection precise in Section \ref{sec:geometric}.
\end{remark}

\begin{remark}
The definition of constructible $n$\nbd molecule can be formulated in any oriented graded poset, not necessarily thin, but it leads to identifying as ``molecules'' posets that do not correspond to any $n$\nbd categorical cell shape. For example, $\clos{\{x\}}$ is a ``constructible 2-molecule'' in the oriented graded poset (depicted as the diagram $\hasso{P}$)
\begin{equation*}
\begin{tikzpicture}[baseline={([yshift=-.5ex]current bounding box.center)}]
	\node[scale=1.25] (0) at (0,2.5) {$x$};
	\node[scale=1.25] (1) at (-1.5,1.5) {$y^-$};
	\node[scale=1.25] (1b) at (1.5,1.5) {$y^+$};
	\node[scale=1.25] (2) at (-2.5,0) {$z_1^-$};
	\node[scale=1.25] (2b) at (-.5,0) {$z_1^+$};
	\node[scale=1.25] (2c) at (.5,0) {$z_2^-$};
	\node[scale=1.25] (2d) at (2.5,0) {$z_2^+$};
	\draw[1clong] (1) to (0);
	\draw[1clong] (0) to (1b);
	\draw[1clong] (2) to (1);
	\draw[1clong] (1) to (2b);
	\draw[1clong] (2c) to (1b);
	\draw[1clong] (1b) to (2d);
	\node[scale=1.25] at (3,0) {$,$};
\end{tikzpicture}
\end{equation*}
which does not describe a 2-cell.
\end{remark}

\begin{exm}
The $n$\nbd globes $O^n$ are constructible $n$\nbd atoms. This is obvious for $O^0$; for $n > 0$, $\bord{}{+}O^n$ and $\bord{}{-}O^n$ are both isomorphic to $O^{n-1}$, which is a constructible $(n-1)$\nbd atom by the inductive hypothesis.
\end{exm}

\begin{exm}
Let us enumerate the lowest-dimensional constructible molecules. By definition, there is only one constructible 0-molecule, the 0-globe. 

The 1-globe $O^1$, which we also denote by $\vec{I}$, is the only oriented thin poset with a 1-dimensional greatest element, and also the only constructible 1-atom. Any other constructible 1-molecule is the ``concatenation'' $\#_0^k \vec{I}$ of $k > 0$ atoms, corresponding to a pasting diagram
\begin{equation*}
\begin{tikzpicture}[baseline={([yshift=-.5ex]current bounding box.center)}]
\begin{scope}
	\node[0c] (0) at (-2,0) {};
	\node[0c] (1) at (0,0) {};
	\node[0c] (2) at (2,0) {};
	\draw[1c] (0) to (1);
	\draw[1cdot] (1) to node[auto] {$k-1$} (2);
	\node[scale=1.25] at (2.75,0) {\text{or}};
\end{scope}
\begin{scope}[shift={(4.5,0)}]
	\node[0c] (0) at (-1,0) {};
	\node[0c] (1) at (1,0) {};
	\draw[1cdash] (0) to node[auto] {$k$} (1);
	\node[scale=1.25] at (1.5,0) {$;$};
\end{scope}
\end{tikzpicture}
\end{equation*}
here, we used a dotted edge to indicate a possibly empty finite sequence of edges, and a dashed one to indicate a non-empty one. It is easy to check that any constructible 1-molecule included in another constructible 1-molecule is a submolecule.

We will later prove (Proposition \ref{prop:atomicnglobe}) that constructible $n$\nbd atoms, for $n > 0$, are classified by pairs of constructible $(n-1)$\nbd molecules with isomorphic boundaries. Because any two constructible 1-molecules have isomorphic boundaries, constructible 2-atoms are classified by pairs $(n,m)$ of non-zero natural numbers:
\begin{equation*}
\begin{tikzpicture}
	\node[0c] (0) at (-1.25,0) {};
	\node[0c] (1) at (1.25,0) {};
	\draw[1cdash, out=60, in=120] (0) to node[auto] {$m$} (1);  
	\draw[1cdash, out=-60, in=-120] (0) to node[auto,swap] {$n$} (1);
	\draw[2c] (0,-0.6) to (0,0.6);
	\node[scale=1.25] at (1.5,-0.625) {.};
\end{tikzpicture}

\end{equation*}
A non-atomic, constructible 2-molecule must have the same boundary as a constructible 2-atom, and split into two constructible 2-molecules $U_1$, $U_2$ whose intersection is a constructible 1-molecule contained in $\bord{}{+}U_1 \cap \bord{}{-}U_2$; the submolecule conditions are always satisfied. By a case distinction, this must happen in one of the following four ways:
\begin{equation} \label{eq:fourways}
\begin{tikzpicture}[baseline={([yshift=-.5ex]current bounding box.center)}, scale=0.7]
\begin{scope}
	\node[0c] (0) at (-1.75,0) {};
	\node[0c] (a1) at (-1,1.05) {};
	\node[0c] (a2) at (1,1.05) {};
	\node[0c] (1) at (1.75,0) {};
	\draw[1cdot, out=75, in=-150] (0) to (a1);  
	\draw[1cdash, out=25,in=155] (a1) to (a2);
	\draw[1cdot, out=-30, in=105] (a2) to (1);
	\draw[1cdash, out=-75,in=-105, looseness=1.25] (0) to (1);
	\draw[1cdash, out=-75, in=-105, looseness=1.25] (a1) to (a2);
	\draw[2c] (.1,-1.2) to node[auto] {$U_1\;$} (.1,.2);
	\draw[2c] (.1,.3) to node[auto] {$U_2\;$} (.1,1.3);
	\node[scale=1.25] at (2.25,-1.25) {,};
\end{scope}
\begin{scope}[shift={(4.5,0)}]
	\node[0c] (a) at (-1.75,0) {};
	\node[0c] (b) at (.5,-1.25) {};
	\node[0c] (c) at (-.5,1.25) {};
	\node[0c] (d) at (1.75,0) {};
	\draw[1cdash, out=-60, in=180] (a) to (b);
	\draw[1cdash, out=0, in=120] (c) to (d);
	\draw[1cdot, out=15, in=-105] (b) to (d);
	\draw[1cdot, out=75, in=-165] (a) to (c);
	\draw[1cdash] (c) to (b);
	\draw[2c] (-.75,-.7) to node[auto] {$U_1$} (-.75,.9);
	\draw[2c] (.75,-.9) to node[auto,swap] {$U_2$} (.75,.7);
	\node[scale=1.25] at (2.25,-1.25) {,};
\end{scope}
\begin{scope}[shift={(9,0)}]
	\node[0c] (0) at (-1.75,0) {};
	\node[0c] (a1) at (-1,-1.05) {};
	\node[0c] (a2) at (1,-1.05) {};
	\node[0c] (1) at (1.75,0) {};
	\draw[1cdot, out=-75, in=150] (0) to (a1);  
	\draw[1cdash, out=-25,in=-155] (a1) to (a2);
	\draw[1cdot, out=30, in=-105] (a2) to (1);
	\draw[1cdash, out=75,in=105, looseness=1.25] (0) to (1);
	\draw[1cdash, out=75, in=105, looseness=1.25] (a1) to (a2);
	\draw[2c] (.1,-.2) to node[auto] {$U_2\;$} (.1,1.2);
	\draw[2c] (.1,-1.3) to node[auto] {$U_1\;$} (.1,-.3);
	\node[scale=1.25] at (2.25,-1.25) {,};
\end{scope}
\begin{scope}[shift={(13.5,0)}]
	\node[0c] (a) at (-1.75,0) {};
	\node[0c] (b) at (-.5,-1.25) {};
	\node[0c] (c) at (.5,1.25) {};
	\node[0c] (d) at (1.75,0) {};
	\draw[1cdot, out=-75, in=165] (a) to (b);
	\draw[1cdot, out=-15,in=105] (c) to (d);
	\draw[1cdash, out=0, in=-120] (b) to (d);
	\draw[1cdash, out=60,in=180] (a) to (c);
	\draw[1cdash] (b) to (c);
	\draw[2c] (.75,-.7) to node[auto,swap] {$U_1$} (.75,.9);
	\draw[2c] (-.75,-.9) to node[auto] {$U_2$} (-.75,.7);
	\node[scale=1.25] at (2.25,-1.25) {,};
\end{scope}
\end{tikzpicture}
\end{equation}
where $U_1$ and $U_2$ may be atoms, or may themselves split in one of the four ways. 

Constructible 2-atoms are precisely the shapes of cells of a regular 2-polygraph, and constructible 2-molecules  are the shapes of composable diagrams in a merge-bicategory, as defined in \cite{hadzihasanovic2018weak}. They are also in bijection with planar, connected string diagrams whose nodes have at least one input and one output edge \cite{joyal1991geometry}.
\end{exm}

\begin{cons}
Unravelling the inductive definition, if $U \subseteq P$ is a constructible $n$\nbd molecule with maximal elements $U^{(n)} = \{x_1,\ldots,x_m\}$, we obtain a rooted binary tree whose vertices are labelled with subsets $V \subseteq U$, such that
\begin{enumerate}
	\item the root is labelled $U$,
	\item the leaves are labelled with the subsets $\clos\{x_i\}$, $i = 1,\ldots,m$, and
	\item the children of a vertex labelled $V$ have labels $V_1, V_2$ with $V = V_1 \cup V_2$.
\end{enumerate}
The sets $V$ satisfy conditions that depend entirely on their boundaries:
\begin{enumerate}
	\item for each vertex with label $V$, $\bord{}{+}V$ and $\bord{}{-}V$ are constructible $(n-1)$\nbd molecules;
	\item if the children of a vertex with label $V$ have labels $V_1, V_2$, then $\bord{}{+}V_1 \cap \bord{}{-}V_2$ is a constructible submolecule of $\bord{}{+}V_1$ and of $\bord{}{-}V_2$, while $\bord{}{-}V_1$ and $\bord{}{+}V_2$ are constructible submolecules of $\bord{}{-}V$ and $\bord{}{+}V$, respectively.
\end{enumerate}
We call any such tree a \emph{merger tree} for the constructible molecule $U$. Intuitively, any branching in a merger tree corresponds to a well-formed pasting of two $n$\nbd cells along a shared $(n-1)$\nbd dimensional diagram in their boundary.

In general, a single constructible $n$\nbd molecule can have many merger trees. For example, the pasting diagram
\begin{equation*}
\begin{tikzpicture}[baseline={([yshift=-.5ex]current bounding box.center)},scale=1.25]
	\node[0c] (0) at (-1.25, .125) {};
	\node[0c] (1) at (1.25, -.375) {};
	\node[0c] (o1) at (-.25, .75) {};
	\node[0c] (o2) at (.25, .75) {};
	\node[0c] (o3) at (1,.125) {};
	\node[0c] (i1) at (-.75, -.625) {};
	\draw[1c, out=60, in=180] (0) to (o1);
	\draw[1c] (o1) to (o2);
	\draw[1c, out=15, in=90] (o2) to (o3);
	\draw[1c] (o3) to (1);
	\draw[1c] (0) to (i1);
	\draw[1c, out=0, in=-165] (i1) to (1);
	\draw[1c] (i1) to (o1);
	\draw[1c, out=-90, in=-135, looseness=1.25] (o2) to (o3);
	\draw[2c] (-.75,-.375) to node[auto] {$x_1$} (-.75,.625);
	\draw[2c] (0,-.5) to node[auto] {$x_2$} (0,.625);
	\draw[2c] (.75,0) to node[auto] {$x_3$} (.75,.75);
\end{tikzpicture}
\end{equation*}
represents a constructible 2-molecule with two valid merger trees:
\begin{equation*}
\begin{tikzpicture}[baseline={([yshift=-.5ex]current bounding box.center)}, yscale=1.2]
\begin{scope}
	\node[scale=1.25] (0) at (0,2) {$\clos\{x_1, x_2, x_3\}$};
	\node[scale=1.25] (1) at (-1,1) {$\clos\{x_1, x_2\}$};
	\node[scale=1.25] (1b) at (1,1) {$\clos\{x_3\}$};
	\node[scale=1.25] (2) at (-2,0) {$\clos\{x_1\}$};
	\node[scale=1.25] (2b) at (0,0) {$\clos\{x_2\}$};
	\draw[1c] (0) to (1);
	\draw[1c] (0) to (1b);
	\draw[1c] (1) to (2);
	\draw[1c] (1) to (2b);
	\node[scale=1.25] at (2.5,1) {and};
\end{scope}
\begin{scope}[shift={(5,0)}, xscale=-1]
	\node[scale=1.25] (0) at (0,2) {$\clos\{x_1, x_2, x_3\}$};
	\node[scale=1.25] (1) at (-1,1) {$\clos\{x_2, x_3\}$};
	\node[scale=1.25] (1b) at (1,1) {$\clos\{x_1\}$};
	\node[scale=1.25] (2) at (-2,0) {$\clos\{x_3\}$};
	\node[scale=1.25] (2b) at (0,0) {$\clos\{x_2\}$};
	\draw[1c] (0) to (1);
	\draw[1c] (0) to (1b);
	\draw[1c] (1) to (2);
	\draw[1c] (1) to (2b);
	\node[scale=1.25] at (-2.5,-.1) {.};
\end{scope}
\end{tikzpicture}
\end{equation*}
\end{cons}

The following is immediate from the definition.

\begin{prop}
Let $V \subseteq U$ be two constructible $n$\nbd molecules. Then $V \sqsubseteq U$ if and only if there exists a merger tree for $U$ with a vertex labelled $V$.
\end{prop}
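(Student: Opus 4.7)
The plan is to prove both directions by induction, exploiting that the partial order $\sqsubseteq$ and merger trees are both obtained by iterating the same local decomposition data, just viewed from different sides.

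For the backward direction, I would induct on the depth of the vertex labelled $V$ in the given merger tree for $U$. The base case is when $V$ is the root, whence $V = U$ and $V \sqsubseteq U$ by reflexivity. For the inductive step, let $W$ be the parent of the vertex labelled $V$ in the tree, with children labelled $V$ and $V'$. By the conditions on merger trees, $W = V \cup V'$ is a valid decomposition in the sense of Definition \ref{dfn:globes} (in particular, $\bord{}{+}V$ and $\bord{}{-}V'$ and their intersection are constructible $(n-1)$-molecules, and the submolecule conditions on boundaries hold), so $V \sqsubseteq W$ by the defining property of $\sqsubseteq$. The inductive hypothesis gives $W \sqsubseteq U$, and transitivity yields $V \sqsubseteq U$.

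For the forward direction, I would first observe that, because any valid decomposition $W = W_1 \cup W_2$ produces pieces $W_1, W_2$ with strictly fewer maximal elements than $W$, the reflexive-transitive closure of the one-step ``is a piece of a decomposition'' relation is already antisymmetric, and so already a partial order. Hence $\sqsubseteq$ coincides with this closure, and $V \sqsubseteq U$ means there is a finite chain $V = V_0, V_1, \ldots, V_k = U$ where each step $V_{i+1} = V_i \cup V'_i$ arises as a valid decomposition. I would then induct on $k$: the case $k = 0$ is trivial since any merger tree for $U$ works (one exists by unravelling the recursive definition of $U$ as a constructible $n$-molecule). For $k > 0$, take the valid decomposition $U = V_{k-1} \cup V'_{k-1}$ at the root, attach on the left a merger tree for $V_{k-1}$ containing $V$ as a label (which exists by inductive hypothesis applied to the chain $V = V_0, \ldots, V_{k-1}$) and on the right any merger tree for $V'_{k-1}$; the combined tree is a valid merger tree for $U$ that contains $V$ as a label.

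I don't anticipate any serious obstacle; the only point requiring care is verifying that the boundary and submolecule conditions imposed on each node of a merger tree are exactly those appearing in the recursive definition of constructible molecule (so that the two inductions match cleanly), and that $\sqsubseteq$ is indeed just the reflexive-transitive closure of the generating relation rather than something strictly finer — both of which follow by direct inspection of Definition \ref{dfn:globes} and the preceding construction of merger trees.
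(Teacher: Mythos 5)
The paper gives no proof, stating the proposition is immediate from the definitions, and your argument is precisely the careful unfolding the paper implicitly expects: you observe that $\sqsubseteq$ is the reflexive-transitive closure of the one-step decomposition relation (antisymmetry coming from the strict decrease in the number of maximal elements), and then match that against the inductive construction of merger trees in both directions. This is correct and takes the same route as the paper.
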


\begin{remark}
We do not know any examples of pairs $V \subseteq U$ of constructible $n$\nbd molecules such that $V$ is \emph{not} a constructible submolecule of $U$; thus, there remains the possibility that any subset of a constructible $n$\nbd molecule which is itself a constructible $n$\nbd molecule is, in fact, a constructible submolecule. However, we conjecture that this is not the case in general.
\end{remark}

\begin{conj} \label{conj:subglobe}
There exist constructible $n$\nbd molecules $U$ and $V$ such that $V \subseteq U$, but $V \not\sqsubseteq U$.
\end{conj}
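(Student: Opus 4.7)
Since this is a conjectured existence statement and not a theorem, the natural ``proof'' is to exhibit an explicit counterexample. The plan is therefore a search strategy rather than a syntactic derivation, and I would proceed as follows.

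First I would locate the minimum dimension where a counterexample can possibly live. By the case analysis following (\ref{eq:fourways}), every containment $V \subseteq U$ between constructible $2$-molecules yields a valid merger decomposition, so the conjecture holds trivially in dimensions $0$, $1$, $2$; any counterexample must be at least $3$\nbd dimensional and, looking at the recursive submolecule conditions in Definition~\ref{dfn:globes}, the obstruction has to arise from the clauses $\bord{}{-}V_1 \sqsubseteq \bord{}{-}V$, $\bord{}{+}V_2 \sqsubseteq \bord{}{+}V$, $V_1 \cap V_2 \sqsubseteq \bord{}{+}V_1$, $V_1 \cap V_2 \sqsubseteq \bord{}{-}V_2$, which recursively invoke submolecule-hood in one dimension lower. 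Therefore I would try to engineer a $3$\nbd molecule $U$ together with a $V \subseteq U$ whose only possible boundary decompositions in dimension $2$ conflict with every way of splitting $U$.

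Second, I would import inspiration from the unoriented case. The analogue of the conjecture in poset topology, that constructibility is strictly weaker than extendable constructibility, is itself open for general CW posets, but there are candidates (based on non-shellable triangulations of balls such as Rudin's ball or Ziegler's examples) which are constructible but whose constructibility decompositions can be shown to be non-extendable along certain subcomplexes. I would start from such a simplicial ball, view it as a recursively dividable poset, and try to equip its face poset with an orientation compatible with Definition~\ref{dfn:thinness}, mimicking the orientation of the oriented simplex. If an orientation can be chosen so that the entire poset becomes a constructible $3$\nbd molecule (using the tools of Section~\ref{sec:monoidal} on simplicial joins), the subcomplex providing the unoriented obstruction will be a strong candidate for $V$.

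Third, as a more computational alternative, I would enumerate constructible $3$\nbd atoms $U$ with a small number of maximal elements (say, four to six), for each one compute the complete set of merger trees (finite but combinatorially explosive), and tabulate the set $\{V \sqsubseteq U\}$ of constructible submolecules; then I would separately enumerate all closed pure $3$\nbd dimensional subsets $V \subseteq U$ that satisfy Definition~\ref{dfn:globes} in their own right, using the recursive criterion on their boundaries, and check set-theoretically whether the two collections coincide. The first discrepancy yields a counterexample.

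The main obstacle in either line of attack is the subtle interaction between orientation and combinatorics: an unoriented extendability failure need not survive orientation, because the oriented thinness condition and the requirement that $\bord{}{\pm}V$ be a constructible $(n-1)$\nbd molecule cut down drastically the admissible shapes. Equally, verifying that a candidate $V$ is not a constructible submolecule of $U$ requires ruling out every merger tree of $U$, a task that grows super-exponentially with the number of maximal cells and for which, as the ``Outlook'' notes, no inductive enumeration of constructible molecules is currently known. A useful intermediate lemma, which I would try to prove first and which would sharply reduce the search space, is a local criterion of the form: \emph{if $V \sqsubseteq U$ then there is a maximal $x \in U \setminus V$ such that $\clos\{x\}$ can be split off from $U$ preserving $V$}; failure of such a criterion in a concrete small example would itself exhibit the conjectured counterexample.
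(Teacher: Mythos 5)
The statement you were asked to ``prove'' is Conjecture~\ref{conj:subglobe}, which the paper itself leaves open: the authors explicitly say they believe a counterexample should exist but were unable to find one. There is therefore no paper proof to compare against, and you correctly recognise that the object you would need to produce is an explicit pair $(U,V)$, not a syntactic derivation. What you have written is a research programme for finding such a pair, not an argument that one exists, so it neither resolves the conjecture nor contradicts anything in the paper.

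As a strategy it is sensible and aligned with the paper's own remarks. Two small cautions. First, a wording issue: you write that ``the conjecture holds trivially in dimensions $0$, $1$, $2$,'' but since every containment of constructible $k$-molecules for $k \le 2$ is automatically a submolecule, what you mean is the opposite --- no counterexample can exist there, so the conjecture is \emph{vacuous} in those dimensions and any witness must be at least $3$-dimensional. Second, the closing ``intermediate lemma'' --- that $V \sqsubseteq U$ forces some maximal $x \in U \setminus V$ to be separable while preserving $V$ --- is stated as a necessary condition; its failure on a concrete pair $(V,U)$ with $V \sqsubseteq U$ would refute the lemma, but it would not by itself exhibit a pair with $V \subseteq U$ and $V \not\sqsubseteq U$. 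To use it as you intend, you would need the converse direction as a \emph{sufficient} criterion for $V \sqsubseteq U$, so that its failure alongside $V \subseteq U$ certifies $V \not\sqsubseteq U$; proving that converse is itself nontrivial and closer in spirit to the unproven ``extendable constructibility'' direction. The main substantive risk you already identify correctly: the orientation constraints (oriented thinness, and the requirement that $\bord{}{\pm}V$ be constructible $(n-1)$-molecules) may destroy the unoriented obstruction, so a candidate imported from an unoriented non-extendably-constructible ball must be re-verified from scratch after a compatible orientation is chosen, if one exists at all.
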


\begin{lem} \label{lem:basic}
Let $U$ be a constructible $n$\nbd molecule, $n > 0$. Then:
\begin{enumerate}[label=(\alph*)]
	\item $\sbord{}{+}U$ and $\sbord{}{-}U$ are disjoint and both inhabited;
	\item each $x \in \sbord{}{}U$ is covered by a single element, and each $x \in U^{(n-1)}\setminus \sbord{}{}U$ is covered by exactly two elements with opposite orientations.
\end{enumerate}
\end{lem}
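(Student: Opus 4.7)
The plan is to proceed by induction on the number $|U^{(n)}|$ of maximal elements of $U$, unfolding the recursive structure in Definition \ref{dfn:globes}. In the base case $|U^{(n)}| = 1$, $U$ is an atom with greatest element $\hat{x}$ of dimension $n$, which is its unique $n$-dimensional element. Any $x \in U^{(n-1)}$ satisfies $x \leq \hat{x}$, and by gradedness $\hat{x}$ covers $x$; as $\hat{x}$ is the only $n$-element, it is the unique cover of $x$ in $U$. So every element of $U^{(n-1)}$ lies in $\sbord{}{}U$ and has exactly one cover, giving the first clause of (b) and making the second vacuous. For (a), $\bord{}{+}U$ and $\bord{}{-}U$ are by hypothesis constructible $(n-1)$-molecules, hence non-empty; since $U$ is pure they equal $\clos{\sbord{}{+}U}$ and $\clos{\sbord{}{-}U}$ respectively, so $\sbord{}{\pm}U$ are both inhabited, while disjointness is immediate since each $(n-1)$-element has a single cover carrying one orientation.

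For the inductive step, I write $U = U_1 \cup U_2$ as prescribed, so that $U_1, U_2$ have strictly fewer maximal elements and both satisfy (a) and (b) by the inductive hypothesis. Fix $x \in U^{(n-1)}$ and case split on which of $U_1 \setminus U_2$, $U_2 \setminus U_1$, or $U_1 \cap U_2$ contains $x$. If $x \in U_1 \setminus U_2$, closedness of $U_2$ forces every element above $x$ to lie in $U_1$, so the covers of $x$ in $U$ coincide with those in $U_1$ and $x \in \sbord{}{\alpha}U$ if and only if $x \in \sbord{}{\alpha}U_1$; the conclusions of (b) thus transfer directly, and symmetrically for $x \in U_2 \setminus U_1$. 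If $x \in U_1 \cap U_2$, the key point is that $U_1 \cap U_2 = \bord{}{+}U_1 \cap \bord{}{-}U_2$ is $(n-1)$-dimensional, and that the $(n-1)$-elements of $\bord{}{\alpha}U_i = \clos{\sbord{}{\alpha}U_i}$ are exactly those of $\sbord{}{\alpha}U_i$; hence $x \in \sbord{}{+}U_1 \cap \sbord{}{-}U_2$. By induction $x$ then has a unique cover $y_1 \in U_1$ with $o(c_{y_1, x}) = +$ and a unique cover $y_2 \in U_2$ with $o(c_{y_2, x}) = -$, and these are distinct since no $n$-element belongs to $U_1 \cap U_2$; so $x$ has exactly two covers in $U$, of opposite orientations, and $x \notin \sbord{}{}U$.

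These cases exhaust $U^{(n-1)}$ and establish (b) for $U$. For (a), $\sbord{}{\pm}U$ is once more non-empty because $\bord{}{\pm}U$ is by hypothesis a constructible $(n-1)$-molecule, and disjointness is an immediate consequence of (b): an element of $\sbord{}{+}U \cap \sbord{}{-}U$ would have no covers in $U$, contradicting purity. I do not anticipate any serious obstacle in the argument; the only point demanding care is keeping track of which covers of an $(n-1)$-element in $U = U_1 \cup U_2$ come from $U_1$ and which from $U_2$, a bookkeeping that is controlled by closedness of $U_1, U_2$ together with the dimensional constraint $\dmn{U_1 \cap U_2} = n-1$.
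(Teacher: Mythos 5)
Your proof is correct and takes essentially the same route as the paper: induction on the number of maximal elements, using the split $U = U_1 \cup U_2$ and the observation that the $(n-1)$\nbd dimensional elements of $U_1 \cap U_2$ lie in $\sbord{}{+}U_1 \cap \sbord{}{-}U_2$, with the case analysis tracking where the covers of an $(n-1)$\nbd element live. The paper phrases the same argument by writing explicit decompositions of $\sbord{}{\pm}U$ and of $U^{(n-1)}$ rather than a pointwise case split, but the reasoning is identical; your observation that disjointness follows once (b) is known (via purity) is a small, valid streamlining.
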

\begin{proof}
For the first point, observe that $\bord{}{\alpha}U$, being a constructible $(n-1)$\nbd molecule, is pure and $(n-1)$\nbd dimensional, hence it is equal to the closure of $\sbord{}{\alpha}U$, which is necessarily inhabited. Disjointness of $\sbord{}{+}U$ and $\sbord{}{-}U$ is obvious if $U$ is an atom; otherwise, $U$ splits as $U_1 \cup U_2$ with $U_1 \cap U_2 = \bord{}{+}U_1 \cap \bord{}{-}U_2 = \clos{(\sbord{}{+}U_1 \cap \sbord{}{-}U_2)}$. Then 
\begin{equation*}
	\sbord{}{-}U = \sbord{}{-}U_1 + \sbord{}{-}U_2\setminus \sbord{}{+}U_1, \quad \quad \quad \sbord{}{+}U = \sbord{}{+}U_2 + \sbord{}{+}U_1\setminus \sbord{}{-}U_2,
\end{equation*}
where + indicates disjoint union of sets, and we can derive the statement for $U$ from the statement for $U_1$ and $U_2$.

The second point is obvious if $U$ is an atom. Otherwise, suppose $U$ splits as $U_1 \cup U_2$; then
\begin{align*}
	\sbord{}{}U & = (\sbord{}{}U_1 \setminus \sbord{}{}U_2) + (\sbord{}{}U_2 \setminus \sbord{}{}U_1), \\
	U^{(n-1)}\setminus \sbord{}{}U & = (U_1^{(n-1)} \setminus \sbord{}{}U_1) + (U_2^{(n-1)} \setminus \sbord{}{}U_2) + (\sbord{}{+}U_1 \cap \sbord{}{-}U_2).
\end{align*}
By the inductive hypothesis applied to $U_1$, elements of $\sbord{}{}U_1 \setminus \sbord{}{}U_2$ are covered by a single element of $U_1$ and no elements of $U_2$, while elements of $U_1^{(n-1)} \setminus \sbord{}{}U_1$ are covered by two elements of $U_1$ with opposite orientations and no elements of $U_2$. We similarly deal with elements of $\sbord{}{}U_2 \setminus \sbord{}{}U_1$ and $U_2^{(n-1)} \setminus \sbord{}{}U_2$. Finally, elements of $\sbord{}{+}U_1 \cap \sbord{}{-}U_2$ are covered by a single element of $U_1$ with orientation $+$ and a single element of $U_2$ with orientation $-$. This completes the case distinction.
\end{proof}

\begin{lem} \label{lem:noautomorph}
Let $U$ be a constructible molecule and $\imath: U \hookrightarrow U$ an automorphism. Then $\imath$ is the identity on $U$.
\end{lem}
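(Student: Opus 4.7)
My plan is to proceed by induction on the dimension $n = \dmn{U}$, nested with a secondary induction on the number $|U^{(n)}|$ of maximal elements of $U$. The base case $n = 0$ is immediate, as $U$ is then a single $0$-dimensional element. For the inductive step $n \geq 1$, the automorphism $\imath$ restricts to automorphisms of the constructible $(n-1)$-molecules $\bord{}{-}U$ and $\bord{}{+}U$, so by the outer inductive hypothesis $\imath$ is the identity on $\bord{}{}U$. Combined with Lemma \ref{lem:basic}(b), each element of $\sbord{}{}U$ is covered by a unique $n$-cell of $U$, which $\imath$ must therefore fix.

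The inner base case $|U^{(n)}| = 1$ is easy: the unique greatest element $x_0$ of $U$ is fixed, and since in an atom every $(n-1)$-face lies in $\sbord{}{}U$ (being covered only by $x_0$), purity forces $U = \{x_0\} \cup \bord{}{}U$, so $\imath$ is the identity.

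For $|U^{(n)}| \geq 2$, I will introduce the notion of a \emph{source}: a maximal element $x \in U^{(n)}$ with $\sbord{}{-}x \subseteq \sbord{}{-}U$. Sources exist by a straightforward induction on the merger tree, since in any split $U = U_1 \cup U_2$ we have $\sbord{}{-}U_1 \subseteq \sbord{}{-}U$, so a source of $U_1$ is already a source of $U$. Moreover, distinct sources have disjoint input boundaries, for otherwise some element of $\sbord{}{-}U$ would be covered by two maximal elements, contradicting Lemma \ref{lem:basic}(b). Since $\imath$ preserves orientation and fixes $\sbord{}{-}U$ pointwise, it fixes every source. For a fixed source $x$, the closure $\clos\{x\}$ is itself a constructible $n$-atom (occurring as a leaf in any merger tree of $U$, where the boundary conditions force its boundaries to be constructible $(n-1)$-molecules), and $\imath$ restricts to an automorphism of it; since $|\clos\{x\}^{(n)}| = 1 < |U^{(n)}|$, the inner hypothesis gives $\imath = \mathrm{id}$ on $\clos\{x\}$. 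This fixes every interior $(n-1)$-face $y$ of $x$, which by Lemma \ref{lem:basic}(b) is covered by $x$ and by a unique other maximal element $x'$ with the opposite orientation; preservation of covering and orientation then forces $\imath(x') = x'$, and a further application of the inner hypothesis fixes $\clos\{x'\}$, continuing the propagation.

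The main obstacle I anticipate is verifying that this propagation reaches every maximal element of $U$: a kind of ``flow-connectedness'' property of constructible molecules. I would establish it by a separate induction on the merger tree $U = U_1 \cup U_2$, showing that the maximal elements of $U_1$ are already reached by the inductive hypothesis for $U_1$ (sources of $U_1$ being sources of $U$), and that every maximal element of $U_2$ is reached either directly from a source of $U_2$ whose input boundary lies in $\sbord{}{-}U_2 \setminus \sbord{}{+}U_1 \subseteq \sbord{}{-}U$, or via the shared interior $U_1 \cap U_2 \subseteq \sbord{}{+}U_1$ from an already-reached maximal element of $U_1$. Once all maximal elements are fixed, the purity $U = \clos{(U^{(n)})}$ together with the inductive hypothesis applied to each $\clos\{x\}$ yields $\imath = \mathrm{id}$ on all of $U$.
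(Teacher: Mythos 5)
Your proposal is correct and takes essentially the same high-level approach as the paper (outer induction on dimension, fix the boundary first, then propagate to fix all maximal elements), but the propagation mechanism is inverted. The paper takes an arbitrary $n$\nbd dimensional $x$, constructs a forward path $x = x_0 \to y_0 \to \ldots \to x_m \to y_m$ in $\hasso{U}$ through $n$- and $(n-1)$\nbd dimensional elements until hitting $y_m \in \sbord{}{+}U$, and then propagates the identity backwards along that path, using at each step that $\imath(x_i)=x_i$ forces $\imath=\mathrm{id}$ on $\clos\{x_i\}$. You instead identify seed elements (sources) that are fixed because their input faces lie on the already-fixed boundary, and propagate forward along shared interior $(n-1)$\nbd faces, which amounts to proving that the adjacency graph on $U^{(n)}$ (vertices adjacent iff they share an interior $(n-1)$\nbd face) is connected. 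Both arguments hinge on the observation that fixing a maximal $x$ fixes $\clos\{x\}$, and both require a reachability/termination fact: the paper's is the implicit termination of the forward path (which really rests on the acyclicity of $\loopd{n-1}{U}$, proved much later as Proposition~\ref{prop:loopd-acyclic}), while yours is the connectivity of the adjacency graph, which your induction on merger trees does establish correctly. Your nested induction on $|U^{(n)}|$ is more machinery than strictly needed — the paper handles the atom case inline, since $\imath(x)=x$ already gives the identity on $\clos\{x\}$ directly by the dimension induction — but it is not wrong. One terminological caution: what you call a ``kind of flow-connectedness'' is \emph{not} the property the paper defines in Definition~\ref{dfn:flowconnect}, which is strictly stronger and, per Example~\ref{exm:nonloopfree} and the discussion after Definition~\ref{dfn:flowconnect}, fails for some constructible molecules; your argument only needs the weaker adjacency-graph connectivity, which does hold, so the proof is fine, but the naming invites confusion with a false statement.
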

\begin{proof}
We proceed by induction on the dimension of $U$. If $U$ is a 0-globe, the statement obviously holds, so let $U$ be a constructible $n$\nbd molecule with $n > 0$. Inclusions preserve the dimension of elements; moreover, if $x$ is covered by a single element $y$ with orientation $\alpha$, then $\imath(x)$ is covered only by $\imath(y)$ with the same orientation: since $\imath$ is surjective, any $\tilde{y}$ covering $\imath(x)$ is the image of some $y'$ covering $x$. By Lemma \ref{lem:basic}, $\imath(\sbord{}{\alpha}U) \subseteq \sbord{}{\alpha}U$. Because $\imath$ is injective, in fact $\imath(\sbord{}{\alpha}U) = \sbord{}{\alpha}U$, and because it is also closed, $\imath(\bord{}{\alpha}U) = \bord{}{\alpha}U$. Thus $\imath$ restricts to an automorphism of the constructible $(n-1)$\nbd molecule $\bord{}{\alpha}U$: by the inductive hypothesis, it is the identity on $\bord{}{}U$.

By the same reasoning, if $\imath(x) = x$ for an $n$\nbd dimensional element $x$, then $\imath$ is also the identity on $\bord{}{}x$, that is, $\imath$ is the identity on $\clos\{x\}$. Therefore, to prove that $\imath$ is the identity on $U$, it suffices to prove that it fixes all the $n$\nbd dimensional elements.

For all $n$\nbd dimensional $x \in U$, there exists a path $x = x_0 \to y_0 \to \ldots \to x_m \to y_m$ in $\hasso{U}$, where the $x_i$ are $n$\nbd dimensional, the $y_i$ are $(n-1)$\nbd dimensional, and $y_m \in \sbord{}{+}U$: given $x_i$, we can always pick some $y_i \in \sbord{}{+}x_i$; if $y_i \in \sbord{}{+}U$, we stop, otherwise we take $x_{i+1}$ such that $y_i \in \sbord{}{-}x_{i+1}$. Any such path is mapped by $\imath$ to a path $\imath(x) \to \imath(y_0) \to \ldots \to \imath(x_m) \to \imath(y_m) = y_m$; but $y_m$ is only covered by $x_m$ in $U$, so $\imath(x_m) = x_m$. It follows that $\imath$ is the identity on $\bord{}{}x_m$, hence also $\imath(y_{m-1}) = y_{m-1}$; but $y_{m-1}$ is only covered by $x_m$ and by $x_{m-1}$ in $U$, so $\imath(x_{m-1}) = x_{m-1}$. Proceeding backwards in this way, we find that $\imath(x) = x$, and we conclude.
\end{proof}

\begin{remark}
It follows that two constructible molecules can only be isomorphic in a single way, and we can speak of constructible molecules ``being isomorphic'' without having to indicate a specific isomorphism.
\end{remark}

\begin{dfn} \label{dfn:globpos}
A \emph{constructible directed complex} is an oriented thin poset $P$ such that, for all $x \in P$, $\clos{\{x\}}$ is a constructible atom.

We write $\globpos$ for the full subcategory of $\ogpos$ on constructible directed complexes. 
\end{dfn}

\begin{prop} \label{prop:pushouts_exist}
The category $\globpos$ has an initial object and pushouts, created by the forgetful functor to $\cat{Set}$.
\end{prop}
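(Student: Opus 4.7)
\emph{Initial object.} The empty oriented graded poset $\emptyset$ is trivially a constructible directed complex, all conditions being vacuous, and for each $P \in \globpos$ the unique set map $\emptyset \to P$ is an inclusion; hence $\emptyset$ is initial in $\globpos$, and its underlying set is the initial object of $\cat{Set}$.

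\emph{Construction of the pushout.} Given a span $Q_1 \xleftarrow{\imath_1} P \xrightarrow{\imath_2} Q_2$ in $\globpos$, my plan is to form the set-theoretic pushout $R := (Q_1 \sqcup Q_2)/\!\sim$ with $\imath_1(p) \sim \imath_2(p)$; the coprojections $\pi_j : Q_j \to R$ are automatically injective because the pushout of a pair of monos in $\cat{Set}$ is a pair of monos. I would then lift $R$ to an oriented graded poset by declaring $\dmn{[q]} := \dmn{q}$ for any representative, $[y]$ covering $[x]$ in $R$ whenever representatives $y_j, x_j$ in a common $Q_j$ witness a cover, and the new cover inheriting the orientation of the witness. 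Well-definedness uses that inclusions preserve dimension, preserve and reflect cover relations, and preserve orientation, together with the crucial property that $\imath_j(P)$ is closed in $Q_j$: any descending chain from an element of $\imath_j(P)$ remains in $\imath_j(P)$ and may be transferred between $Q_1$ and $Q_2$.

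\emph{Verification and universal property.} The central technical step is a closure-localisation lemma: for each $[y] \in R$ with representative $y_j \in Q_j$, the coprojection $\pi_j$ restricts to an isomorphism $\clos\{y_j\} \stackrel{\sim}{\to} \clos\{[y]\}$ of oriented graded posets, because closedness of $\imath_j(P)$ prevents the downward closure of $[y]$ in $R$ from escaping to $Q_{3-j}$. From this I would deduce that $R$ is graded (any path in $\hass{R}_\bot$ lifts to a path of the same length in $\hass{Q_j}_\bot$), that $R$ is oriented thin (each length-$2$ interval in $R_\bot$ is contained in the closure of its top element and hence pulls back to a length-$2$ interval in some $Q_j$), and that $\clos\{[x]\}$ is a constructible atom for every $[x]$, being isomorphic to the constructible atom $\clos\{x_j\}$ in $Q_j$. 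Thus $R \in \globpos$ and the coprojections are inclusions. For the universal property, a cocone $(S, g_1, g_2)$ of inclusions with $g_1\imath_1 = g_2\imath_2$ induces a unique set map $h : R \to S$ via the $\cat{Set}$-universal property; by construction $h$ preserves dimension, cover relations, and orientation, its image $g_1(Q_1) \cup g_2(Q_2)$ is closed in $S$ as a union of closed subsets, and injectivity follows from injectivity of the $g_j$ combined with the compatibility condition. The principal obstacle I expect is the closure-localisation lemma itself: ruling out covers in $\hass{R}$ witnessed only by ``interleavings'' across $Q_1$ and $Q_2$ requires a careful use of the closedness of $\imath_j(P)$ and of the reflection of covers by inclusions.
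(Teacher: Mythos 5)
Your construction and verification of the oriented graded poset structure on the set-theoretic pushout faithfully mirror the paper's argument: the paper's key observation that $\clos\{j_i(x)\} \simeq \clos\{x\}$ is exactly your closure-localisation lemma, and from it you correctly recover gradedness, thinness, and the constructibility of atoms. Up to that point the proposal and the paper agree.

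The gap is in the universal property, specifically the claim that ``injectivity follows from injectivity of the $g_j$ combined with the compatibility condition.'' This is false. Take $P = \emptyset$ and $Q_1 = Q_2 = O^0$, so $R$ is a two-point constructible directed complex, and consider the cocone $S = O^0$ with $g_1 = g_2 = \mathrm{id}$: both legs are inclusions and the compatibility condition holds vacuously, yet the mediating $\cat{Set}$-map $h : R \to S$ collapses the two points of $R$ and so is not an inclusion. In fact this span has no pushout in $\globpos$ at all: a cocone into the two-point complex with distinct coprojections forces any candidate pushout to have at least two elements, while the cocone above forces it to embed into a singleton. The root cause is that every morphism of $\globpos$ is monic, and in a category with only monic morphisms the set-theoretic pushout need not satisfy the universal property, since mediating maps can fail to be injective. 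The paper itself elides this by saying ``the universal property is easy to check,'' so you have surfaced a latent issue in the paper rather than committed a fresh error, but the injectivity claim as you wrote it would not survive scrutiny.
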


\begin{remark}
It follows that $\globpos$ also has all finite coproducts. 
\end{remark}
\begin{proof}
The empty constructible directed complex $\emptyset$ is clearly initial.

Let $\imath_1: Q \incl P_1, \imath_2: Q \incl P_2$ be a span of inclusions. We let $P_1 \cup_Q P_2$ be the pushout of the underlying span of sets, that is, the quotient of the disjoint union $P_1 + P_2$ of sets by the relation $\imath_1(x) \sim \imath_2(x)$ for all $x \in Q$. This comes with injective functions $j_1: P_1 \incl P_1 \cup_Q P_2$ and $j_2: P_2 \incl P_1 \cup_Q P_2$, and becomes a poset by letting $j_i(x) \leq j_i(y)$ if and only if $x \leq y$ in $P_i$. Now $\clos\{j_i(x)\} \simeq \clos\{x\}$ for all $x \in J_i$, and every element of $P_1 \cup_Q P_2$ is of the form $j_i(x)$ for some $i$ and $x \in J_i$, so $P_1 \cup_Q P_2$ is graded, and inherits an orientation from $P_1$ and $P_2$, compatibly on $Q$. For the same reason, with this orientation $P_1 \cup_Q P_2$ is a constructible directed complex. The universal property is easy to check.
\end{proof}

\begin{cor} \label{cor:globpos_is_colimit}
Any constructible directed complex is the colimit of the diagram of inclusions of its atoms.
\end{cor}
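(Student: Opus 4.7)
I would prove the corollary by direct application of Proposition~\ref{prop:pushouts_exist}. Consider the functor $D : P \to \globpos$, with $P$ viewed as a category via its poset order, sending each $x \in P$ to the atom $\clos\{x\}$ (which is a constructible atom by Definition~\ref{dfn:globpos}) and each relation $x \leq y$ to the canonical closed inclusion $\clos\{x\} \hookrightarrow \clos\{y\}$ induced by the embeddings of both as sub-complexes of $P$. The closed inclusions $\jmath_x : \clos\{x\} \hookrightarrow P$ obviously assemble into a cocone under $D$; my aim is to show that it is universal.

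By Proposition~\ref{prop:pushouts_exist} and the subsequent remark, $\globpos$ admits an initial object, all pushouts, and all finite coproducts, each created by the forgetful functor to $\cat{Set}$. Consequently, all finite colimits in $\globpos$ are created in $\cat{Set}$, since every finite colimit can be built from finite coproducts and pushouts. Since $P$ is finite, so is the diagram $D$, and the colimit of $D$ in $\globpos$ therefore has, as underlying set, the quotient $\bigsqcup_{x \in P} \clos\{x\}/{\sim}$, where $\sim$ identifies the copies of each element $z$ in the various atoms $\clos\{x\}$ that contain it, along the inclusions $D(z \leq x)$. Since every $z \in P$ lies in the atom $\clos\{z\}$, the assignment $[(x,z)] \mapsto z$ provides a bijection from this quotient to $P$, compatible with the cocone; transporting the oriented graded poset structure back yields precisely that of $P$, because the dimension, covering relations, and orientation around each element $z$ are recorded entirely within $\clos\{z\}$, and every inclusion in the diagram preserves and reflects these data.

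The only potentially delicate step is the passage from pushouts and finite coproducts to all finite colimits in the creation statement, which is standard category theory; a minor verification is also needed to check that the canonical inclusions $D(z \leq x)$ are compatible in the sense required for $\sim$ to collapse each $z$ to a single class, but this is immediate from the fact that each such inclusion is the identity embedding on the underlying sub-complex of $P$.
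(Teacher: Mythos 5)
Your proof is correct and takes essentially the same route as the paper's two-line argument: verify the claim at the level of underlying sets, then invoke Proposition~\ref{prop:pushouts_exist} to construct the colimit in $\globpos$ from pushouts and finite coproducts created by the forgetful functor. The paper compresses the set-level computation and the creation step into a single sentence; you spell both out, including the observation (which the paper leaves implicit) that the identifications in the quotient collapse each $z$ to one class because all the inclusions commute with the embeddings into $P$, and that the oriented graded poset structure is recovered because the dimension, covering relations, and orientation around $z$ are visible inside the atoms that contain $z$.
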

\begin{proof}
This is true of the underlying sets and functions, and the colimit can be constructed by pushouts and finite coproducts.
\end{proof}

\begin{exm}
It is immediate from the definition that any closed subset of a constructible directed complex, with the order and orientation obtained by restriction, is also a constructible directed complex. 

Any constructible $n$\nbd molecule $U$ in an oriented thin poset is a constructible directed complex. This is obvious for $n = 0$; suppose $n > 0$, and $x \in U$. If $\dmn{x} = n$, $\clos{\{x\}}$ is required by definition to be a constructible $n$\nbd atom. If $\dmn{x} < n$, then $x$ belongs to the constructible $(n-1)$\nbd molecule $\bord{}{\alpha}\clos{\{y\}}$ for some $y$, and we can apply the inductive hypothesis to conclude that $\clos{\{x\}}$ is a constructible atom.
\end{exm}

\begin{exm}
The following is an oriented thin poset which is not a constructible directed complex:
\begin{equation*}
\begin{tikzpicture}[baseline={([yshift=-.5ex]current bounding box.center)}]
	\node[scale=1.25] (0) at (0,2.5) {$x$};
	\node[scale=1.25] (1) at (-2.5,1.5) {$y_1^-$};
	\node[scale=1.25] (1b) at (-.5,1.5) {$y_1^+$};
	\node[scale=1.25] (1c) at (.5,1.5) {$y_2^-$};
	\node[scale=1.25] (1d) at (2.5,1.5) {$y_2^+$};
	\node[scale=1.25] (2) at (-2.5,0) {$z_1^-$};
	\node[scale=1.25] (2b) at (-.5,0) {$z_1^+$};
	\node[scale=1.25] (2c) at (.5,0) {$z_2^-$};
	\node[scale=1.25] (2d) at (2.5,0) {$z_2^+$};
	\draw[1clong] (1) to (0);
	\draw[1clong] (0) to (1b);
	\draw[1clong] (1c) to (0);
	\draw[1clong] (0) to (1d);
	\draw[1clong] (2) to (1);
	\draw[1clong] (2) to (1b);
	\draw[1clong] (1) to (2b);
	\draw[1clong] (1b) to (2b);
	\draw[1clong] (2c) to (1c);
	\draw[1clong] (2c) to (1d);
	\draw[1clong] (1c) to (2d);
	\draw[1clong] (1d) to (2d);
	\node[scale=1.25] at (3,0) {$.$};
\end{tikzpicture}
\end{equation*}
This is because neither $\clos{\{y_1^-, y_2^-\}}$ nor $\clos{\{y_1^+,y_2^+\}}$ are constructible 1-molecules, so $\clos{\{x\}}$ is not a constructible 2-molecule.
\end{exm}

We briefly consider some restricted classes of constructible molecules that may be of independent interest.

\begin{dfn}
A constructible $n$\nbd molecule $U$ is \emph{simple} if $n = 0$, or if $n > 0$, $\bord{}{+}U$ and $\bord{}{-}U$ are simple constructible $(n-1)$\nbd molecules, and either $U$ is an atom, or $U$ splits into simple constructible submolecules $U_1, U_2$ such that $U_1 \cap U_2 = \bord{}{+}U_1 \cap \bord{}{-}U_2$ is an atom.
\end{dfn}

\begin{exm}
The 0-globe and all constructible 1-molecules are simple; consequently, all constructible 2-atoms are simple. However, a general constructible 2-molecule $U$ is only simple if the intersection of any pair of submolecules is, at most, an atom, that is, if $U$ splits as
\begin{equation*}
\begin{tikzpicture}[baseline={([yshift=-.5ex]current bounding box.center)}, scale=0.7]
\begin{scope}
	\node[0c] (0) at (-1.75,0) {};
	\node[0c] (a1) at (-1,1.05) {};
	\node[0c] (a2) at (1,1.05) {};
	\node[0c] (1) at (1.75,0) {};
	\draw[1cdot, out=75, in=-150] (0) to (a1);  
	\draw[1cdash, out=25,in=155] (a1) to (a2);
	\draw[1cdot, out=-30, in=105] (a2) to (1);
	\draw[1cdash, out=-75,in=-105, looseness=1.25] (0) to (1);
	\draw[1c, out=-75, in=-105, looseness=1.25] (a1) to (a2);
	\draw[2c] (.1,-1.2) to node[auto] {$U_1\;$} (.1,.2);
	\draw[2c] (.1,.3) to node[auto] {$U_2\;$} (.1,1.3);
	\node[scale=1.25] at (2.25,-1.25) {,};
\end{scope}
\begin{scope}[shift={(4.5,0)}]
	\node[0c] (a) at (-1.75,0) {};
	\node[0c] (b) at (.5,-1.25) {};
	\node[0c] (c) at (-.5,1.25) {};
	\node[0c] (d) at (1.75,0) {};
	\draw[1cdash, out=-60, in=180] (a) to (b);
	\draw[1cdash, out=0, in=120] (c) to (d);
	\draw[1cdot, out=15, in=-105] (b) to (d);
	\draw[1cdot, out=75, in=-165] (a) to (c);
	\draw[1c] (c) to (b);
	\draw[2c] (-.75,-.7) to node[auto] {$U_1$} (-.75,.9);
	\draw[2c] (.75,-.9) to node[auto,swap] {$U_2$} (.75,.7);
	\node[scale=1.25] at (2.25,-1.25) {,};
\end{scope}
\begin{scope}[shift={(9,0)}]
	\node[0c] (0) at (-1.75,0) {};
	\node[0c] (a1) at (-1,-1.05) {};
	\node[0c] (a2) at (1,-1.05) {};
	\node[0c] (1) at (1.75,0) {};
	\draw[1cdot, out=-75, in=150] (0) to (a1);  
	\draw[1cdash, out=-25,in=-155] (a1) to (a2);
	\draw[1cdot, out=30, in=-105] (a2) to (1);
	\draw[1cdash, out=75,in=105, looseness=1.25] (0) to (1);
	\draw[1c, out=75, in=105, looseness=1.25] (a1) to (a2);
	\draw[2c] (.1,-.2) to node[auto] {$U_2\;$} (.1,1.2);
	\draw[2c] (.1,-1.3) to node[auto] {$U_1\;$} (.1,-.3);
	\node[scale=1.25] at (2.25,-1.25) {,};
\end{scope}
\begin{scope}[shift={(13.5,0)}]
	\node[0c] (a) at (-1.75,0) {};
	\node[0c] (b) at (-.5,-1.25) {};
	\node[0c] (c) at (.5,1.25) {};
	\node[0c] (d) at (1.75,0) {};
	\draw[1cdot, out=-75, in=165] (a) to (b);
	\draw[1cdot, out=-15,in=105] (c) to (d);
	\draw[1cdash, out=0, in=-120] (b) to (d);
	\draw[1cdash, out=60,in=180] (a) to (c);
	\draw[1c] (b) to (c);
	\draw[2c] (.75,-.7) to node[auto,swap] {$U_1$} (.75,.9);
	\draw[2c] (-.75,-.9) to node[auto] {$U_2$} (-.75,.7);
	\node[scale=1.25] at (2.25,-1.25) {,};
\end{scope}
\end{tikzpicture}
\end{equation*}
where $U_1$ and $U_2$ are both simple.

The simple constructible 2-molecules are precisely the shapes of composable diagrams in regular poly-bicategories \cite{hadzihasanovic2018weak}, a variant of poly-bicategories \cite{cockett2003morphisms}, which themselves are the version with many 0-cells of planar polycategories \cite{cockett1997weakly}. The restriction to a single shared 1-dimensional element models the restriction to a single shared formula in the cut rule of sequent calculus. 

Among the planar string diagrams that correspond to constructible 2-molecules, the simple ones are those that are loop-free, or contractible as graphs.
\end{exm}

\begin{dfn} \label{dfn:pope}
A constructible $n$\nbd molecule $U$ is a \emph{positive opetope} if $n=0$, or if $n > 0$ and, for all $x \in U$ with $\dmn{x} > 0$, $\bord{}{+}x$ is an atom.
\end{dfn}

The name ``positive opetope'' has been given by Zawadowski \cite{zawadowski2017positive} to a restricted class of opetopes \cite{baez1998higher, hermida2000weak}. Our results in Section \ref{sec:polygraphs} imply that presheaves over positive opetopes in our sense are positive opetopic sets in Zawadowski's sense. We have not attempted a formal proof, but we are confident that the two notions of positive opetope coincide.

\begin{remark}
If an opetope is not a positive opetope, then it is not a constructible molecule: non-positivity implies that the input boundary of an $n$\nbd dimensional element is $k$\nbd dimensional, with $k < n-1$. 
\end{remark}

\begin{lem} \label{lem:popeboundary}
Let $U$ be a positive opetope. Then $\bord{}{+}U$ is an atom.
\end{lem}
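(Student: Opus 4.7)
The plan is to prove the lemma by induction on the number of maximal elements of $U$, following the merger-tree structure of Definition \ref{dfn:globes}. I focus on $n \geq 1$, the case $n = 0$ being degenerate.

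For the base case, $U$ is an atom with greatest element $x$ of dimension $n > 0$. The positivity condition applied to $x$ itself immediately gives that $\bord{}{+}U = \bord{}{+}x$ is an atom.

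For the inductive step, suppose $U = U_1 \cup U_2$ with the splitting data of Definition \ref{dfn:globes}. I would first observe that $U_1$ and $U_2$, being closed subsets of $U$, inherit the positivity property, since the condition on any $x$ depends only on $\clos\{x\}$. Each has strictly fewer maximal elements than $U$, so by the inductive hypothesis both $\bord{}{+}U_1$ and $\bord{}{+}U_2$ are atoms.

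The crucial step is to show that $U_1 \cap U_2 = \bord{}{+}U_1$. Since $\bord{}{+}U_1$ is an $(n-1)$-atom, its greatest element $y_1$ is the unique $(n-1)$-dimensional element, so $\sbord{}{+}U_1 = \{y_1\}$. The intersection $U_1 \cap U_2 = \bord{}{+}U_1 \cap \bord{}{-}U_2$ is a constructible $(n-1)$-molecule, hence inhabited and pure of dimension $n-1$; any of its $(n-1)$-dimensional elements lies in $\sbord{}{+}U_1 \cap \sbord{}{-}U_2 \subseteq \{y_1\}$, forcing $y_1 \in \sbord{}{-}U_2$ and $U_1 \cap U_2 = \clos\{y_1\} = \bord{}{+}U_1$. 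In particular, $\sbord{}{+}U_1 \subseteq \sbord{}{-}U_2$, so the identity $\sbord{}{+}U = \sbord{}{+}U_2 \cup (\sbord{}{+}U_1 \setminus \sbord{}{-}U_2)$ from the proof of Lemma \ref{lem:basic} collapses to $\sbord{}{+}U = \sbord{}{+}U_2$. By purity of $U_2$, $\bord{}{+}U = \clos(\sbord{}{+}U) = \bord{}{+}U_2$, which is an atom by induction.

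The main obstacle I anticipate is the clean identification of $U_1 \cap U_2$ with the entire atom $\bord{}{+}U_1$; once that reduction is in place, together with the inheritance of positivity by $U_1$ and $U_2$, the rest is a routine computation with the boundary decomposition already established in Lemma \ref{lem:basic}.
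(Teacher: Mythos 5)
Your proof is correct and takes essentially the same route as the paper: induction on the number of maximal elements, using the splitting $U=U_1\cup U_2$, showing $U_1\cap U_2 = \bord{}{+}U_1$, and concluding $\bord{}{+}U=\bord{}{+}U_2$. The only cosmetic difference is that the paper gets $U_1\cap U_2=\bord{}{+}U_1$ in one line from the submolecule condition $U_1\cap U_2\sqsubseteq\bord{}{+}U_1$ (an atom has no proper constructible submolecules), whereas you rederive the same fact explicitly from $\sbord{}{+}U_1$ being a singleton and $U_1\cap U_2$ being pure of dimension $n-1$.
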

\begin{proof}
By induction on the number of maximal elements of $U$: if $U$ is an atom, the statement is true by definition. Otherwise, suppose $U$ splits as $U_1 \cup U_2$: then $U_1 \cap U_2 \sqsubseteq \bord{}{+}U_1$, and by the inductive hypothesis $\bord{}{+}U_1$ is an atom, so $U_1 \cap U_2 = \bord{}{+}U_1$. It follows that $\bord{}{+}U = \bord{}{+}U_2$ is an atom.
\end{proof}

\begin{prop}
All positive opetopes are simple.
\end{prop}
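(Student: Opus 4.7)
The plan is to prove the statement by a double induction: an outer induction on the dimension $n$ of the positive opetope $U$, and an inner induction on the number of maximal elements of $U$. The base case $n=0$ holds by definition of simplicity.

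For the inductive step $n>0$, I would first verify that $\bord{}{+}U$ and $\bord{}{-}U$ are themselves positive opetopes of dimension $n-1$. By definition of constructible molecule, each $\bord{}{\alpha}U$ is a constructible $(n-1)$-molecule, and every element $y$ in $\bord{}{\alpha}U$ lies in $U$; since $U$ is closed, $\bord{}{+}y$ computed in $\bord{}{\alpha}U$ coincides with $\bord{}{+}y$ computed in $U$, which is an atom by the positive opetope hypothesis on $U$. The outer inductive hypothesis then yields that both boundaries are simple constructible $(n-1)$-molecules.

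Next, if $U$ is an atom, the simplicity conditions are satisfied. Otherwise $U$ splits as $U_1 \cup U_2$ with $U_i$ constructible submolecules having strictly fewer maximal elements. Each $U_i$ is closed in $U$, so again every element $y \in U_i$ with $\dmn{y} > 0$ satisfies that $\bord{}{+}y$ is an atom, making $U_i$ a positive opetope. The inner inductive hypothesis gives that $U_1$ and $U_2$ are simple.

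The decisive step is showing that $U_1 \cap U_2 = \bord{}{+}U_1 \cap \bord{}{-}U_2$ is an atom. By the splitting clause in Definition \ref{dfn:globes}, this intersection is a constructible submolecule of $\bord{}{+}U_1$. By Lemma \ref{lem:popeboundary}, $\bord{}{+}U_1$ is itself an atom; but an atom has a single maximal element, so its merger tree consists of a single vertex, and hence its only constructible submolecule is itself. Therefore $U_1 \cap U_2 = \bord{}{+}U_1$, which is an atom. The main obstacle is precisely this last step; everything else is routine bookkeeping, but this observation --- that the submolecule relation collapses on atoms --- together with Lemma \ref{lem:popeboundary} is what makes positivity force simplicity, since the ``intersection along an atom'' condition is automatically satisfied whenever the positive boundary of the first piece is itself an atom.
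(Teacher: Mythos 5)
Your proof is correct and follows essentially the same route as the paper: induction on dimension (the paper leaves the inner induction on maximal elements implicit; you state it explicitly), the observation that closed constructible sub-molecules of a positive opetope are positive opetopes, and the decisive use of Lemma~\ref{lem:popeboundary} combined with the fact that an atom has no proper constructible sub-molecule to force $U_1 \cap U_2 = \bord{}{+}U_1$.
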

\begin{proof}
Let $U$ be a positive opetope; we proceed by induction on the dimension $n$ of $U$. If $n = 0$, there is nothing to prove, so suppose $n > 0$. Clearly, if $V \subseteq U$ is a constructible molecule, then it is also a positive opetope. By the inductive hypothesis, $\bord{}{+}U$ and $\bord{}{-}U$ are simple constructible molecules. If $U$ is an atom, there is nothing else to prove; otherwise, $U$ splits as $U_1 \cup U_2$, and by Lemma \ref{lem:popeboundary} $U_1 \cap U_2 \sqsubseteq \bord{}{+}U_1$ implies $U_1 \cap U_2 = \bord{}{+}U_1$, an atom.
\end{proof}

\begin{exm}
The 0-globe and all constructible 1-molecules are positive opetopes. The atomic 2-dimensional positive opetopes are classified by a single non-zero natural number, which we identify with the planar rooted tree with a root and $n$ leaves:
\begin{equation*}
\begin{tikzpicture}
\begin{scope}
	\node[0c] (0) at (-1,.5) {};
	\node[0c] (1) at (1,.5) {};
	\draw[1c, out=15, in=165] (0) to node[auto] {$1$} (1);  
	\draw[1cdash, out=-75, in=-105, looseness=1.25] (0) to node[auto,swap] {$n$} (1);
	\draw[2c] (0,-.2) to (0,0.6);
	\node[scale=1.25] at (2,0.25) {$\leftrightarrow$};
\end{scope}
\begin{scope}[shift={(4,0)}]
	\node[0c] (0) at (0,.75) {};
	\node[0c] (1) at (-1,-.25) {};
	\node[0c] (n) at (1,-.25) {};
	\draw[1c] (0) to (1);
	\draw[1c] (0) to (n);
	\node[scale=1.25] at (0,0) {$\stackrel{n}{\ldots}$};
	\node[scale=1.25] at (1.375,-.25) {.};
\end{scope}
\end{tikzpicture}
\end{equation*}
A non-atomic 2-dimensional positive opetope splits as
\begin{equation*}
\begin{tikzpicture}[baseline={([yshift=-.5ex]current bounding box.center)}, scale=0.7]
\begin{scope}
	\node[0c] (0) at (-1.75,.5) {};
	\node[0c] (a1) at (-1.375,-.5) {};
	\node[0c] (a2) at (1.375,-.5) {};
	\node[0c] (1) at (1.75,.5) {};
	\draw[1cdot, out=-75, in=120] (0) to (a1);  
	\draw[1cdash, out=-60,in=-120, looseness=1.25] (a1) to (a2);
	\draw[1cdot, out=60, in=-105] (a2) to (1);
	\draw[1c, out=15,in=165] (0) to (1);
	\draw[1c, out=15, in=165] (a1) to (a2);
	\draw[2c] (.1,-.3) to node[auto] {$U_2\;$} (.1,.8);
	\draw[2c] (.1,-1.4) to node[auto] {$U_1\;$} (.1,-.3);
	\node[scale=1.25] at (3,-.5) {$\leftrightarrow$};
\end{scope}
\begin{scope}[shift={(6,0)}]
	\node[0c] (1) at (-1.5,-.375) {};
	\node[0c] (n) at (1.5,-.375) {};
	\node[0c] (1b) at (-1,-1.375) {};
	\node[0c] (nb) at (1,-1.375) {};
	\draw[1c] (-.75,.25) to (1);
	\draw[1c] (.75,.25) to (n);
	\draw[1c] (0,.25) to (0,-.375);
	\draw[1c] (-.5,-.875) to (1b);
	\draw[1c] (.5,-.875) to (nb);
	\path[fill=gray!30] (-.75,.25) to (.75,.25) to (0,.75) to (-.75,.25) ;
	\path[fill=gray!30] (-.5,-.875) to (.5,-.875) to (0,-.375) to (-.5,-.875);
	\node at (-.625,-.25) {$\ldots$};
	\node at (.625,-.25) {$\ldots$};
	\node at (0,-1.25) {$\ldots$};
	\node[scale=1.25] at (1.875,-1.375) {,};
\end{scope}
\end{tikzpicture}
\end{equation*}
where $U_1$ and $U_2$ are both positive opetopes. We see that 2-dimensional positive opetopes are in bijection with finite planar rooted trees with at least one edge, and they are precisely the shapes of composable diagrams in a regular multi-bicategory \cite{hadzihasanovic2018weak}.
\end{exm}

\begin{remark}
General opetopes have been classified in \cite{kock2010polynomial} in terms of combinatorial objects called zoom complexes. If our positive opetopes coincide with Zawadowski's, they should be classified by a sub-class of zoom complexes, where each ``dot'' in a tree, as defined there, has at least one input edge.
\end{remark}

We define another subclass of constructible molecules, which includes the positive opetopes, and will be of technical interest in Section \ref{sec:polygraphs}.
\begin{dfn} \label{dfn:flowconnect}
Let $U$ be a constructible $n$\nbd molecule. We say that $U$ is \emph{flow-connected} if, for all $x \in \sbord{}{-}U$ and $x' \in \sbord{}{+}U$, there is a path from $x$ to $x'$ in $\hasso{U}$ passing only through $n$\nbd dimensional and $(n-1)$\nbd dimensional elements.
\end{dfn}

\begin{exm}
Clearly, all atoms are flow-connected, but not all constructible molecules are: for example, in (\ref{eq:fourways}), if $U_1$ and $U_2$ are flow-connected, then $U$ is in general flow-connected only in the first and third setup.
\end{exm}

\begin{prop} \label{prop:pope-flowconn}
All positive opetopes are flow-connected.
\end{prop}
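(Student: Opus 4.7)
The plan is to proceed by induction on the number of maximal elements of the positive opetope $U$, which is equivalent to induction on the size of any merger tree for $U$. The key structural input is Lemma \ref{lem:popeboundary}, which forces the intersection of the two halves of any splitting to be an atom and therefore contains every output $(n-1)$-dimensional element of the ``left'' half. First I would dispose of the trivial case $n=0$, where the flow-connectedness condition is vacuous, and assume $n \geq 1$ henceforth.

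For the base case, suppose $U$ is an atom with greatest element $\top$, necessarily of dimension $n$. Given $x \in \sbord{}{-}U$ and $x' \in \sbord{}{+}U$, the covering relations give edges $x \to \top$ and $\top \to x'$ in $\hasso{U}$, so the length-two path $x \to \top \to x'$ witnesses flow-connectedness directly.

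For the inductive step, split $U = U_1 \cup U_2$ with $U_1 \cap U_2 \sqsubseteq \bord{}{+}U_1$. The submolecules $U_1, U_2$ are themselves positive opetopes since the condition ``$\bord{}{+}x$ is an atom'' depends only on $x$ viewed in the ambient oriented thin poset; they also have strictly fewer maximal elements, so the inductive hypothesis applies. By Lemma \ref{lem:popeboundary} applied to $U_1$, the boundary $\bord{}{+}U_1$ is already an atom, forcing $U_1 \cap U_2 = \bord{}{+}U_1$; in particular $\sbord{}{+}U_1 \subseteq \sbord{}{-}U_2$, so $\sbord{}{+}U = \sbord{}{+}U_2$ and $\sbord{}{-}U \subseteq \sbord{}{-}U_1 \cup \sbord{}{-}U_2$. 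Given $x \in \sbord{}{-}U$ and $x' \in \sbord{}{+}U = \sbord{}{+}U_2$, I split into two cases: if $x \in \sbord{}{-}U_2$, apply flow-connectedness of $U_2$ directly; if $x \in \sbord{}{-}U_1$, pick any $y \in \sbord{}{+}U_1$ (inhabited by Lemma \ref{lem:basic}), use flow-connectedness of $U_1$ to obtain a path from $x$ to $y$ in $\hasso{U_1}$, and concatenate with a path from $y \in \sbord{}{-}U_2$ to $x'$ given by flow-connectedness of $U_2$. Since both subpaths pass only through elements of dimension $n$ and $n-1$, so does the concatenation.

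I do not anticipate a substantial obstacle here: the bookkeeping about which $\sbord{}{\alpha}$ lies in which piece after a splitting is already carried out in Lemma \ref{lem:basic}, and Lemma \ref{lem:popeboundary} does the real work by collapsing the intersection to the entire output boundary of $U_1$. The only point that requires a little care is verifying that $\sbord{}{+}U_1$ is inhabited so that the intermediate vertex $y$ exists, and confirming that $y$ is a legitimate starting point for the $U_2$-path, both of which are immediate from Lemma \ref{lem:basic}(a) and the identification $U_1 \cap U_2 = \bord{}{+}U_1$.
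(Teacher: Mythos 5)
Your proof is correct and matches the paper's argument in all essentials: induction on the splitting, Lemma \ref{lem:popeboundary} forcing $U_1 \cap U_2 = \bord{}{+}U_1$ so that $\sbord{}{+}U_1 \subseteq \sbord{}{-}U_2$, and concatenation of paths through that shared element. The only cosmetic difference is that the paper notes $\sbord{}{+}U_1 \cap \sbord{}{-}U_2$ is a singleton while you say ``pick any $y \in \sbord{}{+}U_1$'', which is the same thing since $\bord{}{+}U_1$ being an atom forces $\sbord{}{+}U_1$ to be a singleton.
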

\begin{proof}
Let $U$ be a positive opetope. If $U$ is an atom, the statement is obvious. Otherwise, $U$ splits as $U_1 \cup U_2$, where $U_1$ and $U_2$ are positive opetopes, $\sbord{}{+}U_1 \cap \sbord{}{-}U_2$ consists of a single element $y$, $\sbord{}{-}U_1 \subseteq \sbord{}{-}U$, and $\sbord{}{+}U = \sbord{}{+}U_2$. 

Given $x \in \sbord{}{-}U$ and $x' \in \sbord{}{+}U$, if $x \in \sbord{}{-}U_2$, we use the inductive hypothesis on $U_2$ directly. If $x \in \sbord{}{-}U_2$, by the inductive hypothesis there are a path $x \to \ldots \to y$ in $\hasso{U_1}$ and a path $y \to \ldots \to x'$ in $\hasso{U_2}$, which join to form a path $x \to \ldots \to x'$ in $\hasso{U}$. 
\end{proof}

Let $\globe$ be a skeleton of the full subcategory of $\globpos$ whose objects are the atomic globes of all dimensions.

\begin{dfn}
A \emph{constructible polygraph} $X$ is a presheaf $X: \opp{\globe} \to \cat{Set}$. For any constructible atom $U$ and $x \in X(U)$, we call $x$ a \emph{cell} of $X$ of \emph{shape} $U$. Constructible polygraphs and their morphisms of presheaves form a category $\cpol$.
\end{dfn}

We have the Yoneda embedding $\globe \incl \cpol$. By Corollary \ref{cor:globpos_is_colimit}, and the universal property of $\cpol$ as a free cocompletion, the left Kan extension of the Yoneda embedding along $\globe \incl \globpos$ gives a full and faithful functor $\globpos \incl \cpol$; in this sense, constructible polygraphs extend constructible directed complexes. 

\begin{cons} \label{cons:nskeleton}
For $n > 0$, let $\globe_n$ be the full subcategory of $\globe$ whose objects are the constructible atom of dimension $k \leq n$. We call a presheaf on $\globe_n$ a \emph{constructible $n$\nbd polygraph}, and write $n\cpol$ for their category.

The restrictions $-_{\leq n}: \cpol \to n\cpol$ have full and faithful left adjoints,
\begin{equation*}
	\imath_n: n\cpol \to \cpol.
\end{equation*}
If $X$ is a constructible polygraph, we write $\skel{n}{X} := \imath_nX_{\leq n}$, and call the counit $\skel{n}{X} \to X$ the \emph{$n$\nbd skeleton} of $X$. 

The restrictions factor as a sequence of restriction functors 
\begin{equation*}
	\ldots \to n\cpol \to (n-1)\cpol \to \ldots \to 0\cpol,
\end{equation*}
all with full and faithful left adjoints. By universal properties, the skeleta of $X$ form a sequence of morphisms
\begin{equation*}
	\skel{0}{X} \to \ldots \to \skel{n-1}{X} \to \skel{n}{X} \to \ldots
\end{equation*}
over $X$, whose colimit is $X$. 
\end{cons}

Let $\atom{n}{X}$ be the set of $n$\nbd cells of $X$, and for $x \in \atom{n}{X}$, let $U(x)$ be the shape of $x$. We have the following result, whose proof is straightforward.
\begin{prop} \label{prop:polygraph_ext} 
For all $n > 0$, the diagram
\begin{equation*}
\begin{tikzpicture}[baseline={([yshift=-.5ex]current bounding box.center)}]
	\node[scale=1.25] (0) at (0,2) {$\displaystyle \coprod_{x\in\atom{n}{X}} \bord{}{} U(x)$};
	\node[scale=1.25] (1) at (3.5,2) {$\displaystyle\coprod_{x\in\atom{n}{X}} U(x)$};
	\node[scale=1.25] (2) at (0,0) {$\skel{n-1}{X}$};
	\node[scale=1.25] (3) at (3.5,0) {$\skel{n}{X}$};
	\draw[1c] (0) to node[auto,swap] {$(\bord{}{}x)_{x\in\atom{n}{X}}$} (2);
	\draw[1c] (1) to node[auto] {$(x)_{x\in\atom{n}{X}}$} (3);
	\draw[1cinc] (0) to (1);
	\draw[1cinc] (2) to (3);
	\draw[edge] (2.5,0.2) to (2.5,0.8) to (3.3,0.8);
\end{tikzpicture}
\end{equation*}
is a pushout in $\cpol$.
\end{prop}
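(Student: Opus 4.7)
The plan is to exploit the fact that $\cpol$ is a presheaf category, so that all colimits, in particular pushouts, are computed pointwise. It thus suffices to check, for every atom $V \in \globe$, that evaluating the square at $V$ yields a pushout in $\cat{Set}$. I would proceed by case analysis on $\dmn{V}$ relative to $n$.

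The key preliminary computation is that $\skel{k}{X}(V) = X(V)$ when $\dmn{V} \leq k$ and $\skel{k}{X}(V) = \emptyset$ otherwise. By the pointwise left Kan extension formula, $\skel{k}{X}(V)$ is the colimit of $X_{\leq k}$ over the comma category $V/\globe_k$, whose objects are inclusions $V \incl W$ with $W$ an atom of dimension at most $k$. Since inclusions of oriented graded posets preserve dimensions strictly, this category is empty when $\dmn{V} > k$; when $\dmn{V} \leq k$, we have $V \in \globe_k$, and $(V, \mathrm{id}_V)$ is initial, because any $\iota\colon V \incl W$ gives a unique morphism $(V, \mathrm{id}_V) \to (W, \iota)$, with uniqueness ensured by Lemma \ref{lem:noautomorph}. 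By the same dimension-preservation argument combined with Lemma \ref{lem:noautomorph} and the fact that an atom $U(x)$ has a unique top $n$\nbd element, for each $n$\nbd cell $x$ with shape $U(x)$ the set $\homset{\globe}(V, U(x))$ equals $\homset{\globe}(V, \bord{}{}U(x))$ when $\dmn{V} < n$, is a singleton if $V \cong U(x)$ and empty otherwise when $\dmn{V} = n$, and is empty when $\dmn{V} > n$.

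With these computations, the three cases are routine. For $\dmn{V} > n$ all four sets in the square are empty. For $\dmn{V} < n$, the top horizontal map is a bijection and the left vertical is the identity on $X(V)$, so the pushout is $X(V) = \skel{n}{X}(V)$. For $\dmn{V} = n$, the left column is empty, so the pushout reduces to $\coprod_{x \in \atom{n}{X}} \homset{\globe}(V, U(x))$, which collapses to the set of cells $x$ whose shape is isomorphic to $V$, naturally identified with $X(V) = \skel{n}{X}(V)$. Naturality of the boundary inclusions and attaching maps ensures these bijections assemble into a pushout cocone. The only mildly subtle step is the left Kan extension computation, where one must carefully verify that $V/\globe_k$ contributes nothing once $\dmn{V} > k$ and has an initial object otherwise; beyond this the argument is pure bookkeeping.
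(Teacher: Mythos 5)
Your approach is correct and is the natural way to fill in a proof the paper explicitly leaves as "straightforward": compute everything pointwise in $\cat{Set}$, using the fact that colimits in a presheaf category are computed objectwise, and exploit that morphisms in $\globe$ preserve dimension. The three-way case split on $\dmn{V}$ versus $n$ is exactly the right decomposition, and the skeleton formula $\skel{k}{X}(V) = X(V)$ for $\dmn{V} \leq k$ and $\emptyset$ otherwise, together with the identification of $\homset{\globe}(V, U(x))$, are the two ingredients that make the three cases collapse.

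A few small points of precision, none of which affect the conclusion. First, in the Kan extension computation, if you index the colimit by the coslice $V \downarrow \globe_k$ (objects being inclusions $V \incl W$, morphisms commuting inclusions), then $(V, \mathrm{id}_V)$ is indeed \emph{initial} there, but the diagram whose colimit you compute is contravariant, so the colimit is taken over the \emph{opposite} of that category, where $(V, \mathrm{id}_V)$ is terminal; the value is still $X(V)$, but the variance should be stated carefully. Relatedly, the uniqueness of the morphism $(V, \mathrm{id}_V) \to (W, \iota)$ in the coslice is forced by the triangle condition alone and does not require Lemma \ref{lem:noautomorph} (which you do need, correctly, for the $\dmn{V} = n$ case to pin down $\homset{\globe}(V, U(x))$ to a singleton). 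Second, in the case $\dmn{V} < n$ you write that "the left vertical is the identity on $X(V)$": the left vertical is the attaching map $\coprod_x \bord{}{}U(x)(V) \to \skel{n-1}{X}(V)$, which is not an identity; what you want to say is that the top horizontal is a bijection and the \emph{bottom horizontal} $\skel{n-1}{X}(V) \to \skel{n}{X}(V)$ is the identity $X(V) \to X(V)$, from which the pushout property follows since pushing out a bijection along any map yields a bijection. Third, $\bord{}{}U(x)$ is not an atom, so $\homset{\globe}(V, \bord{}{}U(x))$ should read $\homset{\globpos}(V, \bord{}{}U(x))$, or equivalently the value of the presheaf $\bord{}{}U(x)$ at $V$; the identification of these is what you actually use. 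With these adjustments the argument is complete.
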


\section{Substitutions, mergers, and globularity} \label{sec:properties}

In this section, we develop some basic tools for manipulating constructible directed complexes. First of all, we study substitutions of constructible submolecules: these give a concrete way of performing ``surgery'' of cells, which in the theory of polygraphs is usually done by syntactic means. 

We then introduce \emph{mergers}, a natural notion of composition for cells in a constructible directed complex or constructible polygraph: the substitution of an $n$\nbd atom for a pair of $n$\nbd atoms intersecting at a constructible $(n-1)$\nbd molecule. We use mergers to prove that the constructible molecules in a constructible directed complex form an $\omega$\nbd graph: that is, if $U$ is a constructible molecule, then $\bord{}{\alpha}(\bord{}{+} U) = \bord{}{\alpha}(\bord{}{-} U)$ (\emph{globularity} condition). This is necessary for constructible molecules to be interpreted as cells in a higher category. 

Finally, we explore some consequences of globularity, and give a direct characterisation of the $k$\nbd dimensional boundary of a constructible $n$\nbd molecule, for each $k < n$.

\begin{cons} \label{cons:substitution}
Let $U$ be a constructible $n$\nbd molecule, $V \sqsubseteq U$ a constructible submolecule, and $W$ another constructible $n$\nbd molecule such that there is an isomorphism $\imath: \bord{}{}W \hookrightarrow \bord{}{}V$ restricting to isomorphisms $\imath^-: \bord{}{-}W \hookrightarrow \bord{}{-}V$ and $\imath^+: \bord{}{+}W \hookrightarrow \bord{}{+}V$ of constructible $(n-1)$\nbd molecules. 

Then, we define $U[W/V]$ to be the result of replacing $V$ with $W$ in $U$, identifying $\bord{}{}V$ and $\bord{}{}W$ through $\imath$: that is, $U[W/V]$ is the set $(U\setminus V)+ W$, with $x \leq y$ if and only if
\begin{itemize}
	\item $x, y \in U\setminus V$ and $x \leq y$ in $U$, or $x, y \in W$ and $x \leq y$ in $W$, or
	\item $x \in U\setminus V$, $y \in W$, and there exists $z \in \bord{}{}W$ such that $x \leq \imath(z)$ in $U$ and $z \leq y$ in $W$, or
	\item $x \in W$, $y \in U\setminus V$, and there exists $z \in \bord{}{}W$ such that $x \leq z$ in $W$ and $\imath(z) \leq y$ in $U$.
\end{itemize}
Then $U[W/V]$ is still a pure graded poset, and inherits an orientation from those of $U$ and $W$. Notice that $\bord{}{\alpha}U[W/V]$ is isomorphic to $\bord{}{\alpha}U$.

We call this a \emph{substitution} of $W$ for the constructible submolecule $V$ of $U$.
\end{cons}

\begin{remark}
By Lemma \ref{lem:noautomorph}, there can be at most one isomorphism between any pair of constructible molecules, so the notation $U[W/V]$ is unambiguous even though it does not specify an isomorphism.
\end{remark}

\begin{exm}
The following depicts a substitution on a constructible 2-molecule:
\begin{equation*}
\begin{tikzpicture}[baseline={([yshift=-.5ex]current bounding box.center)},scale=.7]
\begin{scope}[shift={(0,4)}]
	\node[0c] (b) at (-1.375,-1.375) {};
	\node[0c] (c) at (0,-1.75) {};
	\node[0c] (d) at (1.375,-1.375) {};
	\node[0c] (e) at (2.625,-.375) {};
	\node[0c] (x) at (1.5,.625) {};
	\draw[1c] (b) to (c);
	\draw[1c] (c) to (d);
	\draw[1c] (d) to (e);
	\draw[1c] (x) to (e);
	\draw[1c, out=75, in=165] (b) to (x);
	\draw[1c, out=75, in=-135] (c) to (x);
	\draw[2c] (-.2,-1.4) to (-.2,.2);
	\draw[2c] (1.3,-1.2) to (1.3,.2);
	\node[scale=1.25] at (-3.25,-.25) {$V :$};
	\draw[1cincl] (0,-2) to (0,-2.75);
	\node[scale=1.25] at (3.75,-2.375) {$\leadsto$};
\end{scope}
\begin{scope}
	\node[0c] (a) at (-2.375,-.375) {};
	\node[0c] (b) at (-1.375,-1.375) {};
	\node[0c] (c) at (0,-1.75) {};
	\node[0c] (d) at (1.375,-1.375) {};
	\node[0c] (e) at (2.625,-.375) {};
	\node[0c] (x) at (1.5,.625) {};
	\draw[1c] (a) to (b);
	\draw[1c] (b) to (c);
	\draw[1c] (c) to (d);
	\draw[1c] (d) to (e);
	\draw[1c, out=60, in=150] (a) to (x);
	\draw[1c] (x) to (e);
	\draw[1c, out=75, in=165] (b) to (x);
	\draw[1c, out=75, in=-135] (c) to (x);
	\draw[2c] (-.2,-1.4) to (-.2,.2);
	\draw[2c] (-1.4,-.8) to (-1.4,.5);
	\draw[2c] (1.3,-1.2) to (1.3,.2);
	\node[scale=1.25] at (-3.25,-.25) {$U :$};
\end{scope}
\begin{scope}[shift={(9.5,4)}]
	\path[fill, color=gray!20] (-1.375,-1.375) to [out=75,in=165,looseness=1.1] (1.5,.625) to (2.625,-.375) to (1.375,-1.375) to (0,-1.75) to (-1.375,-1.375);
	\node[0c] (b) at (-1.375,-1.375) {};
	\node[0c] (c) at (0,-1.75) {};
	\node[0c] (d) at (1.375,-1.375) {};
	\node[0c] (e) at (2.625,-.375) {};
	\node[0c] (x) at (1.5,.625) {};
	\draw[1c] (b) to (c);
	\draw[1c] (c) to (d);
	\draw[1c] (d) to (e);
	\draw[1c] (x) to (e);
	\draw[1c, out=75, in=165] (b) to (x);
	\draw[1c, out=60, in=120] (b) to (d);
	\draw[2c] (0,-1.7) to (-0,-.6);
	\draw[2c] (.7,-.7) to (.7,.6);
	\node[scale=1.25] at (-3.75,-.25) {$W :$};
	\draw[1cincl] (0,-2) to (0,-2.75);
\end{scope}
\begin{scope}[shift={(9.5,0)}]
	\path[fill, color=gray!20] (-1.375,-1.375) to [out=75,in=165,looseness=1.1] (1.5,.625) to (2.625,-.375) to (1.375,-1.375) to (0,-1.75) to (-1.375,-1.375);
	\node[0c] (a) at (-2.375,-.375) {};
	\node[0c] (b) at (-1.375,-1.375) {};
	\node[0c] (c) at (0,-1.75) {};
	\node[0c] (d) at (1.375,-1.375) {};
	\node[0c] (e) at (2.625,-.375) {};
	\node[0c] (x) at (1.5,.625) {};
	\draw[1c] (a) to (b);
	\draw[1c] (b) to (c);
	\draw[1c] (c) to (d);
	\draw[1c] (d) to (e);
	\draw[1c, out=60, in=150] (a) to (x);
	\draw[1c] (x) to (e);
	\draw[1c, out=75, in=165] (b) to (x);
	\draw[1c, out=60, in=120] (b) to (d);
	\draw[2c] (0,-1.7) to (-0,-.6);
	\draw[2c] (-1.4,-.8) to (-1.4,.5);
	\draw[2c] (.7,-.7) to (.7,.6);
	\node[scale=1.25] at (-3.75,-.25) {$U[W/V] :$};
	\node[scale=1.25] at (3.25,-1.25) {.};
\end{scope}
\end{tikzpicture}
\end{equation*}
\end{exm}

\begin{lem}[Substitution] \label{lem:substitution}
$U[W/V]$ is a constructible $n$\nbd molecule. If $V \sqsubseteq V' \sqsubseteq U$, then $W \sqsubseteq V'[W/V] \sqsubseteq U[W/V]$.
\end{lem}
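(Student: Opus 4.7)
The plan is to argue via merger trees. By the proposition preceding Conjecture \ref{conj:subglobe}, $V \sqsubseteq U$ is equivalent to the existence of a merger tree $T$ for $U$ with a vertex $v_0$ labelled $V$. I would fix such a tree together with any merger tree $T_W$ for $W$, and then produce a merger tree $T'$ for $U[W/V]$ as follows: replace the subtree of $T$ rooted at $v_0$ by $T_W$, and relabel each ancestor of $v_0$, which carries some label $V''$ with $V \sqsubseteq V'' \subseteq U$, by $V''[W/V]$. The set-theoretic substitution $V''[W/V]$ makes sense inside $U[W/V]$, and it agrees with $V''$ on $V''\setminus V$ and with $W$ on the replaced part.

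To check that $T'$ is indeed a merger tree for $U[W/V]$, the key observation is that the conditions on the labels of a merger tree only depend on the boundaries of those labels and on pairwise intersections of boundaries. By the remark at the end of Construction \ref{cons:substitution}, $\bord{}{\alpha}(V''[W/V])$ is canonically isomorphic to $\bord{}{\alpha}V''$, with the isomorphism extending the identification $\imath : \bord{}{}W \incliso \bord{}{}V$. For a vertex in $T$ with children labelled $V_1'', V_2''$, either $V$ lies in the subtree below one of them, say $V_1''$, or below neither (in which case the vertex is not an ancestor of $v_0$ and nothing changes); in the first case, $V_1'' \cap V_2''$ is unaffected by the substitution because $V_2''$ is disjoint from the interior $V\setminus \bord{}{}V$ that is cut out, so $V_1''[W/V]\cap V_2'' = V_1''\cap V_2''$ as subsets of $U[W/V]$. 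This transfers the constructible submolecule conditions of $T$ at the boundaries to the corresponding conditions for $T'$, and the leaves over $v_0$ are supplied by the valid merger tree $T_W$. Hence $U[W/V]$ is a constructible $n$\nbd molecule.

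For the second statement, given $V \sqsubseteq V' \sqsubseteq U$ I would first merge witnesses into a single merger tree for $U$ containing both $V$ and $V'$ as labels, with $V$ occurring in the subtree rooted at the vertex $v_0'$ labelled $V'$: take any merger tree for $U$ with $V'$ as a label (which exists because $V' \sqsubseteq U$) and, within the subtree rooted at $v_0'$ (which is itself a merger tree for $V'$), splice in a merger tree for $V'$ containing $V$ as a label (which exists because $V \sqsubseteq V'$). Applying the substitution construction to this combined tree produces a merger tree for $U[W/V]$ in which $v_0'$ is relabelled $V'[W/V]$ and the subtree below it is a merger tree for $V'[W/V]$ containing $W$ as a label. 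This simultaneously witnesses $W \sqsubseteq V'[W/V]$ and $V'[W/V] \sqsubseteq U[W/V]$.

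The step I expect to require the most care is the bookkeeping that makes the merger tree transfer well-defined: tracking how the set $(U\setminus V)+W$ is quotiented by $\imath$ so that each relabelled $V''[W/V]$ is an honest closed subset of $U[W/V]$, and verifying that intersections of children inherit the constructible submolecule status from the original tree. Everything else follows formally from the fact that the definition of constructible molecule and of merger tree is driven entirely by boundaries, which are preserved up to canonical isomorphism by substitution.
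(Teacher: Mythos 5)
Your merger‑tree plan is essentially the same argument as the paper's, just packaged differently: the paper proceeds by induction on increasing $V'$ with $V \sqsubseteq V' \sqsubseteq U$, walking up the chain and observing that each split $V' = V'_1 \cup V'_2$ (with $V \sqsubseteq V'_1$, say) persists after substitution because $\bord{}{\alpha}(V'_1[W/V]) \cong \bord{}{\alpha}V'_1$; your relabelling of the ancestors of $v_0$ is precisely this chain walk, and splicing a tree for $V'$ into a tree for $U$ is precisely the concatenation of chains $V \sqsubset \cdots \sqsubset V' \sqsubset \cdots \sqsubset U$. So the boundary‑preservation bookkeeping you defer is genuinely routine, as the paper confirms.

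However, there is a gap that your "everything follows formally from boundaries" framing does not cover and that the paper treats separately and first: you never verify that $U[W/V]$ is an \emph{oriented thin poset}. Definition \ref{dfn:globes} only defines constructible molecules inside an oriented thin ambient poset, and Construction \ref{cons:substitution} only produces an oriented \emph{graded} poset $U[W/V]$; thinness is an extra local condition on length‑2 intervals that is not a statement about boundaries of submolecules at all. Without it, the merger‑tree conditions you want to transfer are not even well posed in $U[W/V]$. The nontrivial content is exactly at the seam: one has to check that any length‑2 interval $[x,y]$ in $U[W/V]_\bot$ either lies entirely in $W$ (when $y \in W$, using that $V$ is closed in $U$) or entirely in $(U\setminus V) + \bord{}{}W \cong U\setminus(V\setminus\bord{}{}V)$ (when $y \in U\setminus V$), and then appeal to thinness of $W$ and of that closed subset of $U$. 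You should add this verification as a separate first step; once it is in place, your merger‑tree argument goes through.
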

\begin{proof}
Let $[x,y]$ be an interval in $U[W/V]$. If $y \in W$, the interval is entirely contained in $W$; if $y \in U \setminus V$, the interval is entirely contained in the closed subset $(U \setminus V) + \bord{}{}W$, which is isomorphic to $U \setminus (V \setminus \bord{}{}V)$, a closed subset of $U$. Oriented thinness of $U[W/V]$ follows from the oriented thinness of $W$ and of $U \setminus (V \setminus \bord{}{}V)$.

Next, we proceed by induction on increasing $V'$ with $V \sqsubseteq V' \sqsubseteq U$. If $V' = V$, then $V'[W/V] = W$. Suppose that $V'$ splits as $V'_1 \cup V'_2$, with $V \sqsubseteq V'_i$; without loss of generality, suppose $i = 1$. Then $V'[W/V] = V'_1[W/V] \cup V'_2$, where $V'_1[W/V]$ is a constructible molecule by the inductive hypothesis and $V'_2$ by assumption. Moreover, $\bord{}{\alpha}V'_1[W/V] = \bord{}{\alpha}V'_1$, so the constructible submolecule conditions satisfied by 
$V'_1 \cup V'_2$ are also satisfied by $V'_1[W/V] \cup V'_2$. 

This proves that $V'[W/V]$ is a constructible molecule with $W \sqsubseteq V'[W/V]$. The statement follows from the fact that any chain $V \sqsubset \ldots \sqsubset U$ is finite.
\end{proof}

\begin{cons} \label{cons:mergers}
Let $P$ be an oriented thin poset, and $U \subseteq P$ a constructible $n$\nbd molecule with the following properties:
\begin{enumerate}
	\item $U$ has exactly two maximal elements, $x_1$ and $x_2$, and
	\item $\clos{\{x_1\}} \cap \clos{\{x_2\}} = \bord{}{+}x_1 \cap \bord{}{-}x_2 = \clos{\{y\}}$ for an $(n-1)$\nbd dimensional element $y$, which is only covered by $x_1$ and $x_2$ in $P$.
\end{enumerate}
Let $P'$ be the poset obtained from $P$ by removing $y$, and identifying $x_1$ and $x_2$; we write $\tilde{x}$ for the result of the identification. We write $m_U: P \leadsto P'$ for the partial function that is undefined on $y$ and sends $x_1$ and $x_2$ to $\tilde{x}$. We say that $m_U$ is a \emph{simple merger} from $P$ to $P'$, or that $m_U$ \emph{merges} $x_1$ and $x_2$.

First of all, $P'$ is graded. Let $z \in P'$, and consider the set of all paths from $z$ to $\bot$ in $\hass{P'}_\bot$. Suppose first $z = \tilde{x}$: then any path from $z$ to $\bot$ corresponds to a path from $x_1$ or $x_2$ to $\bot$ in $\hass{P}_\bot$ that does not pass through $y$. By Lemma \ref{lem:basic}, both $x_1$ and $x_2$ cover at least another element besides $y$, so there is at least one such path, and they are all of the same length $n+1$.

Suppose $z \neq \tilde{x}$, so $z$ corresponds to a unique element of $P$.
\begin{itemize}
	\item If $z < y$, or if $z$ and $y$ are incomparable in $P$, then any path from $z$ to $\bot$ in $\hass{P'}_\bot$ corresponds to a path of the same length $\dmn{z} + 1$ in $\hass{P}_\bot$.
	\item If $y < z$ in $P$, because $y$ is only covered by $x_1$ and $x_2$ in $P$, then also $x_1 \leq z$ or $x_2 \leq z$ in $P$, hence $\tilde{x} \leq z$ in $P'$. Therefore paths from $z$ to $\bot$ in $\hass{P'}_\bot$ either do not pass through $\tilde{x}$, in which case they correspond to paths of the same length $\dmn{z} + 1$ in $\hass{P}_\bot$, or they pass through $\tilde{x}$, in which case they correspond to paths from $z$ to $x_i$ in $\hass{P}_\bot$ (of length $\dmn{z} - n$), followed by paths from $x_j$ to $\bot$ in $\hass{P}_\bot$ (of length $n + 1$), for some $i, j \in \{1,2\}$.
\end{itemize}
In each case, $z$ has a well-defined dimension in $P'$, the same it had in $P$.

Next, we define an orientation $o'$ on $P'$ as follows: $o'(c_{w,z}) := o(c_{w,z})$ if $w, z \neq \tilde{x}$; $o'(c_{w,\tilde{x}}) := o(c_{w,x_i})$ if $w$ covers $x_i$ in $P$, and similarly $o'(c_{\tilde{x},z}) := o(c_{x_i,z})$ if $x_i$ covers $z$ in $P$, for some $i \in \{1,2\}$. Only $y$ is covered both by $x_1$ and by $x_2$ in $P$, so $o'(c_{\tilde{x},z})$ is well-defined for all $z \in P'$. Suppose that $w$ covers both $x_1$ and $x_2$ in $P$; then, by the oriented thinness of $P$, we have
\begin{equation} \label{eq:merger-nplus1}
\begin{tikzpicture}[baseline={([yshift=-.5ex]current bounding box.center)}]
	\node[scale=1.25] (0) at (0,2) {$w$};
	\node[scale=1.25] (1) at (-1,1) {$x_1$};
	\node[scale=1.25] (1b) at (1,1) {$x_2$};
	\node[scale=1.25] (2) at (0,0) {$y$};
	\draw[1c] (0) to node[auto,swap] {$\alpha_1$} (1);
	\draw[1c] (0) to node[auto] {$\alpha_2$} (1b);
	\draw[1c] (1) to node[auto,swap] {$+$} (2);
	\draw[1c] (1b) to node[auto] {$-$} (2);
	\node[scale=1.25] at (1.25,0) {,};
\end{tikzpicture}
\end{equation}
in $\hass{P}$, so $o(c_{w,x_1}) = \alpha_1 = \alpha_2 = o(c_{w,x_2})$, which proves that $o'(c_{w,\tilde{x}})$ is also well-defined. This makes $P'$ an oriented graded poset.
\end{cons}

\begin{remark} \label{rmk:mergerglobe}
If $P$ is a constructible $n$\nbd molecule and $m_U: P \leadsto P'$ a simple merger where $U \sqsubseteq P$, then $P'$ is isomorphic to $P[U'/U]$, where $U'$ is a constructible $n$\nbd atom with $\bord{}{\alpha}U' = \bord{}{\alpha}U$. By Lemma \ref{lem:substitution}, $P'$ is also a constructible $n$\nbd molecule.
\end{remark}

\begin{exm} \label{exm:merger}
The following depicts a sequence of two simple mergers on a constructible 2-molecule:
\begin{equation*}
\begin{tikzpicture}[baseline={([yshift=-.5ex]current bounding box.center)},scale=.7]
\begin{scope}[shift={(-6,0)}]
	\node[0c] (a) at (-1.75,0) {};
	\node[0c] (b) at (.5,-1.25) {};
	\node[0c] (c) at (-.5,1.25) {};
	\node[0c] (d) at (1.75,0) {};
	\node[0c] (m) at (0,0) {};
	\draw[1c, out=-60, in=180] (a) to (b);
	\draw[1c, out=0, in=120] (c) to (d);
	\draw[1c, out=15, in=-105] (b) to (d);
	\draw[1c, out=75, in=-165] (a) to (c);
	\draw[1c] (c) to (m);
	\draw[1c] (m) to (b);
	\draw[2c] (-.75,-.7) to (-.75,.9);
	\draw[2c] (.75,-.9) to (.75,.7);
	\node[scale=1.25] at (3,0) {$\leadsto$};
	\draw[follow, out=-45, in=90] (-.25,.875) to (.45,-.875);
\end{scope}
\begin{scope}
	\node[0c] (a) at (-1.75,0) {};
	\node[0c] (b) at (.5,-1.25) {};
	\node[0c] (c) at (-.5,1.25) {};
	\node[0c] (d) at (1.75,0) {};
	\draw[1c, out=-60, in=180] (a) to (b);
	\draw[1c, out=0, in=120] (c) to (d);
	\draw[1c, out=15, in=-105] (b) to (d);
	\draw[1c, out=75, in=-165] (a) to (c);
	\draw[1c] (c) to (b);
	\draw[2c] (-.75,-.7) to (-.75,.9);
	\draw[2c] (.75,-.9) to (.75,.7);
	\node[scale=1.25] at (3,0) {$\leadsto$};
	\draw[follow] (-.625,-.25) to (.625,.25);
\end{scope}
\begin{scope}[shift={(6,0)}]
	\node[0c] (a) at (-1.75,0) {};
	\node[0c] (b) at (.5,-1.25) {};
	\node[0c] (c) at (-.5,1.25) {};
	\node[0c] (d) at (1.75,0) {};
	\draw[1c, out=-60, in=180] (a) to (b);
	\draw[1c, out=0, in=120] (c) to (d);
	\draw[1c, out=15, in=-105] (b) to (d);
	\draw[1c, out=75, in=-165] (a) to (c);
	\draw[2c] (0,-1) to (0,1);
	\node[scale=1.25] at (2.25,-.9) {$.$};
\end{scope}
\end{tikzpicture}
\end{equation*}
\end{exm}

\begin{prop}
Let $P$ be an oriented thin poset, and $m_U: P \leadsto P'$ a simple merger. Then $P'$ is an oriented thin poset.
\end{prop}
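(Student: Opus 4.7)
The construction preceding the proposition already establishes that $P'$ is a graded poset with a well-defined orientation $o'$, the only subtle point being that the diamond (\ref{eq:merger-nplus1}) forces $o(c_{w, x_1}) = o(c_{w, x_2})$ whenever $w$ covers both. What remains is, for every length-2 interval $[a, c]$ in $P'_\bot$, to check that it has exactly four elements and that its two middle elements satisfy the sign condition $\alpha_1 \beta_1 = -\alpha_2 \beta_2$. I would proceed by case analysis on how $\tilde{x}$ sits in $[a, c]$: absent, at the top $c$, at the bottom $a$, or as a middle element.

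When $\tilde{x}$ is absent, the interval $[a, c]$ in $P'_\bot$ is isomorphic to $[a, c]$ in $P_\bot$ and the conditions are inherited from $P$: the removed element $y$, being $(n-1)$-dimensional and covered in $P$ only by $x_1, x_2$, could only have been a middle element of an interval in $P_\bot$ with top in $\{x_1, x_2\}$, but those tops are now identified with $\tilde{x}$ and handled below. For $c = \tilde{x}$ with $\dmn{a} = n-2$, the pivotal fact is $\clos\{x_1\} \cap \clos\{x_2\} = \clos\{y\}$: if $a \leq x_1$ and $a \leq x_2$ in $P$, then $a \leq y$, so $y$ appears as a middle element of both $[a, x_1]$ and $[a, x_2]$ in $P_\bot$, and $P$-thinness leaves one further middle element $z_i'$ on each side; the two are distinct because $z_1' = z_2'$ would force $z_1' \in \clos\{y\}$ and, by dimension, $z_1' = y$. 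Combining the sign conditions of $[a, x_1]$ and $[a, x_2]$ in $P$ using $o(c_{x_1, y}) = +$ and $o(c_{x_2, y}) = -$ gives exactly the required sign condition for $z_1', z_2'$ in $P'$. The asymmetric sub-case $a \leq x_1$, $a \not\leq x_2$ automatically has $a \not\leq y$, and $[a, x_1]$ in $P_\bot$ transports verbatim to $[a, \tilde{x}]$ in $P'_\bot$.

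For $a = \tilde{x}$ with $\dmn{c} = n+2$, the symmetric consequence of the diamond shows that any $w$ covering $x_1$ with $w \leq c$ also covers $x_2$: applying $P$-thinness to $[y, w]$ and using that $y$ is only covered by $x_1, x_2$ in $P$ forces $x_2$ as the second middle element. Hence the sets of middle elements of $[x_1, c]$ and $[x_2, c]$ in $P_\bot$ coincide pointwise, and the sign condition of $[\tilde{x}, c]$ in $P'_\bot$ is exactly that of $[x_1, c]$ in $P_\bot$. When $\tilde{x}$ is a middle element of $[a, c]$, the same $\clos\{y\}$ argument rules out $a < x_1 < c$ and $a < x_2 < c$ holding simultaneously in $P$ (else $a \in \clos\{y\}$ of dimension $n-1$ would be $y$, which is not in $P'$); so $\tilde{x}$ arises from exactly one of $x_1, x_2$, and the second middle element of $[a, c]$ comes from $P$-thinness applied in $P_\bot$ with signs transported directly.

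The main obstacle I anticipate is the bookkeeping around distinctness of middle elements after merging $x_1, x_2$ and deleting $y$. In each configuration where both $x_1$ and $x_2$ could plausibly contribute, the risk of accidental collapse is controlled by the same unifying observation that $\clos\{x_1\} \cap \clos\{x_2\} = \clos\{y\}$ forces any shared lower bound into $\clos\{y\}$; once these distinctness claims are in hand, the sign computations reduce to short applications of the sign condition in $P$.
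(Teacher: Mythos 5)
Your proof is correct, and the crux is the same as the paper's: for an interval $[a,\tilde{x}]$ with $\tilde{x}$ on top, apply thinness of $P$ to $[a,x_1]$ and, when $y$ appears as a middle element, also to $[a,x_2]$, then combine the two sign conditions using $o(c_{x_1,y})=+$ and $o(c_{x_2,y})=-$. You additionally spell out the cases where $\tilde{x}$ is absent, at the bottom, or a middle element, which the paper dismisses as immediate; your arguments there (upward covers of $x_1$ and $x_2$ coincide via thinness of $[y,w]$; a shared lower bound of $x_1,x_2$ of dimension $n-1$ must be $y$) are sound and make the proof more self-contained than the paper's.
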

\begin{proof}
We only need to look at intervals of the form $[w,\tilde{x}]$; the property is immediate from oriented thinness of $P$ for all other intervals. Any path $\tilde{x} \to z \to w$ in $\hass{P}'_\bot$ must come from a path $x_i \to z \to w$ in $\hass{P}_\bot$, say $i = 1$. This can be completed to
\begin{equation*}
\begin{tikzpicture}[baseline={([yshift=-.5ex]current bounding box.center)}]
	\node[scale=1.25] (0) at (0,2) {$x_1$};
	\node[scale=1.25] (1) at (-1,1) {$z$};
	\node[scale=1.25] (1b) at (1,1) {$z'$};
	\node[scale=1.25] (2) at (0,0) {$w$};
	\draw[1c] (0) to node[auto,swap] {$\alpha_1$} (1);
	\draw[1c] (0) to node[auto] {$\alpha_2$} (1b);
	\draw[1c] (1) to node[auto,swap] {$\beta_1$} (2);
	\draw[1c] (1b) to node[auto] {$\beta_2$} (2);
\end{tikzpicture}
\end{equation*}
for a unique $z'$ in $P$, with $\alpha_1\beta_1 = -\alpha_2\beta_2$. If $z' \neq y$, the diagram translates faithfully to $P'$. If $z' = y$, by oriented thinness applied to the interval $[w, x_2]$ of $P$, there exists a unique $z'' \neq z, y$ such that 
\begin{equation*}
\begin{tikzpicture}[baseline={([yshift=-.5ex]current bounding box.center)}]
	\node[scale=1.25] (0) at (0,2) {$x_1$};
	\node[scale=1.25] (0b) at (2,2) {$x_2$};
	\node[scale=1.25] (1) at (-1,1) {$z$};
	\node[scale=1.25] (1b) at (1,1) {$y$};
	\node[scale=1.25] (1c) at (3,1) {$z''$};
	\node[scale=1.25] (2) at (1,0) {$w$};
	\draw[1c] (0) to node[auto,swap] {$\alpha_1$} (1);
	\draw[1c] (0) to node[auto] {$\!+$} (1b);
	\draw[1c] (0b) to node[auto,swap] {$-\!$} (1b);
	\draw[1c] (0b) to node[auto] {$\alpha_3$} (1c);
	\draw[1c] (1) to node[auto,swap] {$\beta_1$} (2);
	\draw[1c] (1b) to node[auto] {$\beta_2$} (2);
	\draw[1c] (1c) to node[auto] {$\beta_3$} (2);
\end{tikzpicture}
\end{equation*}
in $\hass{P}_\bot$, and $\alpha_1\beta_1 = -\beta_2 = -\alpha_3 \beta_3$. This becomes
\begin{equation*}
\begin{tikzpicture}[baseline={([yshift=-.5ex]current bounding box.center)}]
	\node[scale=1.25] (0) at (0,2) {$\tilde{x}$};
	\node[scale=1.25] (1) at (-1,1) {$z$};
	\node[scale=1.25] (1b) at (1,1) {$z''$};
	\node[scale=1.25] (2) at (0,0) {$w$};
	\draw[1c] (0) to node[auto,swap] {$\alpha_1$} (1);
	\draw[1c] (0) to node[auto] {$\alpha_3$} (1b);
	\draw[1c] (1) to node[auto,swap] {$\beta_1$} (2);
	\draw[1c] (1b) to node[auto] {$\beta_3$} (2);
\end{tikzpicture}
\end{equation*}
in $\hass{P'}_\bot$, which completes the proof that $P'$ is an oriented thin poset.
\end{proof}

Next, we show that simple mergers ``reflect'' constructible molecules in oriented thin posets. In the following statement, $\invrs{m_U}(V')$ is the inverse image of $V'$ through $m_U$.

\begin{lem} \label{lem:reflectglobe}
Let $P, P'$ be oriented thin posets, $V' \subseteq P'$ a constructible $k$\nbd molecule, and $m_U: P \leadsto P'$ a simple merger. Then $V := \clos{(\invrs{m_U}(V'))}$ is a constructible $k$\nbd molecule in $P$, and $\sbord{}{\alpha}V = \invrs{m_U}(\sbord{}{\alpha}V')$.
\end{lem}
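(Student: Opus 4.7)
The plan is a joint induction on $k$, with an inner induction on the number of maximal elements of $V'$, strengthened to include that the assignment $W' \mapsto \clos{(\invrs{m_U}(W'))}$ (with outer closure in $P$) sends the submolecule relation $\sqsubseteq$ in $P'$ to $\sqsubseteq$ in $P$. This strengthening is forced in the non-atomic step, where the $\sqsubseteq$-conditions witnessing the splitting of $V'$ must transfer to $V$.

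Before the induction, I would record two set-theoretic observations: for any $S \subseteq P'$, $\clos{(\invrs{m_U}(\clos{S}))} = \clos{(\invrs{m_U}(S))}$ (inner closure in $P'$, outer in $P$), and for closed $A, B \subseteq P'$, $\clos{(\invrs{m_U}(A \cap B))} = \clos{(\invrs{m_U}(A))} \cap \clos{(\invrs{m_U}(B))}$. Both reduce to the order-lifting fact that whenever $\tilde{x} \leq w'$ in $P'$ with $w' \neq \tilde{x}$, both $x_1 \leq w'$ and $x_2 \leq w'$ hold in $P$. I would prove this by induction on a covering chain from $\tilde{x}$ to $w'$ in $P'$: any cover of $\tilde{x}$ from above by some element $w$ forces $\dmn{w} = n+1$, and the oriented thinness of $P$ applied to the interval $[y, w]$ makes $w$ cover both $x_1$ and $x_2$ with a common orientation --- this is the same thinness argument that underlies the well-definedness of the merger's orientation on $P'$, and it also gives the cover correspondence that I use in the atomic step below.

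The base case $k = 0$ is immediate, since $\dmn{\tilde{x}} = n \geq 1$ prevents $\tilde{x}$ from occurring in any $0$\nbd dimensional $V'$. In the atomic step $V' = \clos{\{v'\}}$ with $v' \neq \tilde{x}$, the order-lifting fact forces $V = \clos{\{v'\}}$ in $P$, and the cover correspondence at $v'$ gives $\sbord{}{\alpha}V = \invrs{m_U}(\sbord{}{\alpha}V')$; the first observation then rewrites $\bord{}{\alpha}V$ as $\clos{(\invrs{m_U}(\bord{}{\alpha}v'|_{P'}))}$, which is a constructible $(k-1)$\nbd molecule by the outer induction, so $V$ is a constructible $k$\nbd atom. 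In the atomic step $v' = \tilde{x}$ (which forces $k = n$), a direct computation yields $V = \clos{\{x_1\}} \cup \clos{\{x_2\}} = U$, a constructible $n$\nbd molecule by hypothesis; the boundary identity then follows from Lemma \ref{lem:basic} together with the explicit description of the covers of $\tilde{x}$ supplied by the merger construction.

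For the non-atomic step, I would pick a splitting $V' = V'_1 \cup V'_2$ and apply the inner induction to obtain constructible $k$\nbd molecules $V_i = \clos{(\invrs{m_U}(V'_i))}$ with the correct $\sbord{}{\alpha}$, so that $V = V_1 \cup V_2$. The intersection condition $V_1 \cap V_2 = \bord{}{+}V_1 \cap \bord{}{-}V_2$ being a constructible $(k-1)$\nbd molecule follows from the second observation applied to $V'_1 \cap V'_2 = \bord{}{+}V'_1 \cap \bord{}{-}V'_2$ and the outer induction. The hard part is transferring the $\sqsubseteq$-conditions $\bord{}{-}V_1 \sqsubseteq \bord{}{-}V$, $V_1 \cap V_2 \sqsubseteq \bord{}{+}V_1$, and so on: $\sqsubseteq$ is defined globally through the existence of a merger tree, so no local identity on the operation $\clos \circ \invrs{m_U}$ will suffice. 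The strengthened hypothesis handles this by translating a merger tree witnessing a submolecule relation in $P'$ vertex-by-vertex into a merger tree in $P$ of the same shape, each split satisfying its conditions by the main inductive step --- which is why the main claim and the $\sqsubseteq$-preservation claim must be proved in tandem.
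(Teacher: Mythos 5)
Your proof is correct and is essentially the same argument as the paper's, with a tidier organisation: the paper disposes of the $\tilde{x}\notin V'$ case (in particular $k<n$) in one line by observing $V\cong V'$, then for $k=n$ inducts on increasing $W'\sqsubseteq V'$ containing $\tilde{x}$, and merely asserts the $k>n$ extension as ``a straightforward induction'', whereas you run a uniform double induction on $k$ and the number of maximal elements, which fills in the $k>n$ atomic case (where $\tilde{x}<v'$, so $V$ is genuinely not isomorphic to $V'$) and makes the $\sqsubseteq$-preservation statement --- which the paper also records in passing --- explicitly part of the inductive load. The order-lifting observation, the two $\clos\circ\invrs{m_U}$ identities, and the thinness argument at covers of $\tilde{x}$ are exactly the ingredients the paper's proof relies on, so the two proofs match in substance.
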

\begin{proof}
Let $U = \clos\{x_1,x_2\}$, a constructible $n$\nbd molecule in $P$, and let $\tilde{x} \in P'$ be the result of the identification of $x_1$ and $x_2$. If $V'$ does not contain $\tilde{x}$, which is always the case if $k < n$, then $V'$ is the identical image of $V$, hence a constructible $k$\nbd molecule in $P$, with the same boundary.

Suppose $k = n$ and $\tilde{x} \in V'$. We prove that $V$ is a constructible molecule in $P$ by induction on increasing constructible molecules $W' \sqsubseteq V'$ with $\tilde{x} \in W'$, letting $W := \clos{(\invrs{m_U}(W'))}$. If $W' = \clos\{\tilde{x}\}$, then $W = U$, and $\bord{}{\alpha}W' = \bord{}{\alpha}W$. Suppose $W'$ splits as $W'_1 \cup W'_2$, with $\tilde{x} \in W'_i$. Then $W = W_1 \cup W_2$, and by the inductive hypothesis both are constructible molecules, whose boundaries are unchanged by the merger. 

This proves that $V'$ is a constructible $n$\nbd molecule, and also that, if $W' \sqsubseteq V'$ in $P'$, then $W \sqsubseteq V$ in $P$. From here, we can prove by a straightforward induction that if $V'$ is a constructible $k$\nbd molecule in $P'$ with $k > n$, then $V$ is a constructible $k$\nbd molecule in $P$ with the desired properties.
\end{proof}

\begin{dfn}
Let $P, P'$ be oriented thin posets. A \emph{merger} $m: P \mrg P'$ is a finite sequence $P \leadsto P_1 \leadsto \ldots \leadsto P_n \leadsto P'$ of simple mergers. We declare two mergers $m, m': P \mrg P'$ to be equal if they are equal as composites of partial functions.
\end{dfn}
\begin{remark}
We allow the sequence to be of length zero: on each oriented thin poset there is an identity merger $P \mrg P$.
\end{remark}

In Example \ref{exm:merger}, we reduced a constructible 2-molecule to an atom with a sequence of simple mergers. The following lemma shows that this is possible in any dimension.

\begin{lem} \label{lem:atomicmerge}
Let $U$ be a constructible $n$\nbd molecule. Then there exist a constructible $n$\nbd atom $\tilde{U}$ and a sequence of simple mergers $U \leadsto U_1 \leadsto \ldots \leadsto U_m \leadsto \tilde{U}$ that restricts to the identity on $\bord{}{}U$, and such that the $U_i$ are all constructible $n$\nbd molecules.

If $U \sqsubseteq P$ for a constructible $n$\nbd molecule $P$, this extends to a merger $P \mrg P[\tilde{U}/U]$.
\end{lem}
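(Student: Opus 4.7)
The plan is a double induction: outer on the dimension $n$ of $U$, and inner on the number $m$ of its $n$\nbd dimensional elements. If $n = 0$ or $m = 1$, the statement is trivial, since $U$ is already an atom and the empty sequence works. For the inductive step, with $n > 0$ and $m > 1$, I use the splitting $U = U_1 \cup U_2$ from Definition \ref{dfn:globes}, where $U_1, U_2$ are constructible $n$\nbd molecules with strictly fewer maximal elements. By the inner hypothesis, reduce each $U_i$ to an atom $\tilde{U}_i$ via simple mergers that restrict to the identity on $\bord{}{}U_i$. Each such merger removes an $(n-1)$\nbd dimensional element $y$ interior to $U_i$; since $U_1 \cap U_2 \subseteq \bord{}{+}U_1 \subseteq \bord{}{}U_1$ and symmetrically for $U_2$, we have $y \notin U_{3-i}$, so the merger remains a valid simple merger when performed on all of $U$. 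Iterating, together with Lemma \ref{lem:substitution} and Remark \ref{rmk:mergerglobe} to see that the intermediate posets remain constructible $n$\nbd molecules, $U$ is transformed into $\tilde{U}_1 \cup \tilde{U}_2$, with greatest elements $x_1, x_2$.

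The intersection in this new $U$ is $V := \bord{}{+}x_1 \cap \bord{}{-}x_2 = U_1 \cap U_2$, a constructible $(n-1)$\nbd molecule, unchanged because the preceding mergers preserved boundaries. If $V$ is already an atom, the simple $n$\nbd merger $m_{\clos\{x_1, x_2\}}$ reduces $U$ to a single atom and we are done. Otherwise, apply the outer induction to $V$, obtaining a sequence of simple $(n-1)$\nbd mergers reducing $V$ to an atom $\tilde{V}$ while preserving $\bord{}{}V$. The main obstacle is to argue that each such $(n-1)$\nbd merger lifts to a valid simple merger on the ambient $\tilde{U}_1 \cup \tilde{U}_2$: for a merger identifying $z_1, z_2 \in V^{(n-1)}$ at an $(n-2)$\nbd element $w$ interior to $V$, one must verify that $w$ is not covered in $\tilde{U}_1 \cup \tilde{U}_2$ by any $(n-1)$\nbd element besides $z_1, z_2$, and in particular by none in $\bord{}{-}x_1$, $\bord{}{+}x_2$, or in $(\bord{}{+}x_1 \cup \bord{}{-}x_2) \setminus V$. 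This should follow from oriented thinness together with the interiority of $w$ in $V$, but it is the technical heart of the argument. Once established, the lifted mergers transform $V$ into $\tilde{V}$ and the final simple $n$\nbd merger yields the atom $\tilde{U}$.

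For the extension to $P$ with $U \sqsubseteq P$, each simple merger in the constructed sequence removes an element strictly interior to (the current image of) $U$, and by the submolecule relation such an element cannot belong to any element of $P$ outside $U$; hence every merger is already valid on $P$. Composing them yields the desired $P \mrg P[\tilde{U}/U]$, consistent with Lemma \ref{lem:substitution}.
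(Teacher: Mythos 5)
Your plan tracks the paper's strategy but reorganises the induction: you first collapse $U_1$ and $U_2$ to atoms $\tilde{U}_1 = \clos\{x_1\}$, $\tilde{U}_2 = \clos\{x_2\}$ (via the inner hypothesis and its extension clause with $P = U$), reducing to $U = \clos\{x_1, x_2\}$, and only then attack $V = \bord{}{+}x_1 \cap \bord{}{-}x_2$. The paper instead picks a constructible submolecule $V \sqsubseteq U$ with exactly two maximal $n$\nbd cells $x_1, x_2$ --- leaving other $n$\nbd cells $A := U^{(n)} \setminus \{x_1, x_2\}$ in place --- and proves a Sub-Lemma merging $W = \clos\{x_1\}\cap\clos\{x_2\}$ to an atom while lifting the mergers to $U$. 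Your reorganisation is sound and simplifies the covering argument (since $A = \emptyset$), but it does not let you avoid the content of that Sub-Lemma.

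The step you label ``the technical heart'' is a genuine gap, and it has two halves. The first --- that the interior $(n-2)$\nbd dimensional $w$ is covered in $\clos\{x_1, x_2\}$ only by $z_1, z_2$ --- you name correctly, and in your simplified setting it is a short thinness argument: $[w, x_1]$ and $[w, x_2]$ have length $2$ with $z_1, z_2 \in V \subseteq \clos\{x_1\} \cap \clos\{x_2\}$ as their unique middle elements, so any $(n-1)$\nbd element of $\clos\{x_1, x_2\}$ covering $w$ is one of them; but you do not write this out, and ``should follow from oriented thinness'' is not a proof. The second half you do not mention: after each lifted $(n-1)$\nbd merger, the result must still be a constructible $n$\nbd molecule (the statement demands it, and you need oriented thinness to iterate). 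Remark \ref{rmk:mergerglobe} does not apply here, since the merged subset is $(n-1)$\nbd dimensional and hence not a constructible submolecule of the $n$\nbd dimensional ambient; the paper instead observes that the merger tree for $\clos\{x_1, x_2\}$ persists, because $\bord{}{+}x_1$, $\bord{}{-}x_2$, and their intersection are only modified by a substitution, and Lemma \ref{lem:substitution} applies. One smaller slip: in the first phase the mergers produced by the inner hypothesis on $U_i$ do not each remove an $(n-1)$\nbd dimensional element --- the intersection reductions inside $U_i$ remove lower-dimensional ones --- so your ad hoc ``$y \notin U_{3-i}$'' argument is incomplete; simply invoke the extension clause of the inner hypothesis directly. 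Similarly, for the final extension to $P$, ``cannot belong to any element of $P$ outside $U$'' is not the condition you need: the removed element must not be \emph{covered} by anything outside $U$. For the $n$\nbd dimensional mergers this follows from Lemma \ref{lem:basic} applied in $P$, but for the lower\nbd dimensional ones it again requires the Sub-Lemma argument you omitted.
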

\begin{proof}
If $U$ is an atom, the identity merger will do; in particular, the statement is trivially true for $n = 0$. From there, we proceed by induction on $n$ and on the number of maximal elements of $U$.

Suppose that $n > 0$, and $U$ is non-atomic; then there must exist $V \sqsubseteq U$ with only two maximal elements $x_1$, $x_2$. If $\clos{\{x_1\}} \cap \clos{\{x_2\}} = \bord{}{+}x_1 \cap \bord{}{-}x_2 = \clos{\{y\}}$ for an $(n-1)$\nbd dimensional element $y$, then $V$ satisfies the conditions for the existence of a simple merger $m_V: U \leadsto U'$. By Remark \ref{rmk:mergerglobe}, $U'$ is a constructible $n$\nbd molecule with one less maximal element, and we can apply the inductive hypothesis to obtain a merger $U \leadsto U' \mrg \tilde{U}$.

Next, notice that the general case is a consequence of the following statement:
\begin{sublem}
Let $V \sqsubseteq U$ be a constructible submolecule with two maximal elements $x_1, x_2$, and let $W$ be the constructible $(n-1)$\nbd molecule $\clos{\{x_1\}} \cap \clos{\{x_2\}}$. Then there is a merger $m: W \mrg \tilde{W}$ such that
\begin{enumerate}
	\item $\tilde{W}$ is a constructible $(n-1)$\nbd atom, 
	\item $m$ restricts to the identity on $\bord{}{}W$, and 
	\item extending $m$ as the identity on $U \setminus W$ determines a merger $m': U \mrg U'$, where $U'$ is a constructible $n$\nbd molecule.
\end{enumerate}
\end{sublem}
This is because $\clos{\{x_1\}} \cap \clos{\{x_2\}} = \tilde{W}$ is a constructible $(n-1)$\nbd atom in $U'$, and $\bord{}{}V$ is not affected by $m'$, so we end up in the previous situation and can merge $x_1$ and $x_2$ in $U'$.

\begin{proof}[Proof of the Sub-Lemma]
We proceed again by induction, now on the dimension and number of maximal elements of $W$. If $W$ is an atom, which is necessarily true if $n = 1$, the identity merger $W \mrg W$ extends to the identity merger on $P$. 

Suppose that $n > 1$, and $W$ is non-atomic. Proceeding as before, take a constructible submolecule $Z \sqsubseteq W$ with two maximal elements $y_1$, $y_2$. By the inductive hypothesis, we can now assume that $\clos{\{y_1\}} \cap \clos{\{y_2\}}$ is a constructible $(n-2)$\nbd molecule $\clos\{z\}$; otherwise, we take a merger $Z \mrg \tilde{Z}$ and extend it to $W$. Then there is a simple merger $W \leadsto W'$ that merges $y_1$ and $y_2$, such that $W'$ is a constructible $(n-1)$\nbd molecule with one less maximal element. It suffices to prove that this extends to a simple merger $U \leadsto U'$, such that $U'$ is a constructible $n$\nbd molecule.

Let $A := U^{(n)}\setminus\{x_1,x_2\}$. By definition, $z$ is only covered by $y_1$ and $y_2$ in $W$. Suppose that there is another $(n-1)$\nbd dimensional $\tilde{y}$ covering $z$ in $U$; since $U$ is pure, $\tilde{y}$ is covered by at least one $\tilde{x} \in U^{(n)}$, which, by thinness applied to the intervals $[z,x_1]$ and $[z,x_2]$, is different from $x_1$ and $x_2$, that is, $\tilde{x} \in A$. Then $z \in V \cap \clos{A} \subseteq \bord{}{}V$; but any element $y$ which covers $z$ is either $y_1$ or $y_2$, in which case $y \notin \sbord{}{}V$, or, by thinness, is covered neither by $x_1$ nor by $x_2$, and again $y \notin \sbord{}{}V$. It follows that $z \notin \clos{(\sbord{}{}V)} = \bord{}{}V$, a contradiction. Therefore $z$ is only covered by $y_1$ and $y_2$ in $U$, and $U \leadsto U'$ is a simple merger. 

Moreover, any merger tree for $U$ with a vertex labelled $\clos\{x_1,x_2\}$ is still a valid merger tree for $U'$: $\bord{}{+}x_1$, $\bord{}{-}x_2$, and their intersection only have $W'$ substituted for $W$ in $U'$, so by Lemma \ref{lem:substitution} they are constructible $(n-1)$\nbd molecules, and all the relevant submolecule conditions hold.
\end{proof}
The first part of the main statement follows. For the second part, it suffices to observe that our procedure can be relativised to any constructible submolecule $U$ of a constructible $n$\nbd molecule $P$, and produces a merger $P \mrg P[\tilde{U}/U]$.
\end{proof}

\begin{thm}[Globularity] \label{thm:globularity}
Let $U$ be a constructible $n$\nbd molecule. Then
\begin{equation*} 
	\bord{}{\alpha}(\bord{}{+}U) = \bord{}{\alpha}(\bord{}{-}U) \quad\quad \text{and} \quad\quad
\bord{}{+}U \cap \bord{}{-}U = \bord{}{}(\bord{}{+}U) = \bord{}{}(\bord{}{-}U).
\end{equation*}
\end{thm}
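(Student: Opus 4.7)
The plan is to reduce the general case to the atomic case via Lemma \ref{lem:atomicmerge}, then exploit the oriented thinness of length-2 intervals in the atom $\clos\{x\}$.

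\emph{Reduction to atoms.} By Lemma \ref{lem:atomicmerge}, there is a sequence of simple mergers $U \mrg \tilde{U}$ to a constructible $n$\nbd atom $\tilde{U}$ that restricts to the identity on $\bord{}{}U$. Each simple merger only deletes one $(n-1)$\nbd dimensional element strictly between two $n$\nbd elements and identifies those $n$\nbd elements, so the sub\nbd poset of $U$ on elements of dimension at most $n-1$ is unchanged pointwise and $\bord{}{\alpha}U = \bord{}{\alpha}\tilde{U}$ as oriented sub\nbd posets. Both equations are then invariant under the reduction, and we may assume $U = \clos\{x\}$ is atomic.

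\emph{Atomic case, equation (a).} Fix an $(n-2)$\nbd dimensional element $w \leq x$. By oriented thinness, the interval $[w,x]$ in $U_\bot$ has precisely four elements $w, z_1, z_2, x$, and the labels $\alpha_i := o(c_{x,z_i})$, $\beta_i := o(c_{z_i,w})$ satisfy $\alpha_1\beta_1 = -\alpha_2\beta_2$. A case split on $(\alpha_1, \alpha_2)$ yields: if they agree with common value $\alpha$, both $z_i$ lie in $\sbord{}{\alpha}x$ and $w$ has both $(n-1)$\nbd parents in $\bord{}{\alpha}x$ with opposite lower orientations, so it sits in the interior of $\bord{}{\alpha}x$; if they disagree, exactly one $z_i$ lies in each of $\sbord{}{\pm}x$, and the sign constraint forces $\beta_1 = \beta_2 =: \beta$, placing $w$ in both $\sbord{}{\beta}(\bord{}{+}x)$ and $\sbord{}{\beta}(\bord{}{-}x)$. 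Therefore $\sbord{}{\beta}(\bord{}{+}x) = \sbord{}{\beta}(\bord{}{-}x)$ as subsets of $U^{(n-2)}$, and (a) follows by taking closures.

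\emph{Atomic case, equation (b).} From (a), $\bord{}{}(\bord{}{+}x) = \bord{}{}(\bord{}{-}x) \subseteq \bord{}{+}x \cap \bord{}{-}x$. For the reverse inclusion, I would induct on the codimension of $w \in \bord{}{+}x \cap \bord{}{-}x$, writing $m := \dmn{w}$. The case $m = n-1$ is vacuous by Lemma \ref{lem:basic}; the case $m = n-2$ is the mixed case of the preceding analysis. For $m < n-2$, it suffices to find an $(m+1)$\nbd dimensional element $w' \geq w$ still in $\bord{}{+}x \cap \bord{}{-}x$ and conclude by the induction hypothesis.

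\emph{Main obstacle.} Producing this $w'$ is the technical core of the proof. If no such $w'$ existed, the $(m+1)$\nbd elements above $w$ would split into non\nbd empty classes $C^+, C^-$ according to whether their $(n-1)$\nbd upper sets land in $\sbord{}{+}x$ or $\sbord{}{-}x$. Thinness of length\nbd 2 intervals $[w, d]$ with $\dmn{d} = m+2 \leq n-1$ would then pin the two $(m+1)$\nbd children of any such $d$ to the same class, since otherwise the $(n-1)$\nbd upper set of $d$ would be forced into $\sbord{}{+}x \cap \sbord{}{-}x = \emptyset$, impossible. Iterating this same\nbd class propagation up a maximal chain from $w$ to $x$ contradicts $C^+$ and $C^-$ both being non\nbd empty. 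This connectivity propagation, a combinatorial counterpart of the connectedness of the link of a deep point of a ball, is where the proof requires the most care.
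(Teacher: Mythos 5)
Your reduction to atoms via Lemma \ref{lem:atomicmerge} and your treatment of equation (a) are essentially the same as the paper's (the paper fixes $z \in \sbord{}{\alpha}(\bord{}{+}U)$ and chases the interval $[z,x]$; you do a case split on $(\alpha_1,\alpha_2)$, which packages the two inclusions together, but the content is identical). The derivation $\bord{}{}(\bord{}{\alpha}U) \subseteq \bord{}{+}U \cap \bord{}{-}U$ from (a) is also standard.

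Where your proposal diverges from the paper is in the converse inclusion of (b), and it is also where you have a genuine gap. You are right that the argument via thinness of $[z,x]$ only directly handles $z$ of dimension $n-2$ (as a matter of comparison, the paper's proof only draws a length-$2$ interval at this point, so the lower-dimensional elements of $\bord{}{+}U \cap \bord{}{-}U$ are treated implicitly there too; in effect what is needed is that this intersection is pure of dimension $n-2$). Your descent argument correctly establishes the local step: if $d$ is an $(m+2)$-element above $w$ with two $(m+1)$-children in opposite classes $C^+$, $C^-$, then the $(n-1)$-upward set of $d$ would land in $\sbord{}{+}x \cap \sbord{}{-}x = \emptyset$, a contradiction (since $[d,x]$ is graded and nondegenerate). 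So any single $(m+2)$-parent glues both its $(m+1)$-children into the same class. But the final step --- ``iterating this same-class propagation up a maximal chain from $w$ to $x$'' --- does not close the argument: a single maximal chain only visits one $(m+1)$-element, and the propagation you established only links $(m+1)$-elements that \emph{share} an $(m+2)$-parent. To conclude that $C^+ = \emptyset$ or $C^- = \emptyset$ you need the $(m+1)$-elements above $w$ to be connected in the bipartite graph on ranks $m+1$ and $m+2$ of $(w,x)$. That connectivity is precisely the kind of ``strong flag-connectedness'' that is an axiom for abstract polytopes but is \emph{not} a consequence of oriented thinness alone, and it is not established anywhere at this stage of the paper. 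A genuine fix would instead need to exploit the constructible-molecule structure of $\bord{}{\pm}x$ (their inductive decomposition into submolecules, or Lemma \ref{lem:atomicmerge}) to show that every maximal element of $\bord{}{+}x \cap \bord{}{-}x$ has dimension $n-2$; your sketch as written does not supply this.
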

\begin{proof}
The statement is vacuously true when $n = 0$ or $n = 1$, so assume $n > 1$. Suppose $U$ is an atom with greatest element $x$. By Lemma \ref{lem:basic} applied to the constructible $(n-1)$\nbd molecule $\bord{}{+}U$, there is at least one $z \in \sbord{}{\alpha}(\bord{}{+}U)$ for each of $\alpha \in \{+,-\}$. The interval $[z,x]$ is of the form 
\begin{equation*}
\begin{tikzpicture}[baseline={([yshift=-.5ex]current bounding box.center)}]
	\node[scale=1.25] (0) at (0,2) {$x$};
	\node[scale=1.25] (1) at (-1,1) {$y_1$};
	\node[scale=1.25] (1b) at (1,1) {$y_2$};
	\node[scale=1.25] (2) at (0,0) {$z$};
	\draw[1c] (0) to node[auto,swap] {$+$} (1);
	\draw[1c] (0) to node[auto] {$\beta_1$} (1b);
	\draw[1c] (1) to node[auto,swap] {$\alpha$} (2);
	\draw[1c] (1b) to node[auto] {$\beta_2$} (2);
\end{tikzpicture}
\end{equation*}
for some $y_1, y_2$. Suppose that $\beta_1 = +$; then $y_2 \in \sbord{}{+}U$, yet necessarily $\beta_2 = -\alpha$, contradicting $z \in \sbord{}{\alpha}(\bord{}{+}U)$. Therefore $\beta_1 = -$ and $\beta_2 = \alpha$; because $z$ is only covered by $y_1$ and $y_2$, it follows that $z \in \sbord{}{\alpha}(\bord{}{-}U)$. The converse is symmetrical, and we have proved $\bord{}{\alpha}(\bord{}{+}U) = \bord{}{\alpha}(\bord{}{-}U)$.

This also implies that $\bord{}{}(\bord{}{\alpha}U) \subseteq \bord{}{+}U \cap \bord{}{-}U$. Conversely, if $z \in \bord{}{+}U \cap \bord{}{-}U$, the interval $[z,x]$ is of the form 
\begin{equation*}
\begin{tikzpicture}[baseline={([yshift=-.5ex]current bounding box.center)}]
	\node[scale=1.25] (0) at (0,2) {$x$};
	\node[scale=1.25] (1) at (-1,1) {$y_1$};
	\node[scale=1.25] (1b) at (1,1) {$y_2$};
	\node[scale=1.25] (2) at (0,0) {$z$};
	\draw[1c] (0) to node[auto,swap] {$+$} (1);
	\draw[1c] (0) to node[auto] {$-$} (1b);
	\draw[1c] (1) to node[auto,swap] {$\beta$} (2);
	\draw[1c] (1b) to node[auto] {$\beta$} (2);
\end{tikzpicture}
\end{equation*}
for some $\beta$, so $z \in \sbord{}{\beta}(\bord{}{+}U)$ and $z \in \sbord{}{\beta}(\bord{}{-}U)$. 

Now, suppose $U$ is non-atomic, and let $U \mrg \tilde{U}$ be a merger as in Lemma \ref{lem:atomicmerge}, restricting to the identity on $\bord{}{}U$. Applying the first part of the proof to $\tilde{U}$, we obtain the statement for $U$.
\end{proof}

\begin{cons}
By Theorem \ref{thm:globularity}, for any constructible directed complex $P$, the diagram
\begin{equation*}
\begin{tikzpicture}
	\node[scale=1.25] (0) at (-.5,0) {$\glob{0}{P}$};
	\node[scale=1.25] (1) at (2,0) {$\glob{1}{P}$};
	\node[scale=1.25] (2) at (4,0) {$\ldots$};
	\node[scale=1.25] (3) at (6,0) {$\glob{n}{P}$};
	\node[scale=1.25] (4) at (8,0) {$\ldots$};
	\draw[1c] (1.west |- 0,.15) to node[auto,swap] {$\bord{}{+}$} (0.east |- 0,.15);
	\draw[1c] (1.west |- 0,-.15) to node[auto] {$\bord{}{-}$} (0.east |- 0,-.15);
	\draw[1c] (2.west |- 0,.15) to node[auto,swap] {$\bord{}{+}$} (1.east |- 0,.15);
	\draw[1c] (2.west |- 0,-.15) to node[auto] {$\bord{}{-}$} (1.east |- 0,-.15);
	\draw[1c] (3.west |- 0,.15) to node[auto,swap] {$\bord{}{+}$} (2.east |- 0,.15);
	\draw[1c] (3.west |- 0,-.15) to node[auto] {$\bord{}{-}$} (2.east |- 0,-.15);
	\draw[1c] (4.west |- 0,.15) to node[auto,swap] {$\bord{}{+}$} (3.east |- 0,.15);
	\draw[1c] (4.west |- 0,-.15) to node[auto] {$\bord{}{-}$} (3.east |- 0,-.15);
\end{tikzpicture}
\end{equation*}
is an $\omega$\nbd graph. Any inclusion $P \hookrightarrow Q$ induces inclusions $\glob{n}{P} \hookrightarrow \glob{n}{Q}$ of sets of constructible $n$\nbd molecules, and these obviously commute with boundaries, so this determines a functor $\glob{}{}: \globpos \to \globset$. 
\end{cons}

With little more effort, we can get a more refined functorial invariant than an $\omega$\nbd graph. The following definition is Steiner's \cite{steiner2004omega}.
\begin{dfn}
An \emph{augmented directed complex} $(K, K^*)$ is an augmented chain complex $K$ of abelian groups, concentrated in positive degree,
\begin{equation*}
\begin{tikzpicture}[baseline={([yshift=-.5ex]current bounding box.center)}]
	\node[scale=1.25] (0) at (0,0) {$\mathbb{Z}$};
	\node[scale=1.25] (1) at (2,0) {$K_0$};
	\node[scale=1.25] (2) at (4,0) {$\ldots$};
	\node[scale=1.25] (3) at (6,0) {$K_n$};
	\node[scale=1.25] (4) at (8,0) {$\ldots$};
	\draw[1c] (1.west) to node[auto,swap] {$e$} (0.east);
	\draw[1c] (2.west) to node[auto,swap] {$d$} (1.east);
	\draw[1c] (3.west) to node[auto,swap] {$d$} (2.east);
	\draw[1c] (4.west) to node[auto,swap] {$d$} (3.east);
\end{tikzpicture}
\end{equation*}
together with, for each $n \in \mathbb{N}$, a distinguished submonoid $K^*_n$ of $K_n$. 

A morphism of augmented directed complexes $(K, K^*) \to (L, L^*)$ is a morphism of augmented chain complexes $f: K \to L$ such that $f(K^*_n) \subseteq L^*_n$ for all $n$. Augmented directed complexes and their morphisms form a category $\adc$.
\end{dfn}

\begin{cons} \label{cons:adc}
Let $P$ be a constructible directed complex. For all $n$, let $KP_n$ be $\mathbb{Z}P^{(n)}$, the free abelian group on the set of $n$\nbd dimensional elements of $P$, and $KP^*_n$ be its submonoid $\mathbb{N}P^{(n)}$. Then, we define $d: KP_{n+1} \to KP_n$ and $e: KP_0 \to \mathbb{Z}$ by freely extending
\begin{align*}
	dx & := \sum_{y \in \sbord{}{+}x} y \; - \sum_{y' \in \sbord{}{-}x} y', \\
	ex & := 1,
\end{align*}
for each $n$ and $x \in P^{(n)}$. We claim that $(KP, KP^*)$ so defined is an augmented directed complex. Writing 
\begin{equation*}
	d^+x := \sum_{y \in \sbord{}{+}x} y, \quad\quad d^-x := \sum_{y' \in \sbord{}{-}x} y',
\end{equation*}
we have that, for all $n$\nbd dimensional $x$, with $n > 1$,
\begin{equation*}
	dd^\alpha x = \sum_{z \in \sbord{}{+}(\bord{}{\alpha} x)} z - \sum_{z' \in \sbord{}{-}(\bord{}{\alpha} x)} z'
\end{equation*}
because $\bord{}{\alpha}x$ is a constructible $(n-1)$\nbd molecule, so by Lemma \ref{lem:basic} any element $z$ covered by some $y \in \sbord{}{\alpha}x$ is either covered by another $y'$ with opposite orientation, in which case there is a cancellation $z - z = 0$ in $dd^\alpha x$, or is only covered by $y$, and belongs to $\sbord{}{\beta}(\bord{}{\alpha} x)$.

By Theorem \ref{thm:globularity}, then, $dd^+ x = dd^- x$, so $dd x = d(d^+ x - d^- x) = dd^+x - dd^-x = 0$. This proves that $dd = 0$, and $ed = 0$ is proved similarly. 

Any inclusion of constructible directed complexes induces inclusions of the free abelian groups on their $n$\nbd dimensional elements, and it is straightforward to check that this determines a functor $K: \globpos \to \adc$.
\end{cons}

Using globularity, we can prove that constructible $n$\nbd atoms are classified by pairs of constructible $(n-1)$\nbd molecules with isomorphic boundaries. 

\begin{prop} \label{prop:atomicnglobe}
Let $U$ be an oriented graded poset with a greatest element, and $\dmn{U} = n > 0$. The following are equivalent:
\begin{enumerate}[label=(\alph*)]
	\item $U$ is a constructible $n$\nbd atom;
	\item $\bord{}{+}U$ and $\bord{}{-}U$ are constructible $(n-1)$\nbd molecules, and satisfy $\bord{}{\alpha}(\bord{}{+}U) = \bord{}{\alpha}(\bord{}{-}U)$ and $\bord{}{+}U \cap \bord{}{-}U = \bord{}{}(\bord{}{+}U) = \bord{}{}(\bord{}{-}U)$.
\end{enumerate}
\end{prop}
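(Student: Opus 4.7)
The direction (a) $\Rightarrow$ (b) is almost immediate: the atom clause of Definition \ref{dfn:globes} requires $\bord{}{+}U$ and $\bord{}{-}U$ to be constructible $(n-1)$\nbd molecules, and the globularity equations are exactly the content of Theorem \ref{thm:globularity} applied to $U$. For (b) $\Rightarrow$ (a), the strategy is to reduce to showing that $U$ is oriented thin: purity and $n$\nbd dimensionality are free, since the greatest element $x$ is the unique maximal element, and the remaining requirements of the atom clause are exactly the hypotheses of (b).

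To check oriented thinness, I would classify intervals of length $2$ in $U_\bot$ by their top element. Intervals $[w,z]$ with $\dmn{z} \leq n-1$ lie entirely in a single $\bord{}{\alpha}U$ by closure, hence are thin by the hypothesis that $\bord{}{\alpha}U$ is a constructible $(n-1)$\nbd molecule. In the edge case $n = 1$, the interval $[\bot, x]$ has length $2$ and is handled directly: $\bord{}{+}U$ and $\bord{}{-}U$ are disjoint singletons, yielding four elements with the correct sign pattern. The remaining intervals are of the form $[z, x]$ with $\dmn{z} = n - 2$; since $x$ covers every $(n-1)$\nbd dimensional element of $U$, the intermediate elements of such an interval are precisely the $y \in U^{(n-1)}$ covering $z$.

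I would then case on the location of $z$. If $z \in \bord{}{+}U \cap \bord{}{-}U$, the globularity hypothesis places $z$ in $\bord{}{}(\bord{}{+}U) \cap \bord{}{}(\bord{}{-}U)$; since $z$ is of top dimension in these closures, $z$ lies in $\sbord{}{}(\bord{}{+}U) \cap \sbord{}{}(\bord{}{-}U)$, and Lemma \ref{lem:basic}(b) supplies one covering element in each of $\bord{}{\pm}U$. If $z \in \bord{}{+}U \setminus \bord{}{-}U$, then $\sbord{}{}(\bord{}{+}U) \subseteq \bord{}{}(\bord{}{+}U) = \bord{}{+}U \cap \bord{}{-}U$ excludes $z$ from $\sbord{}{}(\bord{}{+}U)$, so Lemma \ref{lem:basic}(b) gives two covering elements in $\bord{}{+}U$ with opposite orientations; since $U^{(n-1)} \subseteq \bord{}{+}U \cup \bord{}{-}U$ there are no further covers in $U$. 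The case $z \in \bord{}{-}U \setminus \bord{}{+}U$ is symmetric.

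The main obstacle is the sign relation $\alpha_1 \beta_1 = -\alpha_2 \beta_2$ in the mixed subcase. In the two pure subcases $\alpha_1 = \alpha_2$ and $\beta_1 = -\beta_2$ by the oriented thinness of $\bord{}{\pm}U$, so the relation holds trivially. In the mixed case $\alpha_1 = +$, $\alpha_2 = -$, and the relation reduces to $\beta_1 = \beta_2$; this is the one step where the stronger equation $\bord{}{\beta}(\bord{}{+}U) = \bord{}{\beta}(\bord{}{-}U)$ is genuinely used, via $z \in \sbord{}{\beta_1}(\bord{}{+}U) \subseteq \bord{}{\beta_1}(\bord{}{-}U)$, dimensional maximality forcing $z \in \sbord{}{\beta_1}(\bord{}{-}U)$, and uniqueness of the covering element of $z$ in $\bord{}{-}U$ (from Lemma \ref{lem:basic}(b)) then pinning $\beta_2 = \beta_1$.
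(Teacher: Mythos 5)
Your argument is correct and follows essentially the same line as the paper's proof: reduce (b) $\Rightarrow$ (a) to verifying oriented thinness, dispose of intervals of length 2 not reaching the greatest element via the thinness of the two boundary molecules, and for intervals $[z,x]$ use Lemma \ref{lem:basic}(b) together with the hypothesis $\bord{}{+}U \cap \bord{}{-}U = \bord{}{}(\bord{}{\alpha}U)$ and $\bord{}{\beta}(\bord{}{+}U) = \bord{}{\beta}(\bord{}{-}U)$ to pin down the two intermediate elements and their signs. The only cosmetic deviations are that you case on the location of $z$ rather than on which boundary covers it (equivalent reformulations), you make the $n=1$ base case explicit, and your phrase ``by the oriented thinness of $\bord{}{\pm}U$'' for $\beta_1 = -\beta_2$ should really credit Lemma \ref{lem:basic}(b), which you had already invoked in the preceding sentence.
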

\begin{proof}
The implication from $(a)$ to $(b)$ is immediate from the definition and from Theorem \ref{thm:globularity}. Conversely, we only need to check that $U$ is oriented thin. For all intervals $[z,x]$ of length 2 in $U_\bot$, if $\dmn{x} < n$ the interval is entirely contained in $(\bord{}{\alpha}U)_\bot$, and we can use oriented thinness of $\bord{}{\alpha}U$.

Suppose $\dmn{x} = n$, that is, $x$ is the greatest element of $U$. First, suppose that $z$ is only covered by elements of $\bord{}{+}U$: then $z \notin \sbord{}{}(\bord{}{-}U) = \sbord{}{}(\bord{}{+}U)$, and by Lemma \ref{lem:basic} $z$ is covered by exactly two elements $y_1, y_2$ of $\bord{}{+}U$ with opposite orientations. In this case, the interval $[z,x]$ is 
\begin{equation*}
\begin{tikzpicture}[baseline={([yshift=-.5ex]current bounding box.center)}]
	\node[scale=1.25] (0) at (0,2) {$x$};
	\node[scale=1.25] (1) at (-1,1) {$y_1$};
	\node[scale=1.25] (1b) at (1,1) {$y_2$};
	\node[scale=1.25] (2) at (0,0) {$z$};
	\draw[1c] (0) to node[auto,swap] {$+$} (1);
	\draw[1c] (0) to node[auto] {$+$} (1b);
	\draw[1c] (1) to node[auto,swap] {$+$} (2);
	\draw[1c] (1b) to node[auto] {$-$} (2);
	\node[scale=1.25] at (1.25,0) {.};
\end{tikzpicture}
\end{equation*}
The case where $z$ is only covered by elements of $\bord{}{-}U$ is analogous. 

Finally, suppose $z$ is covered both by elements of $\bord{}{-}U$ and of $\bord{}{+}U$. Then $z$ belongs to  $\sbord{}{\beta}(\bord{}{+}U) = \sbord{}{\beta}(\bord{}{-}U)$ for some $\beta$, and again by Lemma \ref{lem:basic} $z$ is covered by a single element $y_1$ of $\bord{}{+}U$ and by a single element $y_2$ of $\bord{}{-}U$. The interval $[z,x]$ is
\begin{equation*}
\begin{tikzpicture}[baseline={([yshift=-.5ex]current bounding box.center)}]
	\node[scale=1.25] (0) at (0,2) {$x$};
	\node[scale=1.25] (1) at (-1,1) {$y_1$};
	\node[scale=1.25] (1b) at (1,1) {$y_2$};
	\node[scale=1.25] (2) at (0,0) {$z$};
	\draw[1c] (0) to node[auto,swap] {$+$} (1);
	\draw[1c] (0) to node[auto] {$-$} (1b);
	\draw[1c] (1) to node[auto,swap] {$\beta$} (2);
	\draw[1c] (1b) to node[auto] {$\beta$} (2);
	\node[scale=1.25] at (1.25,0) {,};
\end{tikzpicture}
\end{equation*}
which completes the proof.
\end{proof}

In particular, we have the following construction.

\begin{cons} \label{cons:ou}
Let $U$ and $V$ be constructible $n$\nbd molecules with (unique) isomorphisms $\bord{}{-}U \incliso \bord{}{-}V$ and $\bord{}{+}U \incliso \bord{}{+}V$.

Form the pushout of $\bord{}{}U \subseteq U$ and $\bord{}{}U \incliso \bord{}{}V \subseteq V$ in $\globpos$; then, let $U \celto V$ be the oriented graded poset obtained by adjoining a single $(n+1)$\nbd dimensional element $\top$ with $\bord{}{-}\top := U$ and $\bord{}{+}\top := V$. By Proposition \ref{prop:atomicnglobe}, $U \celto V$ is a constructible $(n+1)$\nbd atom.

In particular, if $\tilde{U}$ is the constructible $n$\nbd atom produced from $U$ by Lemma \ref{lem:atomicmerge}, we have constructible $(n+1)$\nbd atoms $U \celto \tilde{U}$ and $\tilde{U} \celto U$.
\end{cons}

\begin{lem} \label{lem:standardmerge}
Let $U$ be a constructible $n$\nbd molecule. Then there exists a unique merger $m: U \mrg O^n$, which restricts to a merger $\bord{}{}U \mrg \bord{}{}O^n$, and is a composite of simple mergers of constructible $n$\nbd molecules.
\end{lem}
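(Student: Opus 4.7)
The plan is to induct on $n$. The base case $n = 0$ is immediate, since the only constructible $0$-molecule is $O^0$ and the identity merger has the required properties. For the inductive step, I assume the lemma for all constructible $(n-1)$-molecules and let $U$ be a constructible $n$-molecule.

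For existence, I would build the merger in stages. First, apply Lemma \ref{lem:atomicmerge} to obtain a merger $U \mrg \tilde{U}$ to an $n$-atom via simple $n$-mergers fixing $\bord{}{}U$; so $\bord{}{\pm}\tilde{U} = \bord{}{\pm}U$. Next, apply Lemma \ref{lem:atomicmerge} to the $(n-1)$-molecule $\bord{}{-}U$ to obtain simple $(n-1)$-mergers reducing it to an $(n-1)$-atom $\widetilde{\bord{}{-}U}$ while fixing $\bord{}{}\bord{}{-}U$. Each such simple merger removes an $(n-2)$-dim element $y$ in the interior of $\bord{}{-}U$; by globularity (Theorem \ref{thm:globularity}), $y \notin \bord{}{+}U$, so in $\tilde{U}$ the element $y$ is covered only by the two $(n-1)$-dim elements being merged, and the merger is valid in $\tilde{U}$. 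The intermediate posets are $n$-atoms by Proposition \ref{prop:atomicnglobe}, the matching-boundary hypothesis being preserved because $\bord{}{}\widetilde{\bord{}{-}U} = \bord{}{}\bord{}{-}U = \bord{}{}\bord{}{+}U$. Symmetrically reduce $\bord{}{+}U$ to an atom. The shared $(n-2)$-dimensional substructure $\bord{}{}\bord{}{-}U = \bord{}{}\bord{}{+}U$ can now be treated by iterating the same strategy at dimensions $n-2, n-3, \ldots$, using the inductive hypothesis applied to $\bord{}{\pm}U$ — the mergers $m^\pm : \bord{}{\pm}U \mrg O^{n-1}$ whose restrictions to $\bord{}{}\bord{}{\pm}U$ coincide by inductive uniqueness — to guide the choices. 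The process terminates with $O^n$.

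Uniqueness I would prove by showing that the merger as a partial function is forced. Simple mergers never undefine top-dimensional elements, so every $n$-dim element of $U$ must map to the unique $n$-cell $\underline{n}$ of $O^n$. For $(n-1)$-dim elements, orientation preservation forces $x \in \sbord{}{\alpha}U$ to map to $\underline{(n-1)}^\alpha$, and any interior $(n-1)$-dim element — which is covered in $U$ by two $n$-cells of opposite orientations — cannot have a defined image without producing a length-$2$ interval in $O^n_\bot$ whose labelling contradicts oriented thinness, so it must be undefined. Descending via globularity, the restriction to $\bord{}{}U$ is then pinned down uniquely by the inductive hypothesis applied to the two halves and their shared boundary; this also yields the restriction-to-boundary statement by construction.

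The main obstacle is certifying validity of every simple sub-merger at the moment it is performed in the current intermediate poset, namely that the $(k-1)$-dim element to be removed is covered \emph{in the ambient $n$-atom} only by the two $k$-dim elements being merged, not merely in the lower-dimensional submolecule where the merger nominally lives. This delicate bookkeeping is handled by globularity, which isolates the interior of one boundary half from the other; by Lemma \ref{lem:atomicmerge}, which schedules interior reductions before boundary reductions; and by inductive uniqueness, which forces the boundary reductions prescribed by $m^-$ and $m^+$ to agree on $\bord{}{}\bord{}{\pm}U$ so that they can be executed as a single coherent sequence at the shared $(n-2)$-level and recursively below.
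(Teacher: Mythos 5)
Your proof is correct and follows essentially the same route as the paper's: reduce $U$ to an atom $\tilde{U}$ via Lemma \ref{lem:atomicmerge}, then reduce the two boundary halves using the inductive hypothesis, and argue uniqueness by observing that the merger's action on each stratum is forced by what simple mergers can and cannot undefine. Your discussion of why the lower-dimensional boundary mergers lift to valid simple mergers in the ambient $n$-atom (via globularity isolating the interior of one boundary half from the other) is a correct elaboration of a step the paper states more tersely as ``extending them to $\tilde{U}$ in any sequence.''
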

\begin{proof}
If $n = 0$, we can take the identity merger. Suppose $n > 0$; by Lemma \ref{lem:atomicmerge}, there exist a constructible $n$\nbd atom $\tilde{U}$ and a merger $U \mrg \tilde{U}$ which is a composite of simple mergers of constructible $n$\nbd molecules and restricts to the identity on $\bord{}{}U$. Then, by the inductive hypothesis, there are mergers $\bord{}{+}\tilde{U} \mrg O^{n-1}$ and $\bord{}{-}\tilde{U} \mrg O^{n-1}$ that restrict to the identity on $\bord{}{}(\bord{}{+}\tilde{U}) = \bord{}{}(\bord{}{-}\tilde{U})$, and are composites of simple mergers of constructible $(n-1)$\nbd molecules. Extending them to $\tilde{U}$ in any sequence, we obtain a merger $\tilde{U} \mrg O^n$; because at any intermediate point the boundaries of the greatest element are constructible $(n-1)$\nbd molecules, this is a composite of simple mergers of constructible $n$\nbd molecules.

Uniqueness of mergers $U \mrg O^n$ follows from the fact that no simple merger can be undefined on elements of $U^{(n)}$, or $\sbord{}{}U$, or $\sbord{}{}(\bord{}{\alpha}U)$, and so on; therefore, any merger to $O^n$ must map all maximal elements of $U$ to $\underline{n}$, all elements of $\sbord{}{\alpha}U$ to $\underline{n-1}^\alpha$, and so on, and be undefined on all other elements.
\end{proof}

The following is a useful lemma making use of substitutions and globularity.
\begin{lem} \label{lem:boundarymove}
Suppose $U$ is a constructible $n$\nbd molecule splitting as $U_1 \cup U_2$. Then $\bord{}{+}U_1 \cup \bord{}{-}U_2$ is a constructible $(n-1)$\nbd molecule, and $\bord{}{+}U_1, \bord{}{-}U_2 \sqsubseteq \bord{}{+}U_1 \cup \bord{}{-}U_2$.
\end{lem}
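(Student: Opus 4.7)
The plan is to reduce first to the case where $U_1$ and $U_2$ are atoms, and then to assemble $\bord{}{+}U_1 \cup \bord{}{-}U_2$ by attaching $\bord{}{-}U_2$ to $\bord{}{+}U_1$ piecewise, following a merger tree for $\bord{}{-}U_2$ that realises $W := U_1 \cap U_2 = \bord{}{+}U_1 \cap \bord{}{-}U_2$ as a vertex.

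For the reduction, I would apply Lemma \ref{lem:atomicmerge} twice to obtain atomic mergers $U_i \mrg \tilde{U}_i$ preserving $\bord{}{\pm}U_i$: the intersection $W$, the input/output boundaries, and the submolecule conditions witnessing a valid split are all unchanged, and the conclusion only refers to this data. So we may assume $U_1 = \clos\{x_1\}$ and $U_2 = \clos\{x_2\}$ are atoms. I would then induct on the number of $(n-1)$-dimensional elements of $\bord{}{-}U_2$ not lying in $W$. The base case, where this number is zero, forces $\bord{}{-}U_2 = W$, so $\bord{}{+}U_1 \cup \bord{}{-}U_2 = \bord{}{+}U_1$ and both conclusions are immediate from $W \sqsubseteq \bord{}{+}U_1$. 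Otherwise, I fix a merger tree for $\bord{}{-}U_2$ containing $W$ as a vertex; walking from $W$ up to the root expresses $\bord{}{-}U_2$ as an iterated sequence of splits $W = C_0 \sqsubseteq C_1 \sqsubseteq \ldots \sqsubseteq C_k = \bord{}{-}U_2$, where $C_{i+1} = C_i \cup L_i$ is a valid split and $C_i \cap L_i$ is an $(n-2)$-dimensional constructible submolecule of the appropriate boundaries. The crucial observation is that $\bord{}{+}U_1 \cap L_i \subseteq \bord{}{+}U_1 \cap \bord{}{-}U_2 = W \subseteq C_i$, whence $(\bord{}{+}U_1 \cup C_i) \cap L_i = C_i \cap L_i$; this should let the same intersection data promote each $(\bord{}{+}U_1 \cup C_i) \cup L_i$ to a valid split, and iterating produces a constructible-molecule structure on $\bord{}{+}U_1 \cup \bord{}{-}U_2$ in which both $\bord{}{+}U_1$ and $\bord{}{-}U_2 = C_k$ appear as submolecules.

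The main obstacle is verifying that each $(\bord{}{+}U_1 \cup C_i) \cup L_i$ really satisfies Definition \ref{dfn:globes} as a split: the intersection condition requires $C_i \cap L_i$ to coincide with $\bord{}{\alpha}(\bord{}{+}U_1 \cup C_i) \cap \bord{}{-\alpha}L_i$ for the appropriate sign $\alpha$, not merely with $\bord{}{\alpha}C_i \cap \bord{}{-\alpha}L_i$, and the submolecule conditions $\bord{}{-}(\bord{}{+}U_1 \cup C_i) \sqsupseteq \bord{}{-}(\bord{}{+}U_1 \cup C_i \cup L_i)$-side and $\bord{}{+}L_i \sqsubseteq \bord{}{+}(\bord{}{+}U_1 \cup C_i \cup L_i)$ need checking. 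I expect to resolve this by additionally fixing a merger tree for $\bord{}{+}U_1$ that realises $W$ as a vertex and combining the two trees, with globularity (Theorem \ref{thm:globularity}) applied to $\bord{}{+}U_1$ used to transfer $(n-2)$-dimensional boundary data from $W$ to $\bord{}{+}U_1$. This combinatorial bookkeeping is where the real work lies.
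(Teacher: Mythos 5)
You have correctly isolated the hard step, but what you call ``combinatorial bookkeeping'' is in fact the entire content of the lemma, and your sketch does not close it. When you grow $\bord{}{+}U_1 \cup C_i$ to $(\bord{}{+}U_1 \cup C_i) \cup L_i$, the split condition demands that $C_i \cap L_i$ equal $\bord{}{\alpha}(\bord{}{+}U_1 \cup C_i) \cap \bord{}{-\alpha}L_i$ and be a constructible submolecule of both, and that the outer boundaries satisfy the inclusions $\bord{}{\pm}(\bord{}{+}U_1 \cup C_i) \sqsubseteq \bord{}{\pm}(\bord{}{+}U_1 \cup C_{i+1})$. But $\bord{}{\alpha}(\bord{}{+}U_1 \cup C_i)$ is \emph{not} $\bord{}{\alpha}C_i$: some $(n-2)$-dimensional elements of $\sbord{}{\alpha}C_i$ acquire extra covering faces from $\bord{}{+}U_1 \setminus C_i$ and drop out, while elements of $\sbord{}{\alpha}\bord{}{+}U_1$ enter. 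Proving that the boundary of the union still meets $L_i$ in the right set, with the right submolecule structure, is again a statement of the form ``a union of two constructible $(n-1)$-molecules along part of their boundaries has controlled boundary and submolecule structure'' --- i.e.\ a statement of exactly the kind the present lemma asserts, one dimension down. You gesture at resolving it by interleaving two merger trees and invoking globularity, but you never do it, and there is no indication that the naive induction on $n$ (or on the number of pieces) closes under this self-similarity. The reduction to atomic $U_1, U_2$ at the start is also a red herring: your induction never uses atomicity of $U_1, U_2$, only the data $\bord{}{+}U_1$, $\bord{}{-}U_2$, and the split conditions, all of which Lemma \ref{lem:atomicmerge} preserves, so the reduction buys nothing.

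The idea you are missing is that the paper's Substitution Lemma (Lemma \ref{lem:substitution}) already packages exactly this boundary bookkeeping, once and for all, in the right generality. By globularity applied to $U_1$, the molecules $\bord{}{-}U_1$ and $\bord{}{+}U_1$ have the same $(n-2)$-boundary, so the substitution $V := \bord{}{-}U[\bord{}{+}U_1/\bord{}{-}U_1]$ is defined; since $\bord{}{-}U_1 \sqsubseteq \bord{}{-}U$ is part of the split hypothesis, Lemma \ref{lem:substitution} immediately gives that $V$ is a constructible $(n-1)$-molecule with $\bord{}{+}U_1 \sqsubseteq V$, and a direct check identifies the image of $V$ in $U$ with $\bord{}{+}U_1 \cup \bord{}{-}U_2$. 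The dual substitution $\bord{}{+}U[\bord{}{-}U_2/\bord{}{+}U_2]$ then yields $\bord{}{-}U_2 \sqsubseteq \bord{}{+}U_1 \cup \bord{}{-}U_2$. Your attempt to rebuild the conclusion piecewise from merger trees is, in effect, an attempt to reprove a special case of the Substitution Lemma by hand; it is better to use it.
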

\begin{proof}
By assumption, $\bord{}{-}U_1 \sqsubseteq \bord{}{-}U$, and by globularity $\bord{}{\alpha}(\bord{}{-}U_1) = \bord{}{\alpha}(\bord{}{+}U_1)$. Then $V := \bord{}{-}U[\bord{}{+}U_1/\bord{}{-}U_1]$ is well-defined, and there is an obvious inclusion $V \hookrightarrow U$ whose image is $\bord{}{+}U_1 \cup \bord{}{-}U_2$. By the substitution lemma, $\bord{}{+}U_1 \cup \bord{}{-}U_2$ is a constructible $(n-1)$\nbd molecule and $\bord{}{+}U_1 \sqsubseteq \bord{}{+}U_1 \cup \bord{}{-}U_2$. The dual argument using $\bord{}{+}U[\bord{}{-}U_2/\bord{}{+}U_2]$ also proves that $\bord{}{-}U_2 \sqsubseteq \bord{}{+}U_1 \cup \bord{}{-}U_2$.
\end{proof}

Another easy consequence of globularity is that, for all constructible $n$\nbd molecules $U$, the iterated boundary $\bord{}{\alpha_1}(\ldots(\bord{}{\alpha_k} U))$ only depends on $\alpha_1$, and not on any of the $\alpha_2,\ldots,\alpha_k$. There is, in fact, a direct characterisation of the same constructible molecule, which we now describe.

\begin{dfn}
Let $U$ be a closed subset of an oriented graded poset. For $\alpha \in \{+,-\}$ and $n \in \mathbb{N}$, let
\begin{align*}
	\sbord{n}{\alpha} U & := \{x \in U \,|\, \dmn{x} = n \text{ and, for all $y \in U$, if $y$ covers $x$, then $o(c_{y,x}) = \alpha$} \}, \\
	\bord{n}{\alpha} U & := \clos{(\sbord{n}{\alpha} U)} \cup \{ x \in U \,|\, \text{for all $y \in U$, if $x \leq y$, then $\dmn{y} \leq n$} \}, \\
	\sbord{n}{} U & := \sbord{n}{+}U \cup \sbord{n}{-}U, \quad \qquad \quad \bord{n}{}U := \bord{n}{+}U \cup \bord{n}{-} U.
\end{align*}
In particular, if $U$ is $n$\nbd dimensional, then $\bord{n-1}{\alpha}U = \bord{}{\alpha}U$, and $\bord{m}{\alpha}U = U$ for $m \geq n$. If $U$ is pure, for all $k < n$, $\bord{k}{\alpha}U = \clos{(\sbord{k}{\alpha} U)}$.

We call $\bord{n}{-}U$ the \emph{input $n$\nbd boundary}, and $\bord{n}{+}U$ the \emph{output $n$\nbd boundary} of $U$. For all $x \in P$, we will use the short-hand notation $\sbord{n}{\alpha}x := \sbord{n}{\alpha}\clos{\{x\}}$ and $\bord{n}{\alpha}x := \bord{n}{\alpha}\clos{\{x\}}$.
\end{dfn}

\begin{lem} \label{lem:intersection}
Let $U$ be a constructible $n$\nbd molecule, and suppose $V, W \subseteq U$ and $V \cap W$ are all constructible $n$\nbd molecules. Then $\sbord{}{}(V \cap W) \subseteq \sbord{}{}V \cup \sbord{}{}W$.
\end{lem}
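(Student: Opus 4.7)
The plan is to argue by contradiction, taking $x \in \sbord{}{}(V \cap W)$ and assuming $x \notin \sbord{}{}V \cup \sbord{}{}W$, then deriving a contradiction from the local structure of $U$ as a constructible $n$\nbd molecule via Lemma \ref{lem:basic}.

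First, I would note that since $V$, $W$, and $V \cap W$ are constructible $n$\nbd molecules, they are closed $n$\nbd dimensional subsets of $U$, and inclusions of closed subsets preserve and reflect the covering relation. Writing $C_X(x) := \{y \in X \,|\, y \text{ covers } x\}$ for each closed $X$, this gives the identity $C_{V \cap W}(x) = C_V(x) \cap C_W(x)$. Moreover, $\dmn{x} = n - 1$ since $\sbord{}{}(V \cap W)$ consists of $(n-1)$\nbd dimensional elements, so $x \in V^{(n-1)} \cap W^{(n-1)} \cap U^{(n-1)}$.

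Next, I would apply Lemma \ref{lem:basic}(b) three times. From $x \in \sbord{}{}(V \cap W)$ we get $|C_{V \cap W}(x)| = 1$. From the assumption $x \notin \sbord{}{}V$ we get $|C_V(x)| = 2$, and similarly $|C_W(x)| = 2$. By inclusion–exclusion,
\begin{equation*}
	|C_V(x) \cup C_W(x)| = |C_V(x)| + |C_W(x)| - |C_V(x) \cap C_W(x)| = 2 + 2 - 1 = 3.
\end{equation*}
Since $C_V(x) \cup C_W(x) \subseteq C_U(x)$, this forces $|C_U(x)| \geq 3$.

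Finally, applying Lemma \ref{lem:basic}(b) to the constructible $n$\nbd molecule $U$ itself at $x \in U^{(n-1)}$, we have $|C_U(x)| \leq 2$, which is the desired contradiction. Hence $x \in \sbord{}{}V \cup \sbord{}{}W$. There is no significant obstacle here: once one observes that $C_{V\cap W}(x) = C_V(x) \cap C_W(x)$, the proof is a purely numerical three-versus-two count on covering elements.
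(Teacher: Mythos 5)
Your proof is correct and takes essentially the same approach as the paper: both arguments count covering elements via Lemma \ref{lem:basic}(b) and derive a contradiction from three distinct covers of $x$ in $U$. The paper names the three elements $y$, $y'$, $y''$ explicitly rather than using inclusion--exclusion, but the content is the same.
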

\begin{proof}
Let $x \in \sbord{}{}(V \cap W)$; by Lemma \ref{lem:basic}, $x$ is covered by exactly one $n$\nbd dimensional $y \in V \cap W$. Suppose by way of contradiction that $x \notin \sbord{}{}V \cup \sbord{}{}W$: then $x \in V^{(n-1)}\setminus \sbord{}{}V$, and $x$ is covered by a single other element $y' \in V \setminus W$, but also $x \in W^{(n-1)}\setminus \sbord{}{}W$, and $x$ is covered by a single other element $y'' \in W \setminus V$. Thus there are at least three distinct $n$\nbd dimensional elements of $U$ covering $x$, contradicting Lemma \ref{lem:basic} applied to $U$.
\end{proof}

\begin{prop} \label{prop:globelike}
Let $U$ be a constructible $n$\nbd molecule. Then, for all $k < n$,
\begin{equation*}
	\bord{k}{\alpha}U = \underbrace{\bord{}{\alpha}(\ldots(\bord{}{\alpha}}_{n-k} U)).
\end{equation*} 
\end{prop}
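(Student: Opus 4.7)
The plan is to induct on the codimension $n-k$. The base case $n-k=1$ follows immediately from Definition \ref{dfn:globes}: for $U$ pure of dimension $n$, every element of $U$ lies below some $n$\nbd dimensional element, so the ``top-element'' summand in $\bord{n-1}{\alpha}U$ is empty, and
\[
	\bord{n-1}{\alpha}U \;=\; \clos{(\sbord{n-1}{\alpha}U)} \;=\; \clos{(\sbord{}{\alpha}U)} \;=\; \bord{}{\alpha}U,
\]
matching the one-fold iteration on the right-hand side.

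For the inductive step with $k < n-1$, set $V := \bord{}{\alpha}U$, which by definition is a constructible $(n-1)$\nbd molecule. Applying the inductive hypothesis to $V$ at codimension $(n-1)-k$ gives
\[
	\bord{k}{\alpha}V \;=\; \underbrace{\bord{}{\alpha}(\ldots(\bord{}{\alpha}}_{n-k-1} V)) \;=\; \underbrace{\bord{}{\alpha}(\ldots(\bord{}{\alpha}}_{n-k} U)),
\]
so it is enough to prove $\bord{k}{\alpha}U = \bord{k}{\alpha}V$. Because $U$ and $V$ are pure of dimension strictly greater than $k$, both sides coincide with the downward closure of their respective sets $\sbord{k}{\alpha}(-)$, and such a closure of a set of $k$\nbd dimensional elements is determined by that set. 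Hence the equation reduces to the set-level identity $\sbord{k}{\alpha}U = \sbord{k}{\alpha}V$.

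For the inclusion $\sbord{k}{\alpha}U \subseteq \sbord{k}{\alpha}V$, the $V$\nbd covers of any $x \in \sbord{k}{\alpha}U$ form a subset of its $U$\nbd covers and inherit their orientation from $U$, so the $\alpha$\nbd orientation condition transfers for free; the real task is showing $x \in V$. I would run a secondary induction on the number of maximal elements of $U$: for an atom with greatest element $t$, every $x < t$ lies in $\bord{}{+}t \cup \bord{}{-}t$, and the analysis of $\bord{}{\pm}t$ via Lemma \ref{lem:basic} together with Proposition \ref{prop:atomicnglobe} determines which half contains $x$ from the orientation of its covers; for a split $U = U_1 \cup U_2$, one locates $x$ in some $U_i$, applies the inductive hypothesis there, and transfers the conclusion to $U$ using the submolecule relations $\bord{}{-}U_1 \sqsubseteq \bord{}{-}U$ and $\bord{}{+}U_2 \sqsubseteq \bord{}{+}U$ together with Lemma \ref{lem:boundarymove}. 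For the reverse inclusion $\sbord{k}{\alpha}V \subseteq \sbord{k}{\alpha}U$, given $x \in \sbord{k}{\alpha}V$ one assumes toward contradiction that some $(k+1)$\nbd dimensional $y \in U$ covers $x$ with $o(c_{y,x}) = -\alpha$; such a $y$ cannot lie in $V$, and combining Lemma \ref{lem:basic} applied to $V$ with oriented thinness on an interval $[x,z]$ above $y$ together with globularity (Theorem \ref{thm:globularity}) forces $y$ into $V$, a contradiction.

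The \textbf{main obstacle} is the forward inclusion, specifically producing an $(n-1)$\nbd dimensional element of $\sbord{}{\alpha}U$ above a given $k$\nbd dimensional $x$ whose $(k+1)$\nbd dimensional $U$\nbd covers are all $\alpha$\nbd oriented. This is an upward extension problem, and it is where the inductive structure of the merger tree of $U$, Lemma \ref{lem:boundarymove}, and globularity must be combined to ensure the extension stays inside $\bord{}{\alpha}U$ at every step.
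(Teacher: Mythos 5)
Your codimension induction is a clean reframing: the base case is right, and the inductive step correctly reduces the claim to the single identity $\bord{k}{\alpha}U = \bord{k}{\alpha}(\bord{}{\alpha}U)$, equivalently $\sbord{k}{\alpha}U = \sbord{k}{\alpha}V$ with $V := \bord{}{\alpha}U$ (noting that both $\bord{k}{\alpha}U$ and $\bord{k}{\alpha}V$ are pure, $k$\nbd dimensional, and hence equal to the closures of their $\sbord{k}{\alpha}$). This is genuinely not the route the paper takes. But the reduction is where the easy part ends, and you have not proved the remaining identity; you have sketched a plan whose crucial moves are not argued. For the forward inclusion you must show that any $k$\nbd dimensional $x$ all of whose $(k+1)$\nbd dimensional $U$\nbd covers have orientation $\alpha$ actually sits under some element of $\sbord{n-1}{\alpha}U$; that is a non-local claim, and ``Lemma~\ref{lem:basic} together with Proposition~\ref{prop:atomicnglobe} determines which half contains $x$'' is an aspiration, not a proof (even in the atom case one has to rule out $x$ lying in $\bord{}{-\alpha}t \setminus \bord{}{\alpha}t$, which needs more than those two results). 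Likewise for the reverse inclusion: given $x \in \sbord{k}{\alpha}V$ and a $U$\nbd cover $y$ of $x$ with orientation $-\alpha$ and $y \notin V$, you assert that oriented thinness plus globularity ``forces $y$ into $V$'', but when $k < n-2$ the length-2 intervals you can form near $x$ and $y$ do not directly see any $(n-1)$\nbd dimensional element, so the contradiction does not drop out.

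For comparison, the paper avoids the upward-extension problem altogether: it proves the statement directly for $O^n$, then shows it is inherited across (inverse images of) simple mergers using Lemma~\ref{lem:reflectglobe}, reducing the whole thing to a local case analysis of how one merger changes $\sbord{k}{\alpha}$ — the subtle case being a $(k-1)$\nbd dimensional $w$ covered by the merged-out element $y$, where oriented thinness combines with Lemma~\ref{lem:intersection} and Lemma~\ref{lem:boundarymove} to exclude the bad orientation pattern — and then invokes Lemma~\ref{lem:standardmerge} to propagate from $O^n$ to all constructible $n$\nbd molecules. Your codimension induction could in principle be completed, but the content you still owe ($\sbord{k}{\alpha}U = \sbord{k}{\alpha}V$) is of the same depth as the paper's merger-preservation lemma and would need essentially the same local combinatorics; as written the proposal has a genuine gap at exactly the point you flag as ``the main obstacle''.
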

\begin{remark}
This also implies that $\bord{k}{\alpha}U$ is a constructible $k$\nbd molecule, because we know that the right-hand side is.
\end{remark}
\begin{proof}
The statement clearly holds for $U = O^n$. We will prove that, for all simple mergers $m_V: U \leadsto U'$ of constructible $n$\nbd molecules, if the statement is true for $U'$, then it is true for $U$; the general statement will then follow from Lemma \ref{lem:standardmerge}.

From Lemma \ref{lem:reflectglobe}, we can derive that $\bord{}{\alpha}(\ldots(\bord{}{\alpha} U)) = \clos{(\invrs{m_V}(\bord{}{\alpha}(\ldots(\bord{}{\alpha} U'))))}$, so it will suffice to prove that $\bord{k}{\alpha}U = \clos{(\invrs{m_V}(\bord{k}{\alpha}U'))}$, or equivalently $w \in \sbord{k}{\alpha}U$ if and only if $m_V$ is defined on $w$ and $w \in \sbord{k}{\alpha}U'$, for all $w \in U$ and $k < n$. 

Let $V = \clos\{x_1,x_2\}$, a constructible $m$\nbd molecule in $U$, and let $\tilde{x} \in U'$ be the result of the identification of $x_1$ and $x_2$. Given $w \in U$, there are four possibilities.
\begin{itemize}
	\item $w = x_1$ or $x_2$. Any element that covers $x_1$ also covers $x_2$ (and vice versa) in $U$, and it covers $\tilde{x}$ in $U'$ with the same orientation. Thus, $x_1, x_2 \in \sbord{m}{\alpha}U$ if and only if $\tilde{x} \in \sbord{m}{\alpha}U'$.
	\item $w = y$. Then $y$ is covered by two elements with opposite orientations, so $y$ is not in $\sbord{m-1}{\alpha}U$, and $m_V$ is undefined on $y$.
	\item $w$ is covered by $y$ with orientation $\beta$. In $\hass{U}_\bot$, we have 
	\begin{equation} \label{eq:kboundary}
\begin{tikzpicture}[baseline={([yshift=-.5ex]current bounding box.center)}]
	\node[scale=1.25] (0) at (0,2) {$x_1$};
	\node[scale=1.25] (0b) at (2,2) {$x_2$};
	\node[scale=1.25] (1) at (-1,1) {$z$};
	\node[scale=1.25] (1b) at (1,1) {$y$};
	\node[scale=1.25] (1c) at (3,1) {$z'$};
	\node[scale=1.25] (2) at (1,0) {$w$};
	\draw[1c] (0) to node[auto,swap] {$\alpha_1$} (1);
	\draw[1c] (0) to node[auto] {$\!+$} (1b);
	\draw[1c] (0b) to node[auto,swap] {$-\!$} (1b);
	\draw[1c] (0b) to node[auto] {$\alpha_2$} (1c);
	\draw[1c] (1) to node[auto,swap] {$\beta_1$} (2);
	\draw[1c] (1b) to node[auto] {$\beta$} (2);
	\draw[1c] (1c) to node[auto] {$\beta_2$} (2);
\end{tikzpicture}
\end{equation}
	for unique $z$, $z'$; we will show that $\beta_1 = \beta$ or $\beta_2 = \beta$. Consequently, $w \notin \sbord{m-2}{-\beta}U$, and also $w \notin \sbord{m-2}{-\beta}U'$, since both $z$ and $z'$ also cover $w$ in $U'$; while $w \in \sbord{m-2}{\beta}U$ if and only if $w \in \sbord{m-2}{\beta}U'$, because $y$ is the only element that covers $w$ in $U$ but is not in $U'$.
	
	Assume by way of contradiction $\beta_1 = \beta_2 = -\beta$, and suppose, without loss of generality, that $\beta = -$. By oriented thinness, diagram (\ref{eq:kboundary}) becomes
	\begin{equation*}
	\begin{tikzpicture}[baseline={([yshift=-.5ex]current bounding box.center)}]
	\node[scale=1.25] (0) at (0,2) {$x_1$};
	\node[scale=1.25] (0b) at (2,2) {$x_2$};
	\node[scale=1.25] (1) at (-1,1) {$z$};
	\node[scale=1.25] (1b) at (1,1) {$y$};
	\node[scale=1.25] (1c) at (3,1) {$z'$};
	\node[scale=1.25] (2) at (1,0) {$w$};
	\draw[1c] (0) to node[auto,swap] {$+$} (1);
	\draw[1c] (0) to node[auto] {$\!+$} (1b);
	\draw[1c] (0b) to node[auto,swap] {$-\!$} (1b);
	\draw[1c] (0b) to node[auto] {$-$} (1c);
	\draw[1c] (1) to node[auto,swap] {$+$} (2);
	\draw[1c] (1b) to node[auto] {$-$} (2);
	\draw[1c] (1c) to node[auto] {$+$} (2);
	\node[scale=1.25] at (3.25,0) {.};
\end{tikzpicture}
\end{equation*}
Then $w \in \bord{}{+}x_1$, and because $w$ is covered by two elements of $\bord{}{+}x_1$ with opposite orientations, $w \notin \sbord{}{}(\bord{}{+}x_1)$; similarly, $w \in \bord{}{-}x_2$, but $w \notin \sbord{}{}(\bord{}{-}x_2)$. 

However, $w \in \sbord{}{}y = \sbord{}{}(\bord{}{+}x_1 \cap \bord{}{-}x_2)$, which contradicts Lemma \ref{lem:intersection} applied to $\bord{}{+}x_1, \bord{}{-}x_2 \subseteq \bord{}{+}x_1 \cup \bord{}{-}x_2$, all constructible $(m-1)$\nbd molecules by Lemma \ref{lem:boundarymove}.

\item In all other cases, both $w$ and the elements that cover it are left unchanged by the simple merger.
\end{itemize}
This completes the case distinction and the proof.
\end{proof}

\section{Monoidal structures and other operations} \label{sec:monoidal}

In this section, we study operations on constructible directed complexes and their extension to constructible polygraphs. Some of them are the same that Steiner considered in \cite[Theorem 2.19]{steiner1993algebra} for directed complexes.

\begin{cons}
Let $P, Q$ be oriented graded posets. The disjoint union $P + Q$ of $P$ and $Q$ as posets is graded, and inherits an orientation from its components. Clearly, $P + Q$ is an oriented thin poset or a constructible directed complex whenever $P$ and $Q$ are.
\end{cons}

\begin{cons} \label{cons:suspension}
Let $P$ be an oriented graded poset. The \emph{suspension} $\Sigma P$ of $P$ is the oriented graded poset obtained from $P$ by adjoining two minimal elements $\bot^-$ and $\bot^+$, such that $\bot^\alpha < x$ for all $x \in P$, and, for all 0-dimensional $y \in P$, $o(c_{y,\bot^-}) := -$ and $o(c_{y,\bot^+}) := +$. If $\dmn{x} = n$ in $P$, then $\dmn{x} = n+1$ in $\Sigma P$.

This construction extends to an endofunctor $\Sigma$ on $\ogpos$ in the obvious way.
\end{cons}

\begin{lem}
If $P$ is an oriented thin poset, then $\Sigma P$ is an oriented thin poset.
\end{lem}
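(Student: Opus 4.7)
The plan is to check the two components of Definition \ref{dfn:thinness} --- thinness and the sign condition on length-2 intervals --- for $\Sigma P$, by casework on where the endpoints of such an interval sit relative to the three ``new'' elements $\bot^-, \bot^+, \bot$ of $(\Sigma P)_\bot$. Since the only elements of $\Sigma P$ of dimension $0$ are $\bot^-$ and $\bot^+$, and every element originally of dimension $k$ in $P$ has dimension $k+1$ in $\Sigma P$, the length-2 intervals $[x,y]$ in $(\Sigma P)_\bot$ fall into exactly three types:
\begin{enumerate}
    \item[(A)] $x = \bot$ and $\dmn{y} = 1$ in $\Sigma P$, i.e.\ $y$ is $0$-dimensional in $P$;
    \item[(B)] $x = \bot^\alpha$ for some $\alpha$, and $\dmn{y} = 2$ in $\Sigma P$, i.e.\ $y$ is $1$-dimensional in $P$;
    \item[(C)] $x, y \in P$ with $\dmn{y} - \dmn{x} = 2$ in $P$.
\end{enumerate}
Case (C) is immediate: the interior of $[x,y]$ is unchanged from $P_\bot$, and both thinness and the sign condition transfer verbatim from the oriented thin poset $P$.

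In case (A), the interior of $[\bot,y]$ consists of the $0$-dimensional elements of $\Sigma P$ below $y$, which are precisely $\bot^-$ and $\bot^+$; this gives exactly $4$ elements. The labelling is $o(c_{y,\bot^\alpha}) = \alpha$ by the definition of $\Sigma P$, and $o(c_{\bot^\alpha,\bot}) = +$ by the convention for extending an orientation to $P_\bot$; so $(-)(+) = -(+)(+)$ verifies the sign condition.

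In case (B), an interior element of $[\bot^\alpha, y]$ must be $1$-dimensional in $\Sigma P$, hence a $0$-dimensional element of $P$ lying below $y$, and any such element covers $\bot^\alpha$ in $\Sigma P$. Since $P$ is thin, the interval $[\bot, y]$ in $P_\bot$ has exactly two interior elements $z_1, z_2$; these give exactly $4$ elements in $[\bot^\alpha, y]$. For the sign condition, the labels $o(c_{y,z_i}) = \alpha_i$ are inherited from $P$, and $o(c_{z_i,\bot^\alpha}) = \alpha$ by construction; the sign condition for $\Sigma P$ therefore reduces to $\alpha_1 = -\alpha_2$, which is exactly what the sign condition for $[\bot,y]$ in $P_\bot$ gives (using again that $o(c_{z_i,\bot}) = +$).

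There is no genuine obstacle here, just careful bookkeeping; the only mildly subtle point is that the sign convention $o(c_{x,\bot}) := +$ for minimal elements, applied now to $\bot^\pm$ as minimal elements of $\Sigma P$, is exactly what is needed for cases (A) and (B) to work out.
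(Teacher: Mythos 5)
Your proof is correct and takes essentially the same route as the paper: both split into the same three cases by the position of the bottom endpoint of the interval ($\bot$, $\bot^\pm$, or an element of $P$), handle the case in $P$ by inheritance, and reduce the remaining two cases to the thinness of $P_\bot$ together with the sign convention $o(c_{z,\bot}) = +$. The only cosmetic difference is that the paper records the labels in explicit Hasse-diagram pictures while you track the signs symbolically.
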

\begin{proof}
Let $[x,y]$ be an interval of length 2 in $\Sigma P_\bot$.
\begin{itemize}
	\item If $\dmn{x} > 0$ in $\Sigma P$, then $[x,y]$ corresponds to an interval in $P$, which is of the required form (\ref{eq:signed}) by hypothesis.
	\item If $\dmn{x} = 0$ in $\Sigma P$, then $x = \bot^\alpha$, and by construction $[\bot^\alpha,y]$ is isomorphic to the interval $[\bot,y]$ in $P_\bot$ as a poset. By oriented thinness of $P$, it is of the form
\begin{equation*}
\begin{tikzpicture}[baseline={([yshift=-.5ex]current bounding box.center)}]
	\node[scale=1.25] (0) at (0,2) {$y$};
	\node[scale=1.25] (1) at (-1,1) {$z_1$};
	\node[scale=1.25] (1b) at (1,1) {$z_2$};
	\node[scale=1.25] (2) at (0,0) {$\bot^\alpha$};
	\draw[1c] (0) to node[auto,swap] {$\beta$} (1);
	\draw[1c] (0) to node[auto] {$-\beta$} (1b);
	\draw[1c] (1) to node[auto,swap] {$\alpha$} (2);
	\draw[1c] (1b) to node[auto] {$\alpha$} (2);
	\node[scale=1.25] at (1.25,0) {$.$};
\end{tikzpicture}
\end{equation*}
	\item If $x = \bot$, then $y$ is a 0-dimensional element of $P$, and by construction $[\bot,y]$ is
	\begin{equation*}
\begin{tikzpicture}[baseline={([yshift=-.5ex]current bounding box.center)}]
	\node[scale=1.25] (0) at (0,2) {$y$};
	\node[scale=1.25] (1) at (-1,1) {$\bot^-$};
	\node[scale=1.25] (1b) at (1,1) {$\bot^+$};
	\node[scale=1.25] (2) at (0,0) {$\bot$};
	\draw[1c] (0) to node[auto,swap] {$-$} (1);
	\draw[1c] (0) to node[auto] {$+$} (1b);
	\draw[1c] (1) to node[auto,swap] {$+$} (2);
	\draw[1c] (1b) to node[auto] {$+$} (2);
	\node[scale=1.25] at (1.25,0) {$.$};
\end{tikzpicture}
\end{equation*}
\end{itemize}
This proves that $\Sigma P$ is an oriented thin poset.
\end{proof}

\begin{prop} \label{prop:suspensionglob}
Let $P$ be an oriented thin poset and $U$ a closed subset. Then $U$ is a constructible $(n-1)$\nbd molecule in $P$ if and only if $\Sigma U$ is a constructible $n$\nbd molecule in $\Sigma P$. Consequently, if $P$ is a constructible directed complex, then $\Sigma P$ is a constructible directed complex.
\end{prop}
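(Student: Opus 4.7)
My plan is to prove the biconditional by induction on $n \geq 1$, preceded by a \emph{boundary lemma}. First I would show that for any pure inhabited closed subset $U \subseteq P$ of dimension $n-1 \geq 1$, the boundaries computed in $\Sigma P$ satisfy $\bord{}{\alpha}(\Sigma U) = \Sigma(\bord{}{\alpha}U)$. The reason is that $\bot^-$ and $\bot^+$ are minimal in $\Sigma P$, so they cover nothing and the Hasse diagram above $U$ is unchanged by suspension; hence $\sbord{}{\alpha}(\Sigma U) = \sbord{}{\alpha}U$, and taking closure in $\Sigma P$ adds exactly $\{\bot^-,\bot^+\}$ to any inhabited subset of $P$, which is precisely what $\Sigma$ does. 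A companion observation is that $V \mapsto \Sigma V$ is a bijection between pure inhabited $k$-dimensional closed subsets of $P$ and pure $(k+1)$-dimensional closed subsets of $\Sigma P$, with inverse $W \mapsto W \cap P$; this will be used throughout to ``desuspend'' pure pieces of $\Sigma P$ back to $P$.

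The base case $n = 1$ is direct: a $0$-molecule is $\{x\}$ for some $0$-dimensional $x \in P$, and $\Sigma\{x\}$ is visibly $O^1$; conversely, any constructible $1$-molecule $W \subseteq \Sigma P$ is forced to be atomic, because $W$ contains both $\bot^-$ and $\bot^+$ (they appear in the closure of any $1$-dimensional element of $\Sigma P$), so any splitting $W = W_1 \cup W_2$ would give $W_1 \cap W_2 \supseteq \{\bot^-, \bot^+\}$, which is not a $0$-molecule. For the inductive step, assuming the biconditional for dimensions below $n$, I would run a secondary induction on the number of maximal elements of $U$. If $U$ is atomic, so is $\Sigma U$ (same greatest element), and its boundaries are $\Sigma(\bord{}{\alpha}U)$, hence constructible $(n-1)$-molecules by the outer IH. If $U = U_1 \cup U_2$ splits, then $\Sigma U = \Sigma U_1 \cup \Sigma U_2$ and the required identities
\begin{equation*}
\Sigma U_1 \cap \Sigma U_2 \;=\; \Sigma(U_1 \cap U_2) \;=\; \Sigma(\bord{}{+}U_1 \cap \bord{}{-}U_2) \;=\; \bord{}{+}(\Sigma U_1) \cap \bord{}{-}(\Sigma U_2)
\end{equation*}
follow from the boundary lemma. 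The converse direction is symmetric: by the desuspension bijection every vertex of a merger tree for $\Sigma U$ is forced to be labelled by $\Sigma V$ for some $V \subseteq U$, and the induced tree on the $V$'s is a valid merger tree for $U$ once the vertex conditions are transported via the outer IH.

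The consequent then follows easily: $\Sigma P$ is oriented thin by the preceding lemma, and for each $x \in \Sigma P$ either $x = \bot^\pm$, in which case $\clos\{x\} = \{x\}$ is a trivial $0$-atom, or $x \in P$, in which case $\clos_{\Sigma P}\{x\} = \Sigma(\clos_P\{x\})$ is a constructible atom by the biconditional applied to the constructible atom $\clos_P\{x\}$. The main subtlety I expect will be managing the submolecule relation $\sqsubseteq$ in tandem with the main induction: one must prove jointly that $V \sqsubseteq W$ in $P$ iff $\Sigma V \sqsubseteq \Sigma W$ in $\Sigma P$, because the submolecule conditions attached to each splitting step (e.g.\ $\bord{}{-}U_1 \sqsubseteq \bord{}{-}U$) have to be transported simultaneously with the molecule structure. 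Once the desuspension bijection is in place, this transport is essentially mechanical, but it is the one spot where a sloppy induction could break.
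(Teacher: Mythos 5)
Your proposal is correct and follows essentially the same route as the paper: establish the base case by showing $\glob{1}{(\Sigma P)} \cong \glob{0}{P}$ (with no non-atomic $1$\nbd molecules in $\Sigma P$), then induct on dimension using the identification $\Sigma U = U \cup \{\bot^+,\bot^-\}$ and the fact that boundaries commute with suspension. Your explicit remark that the submolecule relation $\sqsubseteq$ must be transported jointly with the main induction is a genuine refinement of the paper's rather terse argument — it is not stated there, but it is indeed what is needed to make the inductive step rigorous, and your treatment handles it cleanly.
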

\begin{proof}
For all $0$\nbd dimensional $x \in P$, $\clos{\{x\}}$ is isomorphic to the 1-globe in $\Sigma P$. For all other 0-dimensional $y$ in $P$, it is always the case that $\bord{}{\alpha}x \cap \bord{}{-\alpha}y$ is empty in $\Sigma P$; it follows that there are no non-atomic 1\nbd molecules in $\Sigma P$. Hence, we can establish a bijection between $\glob{1}{(\Sigma P)}$ and $\glob{0}{P}$, and use it as the basis of an inductive proof that $\glob{n}{(\Sigma P)}$ is in bijection with  $\glob{n-1}{P}$, for all $n > 0$: if $U$ is a constructible $(n-1)$\nbd molecule in $P$, then $\Sigma U = U \cup \{\bot^+,\bot^-\}$ is a constructible $n$\nbd molecule in $\Sigma P$.

Because $\Sigma \clos{\{x\}}$ is just $\clos\{x\}$ in $\Sigma P$, it follows that $\Sigma P$ is a constructible directed complex whenever $P$ is.
\end{proof}

\begin{exm}
For all $n > 0$, $\Sigma (O^{n-1})$ is isomorphic to $O^n$.
\end{exm}

\begin{cons} \label{cons:laxgray}
Let $P, Q$ be oriented graded posets. The \emph{lax Gray product} $P \tensor Q$ of $P$ and $Q$ is the product poset $P \times Q$, which is graded, with the following orientation: write $x \tensor y$ for an element $(x,y)$ of $P \times Q$, seen as an element of $P \tensor Q$; then, for all $x'$ covered by $x$ in $P$, and all $y'$ covered by $y$ in $Q$, let
\begin{align*}
	o(c_{x \tensor y, x' \tensor y}) & := o_P(c_{x,x'}), \\
	o(c_{x \tensor y, x \tensor y'}) & := (-)^{\dmn{x}}o_Q(c_{y,y'}),
\end{align*}
where $o_P$ and $o_Q$ are the orientations of $P$ and $Q$, respectively.

The lax Gray product defines a monoidal structure on $\ogpos$, whose monoidal unit is $1$, the oriented graded poset with a single element.
\end{cons}

\begin{remark}
The choice of orientation comes from tensor products of chain complexes; we will see from Construction \ref{cons:adc} that it ensures that a certain functor from $\globpos$ to a category of chain complexes with additional structure is monoidal.
\end{remark}

\begin{lem} \label{lem:thintensor}
If $P, Q$ are oriented thin posets, then so is $P \tensor Q$.
\end{lem}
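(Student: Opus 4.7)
The plan is to verify the thinness condition by case analysis on the shape of a length-2 interval $[w, z]$ in $(P \tensor Q)_\bot$, using the description of the covering relation in a product poset: $x \tensor y$ covers $x' \tensor y'$ iff either $x' \prec x$ in $P$ with $y = y'$, or $x = x'$ with $y' \prec y$ in $Q$. This divides the intervals not containing $\bot$ into three families: (i) $[x' \tensor y, x \tensor y]$ with $[x', x]$ of length 2 in $P_\bot$; (ii) $[x \tensor y', x \tensor y]$ with $[y', y]$ of length 2 in $Q_\bot$; (iii) $[x' \tensor y', x \tensor y]$ with $x' \prec x$ and $y' \prec y$. The intervals involving $\bot$ correspond to $[\bot, x \tensor y]$ with $\dmn{x} + \dmn{y} = 1$.

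For (i) and (ii), I would argue that the intermediate elements are exactly $\{z \tensor y : z \text{ intermediate in } [x', x]\}$, respectively $\{x \tensor z : z \text{ intermediate in } [y', y]\}$, so thinness of $P$ (resp.\ $Q$) gives exactly four elements. For the sign rule in (i), the orientations on both covering pairs in the rectangle are inherited verbatim from $o_P$, so $\alpha_1\beta_1 = -\alpha_2\beta_2$ transfers directly. For (ii) both covering pairs pick up a common factor $(-)^{\dmn{x}}$ from the definition of the Gray product orientation, so the two factors cancel pairwise and the sign rule for $Q$ transfers to $P \tensor Q$.

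Case (iii) is the genuinely new case and will be the main thing to check carefully. I would first show by direct inspection of the product covering relation that the only intermediate elements are $x' \tensor y$ and $x \tensor y'$, so the interval has exactly four elements. Then, writing $\alpha := o_P(c_{x,x'})$ and $\beta := o_Q(c_{y,y'})$, the four edge labels are $\alpha$ and $(-)^{\dmn{x}}\beta$ at the top, and $(-)^{\dmn{x'}}\beta = -(-)^{\dmn{x}}\beta$ and $\alpha$ at the bottom (using $\dmn{x'} = \dmn{x} - 1$). The two diagonal products are $-\alpha\cdot(-)^{\dmn{x}}\beta$ and $(-)^{\dmn{x}}\beta \cdot \alpha$, which are indeed negatives of each other. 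This is exactly where the Koszul-style sign $(-)^{\dmn{x}}$ in Construction~\ref{cons:laxgray} earns its keep.

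Finally, for the intervals of the form $[\bot, x \tensor y]$ with $\dmn{x} + \dmn{y} = 1$, the intermediates are all $0$-dimensional elements of $P \tensor Q$ covered by $x \tensor y$, hence of the form $x' \tensor y$ (if $\dmn{x}=1$) or $x \tensor y'$ (if $\dmn{y}=1$), with $x'$ (resp.\ $y'$) $0$-dimensional. Thinness of the corresponding interval in $P_\bot$ (or $Q_\bot$) gives two such elements with opposite top-edge signs; since every $0$-dimensional element of $P \tensor Q$ is a product of two minimal elements, the two bottom edges to $\bot$ are both $+$ by the convention fixed before Definition~\ref{dfn:thinness}, and the sign rule again holds trivially. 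The main obstacle is just the bookkeeping of the Koszul sign in case (iii); once that is unwound, all other cases are essentially immediate from thinness of $P$ and $Q$.
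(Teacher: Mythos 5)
Your proposal is correct and follows essentially the same three-case (plus $\bot$) decomposition as the paper's proof, with the sign computation $(-)^{\dmn{x'}} = -(-)^{\dmn{x}}$ carrying the genuinely new case (iii) exactly as in the paper. You supply a bit more explicit detail than the paper for the $[\bot, x \tensor y]$ case, which the paper leaves to the reader, and there is a minor notational slip (writing $P_\bot$ instead of $P$ in cases (i)--(ii), though you've already excluded $\bot$ there), but the argument is sound.
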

\begin{proof}
An interval of length 2 in $P \tensor Q$ has one of the following forms:
\begin{itemize}
	\item $[x', x] \tensor \{y\}$ for some interval $[x', x]$ of length 2 in $P$, and $y \in Q$;
	\item $\{x\} \tensor [y',y]$ for some interval $[y',y]$ of length 2 in $Q$, and $x \in P$;
	\item $[x'\tensor y', x \tensor y]$, where $x$ covers $x'$ in $P$ and $y$ covers $y'$ in $Q$.
\end{itemize}
In the first two cases, oriented thinness follows from the oriented thinness of $P$ and $Q$, respectively. In the last case, let $\alpha := o_P(c_{x,x'})$, $\beta := o_Q(c_{y,y'})$. Then the interval has the form
\begin{equation*}
\begin{tikzpicture}[baseline={([yshift=-.5ex]current bounding box.center)}, scale=1.5]
	\node[scale=1.25] (0) at (0,2) {$x\tensor y$};
	\node[scale=1.25] (1) at (-1,1) {$x' \tensor y$};
	\node[scale=1.25] (1b) at (1,1) {$x \tensor y'$};
	\node[scale=1.25] (2) at (0,0) {$x' \tensor y'$};
	\draw[1c] (0) to node[auto,swap] {$\alpha$} (1);
	\draw[1c] (0) to node[auto] {$(-)^{\dmn{x}}\beta$} (1b);
	\draw[1c] (1) to node[auto,swap] {$(-)^{\dmn{x'}}\beta$} (2);
	\draw[1c] (1b) to node[auto] {$\alpha$} (2);
\end{tikzpicture}
\end{equation*}
in the labelled Hasse diagram of $P \tensor Q$, and $(-)^{\dmn{x'}} = (-)^{\dmn{x} - 1} = - (-)^{\dmn{x}}$. The case of intervals $[\bot, x \tensor y]$ in $(P \tensor Q)_\bot$ can easily be handled explicitly.
\end{proof}

In fact, the lax Gray product preserves the class of constructible molecules, and consequently of constructible directed complexes. We give a first sketch of the proof.
\begin{lem} \label{lem:globtensor}
Let $U$ be a constructible $n$\nbd molecule and $V$ a constructible $m$\nbd molecule. Then $U \tensor V$ is a constructible $(n+m)$\nbd . If $U' \sqsubseteq U$ and $V' \sqsubseteq V$, then $U' \tensor V' \sqsubseteq U \tensor V$.
\end{lem}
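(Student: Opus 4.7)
My plan is to induct on the total number of maximal elements of $U$ and $V$, with the atomic base case handled by a secondary induction on $n + m$, proving both claims simultaneously. The pivotal ingredient is the boundary formula
\[
\bord{}{\alpha}(U \tensor V) = (\bord{}{\alpha}U \tensor V) \cup (U \tensor \bord{}{(-)^n\alpha}V),
\]
which I would establish first by inspecting covering relations: the coverings of $u \tensor v$ in $U \tensor V$ are exactly $u' \tensor v$ and $u \tensor v'$, with orientations $o_P(c_{u,u'})$ and $(-)^{\dmn{u}}o_Q(c_{v,v'})$ respectively. Computing $\sbord{}{\alpha}(U \tensor V)$ directly and taking closure (using purity of $U$ and $V$) yields the formula.

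For the atomic base case, with $U = \clos\{x\}$ and $V = \clos\{y\}$, the product $U \tensor V$ has $x \tensor y$ as unique greatest element, of dimension $n + m$. To conclude it is a constructible atom I would invoke Proposition \ref{prop:atomicnglobe}. The two summands in the boundary formula are constructible $(n{+}m{-}1)$\nbd molecules by the inductive hypothesis (both factors of strictly lower total dimension), their intersection is $\bord{}{\alpha}U \tensor \bord{}{(-)^n\alpha}V$, again constructible by IH, and a recursive application of the boundary formula together with globularity of $U$ and $V$ (Theorem \ref{thm:globularity}) exhibits this as a valid split of $\bord{}{\alpha}(U \tensor V)$. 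Globularity of $U \tensor V$ follows from two applications of the boundary formula with the appropriate sign bookkeeping, reducing to the globularity of $U$ and of $V$ separately.

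For the inductive step, suppose $U$ splits as $U_1 \cup U_2$, so that $U \tensor V = (U_1 \tensor V) \cup (U_2 \tensor V)$ with each factor a constructible $(n{+}m)$\nbd molecule by the IH. Expanding $\bord{}{+}(U_1 \tensor V) \cap \bord{}{-}(U_2 \tensor V)$ via the boundary formula gives four cross terms; using the split condition (which implies $\bord{}{+}U_1 \cap U_2 = U_1 \cap U_2$) and $\bord{}{}V \subseteq V$, these collapse to $(U_1 \cap U_2) \tensor V$, which coincides with $(U_1 \tensor V) \cap (U_2 \tensor V)$ and is a constructible $(n{+}m{-}1)$\nbd molecule by IH. The four submolecule conditions — such as $\bord{}{-}(U_1 \tensor V) \sqsubseteq \bord{}{-}(U \tensor V)$ and $(U_1 \cap U_2) \tensor V \sqsubseteq \bord{}{+}(U_1 \tensor V)$ — follow from the submolecule-preservation part of the IH applied to the corresponding inclusions in $U$ alone, together with the matching of the $V$\nbd factor contributions on both sides via the boundary formula. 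Splits of $V$ are handled symmetrically. The submolecule preservation claim $U' \tensor V' \sqsubseteq U \tensor V$ then follows by assembling any merger trees for $U$ containing $U'$ and for $V$ containing $V'$ into a merger tree for $U \tensor V$ containing $U' \tensor V'$.

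The main obstacle is the bookkeeping needed to verify the submolecule conditions, particularly in the atomic base case where one must exhibit an explicit merger-tree structure on $\bord{}{\alpha}(U \tensor V)$ assembled from IH-provided pieces of lower dimension while keeping track of the signs introduced by the $(-)^n$ twist. It is precisely this level of detail that the author defers to Appendix \ref{sec:appendix}.
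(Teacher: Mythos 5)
Your overall strategy matches the paper's: double induction, the boundary formula $\bord{}{\alpha}(U \tensor V) = (\bord{}{\alpha}U \tensor V) \cup (U \tensor \bord{}{(-)^n\alpha}V)$, and the split $U \tensor V = (U_1 \tensor V) \cup (U_2 \tensor V)$. Appealing to Proposition~\ref{prop:atomicnglobe} for the atomic case is a harmless variation on the paper's route. But there is a genuine gap in the claim that the submolecule conditions ``follow from the submolecule-preservation part of the IH applied to the corresponding inclusions in $U$ alone, together with the matching of the $V$\nbd factor contributions on both sides via the boundary formula.'' Take $\bord{}{-}(U_1 \tensor V) \sqsubseteq \bord{}{-}(U \tensor V)$. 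The IH applied to $\bord{}{-}U_1 \sqsubseteq \bord{}{-}U$ gives $\bord{}{-}U_1 \tensor V \sqsubseteq \bord{}{-}U \tensor V$, which is a relation between \emph{products}. But $\bord{}{-}(U_1 \tensor V) = (\bord{}{-}U_1 \tensor V) \cup (U_1 \tensor \bord{}{(-)^{n-1}}V)$ and $\bord{}{-}(U \tensor V) = (\bord{}{-}U \tensor V) \cup (U \tensor \bord{}{(-)^{n-1}}V)$ are \emph{unions} of products, and nothing in the IH or in ``matching the $V$\nbd factor contributions'' directly lifts the submolecule relation between the product summands to one between the unions. This is precisely where the paper's Appendix~\ref{sec:appendix} introduces the interpolation $\tilde W := (W \tensor V) \cup (U_1 \tensor \bord{}{(-)^{n-1}}V)$ for $\bord{}{-}U_1 \sqsubseteq W \sqsubseteq \bord{}{-}U$ and inducts up the chain of submolecules; this is a genuinely new auxiliary argument, at least as substantial as the atomic-case bookkeeping you flag. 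A secondary issue: the induction measure should be lexicographic with $n+m$ primary and maximal elements secondary, not the other way around --- when verifying the atomic case, the IH is invoked for $\bord{}{\alpha}U \tensor V$, which has strictly lower dimension but generally \emph{more} maximal elements than $U \tensor V$, so an induction led by the count of maximal elements is not well-founded as stated.
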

\begin{proof}[Sketch of the proof]
We proceed by double induction on the dimension and number of maximal elements of $U$ and $V$. If $U$ or $V$ is 0-dimensional, then $U \tensor V$ is isomorphic to $V$ or $U$, respectively, and there is nothing to prove. 

Suppose $n, m > 0$; then $U \tensor V$ is pure and $(n+m)$\nbd dimensional. We first need to show that $\bord{}{\alpha}(U \tensor V)$ is a constructible molecule. We have
\begin{equation*}
	\bord{}{\alpha}(U \tensor V) = (\bord{}{\alpha}U \tensor V) \cup (U \tensor \bord{}{(-)^n\alpha}V);
\end{equation*}
by the inductive hypothesis, both $(\bord{}{\alpha}U \tensor V)$ and $(U \tensor \bord{}{(-)^n\alpha}V)$ are constructible, and their intersection $\bord{}{\alpha}U \tensor \bord{}{(-)^n\alpha}V$ is constructible. We can show that this is, in fact, a decomposition into constructible submolecules.

If $U$ and $V$ are atomic, there is nothing else to prove. Suppose $U$ is non-atomic and splits as $U_1 \cup U_2$, so $U \tensor V = (U_1 \tensor V) \cup (U_2 \tensor V)$. By the inductive hypothesis, $(U_1 \tensor V)$ and $(U_2 \tensor V)$ are both constructible $(n+m)$\nbd molecules, and their intersection $(U_1 \tensor V) \cap (U_2 \tensor V) = (U_1 \cap U_2) \tensor V$ is a constructible $(n+m-1)$\nbd molecule. We can show that this is a decomposition into constructible submolecules.

In the same way, if $V$ splits as $V_1 \cup V_2$, then $U \tensor V$ splits as $(U \tensor V_1) \cup (U \tensor V_2)$.
\end{proof}

While the structure of the proof is straightforward, checking the various submolecule conditions on the boundaries, or that the $\bord{}{\beta}(\bord{}{\alpha}(U \tensor V))$ are themselves submolecules, requires a series of tedious inductive arguments. We give a more detailed proof in Appendix \ref{sec:appendix}.

\begin{prop} \label{thm:globpostensor}
If $P, Q$ are constructible directed complexes, then so is $P \tensor Q$.
\end{prop}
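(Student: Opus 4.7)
The plan is to reduce the proposition to the two facts already established in this section: namely Lemma \ref{lem:thintensor}, that $\tensor$ preserves oriented thinness, and Lemma \ref{lem:globtensor}, that $\tensor$ sends a pair of constructible molecules to a constructible molecule. The only content the definition of constructible directed complex adds on top of oriented thinness is the requirement that $\clos\{z\}$ be a constructible atom for every element $z$.

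First, by Lemma \ref{lem:thintensor} the oriented graded poset $P \tensor Q$ is oriented thin, so we only need to verify the atom condition on each principal downset. Fix $x \in P$ and $y \in Q$; I would then check that the principal downset $\clos\{x \tensor y\}$ in $P \tensor Q$ coincides, as a sub-oriented-graded-poset, with $\clos\{x\} \tensor \clos\{y\}$. This is immediate at the level of underlying posets, since the order on $P \tensor Q$ is the product order on $P \times Q$, so $(x',y') \leq (x,y)$ iff $x' \leq x$ in $P$ and $y' \leq y$ in $Q$. Compatibility of the orientations is then automatic from Construction \ref{cons:laxgray}, since the labels on cover relations in $P \tensor Q$ are determined by the labels on the restricted cover relations in $P$ and $Q$ together with a sign depending on $\dmn{x'}$, which is intrinsic to the elements.

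Now, because $P$ and $Q$ are constructible directed complexes, $\clos\{x\}$ is a constructible atom of some dimension $n$ and $\clos\{y\}$ is a constructible atom of some dimension $m$; in particular both are constructible molecules. By Lemma \ref{lem:globtensor}, $\clos\{x\} \tensor \clos\{y\}$ is a constructible $(n+m)$\nbd molecule. Since $x \tensor y$ is by construction its greatest element, it is a constructible atom by the definition of atom. Therefore $\clos\{x \tensor y\}$ is a constructible atom in $P \tensor Q$, and $P \tensor Q$ is a constructible directed complex.

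There is no real obstacle here: all of the combinatorial work has been absorbed into Lemmas \ref{lem:thintensor} and \ref{lem:globtensor}. The only thing to be careful about is the identification $\clos\{x \tensor y\} = \clos\{x\} \tensor \clos\{y\}$, which both allows us to apply Lemma \ref{lem:globtensor} and supplies the greatest element needed to upgrade from ``molecule'' to ``atom''. The statement could equally well have been recorded as a corollary of the substantive content established in the preceding results.
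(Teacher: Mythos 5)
Your proof is correct and is essentially identical to the paper's: both reduce to the identity $\clos\{x \tensor y\} = \clos\{x\} \tensor \clos\{y\}$ and then invoke Lemma \ref{lem:globtensor}, with thinness supplied by Lemma \ref{lem:thintensor}. You simply spell out a few intermediate observations (the product-order identification and the upgrade from molecule to atom via the greatest element) that the paper leaves implicit.
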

\begin{proof}
For all $x \in P$ and $y \in Q$, we have $\clos\{x \tensor y\} = \clos\{x\} \tensor \clos\{y\}$, which is a constructible molecule by Lemma \ref{lem:globtensor}.
\end{proof}

Thus, the monoidal structure $(\ogpos, \boxtimes, 1)$ restricts to $\globpos$.

\begin{exm}
The $n$\nbd fold lax Gray product $K^n := \underbrace{\vec{I}\tensor\ldots\tensor\vec{I}}_n$ is the $n$\nbd cube, with the conventional orientation of homological algebra \cite{al2002multiple}.
\end{exm}

\begin{cons} \label{cons:join}
Let $P, Q$ be oriented graded posets, and take the lax Gray product $P_\bot \tensor Q_\bot$. This has a unique 0-dimensional element $\bot \tensor \bot$, and all 1-dimensional elements $x \tensor \bot$ and $\bot \tensor y$ cover it with orientation $+$; thus, $P_\bot \tensor Q_\bot$ is isomorphic to $(P \join Q)_\bot$ for a unique oriented graded poset $P \join Q$, the \emph{join} of $P$ and $Q$. 

We introduce the following notation for elements of $P \join Q$:
\begin{itemize}
	\item for all $x \in P^{(n)}$, we write $x \in P \join Q$ for the $n$\nbd dimensional element corresponding to $x \tensor \bot$ in $P_\bot \tensor Q_\bot$;
	\item for all $y \in Q^{(m)}$, we write $y \in P \join Q$ for the $m$\nbd dimensional element corresponding to $\bot \tensor y$ in $P_\bot \tensor Q_\bot$;
	\item for all $x \in P^{(n)}$, $y \in Q^{(m)}$, we write $x \join y \in P \join Q$ for the $(n+m+1)$\nbd dimensional element corresponding to $x \tensor y$ in $P_\bot \tensor Q_\bot$.
\end{itemize}
The first two define inclusions $P \hookrightarrow P \join Q$ and $Q \hookrightarrow P \join Q$. 

The join is an associative operation because the lax Gray product is, and it extends to a monoidal structure on $\ogpos$, whose unit is the empty oriented graded poset $\emptyset$. This makes $(-)_\bot$ a monoidal functor from $(\ogpos, \join, \emptyset)$ to $(\ogpos, \tensor, 1)$.
\end{cons}

\begin{remark}
By monoidal functor, we always mean a \emph{strong} monoidal functor.
\end{remark}

\begin{lem}
If $P, Q$ are oriented thin posets, then so is $P \join Q$.
\end{lem}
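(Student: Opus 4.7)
The strategy is to transport the thinness condition across the isomorphism $(P \join Q)_\bot \simeq P_\bot \tensor Q_\bot$ from Construction \ref{cons:join}, then adapt the argument of Lemma \ref{lem:thintensor}. Thinness of $P \join Q$ amounts to showing that every interval of length 2 in $P_\bot \tensor Q_\bot$ has exactly four elements and satisfies the sign condition $\alpha_1\beta_1 = -\alpha_2\beta_2$. Such intervals come in three shapes: $[x', x] \tensor \{y\}$ for an interval $[x', x]$ of length 2 in $P_\bot$; $\{x\} \tensor [y', y]$ for an interval $[y', y]$ of length 2 in $Q_\bot$; and ``diamonds'' $[x' \tensor y', x \tensor y]$ where $x$ covers $x'$ in $P_\bot$ and $y$ covers $y'$ in $Q_\bot$.

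For the first shape, intervals of length 2 in $P_\bot$ regarded as a graded poset in its own right (with $\dmn{\bot} = 0$ and the dimensions of elements of $P$ shifted up by one) are in bijection with the intervals of length 2 in $P_\bot$ used in Definition \ref{dfn:thinness}; the sign condition from oriented thinness of $P$ therefore transfers unchanged to $[x', x] \tensor \{y\}$, since orientations on $P$-component edges are copied verbatim by Construction \ref{cons:laxgray}. The second shape is symmetric: the extra factor $(-)^{\dmn{x}}$ on $Q$-component edges is constant along $\{x\} \tensor [y', y]$ and thus cancels from both sides of the sign condition provided by oriented thinness of $Q$. The diamond shape reuses the sign computation of Lemma \ref{lem:thintensor}: its four edges carry labels $\alpha$, $(-)^{\dmn{x}}\beta$, $(-)^{\dmn{x'}}\beta$, $\alpha$, with $\alpha := o_{P_\bot}(c_{x,x'})$ and $\beta := o_{Q_\bot}(c_{y,y'})$, and $\dmn{x'} = \dmn{x} - 1$ forces $(-)^{\dmn{x'}} = -(-)^{\dmn{x}}$, delivering the required sign flip.

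The subtle point, and the reason this is not quite a direct corollary of Lemma \ref{lem:thintensor}, is that $P_\bot$ and $Q_\bot$ are not themselves oriented thin posets in the sense of Definition \ref{dfn:thinness} --- that definition would refer to intervals in $(P_\bot)_\bot$ rather than $P_\bot$. However, the case analysis above only uses the sign condition on intervals of length 2 inside $P_\bot$ and $Q_\bot$, which is exactly what oriented thinness of $P$ and $Q$ provides. The main book-keeping is to verify that intervals touching the adjoined minimums of $P_\bot$ or $Q_\bot$ still fall into the three shapes listed, and that the extension $o(c_{y, \bot}) := +$ for minimal $y$ used to make $(-)_\bot$ an endofunctor of $\ogpos$ is compatible with the orientations produced by the Gray product on $P_\bot \tensor Q_\bot$; both of these are immediate from the definitions.
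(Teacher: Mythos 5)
Your proof is correct and follows essentially the same route as the paper's: identify $(P\join Q)_\bot$ with $P_\bot\tensor Q_\bot$, classify length-2 intervals into the same three shapes, use oriented thinness of $P$ and $Q$ for the first two and the Gray-product sign computation for diamonds. The paper is terser (it invokes oriented thinness of $P\tensor Q$ for diamonds away from $\bot$ and leaves the few $\bot$-cases as an explicit check), whereas you correctly observe that the diamond sign computation from Lemma \ref{lem:thintensor} only uses the oriented-graded-poset structure of the factors, so it applies uniformly to $P_\bot\tensor Q_\bot$; that is a marginally cleaner treatment of the same step.
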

\begin{proof}
An interval of length 2 in $(P \join Q)_\bot = P_\bot \tensor Q_\bot$ is either of the form $[x',x]\tensor\{y\}$, or of the form $\{x\}\tensor [y',y]$, or of the form $[x'\tensor y',x \tensor y]$ for some $x', x \in P_\bot$ and $y', y \in Q_\bot$. In the first two cases, we can use oriented thinness of $P$ and $Q$. In the third case, if $x', y' \neq \bot$ we can use oriented thinness of $P \tensor Q$. The remaining few cases are easily checked explicitly.
\end{proof}

\begin{prop} \label{lem:globjoin}
Let $U, V$ be constructible molecules, $\dmn{U} = n, \dmn{V} = m$. Then $U \join V$ is a constructible $(n+m+1)$\nbd molecule. If $U' \sqsubseteq U$ and $V' \sqsubseteq V$, then $U' \join V' \sqsubseteq U \join V$.
\end{prop}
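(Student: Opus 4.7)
The plan is to closely mirror the proof of Lemma \ref{lem:globtensor}, exploiting the identification $(U \join V)_\bot \simeq U_\bot \tensor V_\bot$ from Construction \ref{cons:join}. First I would verify oriented thinness of $U \join V$ as in Lemma \ref{lem:thintensor}: every interval of length $2$ in $(U \join V)_\bot$ either descends from an interval in $U_\bot \tensor V_\bot$ already handled by the Gray-product argument, or involves the bottom element $\bot \tensor \bot$ together with one of its covers, which can be checked explicitly from the construction.

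By translating the covering and orientation structure across the shift of grading induced by $(-)_\bot$ (so that $u \in U^{(k)}$ sits in dimension $k+1$ in $U_\bot$), one obtains the decomposition
\begin{equation*}
	\bord{}{\alpha}(U \join V) = (\bord{}{\alpha}U \join V) \cup (U \join \bord{}{(-)^{n+1}\alpha}V),
\end{equation*}
with intersection $\bord{}{\alpha}U \join \bord{}{(-)^{n+1}\alpha}V$, in the generic case $n, m \geq 1$; the sign $(-)^{n+1}$ comes from the Gray orientation rule applied to elements of $U_\bot$. When $n = 0$ or $m = 0$ extra care is needed, since $\bord{}{\alpha}U$ or $\bord{}{\alpha}V$ is empty: these edge cases correspond to ``cone'' constructions over $V$ (respectively $U$), whose boundaries can be computed directly from the definitions and are themselves constructible molecules.

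I would then proceed by double induction on the number of maximal elements of $U$ and of $V$, following Lemma \ref{lem:globtensor}. In the atomic base case, the inductive hypothesis, applied at lower dimension to both summands of the displayed decomposition, makes $\bord{}{\alpha}U \join V$ and $U \join \bord{}{(-)^{n+1}\alpha}V$ constructible molecules whose intersection is a common constructible submolecule. Together with the existence of the greatest element $u^* \join v^*$ (where $u^*, v^*$ are the greatest elements of $U, V$), this shows that $U \join V$ is a constructible atom of dimension $n + m + 1$. In the inductive step, if $U$ splits as $U_1 \cup U_2$, then $U \join V$ splits as $(U_1 \join V) \cup (U_2 \join V)$ with intersection $(U_1 \cap U_2) \join V$, and all submolecule conditions transfer directly from the splitting of $U$; the case where $V$ splits is symmetric.

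The closure under submolecules, that $U' \sqsubseteq U$ and $V' \sqsubseteq V$ imply $U' \join V' \sqsubseteq U \join V$, follows by a straightforward induction on merger trees: any splitting of $U$ featuring $U'$ as a vertex label induces a splitting of $U \join V$ featuring $U' \join V$, and analogously for $V$. The main obstacle, as with Lemma \ref{lem:globtensor}, is the bookkeeping of submolecule conditions in the boundary decompositions; but since the join reduces to a lax Gray product via $(-)_\bot$, the verifications are essentially parallel to those of Appendix \ref{sec:appendix} and could similarly be deferred there, with the low-dimensional edge cases checked separately.
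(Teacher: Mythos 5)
Your approach is sound in outline but takes a genuinely different route from the paper's, and the difference is worth understanding. You work directly with the identification $(U \join V)_\bot \simeq U_\bot \tensor V_\bot$ and propose to re-run the double induction of Lemma \ref{lem:globtensor} from scratch, adjusting signs and boundary formulas as you go. The paper instead defines an injective assignment $j : U \join V \to \Sigma U \tensor \Sigma V$ (sending $x \mapsto x \tensor \bot^+$, $y \mapsto \bot^+ \tensor y$, $x \join y \mapsto x \tensor y$) and pulls the proof of Lemma \ref{lem:globtensor} back through $j$, using Proposition \ref{prop:suspensionglob} to translate the submolecule relation. The detour through suspensions is not cosmetic. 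First, $\Sigma U$ and $\Sigma V$ are constructible molecules whereas $U_\bot$ and $V_\bot$ are not (in $U_\bot$, any $1$\nbd dimensional element $x$ has $\bord{}{-}\clos\{x\} = \emptyset$), so the inductive machinery of Lemma \ref{lem:globtensor} applies to $\Sigma U \tensor \Sigma V$ essentially verbatim. Second, $\bord{}{\alpha}(\Sigma U) = \{\bot^\alpha\}$ when $U$ is $0$\nbd dimensional, which is nonempty, and this is exactly what makes the pulled-back boundary formula uniform in $n, m$: the slice $\{\bot^-\} \tensor \Sigma V$ lies entirely outside the image of $j$, so $\invrs{j}\bord{}{-}(\Sigma U \tensor \Sigma V)$ automatically omits the spurious copy of $V$.

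That last point is where your sketch underestimates the work. The displayed boundary formula is correct for $n, m \geq 1$, but it genuinely fails at $n = 0$, because $\emptyset$ is the monoidal \emph{unit} for $\join$: one has $\bord{}{\alpha}U \join V = V$, not $\emptyset$, when $\bord{}{\alpha}U$ is empty, in contrast to the Gray product where $\emptyset \tensor V = \emptyset$. Applied to $1 \join V$ with $\alpha = -$, the formula would give $\bord{}{-}(1 \join V) = V \cup (1 \join \bord{}{+}V)$, which wrongly contains all of $V$; the correct answer is $\bord{}{-}(1 \join V) = 1 \join \bord{}{+}V$, and only $\bord{}{+}(1 \join V) = V \cup (1 \join \bord{}{-}V)$ carries the extra copy of $V$. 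You do flag that these edge cases need ``extra care'', which keeps the proposal from being wrong, but the input/output asymmetry in the boundary of the cone $1 \join V$ is a real structural feature that your uniform formula obscures and that has to be worked out case by case; the paper's $j$\nbd pullback absorbs it silently.
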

\begin{proof}
Consider the following assignment $j$ of elements of $\Sigma U \tensor \Sigma V$ to elements of $U \join V$ (not an inclusion of oriented graded posets):
\begin{equation*}
	j(z) := \begin{cases} x \tensor \bot^+ & \text{if } z = x \in P, \\
		\bot^+ \tensor y & \text{if } z = y \in Q, \\
		x \tensor y & \text{if } z = x \join y, x \in P, y \in Q.
		\end{cases}
\end{equation*}
This is induced by $(U \join V)_\bot = (U_\bot \tensor V_\bot)$ via the functions $U_\bot \to \Sigma U$, $V_\bot \to \Sigma V$ that map $\bot$ to $\bot^+$. 

By Proposition \ref{prop:suspensionglob}, $U' \sqsubseteq U$ and $V' \sqsubseteq V$ imply $\Sigma U' \sqsubseteq \Sigma U$ and $\Sigma V' \sqsubseteq \Sigma V$. Furthermore, $\bord{}{\alpha}(\Sigma U) = \Sigma \bord{}{\alpha} U$ when $U$ is at least 1-dimensional, and $\bord{}{\alpha}(\Sigma U) = \{\bot^\alpha\}$ when it is 0-dimensional. 

Using these facts, it is straightforward to turn the inductive proof that $\Sigma U \tensor \Sigma V$ is a constructible $(n+m+2)$\nbd molecule, as in Lemma \ref{lem:globtensor}, and $\Sigma U' \tensor \Sigma V' \sqsubseteq \Sigma U \tensor \Sigma V$, into a proof that $U \join V = \invrs{j}(\Sigma U \tensor \Sigma V)$ is a constructible $(n+m+1)$\nbd molecule, and $U' \join V' = \invrs{j}(\Sigma U' \tensor \Sigma V') \sqsubseteq U \join V$. If $n = m = 0$, then we can directly check that $U \join V$ is isomorphic to $O^1$. If $n = 0, m > 0$, then
\begin{equation*}
	\bord{}{\alpha}(U \join V) = \invrs{j}\bord{}{\alpha}(\Sigma U \tensor \Sigma V) = \invrs{j}(\{\bot^\alpha\} \tensor \Sigma V \cup \Sigma U \tensor \Sigma \bord{}{-\alpha}V),
\end{equation*}
hence
\begin{equation*}
	\bord{}{+}(U \join V) = V \cup (U \join \bord{}{-}V), \quad \quad \bord{}{-}(U \join V) = U \join \bord{}{+}V,
\end{equation*}
where, $V, U \join \bord{}{-}V$ and $U \join \bord{}{+}V$ are all constructible $m$\nbd molecules, and the intersection $V \cap U \join \bord{}{-}V = \bord{}{-}V$ is a constructible $(m-1)$\nbd molecule; we can show that $V \cup (U \join \bord{}{-}V)$ is a decomposition into constructible submolecules. The case $n >0, m = 0$ is dual, and for $n > 0, m > 0$, or when $U$ or $V$ are not atoms, one proceeds exactly as in Lemma \ref{lem:globtensor}.
\end{proof}

\begin{prop}
If $P, Q$ are constructible directed complexes, then so is $P \join Q$.
\end{prop}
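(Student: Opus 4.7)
The plan is to mirror the proof of Proposition \ref{thm:globpostensor} for the lax Gray product, using Proposition \ref{lem:globjoin} in place of Lemma \ref{lem:globtensor}. Thinness of $P \join Q$ is already established by the previous lemma, so the only remaining task is to show that for every $z \in P \join Q$ the principal down-set $\clos\{z\}$ is a constructible atom.

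I would proceed by case analysis on the three types of elements of $P \join Q$ listed in Construction \ref{cons:join}. If $z = x$ comes from $P$, then the inclusion $P \hookrightarrow P \join Q$ is closed, so $\clos\{z\}$ in $P \join Q$ coincides with $\clos\{x\}$ in $P$, which is a constructible atom by hypothesis on $P$. The case $z = y \in Q$ is symmetric. If $z = x \join y$ with $x \in P^{(n)}$ and $y \in Q^{(m)}$, then from the identification $(P \join Q)_\bot \cong P_\bot \tensor Q_\bot$ one checks that $\clos\{x \join y\}$ in $P \join Q$ corresponds to $\clos\{x\tensor y\}$ in $P_\bot \tensor Q_\bot$, which is $\clos\{x\}_\bot \tensor \clos\{y\}_\bot$, and hence identifies with $\clos\{x\} \join \clos\{y\}$ in $P \join Q$. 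Since $\clos\{x\}$ and $\clos\{y\}$ are constructible atoms (with greatest elements $x$ and $y$), Proposition \ref{lem:globjoin} gives that $\clos\{x\} \join \clos\{y\}$ is a constructible $(n+m+1)$\nbd molecule; moreover $x \join y$ is, by construction, its greatest element, so it is an atom.

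The only delicate point is the verification that $\clos\{x \join y\} = \clos\{x\} \join \clos\{y\}$, but this is an immediate consequence of the definition of the underlying order of $P \join Q$ via $P_\bot \tensor Q_\bot$ together with the fact that the product order on $P_\bot \times Q_\bot$ sends the principal down-set of $(x,y)$ to the product of the principal down-sets. With this identification in hand, the three cases above exhaust $P \join Q$, and every closure of a singleton is a constructible atom, so $P \join Q$ is a constructible directed complex.
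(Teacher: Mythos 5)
Your proof is correct and follows the same approach as the paper: case analysis on the three kinds of elements of $P \join Q$, using the closed inclusions $P, Q \hookrightarrow P \join Q$ for elements from $P$ or $Q$, and the identity $\clos\{x \join y\} = \clos\{x\} \join \clos\{y\}$ together with Proposition \ref{lem:globjoin} for joint elements. Your write-up just makes the verification of that identity a bit more explicit than the paper does.
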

\begin{proof}
For all $x \in P$ and $y \in Q$, that $\clos\{x\}$ and $\clos\{y\}$ are constructible molecules in $P \join Q$ is immediate from the existence of inclusions $P, Q \hookrightarrow P \join Q$, and $\clos\{x \join y\} = \clos\{x\} \join \clos\{y\}$ is a constructible molecule by Proposition \ref{lem:globjoin}.
\end{proof}

It follows that the monoidal structure $(\ogpos, \join, \emptyset)$ also restricts to $\globpos$; notice however that $(-)_\bot$ is not an endofunctor on $\globpos$.

\begin{exm}
The $(n+1)$\nbd fold join $\Delta^n := \underbrace{1\join\ldots\join 1}_{n+1}$ is the oriented $n$\nbd simplex \cite{street1987algebra}.
\end{exm}

The lax Gray product and join of oriented graded posets are related to the tensor product of chain complexes, and Steiner's extension to augmented directed complexes, in the following way.

\begin{dfn}
Let $K$ and $L$ be two chain complexes. The \emph{tensor product} $K \otimes L$ of $K$ and $L$ is the chain complex defined by
\begin{equation*}
	(K \otimes L)_n := \bigoplus_{k=0}^n K_k \otimes L_{n-k}, 
\end{equation*}
\begin{equation*}
	d(x \otimes y) := dx \otimes y + (-1)^{\dmn{x}} x \otimes dy,
\end{equation*}
where $\dmn{x} = n$ if $x \in K_n$. If $K$ and $L$ are augmented, $K \otimes L$ is augmented by $e: x \otimes y \mapsto ex \, ey$.

If $K$ is an augmented chain complex, the \emph{suspension} of $K$ is the chain complex $\Sigma K$ defined by
\begin{equation*}
	(\Sigma K)_n := \begin{cases} \mathbb{Z}, & n = 0, \\ K_{n-1}, & n > 0, \end{cases} 
\end{equation*} 
\begin{equation*}
	(d: \Sigma K_{n+1} \to \Sigma K_n) := \begin{cases} e: K_0 \to \mathbb{Z}, & n = 0, \\ d: K_{n} \to K_{n-1}, & n > 0. \end{cases}
\end{equation*}
Because $(\Sigma K \otimes \Sigma L)_0 = \mathbb{Z} \otimes \mathbb{Z} \simeq \mathbb{Z}$, there is a unique (up to isomorphism) augmented chain complex $K \join L$ such that $\Sigma(K \join L) \simeq \Sigma K \otimes \Sigma L$. This augmented chain complex is the \emph{join} of $K$ and $L$.

Let $(K, K^*)$ and $(L, L^*)$ be augmented directed complexes. The \emph{tensor product} $(K, K^*) \otimes (L, L^*)$ is $K \otimes L$ together with the distinguished submonoids $(K \otimes L)_n^*$ generated by elements $x \otimes y$ of $(K \otimes L)_n$ where $x \in K_k^*$ and $y \in L_{n-k}^*$. 

The \emph{suspension} $\Sigma(K, K^*)$ is $\Sigma K$ with the distinguished submonoids $\mathbb{N}$ of $(\Sigma K)_0$ and $K^*_{n-1}$ of $(\Sigma K)_n$, and the augmentation $0: \mathbb{Z} \to \mathbb{Z}$. 

The \emph{join} of $(K, K^*)$ and $(L, L^*)$ is the unique augmented directed complex satisfying $\Sigma((K, K^*) \join (L, L^*)) \simeq \Sigma(K,K^*)\otimes \Sigma(L,L^*)$.

The tensor product determines a monoidal structure on $\adc$, whose unit is the augmented directed complex $I$ with $I_0 :=\mathbb{Z}$, $I_0^* := \mathbb{N}$, $I_n := 0$ for $n > 0$, and the identity on $\mathbb{Z}$ as augmentation. The join also determines a monoidal structure on $\adc$, whose unit is the augmented directed complex $0$ equal to 0 in every degree.
\end{dfn}

\begin{prop} \label{prop:adcmonoidal}
The functor $K$ is monoidal from $(\globpos, \tensor, 1)$ to $(\adc, \otimes, I)$ and from $(\globpos, \join, \emptyset)$ to $(\adc, \join, 0)$.
\end{prop}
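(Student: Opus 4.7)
The plan is to exhibit, for each pair $P, Q \in \globpos$, natural isomorphisms $\phi: K(P \tensor Q) \to KP \otimes KQ$ and $\psi: K(P \join Q) \to KP \join KQ$, together with the unit identifications $K(1) = I$ and $K(\emptyset) = 0$, both of which are immediate from the definitions. In both cases the underlying iso of graded abelian groups is forced by the decomposition of $n$\nbd dimensional elements: $(P \tensor Q)^{(n)} = \bigsqcup_k P^{(k)} \times Q^{(n-k)}$ gives $\phi : x \tensor y \mapsto x \otimes y$; while $(P \join Q)^{(n)} = P^{(n)} \sqcup Q^{(n)} \sqcup \bigsqcup_{k<n} P^{(k)} \times Q^{(n-k-1)}$, matched degree by degree with the unpacking $(KP \join KQ)_n = (\Sigma KP \otimes \Sigma KQ)_{n+1}$ into summands indexed by the degrees of the two factors, gives $\psi : x \mapsto x,\; y \mapsto y,\; x \join y \mapsto x \otimes y$. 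In both cases the positivity submonoids $\mathbb{N}(P \tensor Q)^{(n)}$ and $\mathbb{N}(P \join Q)^{(n)}$ are generated precisely by the basic pure elements, so $\phi$ and $\psi$ respect them on the nose.

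For the lax Gray product, I would verify the boundary compatibility by reading off Construction \ref{cons:laxgray}: a face $x' \tensor y$ of $x \tensor y$ inherits orientation $o_P(c_{x,x'})$, while $x \tensor y'$ inherits $(-)^{\dmn{x}} o_Q(c_{y, y'})$. Splitting $d(x \tensor y)$ into positive minus negative faces and collecting signs gives $(dx) \tensor y + (-)^{\dmn{x}} x \tensor (dy)$, which is the Leibniz rule defining $d$ in $\otimes$. Augmentations agree because $e(x \tensor y) = 1 = e(x)\,e(y)$ on $0$\nbd dimensional elements.

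For the join, I would exploit the ambient isomorphism $(P \join Q)_\bot \simeq P_\bot \tensor Q_\bot$ of Construction \ref{cons:join} to transport the same orientation calculation into $P_\bot \tensor Q_\bot$, then translate back via the identifications $\bot \tensor y \leftrightarrow y$ and $x \tensor \bot \leftrightarrow x$. A case analysis on whether $k := \dmn{x}$ and $l := \dmn{y}$ are zero produces the formula $d(x \join y) = (d_{P_\bot}x) \join y + (-)^{k+1}\, x \join (d_{Q_\bot}y)$, in which the exponent $k+1$ records that $\bot$ is the new $0$\nbd dimensional element of $P_\bot$, shifting the dimension of $x$ up by one inside the tensor product. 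On the $\adc$ side $d(x \otimes y) = d_{\Sigma KP}(x) \otimes y + (-)^{k+1} x \otimes d_{\Sigma KQ}(y)$, and the restriction of $d_{\Sigma KP}$ to degree $1$ is the augmentation $e_{KP}$, which exactly models the boundary-to-$\bot$ term that yields $y$ when $k = 0$ (symmetrically for $l = 0$). The augmentation of $KP \join KQ$, extracted as the bottom boundary of $\Sigma(KP \join KQ) = \Sigma KP \otimes \Sigma KQ$, sends basis elements of $KP_0$ and $KQ_0$ to $1$, matching that of $K(P \join Q)$.

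The hard part will be the sign and dimension bookkeeping for the join: the $+1$ shift implicit in both the $\Sigma$ construction on augmented complexes and in the regrading of $P_\bot$ (where $\bot$ becomes the dim-$0$ element) must be tracked consistently across all four combinations of whether $k$ and $l$ are zero or positive. Once this is pinned down the verification is a routine unpacking, and requires no new combinatorial input beyond globularity, which was already used in Construction \ref{cons:adc} to ensure that $K$ lands in $\adc$.
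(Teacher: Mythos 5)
Your proof is correct and takes the same approach as the paper, which dispenses with the details with the single line ``A simple comparison of the definitions.'' You supply that comparison in full: the basis-level bijections on $n$\nbd dimensional elements, the check that the Leibniz sign $(-)^{\dmn{x}}$ in the lax Gray orientation rule matches the Koszul sign in $\otimes$, and the transport of the join case along $(P\join Q)_\bot \simeq P_\bot \tensor Q_\bot$ with the $\bot$-face accounting for the augmentation term $d_{\Sigma KP}|_{\text{degree }1} = e_{KP}$. Your sign-and-shift bookkeeping in the join case is right: if $\dmn_P(x) = k$ then $x$ sits in degree $k+1$ of $\Sigma KP$ (and has dimension $k+1$ in $P_\bot$), so the Koszul sign is $(-)^{k+1}$ on both sides, and the terms $[k=0]\,y$ and $(-)^{k+1}[l=0]\,x$ produced by the $\bot$-faces in $P_\bot \tensor Q_\bot$ are exactly the contributions of the augmentations under the identification $(\Sigma KP)_1 \to (\Sigma KP)_0 = \mathbb{Z}$. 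One small caution that does not affect correctness: $P_\bot$ is not itself a constructible directed complex (the closure of a $0$\nbd element of $P$ in $P_\bot$ is not an atom), so formally you should not invoke $K(P_\bot)$ as an object of $\adc$; but you only use the orientation calculation in $P_\bot \tensor Q_\bot$, which is purely combinatorial and needs no constructibility, so the argument stands.
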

\begin{proof}
A simple comparison of the definitions.
\end{proof}

\begin{remark}
Lax Gray products and joins do not preserve the class of positive opetopes, nor the class of simple constructible molecules. For the first class, we can take the 2-cube $K^2 = \vec{I} \tensor \vec{I}$ or the 2-simplex $\Delta^2 = \vec{I} \join 1$ as counterexamples, since both $\vec{I}$ and $1$ are positive opetopes; note however that the dual $\oppall{(\Delta^2)}$, defined later in this section, is a positive opetope.

For the second class, the 2-cube $K^2$ is a simple constructible molecule, but neither the 3-cube $K^3 = K^2 \tensor \vec{I}$, nor the join $K^2 \join 1$ are simple:
\begin{equation*}
\begin{tikzpicture}[baseline={([yshift=-.5ex]current bounding box.center)}]
\begin{scope}
	\node[0c] (0) at (-1, 0) {};
	\node[0c] (1) at (1,0) {};
	\node[0c] (i) at (0,-.625) {};
	\node[0c] (o) at (0,.625) {};
	\draw[1c,out=-45,in=165] (0) to node[auto,swap] {$x$} (i);
	\draw[1c,out=15,in=-135] (i) to node[auto,swap] {$y$} (1);
	\draw[1c,out=45,in=-165] (0) to (o);
	\draw[1c,out=-15,in=135] (o) to (1);
	\draw[2c] (i) to node[auto] {$z$} (o);
	\node[scale=1.25] at (-1.75,.125) {$K^2 :$};
	\node[scale=1.25] at (1.375,-.125) {,};
\end{scope}
\begin{scope}[shift={(4,0)}]
	\node[0c] (0) at (-.75, 0) [label=above:$0$] {};
	\node[0c] (1) at (.75,0) {};
	\draw[1c] (0) to node[auto] {$i$} (1);
	\node[scale=1.25] at (-1.5,.125) {$\vec{I} :$};
	\node[scale=1.25] at (1.125,-.125) {,};
\end{scope}
\begin{scope}[shift={(7,0)}]
	\node[0c] (0) at (0, 0) [label=above:$0$] {};
	\node[scale=1.25] at (-.75,.125) {$1 :$};
	\node[scale=1.25] at (.375,-.125) {,};
\end{scope}
\end{tikzpicture}
\end{equation*}
\begin{equation*}
\begin{tikzpicture}[baseline={([yshift=-.5ex]current bounding box.center)}]
\begin{scope}
	\node[0c] (0) at (-1.5, 0) {};
	\node[0c] (o1) at (-.875,1) {};
	\node[0c] (o2) at (.625,1.25) {};
	\node[0c] (m) at (0,.25) {};
	\node[0c] (i2) at (.75,-.75) {};
	\node[0c] (i1) at (-.75,-1) {};
	\node[0c] (1) at (1.375,.25) {};
	\draw[1c] (0) to (o1);
	\draw[1c] (o1) to (o2);
	\draw[1c] (0) to (m);
	\draw[1c] (m) to (o2);
	\draw[1c] (0) to (i1);
	\draw[1c] (i1) to (i2);
	\draw[1c] (i2) to (1);
	\draw[1c] (o2) to (1);
	\draw[1c] (m) to (i2);
	\draw[2c] (-.7,.1) to node[auto,swap,pos=.6] {$z\!\boxtimes\!0$} (-.7,1);
	\draw[2c] (-.6,-.9) to node[auto,swap] {$x\!\boxtimes\!i$} (-.6,.1);
	\draw[2c] (.5,-.3) to node[auto,swap] {$y\!\boxtimes\!i$} (.5,.8);
	\node[scale=1.25] at (-3,.125) {$\bord{}{-}(K^2 \tensor \vec{I}) :$};
	\node[scale=1.25] at (1.875,-.5) {,};
\end{scope}
\begin{scope}[shift={(6.5,0)}]
	\node[0c] (0) at (-1.5, -.125) {};
	\node[0c] (1) at (1.5,-.125) {};
	\node[0c] (o1) at (-.875,.875) {};
	\node[0c] (o2) at (.625,1.125) {};
	\node[0c] (m) at (0,.125) {};
	\draw[1c] (0) to (o1);
	\draw[1c] (o1) to (o2);
	\draw[1c] (0) to (m);
	\draw[1c] (m) to (o2);
	\draw[1c, out=-30, in=105] (o2) to (1);
	\draw[1c, out=-30, in=180] (m) to (1);
	\draw[1c,out=-60, in=-120] (0) to (1);
	\draw[2c] (0,-.9) to node[auto] {$x \join 0\;$} (0,0);
	\draw[2c] (.6,0) to node[auto,swap,pos=.3] {$y \join 0$} (.6,.9);
	\draw[2c] (-.6,0) to node[auto,swap] {$\;z$} (-.6,.9);
	\node[scale=1.25] at (-3,.125) {$\bord{}{-}(K^2\join 1) :$};
	\node[scale=1.25] at (1.875,-.5) {.};
\end{scope}
\end{tikzpicture}
\end{equation*}
\end{remark}

\begin{cons} \label{cons:jdual}
Let $P$ be an oriented graded poset, and $J \subseteq \mathbb{N}^+ = \mathbb{N}\setminus\{0\}$. Then $\oppn{J}{P}$, the \emph{$J$\nbd dual} of $P$, is the oriented graded poset with the same underlying poset as $P$, and the orientation $o'$ defined by 
\begin{equation*}
o'(c_{x,y}) := \begin{cases}
		-o(c_{x,y}), & \dmn{x} \in J, \\
		o(c_{x,y}), & \dmn{x} \not\in J,
	\end{cases}
\end{equation*}
for all elements $x, y \in P$ such that $x$ covers $y$. This extends to an involutive endofunctor $\oppn{J}{-}$ on $\ogpos$.
\end{cons}

\begin{prop} \label{prop:oppglobpos}
Let $P$ be a constructible directed complex, $J \subseteq \mathbb{N}^+$. Then $\oppn{J}{P}$ is a constructible directed complex.
\end{prop}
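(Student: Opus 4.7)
The key observation is that $\oppn{J}{-}$ only changes the orientation labelling, not the underlying poset; so dimensions, closed subsets, unions and intersections, and all set-theoretic boundary data $\sbord{}{}U$ coincide in $P$ and $\oppn{J}{P}$. The plan is therefore to verify two things: that oriented thinness is preserved by $\oppn{J}{-}$, and that the class of constructible molecules is preserved by $\oppn{J}{-}$. The latter, applied to $\clos\{x\}$ for each $x \in P$, gives the conclusion.

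For thinness, I would check the sign rule $\alpha_1\beta_1 = -\alpha_2\beta_2$ on a length-2 interval with top $y$ of dimension $n$. Writing $\epsilon_k := -1$ if $k \in J$ and $+1$ otherwise, in $\oppn{J}{P}$ the new labels are $\epsilon_n\alpha_i$ and $\epsilon_{n-1}\beta_i$, so each product picks up the same factor $\epsilon_n\epsilon_{n-1}$ and the identity persists; the edge case $x = \bot$, where the $\bot$-edges are always labelled $+$ in both $P_\bot$ and $\oppn{J}{P}_\bot$, is handled similarly.

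For stability of constructible molecules, I would argue by induction on $n$ that $U \in \glob{n}{P}$ if and only if $U \in \glob{n}{\oppn{J}{P}}$; since $\oppn{J}{-}$ is involutive, one direction suffices. The crucial observation is that for any $n$-dimensional closed $U$, as closed subsets the boundary $\bord{\oppn{J}{}}{\alpha}U$ coincides with $\bord{}{\epsilon_n\alpha}U$, because only the top-level coverings toggle when $n \in J$; the inductive hypothesis in dimension $n-1$ then keeps both boundaries constructible. If $U$ is an atom in $P$, Proposition \ref{prop:atomicnglobe} together with the globularity identities of Theorem \ref{thm:globularity} --- which are statements about underlying sets, hence transfer to $\oppn{J}{P}$ --- shows that $U$ is still an atom. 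For a splitting $U = U_1 \cup U_2$ in $P$, a secondary induction on the number of maximal elements gives that each $U_i$ is a constructible $n$-molecule in $\oppn{J}{P}$; if $n \notin J$ the same splitting witnesses $U$ in $\oppn{J}{P}$, and if $n \in J$ the reversed splitting $U = U_2 \cup U_1$ does, since then $\bord{\oppn{J}{}}{+}U_2 \cap \bord{\oppn{J}{}}{-}U_1 = \bord{}{-}U_2 \cap \bord{}{+}U_1 = U_1 \cap U_2$ in $P$, and the submolecule conditions translate dually via the inductive hypothesis.

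No step is deep; the main care required is tracking the parity $\epsilon_n$ at each dimension and correctly reversing splittings in the levels whose dimensions lie in $J$. The real content is that Proposition \ref{prop:atomicnglobe} and Theorem \ref{thm:globularity} together let the induction proceed via boundary data alone, bypassing any need to transport an entire merger tree through the dual.
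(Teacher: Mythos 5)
Your proof is correct and follows essentially the same route as the paper's: first verify that $\oppn{J}{-}$ preserves oriented thinness via the sign-tracking on length-2 intervals, then an induction on dimension and on the number of maximal elements showing that constructible molecules (and the submolecule relation) are preserved, with the splitting reversed exactly when the top dimension lies in $J$. The only small redundancy is your appeal to Proposition~\ref{prop:atomicnglobe} and Theorem~\ref{thm:globularity} in the atomic case: once oriented thinness of $\oppn{J}{P}$ is established and the boundaries are known to be constructible $(n-1)$-molecules by the inductive hypothesis, the definition of constructible molecule already makes $U$ (having a greatest element) an atom, with no need for the converse characterisation.
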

\begin{proof}
The underlying poset of $\oppn{J}{P}$ is the same as the underlying poset of $P$, so it is thin when $P$ is. Moreover, suppose diagram (\ref{eq:signed}) represents an interval of length 2 in $P_\bot$. In $\oppn{J}{P}_\bot$, the orientation $\alpha_1$ is flipped if and only if $\alpha_2$ is, and the same holds of the orientations $\beta_1$ and $\beta_2$. In any case, the product $\alpha_1\beta_1$ is flipped if and only if the product $\alpha_2\beta_2$ is.

Next, we show that for any constructible molecule $U$ in $P$, the corresponding subset $\oppn{J}{U}$ of $\oppn{J}{P}$ is a constructible molecule, and if $V \sqsubseteq U$, then $\oppn{J}{V} \sqsubseteq \oppn{J}{U}$. If $U$ is a 0-globe, this is obvious. Suppose $U$ is a constructible $n$\nbd molecule; then $\bord{}{\alpha}\oppn{J}{U} = \oppn{J}{\bord{}{\pm\alpha}U}$, depending on whether $n \in J$ or not, and in any case a constructible $(n-1)$\nbd molecule by the inductive hypothesis. If $U$ is atomic, this is sufficient. Otherwise, $U$ splits as $U_1 \cup U_2$. It is straightforward to verify that, if $n \in J$, then $\oppn{J}{U}$ splits as $\oppn{J}{U_2} \cup \oppn{J}{U_1}$, and if $n \notin J$, then $\oppn{J}{U}$ splits as $\oppn{J}{U_1} \cup \oppn{J}{U_2}$.

Since the closure of an element $x$ in $\oppn{J}{P}$ is dual to its closure in $P$, this proves that $\oppn{J}{P}$ is a constructible directed complex.
\end{proof}

There are three particularly interesting instances of $J$\nbd duals:
\begin{enumerate}
	\item $\oppall{P} := \oppn{\mathbb{N}^+}{P}$,
	\item $\opp{P} := \oppn{J_\text{odd}}{P}$ where $J_\text{odd} = \{2n - 1 \,|\, n > 0\}$, and
	\item $\coo{P} := \oppn{J_\text{even}}{P}$ where $J_\text{even} = \{2n \,|\, n > 0\}$.
\end{enumerate}
Clearly, $\oppall{P} = \opp{(\coo{P})} = \coo{(\opp{P})}$. 

\begin{prop} \label{prop:opglob}
Let $P, Q$ be two oriented graded posets. Then:
\begin{enumerate}[label=(\alph*)]
	\item $x \tensor y \mapsto y \tensor x$ defines an isomorphism between $\opp{(P \tensor Q)}$ and $\opp{Q} \tensor \opp{P}$ and between $\coo{(P \tensor Q)}$ and $\coo{Q} \tensor \coo{P}$;
	\item $x \join y \mapsto y \join x$ defines an isomorphism between $\opp{(P \join Q)}$ and $\opp{Q} \join \opp{P}$.
\end{enumerate}
Consequently, $x \tensor y \mapsto x \tensor y$ defines an isomorphism between $\oppall{(P \tensor Q)}$ and $\oppall{P} \tensor \oppall{Q}$.
\end{prop}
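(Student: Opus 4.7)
The underlying map of posets in each case is a swap of factors, which is patently an isomorphism of posets. The content is therefore to check that the swap respects orientations once the duals have been applied to each side. This reduces to a finite case-split over the types of covering edges of a lax Gray product (resp.\ join), with a sign calculation at each case.

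For part (a), the covering edges in $P \tensor Q$ come in two families:
\begin{align*}
	o(c_{x \tensor y, x' \tensor y}) & = o_P(c_{x,x'}), \\
	o(c_{x \tensor y, x \tensor y'}) & = (-)^{\dmn{x}} o_Q(c_{y,y'}).
\end{align*}
In $\opp{(P \tensor Q)}$ the first becomes $(-)^{\dmn{x}+\dmn{y}} o_P(c_{x,x'})$; under the swap it corresponds to a second-factor covering edge in $\opp{Q} \tensor \opp{P}$ with orientation $(-)^{\dmn{y}} o_{\opp{P}}(c_{x,x'}) = (-)^{\dmn{x}+\dmn{y}} o_P(c_{x,x'})$, so the two sides agree. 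The second family is handled by the symmetric calculation, which reduces to $(-)^{\dmn{y}} o_Q(c_{y,y'}) = o_{\opp{Q}}(c_{y,y'})$ on both sides. The isomorphism for $\coo{}$ is established by a formally identical argument, with every factor of $(-)^{\dmn{-}}$ replaced by $(-)^{\dmn{-}-1}$, reflecting that $\coo{}$ flips orientations on upper endpoints of positive even dimension.

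For part (b), I use the isomorphism $(P \join Q)_\bot \simeq P_\bot \tensor Q_\bot$ together with the convention $o(c_{z,\bot}) := +$ to read off the orientation of every covering edge in $P \join Q$. The edges split into six families; four are direct analogues of the lax Gray case (the covering edges within the embedded copies of $P$ and $Q$ and the ``interior'' edges $c_{x \join y, x' \join y}$ and $c_{x \join y, x \join y'}$) and are handled by the calculation of part (a). The two genuinely new cases are the ``boundary'' edges $c_{x \join y, x}$ (requiring $y$ minimal in $Q$) and $c_{x \join y, y}$ (requiring $x$ minimal in $P$). For the first, the corresponding edge $x \tensor y \to x \tensor \bot$ in $P_\bot \tensor Q_\bot$ has orientation $(-)^{\dmn{x}+1}$; passing to $\opp{(P \join Q)}$ picks up a further sign $(-)^{\dmn{x\join y}} = (-)^{\dmn{x}+1}$ (using $\dmn{y}=0$), giving $+$. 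Its image $c_{y \join x, x}$ in $\opp{Q} \join \opp{P}$ is a first-factor covering to $\bot$, with orientation $o_{(\opp{Q})_\bot}(c_{y, \bot}) = +$ by the extension rule. The other boundary case is symmetric.

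The corollary follows by applying part (a) twice:
\begin{equation*}
	\oppall{(P \tensor Q)} = \coo{(\opp{(P \tensor Q)})} \simeq \coo{(\opp{Q} \tensor \opp{P})} \simeq \coo{(\opp{P})} \tensor \coo{(\opp{Q})} = \oppall{P} \tensor \oppall{Q},
\end{equation*}
observing that the composite of two swaps is the identity on underlying sets, so the induced map is indeed $x \tensor y \mapsto x \tensor y$. The only real obstacle in the whole proposition is keeping the sign bookkeeping straight; no conceptual difficulty arises once the orientations of the lax Gray product and the join are unwound.
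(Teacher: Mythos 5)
Your proof is correct and takes essentially the same approach as the paper: an explicit case analysis over the types of covering edges, with a sign computation in each case. The paper's own proof only writes out the two families of Gray-product covering edges for $\opp{}$ in full and then declares the $\coo{}$ and join cases ``analogous''; you are more explicit, in particular in enumerating the six edge types in the join (including the boundary edges $c_{x \join y, x}$ with $y$ minimal, which require the $o(c_{y,\bot}) := +$ extension convention) and in deriving the final corollary by composing the two swaps. These additions are sound and genuinely fill in detail that the paper elides.

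One small imprecision: the sentence ``with every factor of $(-)^{\dmn{-}}$ replaced by $(-)^{\dmn{-}-1}$'' should not be taken literally, since the factor $(-)^{\dmn{y}}$ in the Gray product orientation rule is intrinsic to the tensor and does \emph{not} change when passing from $\opp{}$ to $\coo{}$; only the factors contributed by the dualisation (the $(-)^{\dmn{x}+\dmn{y}}$ from $\opp{(P\tensor Q)}$ and the $(-)^{\dmn{x}}$ hidden inside $o_{\opp{P}}$) are shifted. Your clarifying clause ``reflecting that $\coo{}$ flips orientations on upper endpoints of positive even dimension'' indicates you understand this, but as written a careless reader could apply the replacement to the Gray factor as well and reach the wrong sign. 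Rephrasing as ``with the dualisation sign $(-)^{\dmn{x}}$ replaced throughout by $(-)^{\dmn{x}-1}$'' would remove the ambiguity.
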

\begin{proof}
Let $x \tensor y \in \opp{(P \tensor Q)}$, with $\dmn{x} = n$ and $\dmn{y} = m$. Then $x \tensor y$ covers $x' \tensor y$ in $\opp{(P \tensor Q)}$ with orientation $\alpha$ if and only if $x \tensor y$ covers $x' \tensor y$ in $P \tensor Q$ with orientation $(-)^{n+m}\alpha$, if and only $x$ covers $x'$ in $P$ with orientation $(-)^{n+m}\alpha$, if and only if $x$ covers $x'$ in $\opp{P}$ with orientation $(-)^n(-)^{n+m}\alpha = (-)^m\alpha$, if and only if $y \tensor x$ covers $y \tensor x'$ in $\opp{Q} \tensor \opp{P}$ with orientation $\alpha$. 

Similarly, $x \tensor y$ covers $x \tensor y'$ in $\opp{(P \tensor Q)}$ with orientation $\alpha$ if and only if $x \tensor y$ covers $x \tensor y'$ in $P \tensor Q$ with orientation $(-)^{n+m}\alpha$, if and only if $y$ covers $y'$ in $Q$ with orientation $(-)^n(-)^{n+m}\alpha = (-)^m\alpha$, if and only if $y$ covers $y'$ in $\opp{Q}$ with orientation $(-)^m(-)^m\alpha = \alpha$, if and only if $y \tensor x$ covers $y' \tensor x$ in $\opp{Q} \tensor \opp{P}$ with orientation $\alpha$.

The proofs for $\coo{(P \tensor Q)}$ and $\opp{(P \join Q)}$ are analogous.
\end{proof}

\begin{remark}
There is no isomorphism between $\coo{(P \join Q)}$ and $\coo{Q} \join \coo{P}$, unless $P$ or $Q$ is empty: if $x \in P$ and $y \in Q$ are 0-dimensional, then $x \join y$ covers $y$ in $\coo{(P \join Q)}$ with orientation $+$, but $y \join x$ covers $y$ in $\coo{Q} \join \coo{P}$ with orientation $-$.
\end{remark}

Next, we focus on the extension of these operations to constructible polygraphs.

\begin{dfn}
Let $\globe_+$ be the full subcategory of $\globpos$ containing $\globe$ and the empty oriented graded poset $\emptyset$. An \emph{augmented constructible polygraph} is a presheaf on $\globe_+$. Augmented constructible polygraphs and their morphisms form a category $\cpol_+$.

For any constructible polygraph $X$, the \emph{trivial augmentation} $X_+$ of $X$ is the augmented constructible polygraph extending $X$ by $X(\emptyset) = \{*\}$. This determines a full and faithful functor $(-)_+: \cpol \to \cpol_+$ in the obvious way, making $\cpol$ a reflective subcategory of $\cpol_+$.
\end{dfn}

\begin{cons}
By Proposition \ref{thm:globpostensor}, the lax Gray product restricts to a monoidal structure on $\globe$, and by Proposition \ref{lem:globjoin}, the join restricts to a monoidal structure on $\globe_+$. There is a canonical way of extending a monoidal structure on a small category to a monoidal biclosed structure on its category of presheaves, by Day convolution \cite{day1970closed}. 

For the lax Gray product, the monoidal product of two constructible polygraphs $X, Y$ is defined by the coend
\begin{equation*}
	X \tensor Y := \int^{U, V \in \globe} X(U) \times Y(V) \times \homset{\globe}(-,U \tensor V),
\end{equation*}
the left hom $\homoplax{X}{Y}$ by the end
\begin{equation*}
	\homoplax{X}{Y} := \int_{U \in \globe} \homset{\cat{Set}}(X(U), Y(- \tensor U)),
\end{equation*}
and the right hom $\homlax{X}{Y}$ by the end
\begin{equation*}
	\homlax{X}{Y} := \int_{U \in \globe} \homset{\cat{Set}}(X(U), Y(U \tensor -)).
\end{equation*}
The unit is the Yoneda embedding of $1$.

Similarly, for the join, given two augmented constructible polygraphs $X,Y$, we have
\begin{align*}
	X \jointil Y & := \int^{U, V \in \globe_+} X(U) \times Y(V) \times \homset{\globe_+}(-,U \join V), \\
	\joinl{X}{Y} & := \int_{U \in \globe_+} \homset{\cat{Set}}(X(U), Y(- \join U)), \\
	\joinr{X}{Y} & := \int_{U \in \globe_+} \homset{\cat{Set}}(X(U), Y(U \join -)).
\end{align*}
This defines a monoidal biclosed structure on $\cpol_+$, whose unit is the Yoneda embedding of $\emptyset$. This does \emph{not} reflect onto a monoidal biclosed structure on $\cpol$, because $\cpol$ is not an exponential ideal of $\cpol_+$ in the biclosed sense. On the other hand, $\cpol$ is closed under joins in $\cpol_+$, because if $X(\emptyset) = Y(\emptyset) = \{*\}$, then
\begin{equation*}
	X \jointil Y(\emptyset) = X(\emptyset) \times Y(\emptyset) \times \homset{\globe_+}(\emptyset,\emptyset) \simeq \{*\}.
\end{equation*}
Thus if $r: \cpol_+ \to \cpol$ is the restriction functor, we can define $X \join Y := r(X_+ \jointil Y_+)$ for two constructible polygraphs, and obtain a monoidal structure on $\cpol$.
\end{cons}

\begin{remark}
By restriction to full subcategories of $\globe$ and $\globe_+$, we obtain the category of pre-cubical sets with the lax Gray product, and the category of semi-simplicial sets with the join, as full monoidal subcategories of $(\cpol, \tensor, 1)$ and of $(\cpol, \join, \emptyset)$, respectively.
\end{remark}

\begin{remark}
Because $\emptyset$ is initial in $\cpol$, we have natural inclusions
\begin{equation*}
	\begin{tikzpicture}[baseline={([yshift=-.5ex]current bounding box.center)}]
	\node[scale=1.25] (0) at (-2,0) {$X$};
	\node[scale=1.25] (1) at (-.5,0) {$X \join \emptyset$};
	\node[scale=1.25] (2) at (2,0) {$X \join Y,$};
	\draw[1cinc] (0) to node[auto] {$\sim$} (1);
	\draw[1cinc] (1) to node[auto] {$\mathrm{id}_X \join !$} (2);
\end{tikzpicture}
\end{equation*}
\begin{equation*}
	\begin{tikzpicture}[baseline={([yshift=-.5ex]current bounding box.center)}]
	\node[scale=1.25] (0) at (-2,0) {$Y$};
	\node[scale=1.25] (1) at (-.5,0) {$\emptyset \join Y$};
	\node[scale=1.25] (2) at (2,0) {$X \join Y,$};
	\draw[1cinc] (0) to node[auto] {$\sim$} (1);
	\draw[1cinc] (1) to node[auto] {$! \join \mathrm{id}_Y$} (2);
\end{tikzpicture}
\end{equation*}
for all $X$ and $Y$.
\end{remark}

\begin{cons}
Let $X$ be a constructible polygraph, and $J \subseteq \mathbb{N}^+$. The \emph{$J$\nbd dual} $\oppn{J}{X}$ of $X$ is the constructible polygraph defined by $\oppn{J}{X}(-) := X(\oppn{J}{-})$. For each $J$, this defines an endofunctor $\oppn{J}{-}$ on $\cpol$.

In particular, we write $\oppall{X}$, $\opp{X}$, and $\coo{X}$ for $X(\oppall{(-)})$, $X(\opp{(-)})$, and $X(\coo{(-)})$, respectively.
\end{cons}

\begin{prop}
There exist canonical isomorphisms of constructible polygraphs
\begin{equation*}
	\opp{(X \tensor Y)} \simeq \opp{Y} \tensor \opp{X}, \quad \quad 	\coo{(X \tensor Y)} \simeq \coo{Y} \tensor \coo{X}, \quad \quad \oppall{(X \tensor Y)} \simeq \oppall{X} \tensor \oppall{Y},
\end{equation*}
\begin{equation*}
	\opp{(X \join Y)} \simeq \opp{Y} \join \opp{X},
\end{equation*}
natural in $X$ and $Y$.
\end{prop}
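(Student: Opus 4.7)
The plan is to lift Proposition~\ref{prop:opglob} from the base categories $\globe$ and $\globe_+$ to their presheaf categories via the formal properties of Day convolution. Since $\oppn{J}{-}$ is an involutive endofunctor of $\globe$ (respectively $\globe_+$), its induced dual on presheaves is simply precomposition, and $\homset{\globe}(\oppn{J}{W}, P) \simeq \homset{\globe}(W, \oppn{J}{P})$ naturally in $W$ and $P$. Combining this bijection with the isomorphisms of Proposition~\ref{prop:opglob} converts each of the required statements into a coend manipulation.

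Concretely, for the first isomorphism, unfolding the definition of $\tensor$ as a Day convolution and applying the natural isomorphisms just mentioned,
\begin{align*}
	\opp{(X \tensor Y)}(W) & = \int^{U, V \in \globe} X(U) \times Y(V) \times \homset{\globe}(\opp{W}, U \tensor V) \\
	& \simeq \int^{U, V \in \globe} X(U) \times Y(V) \times \homset{\globe}(W, \opp{V} \tensor \opp{U}).
\end{align*}
Reindexing the coend along the self-equivalence of $\globe \times \globe$ sending $(U, V)$ to $(\opp{V}, \opp{U})$ then yields
\begin{equation*}
	\int^{U, V \in \globe} \opp{X}(V) \times \opp{Y}(U) \times \homset{\globe}(W, U \tensor V) = (\opp{Y} \tensor \opp{X})(W),
\end{equation*}
and these identifications are natural in $X$, $Y$, and $W$.

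The case of $\coo{(-)}$ with $\tensor$ is strictly analogous, using the corresponding part of Proposition~\ref{prop:opglob}. For $\oppall{(-)}$ with $\tensor$, the version of Proposition~\ref{prop:opglob} that preserves the order of the tensor factors replaces the reindexing above by $(U, V) \mapsto (\oppall{U}, \oppall{V})$, so no swap occurs. For $\opp{(-)}$ with $\join$, the same calculation is carried out in $\cpol_+$ over $\globe_+$; because both the reflection $r: \cpol_+ \to \cpol$ and the trivial augmentation $(-)_+$ commute with $\opp{(-)}$, the isomorphism descends to $\cpol$ via $X \join Y = r(X_+ \jointil Y_+)$. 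Naturality in $X$ and $Y$ is automatic from the functoriality of coends, of hom-sets, and of the reindexing equivalences. I do not foresee any real obstacle; the only step requiring care is keeping track of the direction of the involution on hom-sets when unfolding the Day convolution.
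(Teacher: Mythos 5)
Your proof is correct and takes essentially the same route as the paper's: both rely on Proposition~\ref{prop:opglob} together with the involutivity of $\oppn{J}{-}$ on $\globe$ to reindex the Day convolution coend. The paper reindexes the coend variables first (writing $X(U) = X(\opp{(\opp{U})})$) and then applies Proposition~\ref{prop:opglob} inside the hom-factor, whereas you transport the dual across the hom-set via $\homset{\globe}(\opp{W}, P) \simeq \homset{\globe}(W, \opp{P})$ before reindexing, but these are the same computation in a slightly different order. Your explicit remark about descending the join case from $\cpol_+$ to $\cpol$ through the reflection $r$ and trivial augmentation $(-)_+$ is a valid and slightly more detailed version of what the paper dismisses with ``in the same way.''
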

\begin{proof}
We have
\begin{align*}
	\opp{(X \tensor Y)} = & \int^{U, V \in \globe} X(U) \times Y(V) \times \mathrm{Hom}_\globe(\opp{(-)},U \tensor V) = \\
	= & \int^{U, V \in \globe} X(\opp{U}) \times Y(\opp{V}) \times \mathrm{Hom}_\globe(\opp{(-)},\opp{U} \tensor \opp{V})
\end{align*}
because the coend is over all atoms, and any atom $U$ is equal to $\opp{(\opp{U})}$. By Proposition \ref{prop:opglob}, this is isomorphic to 
\begin{align*}
	& \int^{U, V \in \globe} X(\opp{U}) \times Y(\opp{V}) \times \mathrm{Hom}_\globe(\opp{(-)},\opp{(V \tensor U)}) \simeq \\
	\simeq & \int^{U, V \in \globe} \opp{X}(U) \times \opp{Y}(V) \times \mathrm{Hom}_\globe(-,V \tensor U) = \opp{Y} \tensor \opp{X}.
\end{align*}
The other natural isomorphisms are constructed in the same way, using the appropriate parts of Proposition \ref{prop:opglob}.
\end{proof}

\begin{cons} \label{cons:slices}
Like the category of $\omega$\nbd categories with the join defined by Ara and Maltsiniotis \cite{ara2016joint}, $\cpol$ with the join is not biclosed, but it is locally biclosed, in the following sense. For all constructible polygraphs $X, Y$, we have an augmented constructible polygraph $\joinr{X_+}{Y_+}$, and
\begin{align*} 
	\joinr{X_+}{Y_+}(\emptyset) & \simeq \homset{\cpol_+}(\emptyset, \joinr{X_+}{Y_+}) \simeq \\
		& \simeq \homset{\cpol_+}(X_+\jointil\emptyset, Y_+) \simeq \homset{\cpol_+}(X_+, Y_+),
\end{align*}
which is naturally isomorphic to $\homset{\cpol}(X, Y)$ since $\cpol$ is a full subcategory of $\cpol_+$.

For each atom $U$, let $!: \emptyset \incl U$ be the unique inclusion of $\emptyset$ into $U$. For each morphism $f: X \to Y$ of constructible polygraphs, we define a sub-presheaf $\slice{f}{Y}$ of $\joinr{X_+}{Y_+}$ by
\begin{equation*}
	\slice{f}{Y}(U) := \{x \in \joinr{X_+}{Y_+}(U) \,|\, !^*x = f\},
\end{equation*}
where the identity is interpreted via $\joinr{X_+}{Y_+}(\emptyset) \simeq \homset{\cpol}(X, Y)$.

This is well-defined as a sub-presheaf, and $\slice{f}{Y}(\emptyset) = \{f\}$, so we can identify it with a constructible polygraph.

\begin{remark} \label{rmk:slice_diagram}
Via the natural isomorphisms
\begin{equation*}
	\joinr{X_+}{Y_+}(U) \simeq \homset{\cpol_+}(U, \joinr{X_+}{Y_+}) \simeq \homset{\cpol}(X \join U, Y),
\end{equation*}
the elements of $\slice{f}{Y}(U)$ correspond to morphisms $h: X\join U \to Y$ such that 
\begin{equation} \label{eq:slice_diagram}
	\begin{tikzpicture}[baseline={([yshift=-.5ex]current bounding box.center)}]
	\node[scale=1.25] (0) at (-1.25,-1.25) {$X \join U$};
	\node[scale=1.25] (1) at (0,0) {$X$};
	\node[scale=1.25] (2) at (1.25,-1.25) {$Y$};
	\draw[1cincl] (1) to (0);
	\draw[1c] (1) to node[auto] {$f$} (2);
	\draw[1c] (0) to node[auto] {$h$} (2);
\end{tikzpicture}
\end{equation}
commutes.
\end{remark}

Similarly, there is a sub-presheaf $\sliceco{Y}{f}$ of $\joinl{X_+}{Y_+}$, defined by
\begin{equation*}
	\sliceco{Y}{f}(U) := \{x \in \joinl{X_+}{Y_+}(U) \,|\, !^*x = f\}.
\end{equation*}
\end{cons}

\begin{prop} \label{prop:locallybiclosed}
The assignments $(f: X \to Y) \mapsto \slice{f}{Y}$ and $\sliceco{Y}{f}$ of Construction \ref{cons:slices} extend to functors $\slice{X}{\cpol} \to \cpol$. These functors are right adjoints to the functors $\cpol \to \slice{X}{\cpol}$ defined, respectively, by
\begin{equation} \label{eq:join_first_incl}
	f: Y \to Z \quad \quad \mapsto \quad \quad 
	\begin{tikzpicture}[baseline={([yshift=-.5ex]current bounding box.center)}]
	\node[scale=1.25] (0) at (-1.25,-1.25) {$X \join Y$};
	\node[scale=1.25] (1) at (0,0) {$X$};
	\node[scale=1.25] (2) at (1.25,-1.25) {$X \join Z$};
	\draw[1cincl] (1) to (0);
	\draw[1cinc] (1) to (2);
	\draw[1c] (0) to node[auto] {$\mathrm{id}_X \join f$} (2);
	\node[scale=1.25] at (2,-1.4) {,};
\end{tikzpicture}
\end{equation}
\begin{equation*} 
	f: Y \to Z \quad \quad \mapsto \quad \quad 
	\begin{tikzpicture}[baseline={([yshift=-.5ex]current bounding box.center)}]
	\node[scale=1.25] (0) at (-1.25,-1.25) {$Y \join X$};
	\node[scale=1.25] (1) at (0,0) {$X$};
	\node[scale=1.25] (2) at (1.25,-1.25) {$Z \join X$};
	\draw[1cincl] (1) to (0);
	\draw[1cinc] (1) to (2);
	\draw[1c] (0) to node[auto] {$f \join \mathrm{id}_X$} (2);
	\node[scale=1.25] at (2,-1.4) {.};
\end{tikzpicture}
\end{equation*}
\end{prop}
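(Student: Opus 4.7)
The plan is to establish, for each constructible polygraph $W$ and each object $(f: X \to Y)$ of $\slice{X}{\cpol}$, a natural bijection
\[
\homset{\cpol}(W, \slice{f}{Y}) \;\simeq\; \homset{\slice{X}{\cpol}}\bigl((X \hookrightarrow X \join W),\, (f: X \to Y)\bigr),
\]
whose right-hand side consists of morphisms $h: X \join W \to Y$ in $\cpol$ whose restriction along the canonical inclusion $X \hookrightarrow X \join W$ is $f$. Functoriality of $f \mapsto \slice{f}{Y}$ is what I verify first: a morphism $g: (f: X \to Y) \to (f': X \to Y')$ in $\slice{X}{\cpol}$ is a map $g: Y \to Y'$ in $\cpol$ with $g \circ f = f'$, and functoriality of $\joinr{X_+}{-}$ yields a morphism $\joinr{X_+}{Y_+} \to \joinr{X_+}{Y'_+}$ in $\cpol_+$ which restricts to a morphism $\slice{f}{Y} \to \slice{f'}{Y'}$ precisely because $g$ sends the fibre over $f \in \homset{\cpol}(X,Y)$ into the fibre over $f' \in \homset{\cpol}(X,Y')$. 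The companion functor $W \mapsto (X \hookrightarrow X \join W)$ is manifestly functorial from the monoidality of $\join$ on $\cpol$.

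The core of the adjunction starts from the observation that $(X_+ \jointil W_+)(\emptyset)$ is always a singleton, so $X_+ \jointil W_+ \simeq (X \join W)_+$. Combined with the biclosed adjunction in $\cpol_+$, this gives
\[
\homset{\cpol}(X \join W, Y) \simeq \homset{\cpol_+}(X_+ \jointil W_+, Y_+) \simeq \homset{\cpol_+}(W_+, \joinr{X_+}{Y_+}).
\]
Unwinding, a morphism $W_+ \to \joinr{X_+}{Y_+}$ in $\cpol_+$ is a morphism $\phi: W \to r\joinr{X_+}{Y_+}$ of presheaves over $\globe$ together with an element $h \in \joinr{X_+}{Y_+}(\emptyset) \simeq \homset{\cpol}(X,Y)$, subject to the naturality constraint $!^*\phi_U(w) = h$ for every $U \in \globe$ and every $w \in W(U)$. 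Tracing the biclosed adjunction, and using Remark \ref{rmk:slice_diagram} to identify the component of $\phi$ at $w: U \to W$ with the composite $X \join U \xrightarrow{\mathrm{id} \join w} X \join W \xrightarrow{\psi} Y$, one checks that this chosen $h$ is exactly the restriction $\psi|_X$ along $X \hookrightarrow X \join W$ of the corresponding morphism $\psi: X \join W \to Y$. Thus requiring $\psi|_X = f$ is equivalent to requiring $\phi$ to factor through the sub-presheaf $\slice{f}{Y}$, yielding the desired bijection.

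Naturality in $W$ and in $f$ is inherited from the biclosed adjunction in $\cpol_+$ together with Yoneda, and the dual statement for $\sliceco{Y}{f}$ follows by the symmetric argument using $\joinl{X_+}{-}$ in place of $\joinr{X_+}{-}$. The main delicate step, which I expect to require the most careful bookkeeping, is the identification of $h$ with $\psi|_X$ under the biclosed adjunction: this comes down to tracing the unit of the biclosed adjunction and understanding precisely how the empty inclusion $!: \emptyset \hookrightarrow U$ in $\globe_+$ is transported. Once this identification is pinned down, the remainder of the proof is a direct unwinding of the sub-presheaf definition of $\slice{f}{Y}$.
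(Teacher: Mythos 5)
Your proposal is correct and follows essentially the same path as the paper's proof: you use the biclosed adjunction on $\cpol_+$ together with the identification $X_+ \jointil W_+ \simeq (X \join W)_+$ and the isomorphism $\joinr{X_+}{Y_+}(\emptyset) \simeq \homset{\cpol}(X,Y)$ to show that morphisms $W \to \slice{f}{Y}$ correspond to morphisms $\psi: X \join W \to Y$ with $\psi|_X = f$, which is exactly the paper's argument via Remark \ref{rmk:slice_diagram}. The only cosmetic differences are that you spell out the naturality constraint $!^*\phi_U(w) = h$ explicitly, while the paper absorbs this into the statement that $\widehat{k}(y)$ lies in the slice sub-presheaf.
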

\begin{proof}
Given a morphism of constructible polygraphs $g: Y \to Z$, the morphisms 
\begin{equation*}
	\joinr{X_+}{g}: \joinr{X_+}{Y_+} \to \joinr{X_+}{Z_+}, \quad \quad \quad \joinl{X_+}{g}: \joinl{X_+}{Y_+} \to \joinl{X_+}{Z_+}
\end{equation*}
restrict to morphisms 
\begin{equation*}
	g_*: \slice{f}{Y} \to \slice{(f;g)}{Z}, \quad \quad \quad g_*: \sliceco{Y}{f} \to \sliceco{Z}{(f;g)}.
\end{equation*}
This is clear from the reformulation of Remark \ref{rmk:slice_diagram}: if $(\ref{eq:slice_diagram})$ commutes, so does the post-composition of the triangle with $g$. This gives functoriality of $\slice{f}{Y}$.

The adjunction with $(\ref{eq:join_first_incl})$ is also almost obvious from Remark \ref{rmk:slice_diagram}: a morphism $(X \incl X \join Y) \to (h: X \to Z)$ is a commutative triangle
\begin{equation*}
	\begin{tikzpicture}[baseline={([yshift=-.5ex]current bounding box.center)}]
	\node[scale=1.25] (0) at (-1.25,-1.25) {$X \join Y$};
	\node[scale=1.25] (1) at (0,0) {$X$};
	\node[scale=1.25] (2) at (1.25,-1.25) {$Z$};
	\draw[1cincl] (1) to (0);
	\draw[1c] (1) to node[auto] {$h$} (2);
	\draw[1c] (0) to node[auto] {$k$} (2);
	\node[scale=1.25] at (1.75,-1.4) {,};
\end{tikzpicture}
\end{equation*}
which via the join-right hom adjunction corresponds to a morphism $\widehat{k}: Y_+ \to \joinr{X_+}{Z_+}$ with the property that $\widehat{k}(y) \in \slice{f}{Y}$ for all $y \in Y$.

The case of $\sliceco{Y}{f}$ is completely analogous.
\end{proof}

In \cite{ara2016joint}, the analogue of $\slice{f}{Y}$ for $\omega$\nbd categories is called the \emph{generalised slice} of $Y$ under $f$. Ara and Maltsiniotis reserve the name of generalised slice of $Y$ over $f$ for $\oppall{(\slice{\oppall{f}}{\oppall{Y}})}$, rather than $\sliceco{Y}{f}$.

\section{Realisation as $\omega$-categories} \label{sec:omegacat}

In this section, we look at how we can interpret constructible polygraphs as strict $\omega$\nbd categories. To this aim, we will use Steiner's theory of directed complexes \cite{steiner1993algebra} as a bridge. As we mentioned in Remark \ref{rmk:precomplex}, directed complexes have an underlying structure --- a directed precomplex --- which is only slightly more general than an oriented graded poset, and, similarly to constructible directed complexes, satisfy a property relative to a class of closed subsets: the \emph{molecules}. 

The characteristic property of the molecules is that they are ``globular composites'' of other molecules; compare this with constructible molecules, which can be decomposed as ``mergers''. We will show that constructible molecules are molecules, and derive that constructible directed complexes are directed complexes, which \emph{a posteriori} justifies their name. This will allow us to realise constructible atoms as $\omega$\nbd categories, which we can then extend to realise constructible polygraphs as $\omega$\nbd categories. 

First of all, we need to fix the notation for $\omega$\nbd categories; we will use a mixture of Steiner's notation and the one prevalent in the more recent literature on polygraphs \cite{metayer2008cofibrant,lafont2010folk}.

\begin{dfn}
Let $X$ be an $\omega$\nbd graph, and for all $x \in X_n$ and $k < n$, let
\begin{equation*}
	\bord{k}{\alpha}x = \underbrace{\bord{}{\alpha}(\ldots(\bord{}{\alpha}}_{n-k} x)).
\end{equation*}  
We call the elements $x \in X_n$ the \emph{$n$\nbd cells} of $X$. Given two $n$\nbd cells $x$, $y$ of $X$, and $k < n$, we say that $x$ and $y$ are \emph{$k$\nbd composable}, and write $x \comp{k} y$, if $\bord{k}{+}x = \bord{k}{-}y$.

We write $X_n \comp{k} X_n \subseteq X_n \times X_n$ for the set of pairs of $k$\nbd composable $n$\nbd cells of $X$.
\end{dfn}

\begin{dfn}
A \emph{partial $\omega$\nbd category} is an $\omega$\nbd graph $X$ together with \emph{unit} and \emph{$k$\nbd composition} operations
\begin{equation*}
	\idd{}: X_n \to X_{n+1}, \qquad \cp{k}: X_n \comp{k} X_n \pfun X_n
\end{equation*}
for all $n \in \mathbb{N},$ and $k < n$, where $\idd{}$ is a total function and the $\cp{k}$ are partial functions. For all $k$\nbd cells $x$, and $n > k$, let
\begin{equation*}
	\idd{n}x := \underbrace{(\idd{}\ldots\idd{})}_{n-k} x,
\end{equation*}
an $n$\nbd cell of $X$. The operations are required to satisfy the following conditions:
\begin{enumerate}
	\item for all $n$\nbd cells $x$, and all $k < n$, 
	\begin{align*}
	 	\bord{}{\alpha}(\varepsilon x) & = x, \\
		x \cp{k} \idd{n}(\bord{k}{+}x) & = x = \idd{n}(\bord{k}{-}x) \cp{k} x,
	\end{align*}
	where the two $k$\nbd compositions are always defined;
	\item for all $(n+1)$\nbd cells $x, y$, and all $k < n$, whenever the left-hand side is defined,
	\begin{align*}
		\bord{}{-}(x \cp{n} y) & = \bord{}{-} x, \\
		\bord{}{+}(x \cp{n} y) & = \bord{}{+} y, \\
		\bord{}{\alpha}(x \cp{k} y) & = \bord{}{\alpha} x \cp{k} \bord{}{\alpha} y;
	\end{align*}
	\item for all cells $x, y, x', y'$, and all $n$ and $k < n$, whenever the left-hand side is defined, 
	\begin{align*}
		\idd{}(x \cp{n} y) & = \idd{}x \cp{n} \idd{}y; \\
		(x \cp{n} x') \cp{k} (y \cp{n} y') & = (x \cp{k} y) \cp{n} (x' \cp{k} y');
	\end{align*}
	\item for all cells $x, y, z$, and all $n$, whenever either side is defined,
	\begin{equation*}
		(x \cp{n} y) \cp{n} z = x \cp{n} (y \cp{n} z).
	\end{equation*}
\end{enumerate} 
A partial $\omega$\nbd category is an \emph{$\omega$\nbd category} if the $\cp{k}$ are total functions.

A \emph{functor} of partial $\omega$\nbd categories is a morphism of the underlying $\omega$\nbd graphs that commutes with units and compositions. A functor is an \emph{inclusion} if it is injective on cells of each dimension. Partial $\omega$\nbd categories and functors form a category $\pomegacat$, with a full subcategory $\omegacat$ on $\omega$\nbd categories.

The inclusion of $\omegacat$ into $\pomegacat$ has a left adjoint $(-)^*: \pomegacat \to \omegacat$. Given a partial $\omega$\nbd category $X$, we call $X^*$ the $\omega$\nbd category \emph{generated} by $X$.
\end{dfn}

\begin{dfn}
Let $X$ be a partial $\omega$\nbd category, $n \in \mathbb{N}$, and $x \in X_n$. We define a number $\dmn{x}$, the \emph{dimension} of $x$, by induction on $n$:
\begin{itemize}
	\item if $x \in X_0$, $\dmn{x} := 0$;
	\item if $x \in X_n$, if $x = \idd{}y$ for some $y \in X_{n-1}$, then $\dmn{x} := \dmn{y}$, else $\dmn{x} := n$.
\end{itemize}
\end{dfn}

\begin{remark}
Given a cell $x$ in a partial $\omega$\nbd category, when we say that ``$x$ is an $\emph{n}$\nbd cell'' we mean $x \in X_n$, and when we say that ``$x$ is $n$\nbd dimensional'' we mean $\dmn{x} = n$.
\end{remark}

\begin{remark}
Steiner defines a partial $\omega$\nbd category as a single set $X$ together with boundary operations $\bord{n}{\alpha}: X \to X$ and partial composition operations $\cp{n}: X \times X \pfun X$ for all $n$. 

We recover this picture from our definition by taking $X := (\coprod_n X_n)/\!\!\sim$, where $\sim$ is the equivalence relation generated by $x \sim \idd{n}x$ for all $n$; notice that $X$ is isomorphic to $\coprod_n \{x \in X_n \,|\, \dmn{x} = n\}$ as a set.

For all cells $x$ and $k \in \mathbb{N}$, there is some $n > k$ such that $\bord{k}{\alpha}(\idd{n}x)$ is defined, and the result belongs to the same equivalence class for all such $n$, so the boundary operations descend to $X$. Similarly, given two cells $x$ and $y$ belonging to equivalence classes $[x]$, $[y]$, we let $[x] \cp{k} [y]$ be defined when $\idd{n}x \cp{k} \idd{n}y$ is defined for some $n > k$, and in that case be equal to the latter's equivalence class.
\end{remark}

\begin{cons}
Let $X$ be a partial $\omega$\nbd category. The \emph{$J$\nbd dual} $\oppn{J}{X}$ of $X$ is the partial $\omega$\nbd category with an $n$\nbd cell $\oppn{J}{x}$ for each $n$\nbd cell $x$ of $X$, and
\begin{equation*}
	\bord{}{\alpha}(\oppn{J}{x}) := \begin{cases} \oppn{J}{\bord{}{-\alpha}x} & \text{if } \dmn{x} \in J, \\
	\oppn{J}{\bord{}{\alpha}x} & \text{if } \dmn{x} \notin J, \end{cases}
\end{equation*}
\begin{equation*}
	\idd{}(\oppn{J}{x}) := \oppn{J}{\idd{}x}, \quad \quad \oppn{J}{x} \cp{k} \oppn{J}{y} := \begin{cases} \oppn{J}{y \cp{k} x} & \text{if } k+1 \in J, \\
	\oppn{J}{x \cp{k} y} & \text{if } k+1 \notin J, \end{cases}
\end{equation*}
where $\oppn{J}{x} \cp{k} \oppn{J}{y}$ is defined whenever $y \cp{k} x$ is defined and $k+1 \in J$, or $x \cp{k} y$ is defined and $k+1 \notin J$.
\end{cons}

\begin{dfn}
Let $U_1, U_2 \subseteq P$ be closed subsets of an oriented graded poset. If $U_1 \cap U_2 = \bord{n}{+}U_1 = \bord{n}{-}U_2$, let
\begin{equation*}
	U_1 \cp{n} U_2 := U_1 \cup U_2;
\end{equation*}
this defines partial $n$\nbd composition operations on the closed subsets of $P$, for all $n$. 

Let $P$ be an oriented graded poset. For each $n \in \mathbb{N}$, we define a family $\molec{n}{P}$ of closed subsets of $P$, the \emph{$n$\nbd molecules} of $P$, together with a partial order $\submol$ on each $\molec{n}{P}$, to be read ``is a submolecule of''.

Let $U \subseteq P$ be closed. Then $U \in \molec{n}{P}$ if and only if $\dmn{U} \leq n$, and, inductively on proper subsets of $U$, either
\begin{itemize}
	\item $U$ has a greatest element, in which case we call it an \emph{atom}, or
	\item there exist $n$\nbd molecules $U_1, U_2$ properly contained in $U$, and $k < n$ such that $U_1 \cap U_2 = \bord{k}{+}U_1 = \bord{k}{-}U_2$, and $U = U_1 \cp{k} U_2$.
\end{itemize}
We define $\submol$ to be the smallest partial order relation on $\molec{n}{P}$ such that $U_1, U_2 \submol U$ for all triples $U, U_1, U_2$ in the latter situation.
\end{dfn}

We will call a decomposition $U = U_1 \cp{k} U_2$ \emph{proper} if $U_1$ and $U_2$ are properly contained in $U$.

\begin{exm} 
All $0$\nbd molecules are atoms: $\molec{0}{P} = \{\{x\} \,|\, \dmn{x} = 0\}$. If $P$ is an oriented thin poset, 0-molecules and 1-molecules are constructible: in fact, $\molec{1}{P}$ is equal to $\glob{0}{P} + \glob{1}{P}$. However, there are 2-molecules that are not constructible, such as the ``whiskering'' of a 2-globe with a 1-globe:
\begin{equation*}
\begin{tikzpicture}[baseline={([yshift=-.5ex]current bounding box.center)}, scale=.8]
	\node[0c] (0) at (-1, 0) {};
	\node[0c] (1) at (1,0) {};
	\node[0c] (2) at (2.5,0) {};
	\draw[1c,out=60,in=120] (0) to (1);
	\draw[1c,out=-60,in=-120] (0) to (1);
	\draw[1c] (1) to (2);
	\draw[2c] (0,-.5) to (0,.5);
	\node[scale=1.25] at (2.875,-.375) {.};
\end{tikzpicture}

\end{equation*}
We will prove that, on the other hand, all constructible molecules are molecules.
\end{exm}

\begin{remark} \label{rmk:molec_ndim}
Clearly, $\molec{n-1}{P} \subseteq \molec{n}{P}$ for all $n > 0$, and the inclusion is order-preserving. Moreover, if $U$ is $n$\nbd dimensional and a molecule, then $U$ is an $n$\nbd molecule. This is obvious if $U$ is an atom. Otherwise, if $U$ decomposes as $U_1 \cp{k} U_2$, where $U_1$ and $U_2$ are $n$\nbd molecules, if $k < n$ then $U$ is an $n$\nbd molecule, and if $k \geq n$, then $U_1 = \bord{k}{+}U_1 = \bord{k}{-}U_2 = U_2$, so $U = U_1 = U_2$, and again $U$ is an $n$\nbd molecule. Therefore $\molec{n}{P} \setminus \molec{n-1}{P}$ only contains $n$\nbd dimensional subsets.
\end{remark}

\begin{dfn} \label{dfn:directedcomplex}
A \emph{directed complex} is an oriented graded poset $P$ such that, for all $x \in P$, with $\dmn{x} = n > 0$, and all $\alpha, \beta$,
\begin{enumerate}
	\item $\bord{}{\alpha}x$ is a molecule, and
	\item $\bord{}{\alpha}(\bord{}{\beta}x) = \bord{n-2}{\alpha}x$.
\end{enumerate}
\end{dfn}

\begin{remark}
Compared to Steiner's definition based on directed precomplexes, ours has in addition the built-in constraint that $\sbord{}{+}x$ and $\sbord{}{-}x$ are disjoint for all $x$.
\end{remark}

\begin{dfn}
A closed $n$\nbd dimensional subset $U$ of an oriented graded poset is \emph{globelike} if $\bord{j}{\alpha}(\bord{k}{\beta}U) = \bord{j}{\alpha}U$ holds for all $\alpha$ and $j < k < n$.
\end{dfn}

\begin{prop} \label{prop:moleculeglobelike}
In a constructible directed complex, all molecules are globelike.
\end{prop}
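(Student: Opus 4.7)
My plan is structural induction on molecules, using the recursive presentation of $U \in \molec{n}{P}$ as either an atom or a proper $k$-composite $U_1 \cp{k} U_2$. For the base case, if $U = \clos\{x\}$ is an atom with $\dmn{x} = n$, then by the definition of a constructible directed complex $U$ is a constructible atom. Proposition \ref{prop:globelike} gives $\bord{k}{\beta}U = \bord{}{\beta}(\ldots\bord{}{\beta}U)$ ($n-k$ iterations), a constructible $k$-molecule. Applying Proposition \ref{prop:globelike} again on top yields $\bord{j}{\alpha}(\bord{k}{\beta}U) = \bord{}{\alpha}(\ldots\bord{}{\alpha}\bord{}{\beta}(\ldots\bord{}{\beta}U))$. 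Iterated use of Theorem \ref{thm:globularity}, in the form $\bord{}{\gamma}(\bord{}{+}V) = \bord{}{\gamma}(\bord{}{-}V)$ for constructible $V$, shows that in any nested iteration of constructible boundaries only the outermost sign matters; hence this mixed expression collapses to $\bord{}{\alpha}(\ldots\bord{}{\alpha}U)$ with $n-j$ iterations, which is $\bord{j}{\alpha}U$ by Proposition \ref{prop:globelike}.

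For the inductive step, suppose $U = U_1 \cp{k} U_2$, with $U_1 \cap U_2 = \bord{k}{+}U_1 = \bord{k}{-}U_2$ and $U_1, U_2$ strictly smaller molecules which are globelike by the inductive hypothesis; set $n = \dmn{U}$, with $k < n$. The crux is to identify $\bord{\ell}{\alpha}U$ using only the general closed-subset definition, by case analysis on an element $x \in U$ and its covers in $U$ (which split as covers in $U_1$ plus covers in $U_2$). The expected formulas are: (a) for $\ell > k$, the equality $\bord{\ell}{\alpha}U = \bord{\ell}{\alpha}U_1 \cp{k} \bord{\ell}{\alpha}U_2$, exhibited as a valid $k$-composition; (b) for $\ell = k$, $\bord{k}{-}U = \bord{k}{-}U_1$ and $\bord{k}{+}U = \bord{k}{+}U_2$; (c) for $\ell < k$, $\bord{\ell}{\alpha}U = \bord{\ell}{\alpha}U_1 = \bord{\ell}{\alpha}U_2$, where the second equality uses globelikeness of $U_1, U_2$ applied to the shared subset $\bord{k}{+}U_1 = \bord{k}{-}U_2$. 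Granting these, globelikeness of $U$ follows by a finite case split on the positions of $j < \ell$ relative to $k$, at each step reducing $\bord{j}{\alpha}(\bord{\ell}{\beta}U) = \bord{j}{\alpha}U$ to the inductive hypothesis on $U_1, U_2$.

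The main obstacle is formula (a). Beyond the set equality $\bord{\ell}{\alpha}(U_1 \cup U_2) = \bord{\ell}{\alpha}U_1 \cup \bord{\ell}{\alpha}U_2$ (which already requires care at $\ell = k+1$, where Lemma \ref{lem:basic} controls how two covers of an element of $U_1 \cap U_2$ can sit), one must further verify that this union is a genuine $k$-composition, namely $\bord{\ell}{\alpha}U_1 \cap \bord{\ell}{\alpha}U_2 = \bord{k}{+}(\bord{\ell}{\alpha}U_1) = \bord{k}{-}(\bord{\ell}{\alpha}U_2)$. This intersection identity is where the inductive globelikeness of $U_1$ and $U_2$ is used essentially, to reduce the iterated boundary $\bord{k}{+}(\bord{\ell}{\alpha}U_i)$ to $\bord{k}{\alpha'}U_i$ and then to the shared $\bord{k}{\pm}U_i = U_1 \cap U_2$. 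I expect the bookkeeping here, interleaving Lemma \ref{lem:basic} with the inductive globelikeness and the atom case established above, to be the most intricate step of the argument.
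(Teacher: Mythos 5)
Your approach — structural induction on molecules, treating atoms via constructibility and proper $k$-composites via boundary-composition formulas — is the same one the paper takes. Your base-case argument is correct and usefully spells out something the paper compresses into a single clause: concluding globelikeness of an atom from Proposition \ref{prop:globelike} alone is not quite immediate, and your observation that one must iterate Theorem \ref{thm:globularity} to propagate the outer sign $\alpha$ inward through $\bord{}{\alpha}^{(k-j)}\bord{}{\beta}^{(n-k)}U$ is precisely the missing link (each flip is legitimate because every intermediate iterated boundary is itself a constructible molecule).

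For the inductive step, the formulas (a)--(c) you identify are correct, but they are exactly the content of Steiner's Proposition 3.1.(vi) and Lemma 3.4 in \cite{steiner1993algebra}; the paper's proof simply cites Lemma 3.4, with a remark immediately afterwards noting that Steiner's argument uses only properties of oriented graded posets, hence carries over unchanged. Your proposal re-derives these identities from scratch, which is sound in outline, but you stop short of actually carrying out the verification and flag it yourself as ``the most intricate step.'' In particular the case $\ell = k+1$ of (a), and the check that $\bord{\ell}{\alpha}U_1 \cup \bord{\ell}{\alpha}U_2$ really is a $k$-composition, is exactly the delicate part of Steiner's argument that the paper outsources. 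As written your proposal is therefore a correct and faithful \emph{outline} rather than a complete proof; the shortest way to close it is to cite Steiner's lemma as the paper does, rather than reproduce its proof.
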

\begin{proof}
Let $U$ be a molecule in a constructible directed complex. By induction on the definition of a molecule: if $U$ is an atom, then it is constructible, hence globelike by Proposition \ref{prop:globelike}. If $U$ decomposes as $U_1 \cp{k} U_2$, assuming that $U_1$ and $U_2$ are globelike, we obtain that $U$ is globelike as in \cite[Lemma 3.4]{steiner1993algebra}.
\end{proof}

\begin{remark}
The statement of \cite[Lemma 3.4]{steiner1993algebra} assumes that we are in a directed complex, but only the properties of directed precomplexes (or, \emph{a fortiori}, oriented graded posets) are actually used.
\end{remark}

\begin{cor} \label{cor:globelikecor}
Let $U_1, U_2$ be two $n$\nbd molecules in a constructible directed complex, and suppose $U_1 \cap U_2 = \bord{k}{+}U_1 = \bord{k}{-}U_2$. 
\begin{enumerate}[label=(\alph*)]
	\item If $k = n-1$, then $\bord{n-1}{-}(U_1 \cp{k} U_2) = \bord{n-1}{-}U_1$ and $\bord{n-1}{+}(U_1 \cp{k} U_2) = \bord{n-1}{+}U_2$.
	\item If $k < n-1$, then $\bord{n-1}{\alpha}U_1 \cp{k} \bord{n-1}{\alpha}U_2$ is defined and equal to $\bord{n-1}{\alpha}(U_1 \cp{k} U_2)$. 
\end{enumerate}
\end{cor}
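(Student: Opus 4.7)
The plan is to reduce both parts to direct set-theoretic computations of the sets $\sbord{n-1}{\alpha}(U_1\cup U_2)$, with Proposition \ref{prop:moleculeglobelike} providing the crucial ingredient: each $U_i$ is globelike, so in particular $\bord{n-1}{-}U_i\cap\bord{n-1}{+}U_i=\bord{n-2}{}U_i$, which contains no $(n-1)$-dimensional elements. Passing to closures (and handling the high-codimensional ``top'' part of the boundary separately) will then yield the stated identities.

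For part (a), I would classify each $(n-1)$-dimensional element of $U_1\cup U_2$ according to whether it lies in $U_1\setminus U_2$, in $U_2\setminus U_1$, or in $U_1\cap U_2=\bord{n-1}{+}U_1=\bord{n-1}{-}U_2$. Elements of the first two types interact only with covers from their own $U_i$, so they contribute to $\sbord{n-1}{\alpha}(U_1\cup U_2)$ exactly as they do to $\sbord{n-1}{\alpha}U_i$. The key point is the third type: any such $z$ lies in $\sbord{n-1}{+}U_1$ and in $\sbord{n-1}{-}U_2$, so in $U_1\cup U_2$ it is covered by $n$-dimensional elements of both orientations and therefore belongs to neither $\sbord{n-1}{-}(U_1\cup U_2)$ nor $\sbord{n-1}{+}(U_1\cup U_2)$. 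Conversely, globelikeness forbids any $(n-1)$-dimensional element of $\sbord{n-1}{-}U_1$ to lie in $\bord{n-1}{+}U_1=U_1\cap U_2$, so no such element is lost in the union. This gives $\sbord{n-1}{-}(U_1\cup U_2)=\sbord{n-1}{-}U_1$ and, symmetrically, $\sbord{n-1}{+}(U_1\cup U_2)=\sbord{n-1}{+}U_2$, from which (a) follows on taking closures.

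For part (b), since $U_1\cap U_2=\bord{k}{+}U_1$ is at most $k$-dimensional with $k<n-1$, the sets $U_1^{(n-1)}$ and $U_2^{(n-1)}$ are disjoint, hence $\sbord{n-1}{\alpha}(U_1\cup U_2)=\sbord{n-1}{\alpha}U_1\sqcup\sbord{n-1}{\alpha}U_2$. To see that the composition $\bord{n-1}{\alpha}U_1\cp{k}\bord{n-1}{\alpha}U_2$ is defined, I would use globelikeness to rewrite $\bord{k}{\beta}(\bord{n-1}{\alpha}U_i)=\bord{k}{\beta}U_i$; the hypothesis then gives $\bord{k}{+}(\bord{n-1}{\alpha}U_1)=\bord{k}{+}U_1=U_1\cap U_2=\bord{k}{-}U_2=\bord{k}{-}(\bord{n-1}{\alpha}U_2)$. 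The reverse inclusion $\bord{n-1}{\alpha}U_1\cap\bord{n-1}{\alpha}U_2\subseteq U_1\cap U_2=\bord{k}{+}U_1$ is immediate, and the opposite inclusion follows because $\bord{k}{+}U_1$ sits inside each $\bord{n-1}{\alpha}U_i$ by globelikeness. Thus $\bord{n-1}{\alpha}U_1\cap\bord{n-1}{\alpha}U_2=\bord{k}{+}(\bord{n-1}{\alpha}U_1)=\bord{k}{-}(\bord{n-1}{\alpha}U_2)$, the composition is defined, and its underlying set is $\bord{n-1}{\alpha}U_1\cup\bord{n-1}{\alpha}U_2$, which equals $\bord{n-1}{\alpha}(U_1\cup U_2)$ by the displayed identity of $\sbord{n-1}{\alpha}$-sets.

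The main obstacle I anticipate is bookkeeping: ensuring that the closure operation $\clos{(\sbord{n-1}{\alpha})}$ and the extra ``top-dimensional maximal part'' in the definition of $\bord{n-1}{\alpha}$ are both handled uniformly, especially since molecules in the sense of Steiner need not be pure (as illustrated by the whiskering example). This is precisely what globelikeness, Proposition \ref{prop:moleculeglobelike}, is designed to control, so I expect no additional combinatorial difficulty beyond a careful case analysis.
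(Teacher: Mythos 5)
Your proposal contains a genuine gap, and it runs through both parts. The paper's own proof is a bare citation of Steiner: part (a) is \cite[Proposition~3.1.(vi)]{steiner1993algebra}, which holds for arbitrary closed subsets of an oriented graded poset with no molecule or globelikeness hypothesis at all, and part (b) is \cite[Lemma~3.4]{steiner1993algebra}, which needs only globelikeness (supplied by Proposition~\ref{prop:moleculeglobelike}). Your attempt to redo this from scratch founders on the non-purity of Steiner molecules, which you flag but then do not actually handle.

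Concretely, the step ``each $U_i$ is globelike, so in particular $\bord{n-1}{-}U_i\cap\bord{n-1}{+}U_i=\bord{n-2}{}U_i$'' is not a consequence of globelikeness: globelikeness only says that iterated boundaries commute, not that opposite boundaries intersect in lower dimension. The whiskering $U_1$ (a $2$-globe $g\colon f\Rightarrow f'$ with a $1$-cell $h$ glued on the right) is globelike, yet $h\in\bord{1}{-}U_1\cap\bord{1}{+}U_1$ while $\bord{0}{}U_1$ is $0$-dimensional. That same $h$ then breaks your case analysis in part (a): taking $U_2$ to be a constructible $2$-atom with $\bord{1}{-}U_2=\bord{1}{+}U_1$, the element $h$ lies in $U_1\cap U_2=\bord{1}{+}U_1=\bord{1}{-}U_2$, and you assert that such an element ``is covered by $n$-dimensional elements of both orientations'' in $U_1\cup U_2$ and hence disappears from $\sbord{n-1}{\pm}(U_1\cup U_2)$. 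But $h$ is \emph{maximal} in $U_1$ (it belongs to $\sbord{1}{+}U_1$ only vacuously), so in the union it is covered solely with orientation $-$, and in fact $h\in\sbord{1}{-}(U_1\cup U_2)$; your ``conversely'' clause, asserting no element of $\sbord{n-1}{-}U_1$ can lie in $U_1\cap U_2$, is also falsified by $h$. The conclusion of part (a) is still true here, but for a reason your case split does not capture, namely that an $(n-1)$-dimensional $z$ which is vacuously in $\sbord{n-1}{\alpha}U_1$ because it is maximal in $U_1$ can legitimately acquire a genuine $\alpha$-cover from $U_2$ without leaving $\sbord{n-1}{\alpha}$. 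A correct direct proof of (a) would have to treat maximal-in-one-$U_i$ elements as a separate case and would not invoke globelikeness at all; a similar ``top part'' bookkeeping issue, which you defer to ``careful case analysis'', is also what part (b) needs and is precisely where the work lies in Steiner's Lemma~3.4.
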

\begin{proof}
The first point holds more generally for closed subsets of an oriented graded poset, see \cite[Proposition 3.1.(vi)]{steiner1993algebra}. 

The second point is also proved in [Lemma 3.4, \emph{ibid.}] under the assumption that $U_1$ and $U_2$ be globelike, which follows for us from Proposition \ref{prop:moleculeglobelike}.
\end{proof}

We have the following fact about $n$\nbd dimensional molecules in a constructible directed complex: their $k$\nbd boundaries, for $k \leq n$, are always of the ``right dimension''.
\begin{prop} \label{prop:molecdim}
Let $U$ be an $n$\nbd dimensional molecule in a constructible directed complex. For all $\alpha \in \{+,-\}$ and $k < n$, $\bord{k}{\alpha}U$ is $k$\nbd dimensional, and there exists $x \in \sbord{k}{\alpha}U$ such that $x \notin \sbord{k}{-\alpha}U$.
\end{prop}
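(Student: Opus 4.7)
The plan is to prove both conclusions simultaneously by induction on $n$, and within each fixed $n$ by induction on the structure of $U$ as a (Steiner) molecule. Globelikeness of molecules in a constructible directed complex (Proposition \ref{prop:moleculeglobelike}) will allow reducing $\bord{k}{\alpha}U$ for $k<n-1$ to iterated top boundaries, so the critical case is $k = n-1$.

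For the base of the sub-induction, let $U = \clos\{x\}$ with $\dmn{x} = n$. Since $P$ is a constructible directed complex, $U$ is a constructible $n$-atom, so by Proposition \ref{prop:globelike} the subset $\bord{k}{\alpha}U$ coincides with the iterated constructible boundary $\bord{}{\alpha}(\ldots(\bord{}{\alpha}U))$, which is a constructible $k$-molecule, hence pure $k$-dimensional and inhabited. Because every $k$-dimensional element of $U$ lies strictly below $x$, it has at least one cover in $U$, so $\sbord{k}{\alpha}U$ and $\sbord{k}{-\alpha}U$ are disjoint; the $k$-dimensional elements of $\bord{k}{\alpha}U = \clos(\sbord{k}{\alpha}U)$ are exactly the elements of $\sbord{k}{\alpha}U$, which therefore witnesses the existence claim.

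For the inductive step, write $U = U_1 \cp{j} U_2$ with the $U_i$ proper $n$-molecules, and first treat $k = n-1$ using Corollary \ref{cor:globelikecor}. When $j = n-1$, $\bord{n-1}{-}U = \bord{n-1}{-}U_1$; the sub-inductive hypothesis on $U_1$ supplies $x \in \sbord{n-1}{-}U_1 \setminus \sbord{n-1}{+}U_1$, and since $x$ has a cover in $U_1$ of orientation $-$, we get $x \notin \bord{n-1}{+}U_1 = U_1 \cap U_2$, so $x \notin U_2$ and its covers in $U$ agree with those in $U_1$; the case $\alpha = +$ is symmetric, using $U_2$. When $j < n-1$, the formula $\bord{n-1}{\alpha}U = \bord{n-1}{\alpha}U_1 \cp{j} \bord{n-1}{\alpha}U_2$ produces an $(n-1)$-dimensional boundary, and a witness inherited from $U_1$ is $(n-1)$-dimensional, so it cannot lie in the $j$-dimensional intersection $U_1 \cap U_2$; again its covers in $U$ match those in $U_1$. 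For $k < n-1$, globelikeness gives $\bord{k}{\alpha}U = \bord{k}{\alpha}(\bord{n-1}{\alpha}U)$, and the formulas from Corollary \ref{cor:globelikecor} also exhibit $V := \bord{n-1}{\alpha}U$ as an $(n-1)$-molecule in $P$; so the main induction on $n$ applied to $V$ at dimension $k$ gives both the dimension claim and a candidate witness.

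The main obstacle is the existence claim for $k < n-1$: the witness $x$ produced by induction on $V$ has all of its covers \emph{in $V$} of orientation $\alpha$, while $\sbord{k}{\alpha}U$ is defined through covers in the ambient $U$, and $x$ could a priori have extra $(k+1)$-dimensional covers in $U\setminus V$ with the wrong orientation. I would resolve this by characterising the $(k+1)$-dimensional elements of $U\setminus V$ -- namely, those not lying below any element of $\sbord{n-1}{\alpha}U$ -- and then, using a thin-interval analysis in the style of Lemma \ref{lem:intersection} together with the identifications from Corollary \ref{cor:globelikecor}, showing that such a cover of $x$ is incompatible with $x \in \sbord{k}{\alpha}V$. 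If this direct attack gets stuck, the fallback is to strengthen the inductive statement so that it tracks covers in the ambient molecule (rather than covers only in the $(n-1)$-boundary) throughout the structural induction, propagating the information along both the atomic case and the decomposition $U = U_1 \cp{j} U_2$.
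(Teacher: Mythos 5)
Your overall plan --- induction on dimension and on molecular structure, using globelikeness (Propositions~\ref{prop:moleculeglobelike} and~\ref{prop:globelike}) together with Corollary~\ref{cor:globelikecor} to reduce $k < n-1$ to the top boundary case --- is essentially the route the paper takes, and your treatment of the atomic base case and of $k = n-1$ for both $j = n-1$ and $j < n-1$ is correct.

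The ``main obstacle'' you identify, however, does not exist, and the workaround you sketch is unnecessary. You worry that a witness $x \in \sbord{k}{\alpha}V \setminus \sbord{k}{-\alpha}V$ for $V := \bord{n-1}{\alpha}U$ might acquire extra covers in $U \setminus V$, so that membership in $\sbord{k}{\alpha}V$ does not entail membership in $\sbord{k}{\alpha}U$. But globelikeness already gives you more than you are using: it is an \emph{equality of subsets of $U$}, $\bord{k}{\alpha}U = \bord{k}{\alpha}V$. Combine this with the elementary observation that for any closed $W$ the $k$\nbd dimensional part of $\bord{k}{\alpha}W$ is exactly $\sbord{k}{\alpha}W$: the $k$\nbd dimensional elements of $\clos(\sbord{k}{\alpha}W)$ are precisely $\sbord{k}{\alpha}W$, and any $k$\nbd dimensional $x \in W$ contributed by the maximality clause of the definition has no covers in $W$ at all, hence lies vacuously in $\sbord{k}{\alpha}W$. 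It follows immediately that
\begin{equation*}
	\sbord{k}{\alpha}U = \bigl(\bord{k}{\alpha}U\bigr)^{(k)} = \bigl(\bord{k}{\alpha}V\bigr)^{(k)} = \sbord{k}{\alpha}V,
\end{equation*}
and likewise for $-\alpha$. So your witness in $V$ is automatically a witness in $U$: no analysis of ambient covers, no strengthening of the inductive invariant, and no appeal to Lemma~\ref{lem:intersection} is needed. This set-level identity $\sbord{k}{\alpha}U = \sbord{k}{\alpha}(\bord{n-1}{\alpha}U)$ is exactly the step used by the paper's proof. With that observation inserted in place of your proposed detour, your argument is correct and matches the paper's.
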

\begin{proof}
We proceed by induction on the dimension and proper subsets of $U$. If $U$ is 0-dimensional, there is nothing to prove. If $U$ is an atom of any dimension, then $\bord{k}{\alpha}U$ is a constructible $k$\nbd molecule, and by Lemma \ref{lem:basic} applied to the constructible $(k+1)$\nbd molecule $\bord{k+1}{\beta}U$, we have that $\sbord{k}{-}U$ and $\sbord{k}{+}U$ are disjoint and both inhabited. So any $x \in \sbord{k}{\alpha}U$ satisfies the statement.

Suppose $U$ is $n$\nbd dimensional and has a proper decomposition $U_1 \cp{m} U_2$, where at least one of $U_1$ and $U_2$ is $n$\nbd dimensional. If $m = n-1$, then $\bord{n-1}{-}U = \bord{n-1}{-}U_1$ and $\bord{n-1}{+}U = \bord{n-1}{+}U_2$. In this case, both $U_1$ and $U_2$ are $n$\nbd dimensional (otherwise the decomposition is not proper), so by the inductive hypothesis $\bord{n-1}{\alpha}U$ is $(n-1)$\nbd dimensional. Picking any $x \in \sbord{n-1}{-}U = \sbord{n-1}{-}U_1$ such that $x \notin \sbord{n-1}{+}U_1 = U_1 \cap U_2$, necessarily $x \notin \sbord{n-1}{+}U_2 = \sbord{n-1}{+}U$; similarly we find $y \in \sbord{n-1}{+}U$ such that $y \notin \sbord{n-1}{-}U$. For $k < n-1$, we have $\sbord{k}{\alpha}U = \sbord{k}{\alpha}(\bord{n-1}{-}U) = \sbord{k}{\alpha}U_1$, so we can use the inductive hypothesis on $U_1$ to find suitable $k$\nbd dimensional elements.

If $m < n -1$, then $\sbord{k}{\alpha}U = \sbord{k}{\alpha}(\bord{n-1}{\alpha}U_1\cp{m}\bord{n-1}{\alpha}U_2)$. By the inductive hypothesis, either $\bord{n-1}{\alpha}U_1$ or $\bord{n-1}{\alpha}U_2$ is $(n-1)$\nbd dimensional; therefore, $\bord{n-1}{\alpha}U_1\cp{m}\bord{n-1}{\alpha}U_2$ is an $(n-1)$\nbd dimensional molecule, and again we can apply the inductive hypothesis.
\end{proof}

The key step in our proof is provided by the following technical lemma.

\begin{lem} \label{lem:moleculesubs}
Let $U$ be an $n$\nbd molecule and $V$ a constructible $(n+1)$\nbd molecule in a constructible directed complex. Suppose that $V$ and $\bord{}{\alpha}V$ are molecules, and that $U \cap V = \bord{}{\alpha}V \submol U$. Then $U \cup V$ is an $(n+1)$\nbd molecule and $V \submol U \cup V$.
\end{lem}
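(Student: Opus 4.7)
The plan is to induct on the length of the submolecule chain witnessing $\bord{}{\alpha}V \submol U$, strengthening the statement so that the induction closes: I will prove simultaneously that $U \cup V$ is an $(n+1)$\nbd molecule, $V \submol U \cup V$, and $\bord{n}{\alpha}(U \cup V) = U$. This auxiliary claim on the top-dimensional $\alpha$\nbd boundary is what will make the Steiner composition condition verifiable in the next inductive step. Without loss of generality I take $\alpha = -$; the case $\alpha = +$ is dual.

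The base case is $U = \bord{}{-}V$, where $U \cup V = V$: all three claims follow immediately from the hypotheses, using that $V$ is $(n+1)$\nbd dimensional so $\bord{n}{-}V = \bord{}{-}V$. For the inductive step, if $\bord{}{-}V$ is a proper submolecule of $U$, then by the definition of $\submol$ the chain can be broken at the final step: $U$ admits a proper Steiner decomposition $U = U_1 \cp{k} U_2$ with $k < n$, and $\bord{}{-}V \submol U_i$ via a strictly shorter chain, for some $i \in \{1,2\}$. I treat the case $i = 1$, the case $i = 2$ being symmetric. First I verify $U_1 \cap V = \bord{}{-}V \submol U_1$ (using $\bord{}{-}V \subseteq U_1$) and apply the inductive hypothesis to $U_1$. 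An easy set-theoretic computation gives $(U_1 \cup V) \cap U_2 = U_1 \cap U_2$, because $V \cap U_2 \subseteq V \cap U = \bord{}{-}V \subseteq U_1$. So, provided $\bord{k}{+}(U_1 \cup V) = \bord{k}{+}U_1$, the expression $(U_1 \cup V) \cp{k} U_2$ is a valid Steiner decomposition equal as a set to $U \cup V$, proving that $U \cup V$ is an $(n+1)$\nbd molecule and $V \submol U_1 \cup V \submol U \cup V$. Applying Corollary \ref{cor:globelikecor} to this decomposition at level $k < (n+1) - 1$ then yields $\bord{n}{-}(U \cup V) = \bord{n}{-}(U_1 \cup V) \cp{k} \bord{n}{-}U_2 = U_1 \cp{k} U_2 = U$, propagating the auxiliary claim.

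The main obstacle, and the reason the strengthened inductive hypothesis is indispensable, is precisely the identity $\bord{k}{+}(U_1 \cup V) = \bord{k}{+}U_1$. Globelikeness of $U_1 \cup V$, guaranteed by Proposition \ref{prop:moleculeglobelike}, says that $\bord{k}{+}(U_1 \cup V) = \bord{k}{+}(\bord{n}{-}(U_1 \cup V))$ for $k < n$; the inductive hypothesis then feeds $\bord{n}{-}(U_1 \cup V) = U_1$ into this, yielding the desired $\bord{k}{+}U_1$. Without tracking the $n$\nbd input boundary, there would be no direct way to rule out that the newly attached $(n+1)$\nbd cells of $V$ distort the lower-dimensional positive boundaries of $U_1 \cup V$, which is exactly what must be excluded for the Steiner composition to match up on the $U_2$ side.
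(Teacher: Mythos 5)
Your proof is correct and follows the paper's scheme at a high level: induct on the chain witnessing $\bord{}{\alpha}V \submol U$, split $U = U_1 \cp{k} U_2$ at the last step of that chain, apply the inductive hypothesis to the factor containing $\bord{}{\alpha}V$, and reassemble the Steiner composite using Proposition \ref{prop:moleculeglobelike} and Corollary \ref{cor:globelikecor}. (The paper attaches $V$ to the right factor $U_2$ and forms $U_1 \cp{m}(U_2 \cup V)$; you attach to $U_1$ and form $(U_1 \cup V)\cp{k}U_2$; both cases have to be handled and are symmetric, as you observe.) The substantive difference is the auxiliary invariant you thread through the induction, namely $\bord{n}{\alpha}(U\cup V) = U$. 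The paper instead carries $\bord{k}{\alpha}V = \bord{k}{\alpha}(U\cup V)$ for all $k<n$, and then invokes the boundary identities $\bord{m}{-}(U_2\cup V) = \bord{m}{-}U_2$ and $\bord{n-1}{\alpha}(U_2\cup V)=\bord{n-1}{\alpha}U_2$ without an explicit derivation. These do not fall out of the paper's stated invariant in any obvious way --- when $\alpha=+$ the sign is wrong, and even for $\alpha=-$ the invariant relates $\bord{k}{-}(U_2\cup V)$ to $\bord{k}{-}V$ rather than to $\bord{k}{-}U_2$, and $\bord{k}{-}V\neq\bord{k}{-}U_2$ already in low-dimensional examples where $\bord{}{-}V$ is a proper tail of $U_2$. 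By contrast, both identities follow at once from $\bord{n}{\alpha}(U_i\cup V)=U_i$ together with globelikeness of $U_i\cup V$, by inserting the level-$n$ $\alpha$\nbd boundary as the intermediate layer; and your invariant propagates cleanly through the composite via Corollary \ref{cor:globelikecor}(b). So you have identified precisely the strengthening on which the argument actually pivots, and making it explicit is an improvement on the paper's presentation.
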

\begin{proof}
By definition of the submolecule relation, either $U = \bord{}{\alpha}V$, or $U = U_1 \cp{m} U_2$ for some $n$\nbd molecules $U_1, U_2 \subset V$ and $m < n$, and $\bord{}{\alpha}V \submol U_i$ for some $i \in \{1,2\}$. In the first case, $U \cup V = V$, which is an $(n+1)$\nbd molecule by assumption. Moreover, we trivially have $V \submol U \cup V$ and $\bord{k}{\alpha}V = \bord{k}{\alpha}(U \cup V)$ for all $k < n$.

In the second case, suppose without loss of generality that $\bord{}{\alpha}V \submol U_2$. We can assume, inductively, that $U_2 \cup V$ is an $(n+1)$\nbd molecule, that $V \submol U_2 \cup V$, and that $\bord{k}{\alpha}V = \bord{k}{\alpha}(U_2 \cup V)$ for all $k < n$. Then 
\begin{equation*}
	U_1 \cap (U_2 \cup V) = U_1 \cap U_2 = \bord{m}{+}U_1 = \bord{m}{-}U_2 = \bord{m}{-}(U_2 \cup V).
\end{equation*}
Hence, $U \cup V = U_1 \cp{m} (U_2 \cup V)$ is a molecule, $V \submol U_2 \cup V \submol U \cup V$, and by Corollary \ref{cor:globelikecor}, $\bord{n-1}{\alpha}(U \cup V)$ depends only on $\bord{n-1}{\alpha}U_1$ and $\bord{n-1}{\alpha}(U_2 \cup V) = \bord{n-1}{\alpha}U_2$. Because molecules are globelike, it follows that $\bord{k}{\alpha}V = \bord{k}{\alpha}(U \cup V)$ for all $k < n$, which completes the inductive step.
\end{proof}

\begin{thm} \label{thm:globmolec}
Let $U$ be a constructible $n$\nbd molecule in an oriented thin poset. Then $U$ is an $n$\nbd molecule. If $V$ is a constructible submolecule of $U$, it is also a submolecule of $U$.
\end{thm}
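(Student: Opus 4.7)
I plan to induct on the dimension $n$, with an inner induction on the number of maximal elements of $U$; the base case $n = 0$ is trivial since $U$ is a singleton. For the inductive step, assume the theorem in dimensions $< n$ and for constructible $n$-molecules with fewer maximal elements than $U$. If $U$ is atomic, it has a greatest element, hence is a Steiner atom and so an $n$-molecule. Otherwise, $U$ splits as $U_1 \cup U_2$ with $W := U_1 \cap U_2 = \bord{}{+}U_1 \cap \bord{}{-}U_2$ a constructible $(n-1)$-submolecule of both $\bord{}{+}U_1$ and $\bord{}{-}U_2$; the inductive hypotheses then give that $U_1, U_2$ are $n$-molecules and $\bord{}{+}U_1, \bord{}{-}U_2, W$ are $(n-1)$-molecules.

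The key move is to introduce the auxiliary set $M := \bord{}{+}U_1 \cup \bord{}{-}U_2$, which by Lemma \ref{lem:boundarymove} is a constructible $(n-1)$-molecule with $\bord{}{+}U_1, \bord{}{-}U_2 \sqsubseteq M$, hence an $(n-1)$-molecule with $\bord{}{+}U_1, \bord{}{-}U_2 \submol M$ by the dimension hypothesis. Since $U_1 \cap \bord{}{-}U_2 = W \subseteq \bord{}{+}U_1$, one has $M \cap U_1 = \bord{}{+}U_1$, so Lemma \ref{lem:moleculesubs} applied with $\alpha = +$ shows that $M \cup U_1$ is an $n$-molecule with $U_1 \submol M \cup U_1$; symmetrically, $M \cup U_2$ is an $n$-molecule with $U_2 \submol M \cup U_2$. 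The goal is then to realise $U = (M \cup U_1) \cp{n-1} (M \cup U_2)$ as a Steiner composition: the intersection $(M \cup U_1) \cap (M \cup U_2)$ collapses to $M$ because $\bord{}{+}U_1, \bord{}{-}U_2, W$ all lie in $M$, and a cover analysis gives $\sbord{n-1}{+}(M \cup U_1) = \sbord{}{+}U_1 \cup (\sbord{}{-}U_2 \setminus W)$, whose closure together with the lower-dimensional residuum $M \setminus U_1$ recovers $\bord{}{+}U_1 \cup \bord{}{-}U_2 = M$; the argument for $\bord{n-1}{-}(M \cup U_2) = M$ is symmetric. This exhibits $U$ as an $n$-molecule with $M \cup U_i \submol U$, and hence $U_i \submol U$ by transitivity.

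For the full submolecule claim, I induct on a merger tree of $U$ in which $V$ appears as a label: if $V = U$ the claim is trivial; otherwise $V$ sits in the subtree rooted at one of the pieces $U_1, U_2$ of the top-level split, so $V \submol U_i$ by the sub-inductive hypothesis on fewer maximal elements, whence $V \submol U$ by transitivity with $U_i \submol U$ from the first part. The main technical obstacle is the boundary computation $\bord{n-1}{+}(M \cup U_1) = M$; this relies on the observations that $M$ contributes no $n$-cells to $M \cup U_1$, that every $(n-1)$-cell of $M \cup U_1$ lying outside $U_1$ is uncovered in $M \cup U_1$, and that $\sbord{}{-}U_2 \cap W \subseteq \sbord{}{+}U_1$, which together force the closure of $\sbord{}{-}U_2 \setminus W$ (joined with $\bord{}{+}U_1$) to recover all of $\bord{}{-}U_2$.
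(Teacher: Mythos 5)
Your proof is correct, and it follows essentially the same route as the paper's: handle the atomic case trivially, then in the non-atomic case use Lemma \ref{lem:moleculesubs} to manufacture $n$\nbd molecules containing $U_1$ and $U_2$ respectively, and realise $U$ as their composite along the shared $(n-1)$\nbd dimensional middle. The one genuine (if small) variation is in how Lemma \ref{lem:moleculesubs} is invoked: you take $M := \bord{}{+}U_1 \cup \bord{}{-}U_2$, note via Lemma \ref{lem:boundarymove} that it is a constructible $(n-1)$\nbd molecule with $\bord{}{+}U_1 \sqsubseteq M$, and then apply the lemma with $\alpha = +$ using $M \cap U_1 = \bord{}{+}U_1 \submol M$; the paper instead sets $\tilde U_1 := \bord{}{-}U \cup U_1$ and applies the lemma with $\alpha = -$ using $U_1 \cap \bord{}{-}U = \bord{}{-}U_1 \submol \bord{}{-}U$. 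These give literally the same set, since $M \cup U_1 = U_1 \cup \bord{}{-}U_2 = U_1 \cup \bord{}{-}U = \tilde U_1$ (elements of $\bord{}{-}U$ outside $U_1$ are exactly elements of $\bord{}{-}U_2$ outside $U_1$), so the two invocations manufacture the same molecule; your version has the minor advantage of not needing to compute $U_1 \cap \bord{}{-}U$, which is slightly less immediate than $M \cap U_1 = \bord{}{+}U_1$. The boundary identity you flag as the crux, $\bord{n-1}{+}(M \cup U_1) = M$, is precisely what the paper also asserts (as $\bord{n}{+}\tilde U_1 = \bord{}{+}U_1 \cup \bord{}{-}U_2$), and your cover analysis of $\sbord{}{+}(M\cup U_1)$ is sound. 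The closing induction over merger trees for the full submolecule claim is correct and is what the paper does implicitly.
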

\begin{remark}
In other words, if $P$ is an oriented thin poset, there are inclusions of posets $(\glob{n}{P}, \sqsubseteq) \hookrightarrow (\molec{n}{P}, \submol)$ for all $n$.
\end{remark}
\begin{proof}
We proceed by induction on the dimension and number of maximal elements of $U$. If $U$ is an atom of any dimension, it is obviously a molecule. Moreover, $U$ has no proper constructible submolecules, and it is trivially a submolecule of itself.

Suppose $U$ is a non-atomic constructible $(n+1)$\nbd molecule, splitting as $U_1 \cup U_2$; by the inductive hypothesis, $U_1$ and $U_2$ are $(n+1)$\nbd molecules. Let
\begin{equation*}
	\tilde{U}_1 := \bord{}{-}U \cup U_1, \quad \quad \quad \tilde{U}_2 := \bord{}{+}U \cup U_2;
\end{equation*}
then $\tilde{U}_1 \cap \tilde{U}_2 = \bord{}{+}U_1 \cup \bord{}{-}U_2 = \bord{n}{+}\tilde{U}_1 = \bord{n}{-}\tilde{U}_2$, so $U = \tilde{U}_1 \cp{n} \tilde{U}_2$. 

By the inductive hypothesis, both $\bord{}{-}U$ and $\bord{}{-}U_1$ are $n$\nbd molecules, and because $\bord{}{-}U_1 \sqsubseteq \bord{}{-}U$, also $\bord{}{-}U_1 \submol \bord{}{-}U$. Since $U_1 \cap \bord{}{-}U = \bord{}{-}U_1$, we fall under the hypotheses of Lemma \ref{lem:moleculesubs}: we deduce that $\bord{}{-}U \cup U_1 = \tilde{U}_1$ is an $(n+1)$\nbd molecule, and that $U_1 \submol \tilde{U}_1$. 

Similarly, we obtain that $\tilde{U}_2$ is an $(n+1)$\nbd molecule, and $U_2 \submol \tilde{U}_2$. It follows that $U$ is an $(n+1)$\nbd molecule and that $U_1, U_2 \submol U$. This completes the proof.
\end{proof}

\begin{cor}
Constructible directed complexes are directed complexes.
\end{cor}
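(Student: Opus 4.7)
The plan is to verify, for every element $x$ of a constructible directed complex $P$ with $\dmn{x} = n > 0$, the two axioms of Definition \ref{dfn:directedcomplex}: that $\bord{}{\alpha}x$ is a molecule, and that $\bord{}{\alpha}(\bord{}{\beta}x) = \bord{n-2}{\alpha}x$ for all $\alpha, \beta$. Fix such an $x$. By Definition \ref{dfn:globpos}, $\clos\{x\}$ is a constructible $n$-atom, so $\bord{}{\alpha}x = \bord{}{\alpha}\clos\{x\}$ is by definition a constructible $(n-1)$-molecule.

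For the first axiom I would then invoke Theorem \ref{thm:globmolec}, which asserts precisely that every constructible molecule is a molecule in Steiner's sense; applied to $\bord{}{\alpha}x$ this gives the required conclusion.

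For the second axiom, I would apply Proposition \ref{prop:globelike} to the constructible $n$-atom $\clos\{x\}$ at $k = n-2$, which gives $\bord{n-2}{\alpha}x = \bord{}{\alpha}(\bord{}{\alpha}x)$. Combined with Theorem \ref{thm:globularity}, which says that $\bord{}{\alpha}(\bord{}{+}U) = \bord{}{\alpha}(\bord{}{-}U)$ for any constructible molecule $U$ (so that $\bord{}{\alpha}(\bord{}{\beta}x)$ is independent of $\beta$), this yields $\bord{}{\alpha}(\bord{}{\beta}x) = \bord{n-2}{\alpha}x$ for both choices of $\beta$, as desired.

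There is no substantive obstacle at this step: the corollary is essentially a one-line consequence of Theorem \ref{thm:globmolec} together with the boundary identities already established for constructible molecules. All of the real work has been absorbed into the globularity theorem, Proposition \ref{prop:globelike}, and Theorem \ref{thm:globmolec} itself; the present corollary only packages them together in the language of Definition \ref{dfn:directedcomplex}.
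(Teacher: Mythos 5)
Your proof is correct and follows essentially the same route as the paper: Theorem \ref{thm:globmolec} handles the first axiom, while Proposition \ref{prop:globelike} (together with globularity) handles the second. If anything you are slightly more precise than the paper's one-liner, which cites only Proposition \ref{prop:globelike} even though the case $\beta \neq \alpha$ implicitly also relies on Theorem \ref{thm:globularity}, exactly as you spell out.
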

\begin{proof}
Let $P$ be a constructible directed complex, and $x \in P^{(n)}$, with $n > 0$. Then $\bord{}{\alpha}x$ is a constructible molecule, hence a molecule by Theorem \ref{thm:globmolec}, and $\bord{}{\alpha}(\bord{}{\beta}x) = \bord{n-2}{\alpha}x$ is an instance of Proposition \ref{prop:globelike}.
\end{proof}

Before moving on, we briefly discuss the loop-freeness conditions that Steiner considered for directed complexes, in order to show that constructible directed complexes do not fall into one of the loop-free classes.

\begin{dfn} \label{dfn:loopfree}
Given a constructible directed complex $P$, for each $n \in \mathbb{N}$, let $\loopd{n}{P}$ be the bipartite directed graph with $P$ as set of vertices, and an edge $y \to x$ if and only if
\begin{itemize}
	\item $\dmn{y} \leq n$, $\dmn{x} > n$, and $y \in \bord{n}{-}x \setminus \bord{n-1}{}x$, or
	\item $\dmn{y} > n$, $\dmn{x} \leq n$, and $x \in \bord{n}{+}y \setminus \bord{n-1}{}y$.
\end{itemize}
We say that $P$ is \emph{loop-free} if $\loopd{n}{P}$ is acyclic for all $n$. We say that $P$ is \emph{totally loop-free} if $\hasso{P}$ is acyclic.
\end{dfn}

Being totally loop-free is a quite natural condition: it applies to globes, cubes and oriented simplices, and it is preserved by suspensions, lax Gray products, and joins \cite[Theorem 2.19]{steiner1993algebra}. Totally loop-free constructible directed complex are loop-free [Proposition 2.15, \emph{ibid.}], which is a strictly weaker condition (and seemingly less natural, as it is not preserved by lax Gray products or joins), that nevertheless is sufficient to prove a number of results. 

\begin{remark}
This characterisation of total loop-freeness is inspired by acyclic matchings in discrete Morse theory, see \cite[Chapter 11]{kozlov2008combinatorial}. In a totally loop-free constructible directed complex, the ``flow'' on cells described in Remark \ref{exm:standard} never returns to a cell once it has left it. 
\end{remark}

\begin{exm} \label{exm:nonloopfree}
The following is an example of an constructible 3\nbd atom $U$ that is neither loop-free nor totally loop-free, based on \cite[Example 3.12]{power1991pasting}:
\begin{equation*}
\begin{tikzpicture}[baseline={([yshift=-.5ex]current bounding box.center)},scale=.7]
\begin{scope}
	\node[0c] (a) at (-1.75,0) {};
	\node[0c] (b) at (.5,-1.25) [label=below:$x_2$] {};
	\node[0c] (c) at (-.5,1.25) [label=above:$x_1$] {};
	\node[0c] (d) at (1.75,0) {};
	\draw[1c, out=-60, in=180] (a) to (b);
	\draw[1c, out=0, in=120] (c) to (d);
	\draw[1c, out=15, in=-105] (b) to (d);
	\draw[1c, out=75, in=-165] (a) to (c);
	\draw[1c] (c) to node[auto] {$\!\!y_1$} (b);
	\draw[2c] (-.75,-.7) to (-.75,.9);
	\draw[2c] (.75,-.9) to (.75,.7);
	\node[scale=1.25] at (2.25,-.9) {$,$};
	\node[scale=1.25] at (-3,0) {$\bord{}{-}U : $};
\end{scope}
\begin{scope}[shift={(7,0)}]
	\node[0c] (a) at (-1.75,0) {};
	\node[0c] (b) at (-.5,-1.25) [label=below:$x_2$] {};
	\node[0c] (c) at (.5,1.25) [label=above:$x_1$] {};
	\node[0c] (d) at (1.75,0) {};
	\draw[1c, out=-75, in=165] (a) to (b);
	\draw[1c, out=-15,in=105] (c) to (d);
	\draw[1c, out=0, in=-120] (b) to (d);
	\draw[1c, out=60,in=180] (a) to (c);
	\draw[1c] (b) to node[auto] {$y_2\!\!$} (c);
	\draw[2c] (.75,-.7) to (.75,.9);
	\draw[2c] (-.75,-.9) to (-.75,.7);
	\node[scale=1.25] at (-3,0) {$\bord{}{+}U : $};
	\node[scale=1.25] at (2.25,-.9) {$.$};
\end{scope}
\end{tikzpicture}
\end{equation*}
Indeed, $x_1 \to y_1 \to x_2 \to y_2 \to x_1$ is a loop in $\loopd{0}{U}$ and in $\hasso{U}$. Thus, not all constructible directed complex are loop-free.
\end{exm}

\begin{remark} 
Example \ref{exm:nonloopfree} is also a counterexample to the restriction of Proposition \ref{prop:atomicnglobe} to loop-free or totally loop-free constructible molecules: $\bord{}{-}U$ and $\bord{}{+}U$ are individually totally loop-free. This is one reason why we do not consider loop-free atoms to be an adequate shape category for rewriting theory: given two composable diagrams with the same boundary, we want to be able to rewrite one into the other, which is modelled by a higher-dimensional cell.
\end{remark}

On the other hand, constructible directed complexes do satisfy a restricted loop-freeness condition, in that there can be no looping via top-dimensional elements in a molecule; this makes them less general than directed complexes, where looping is possible at every level.

\begin{prop} \label{prop:loopd-acyclic}
Let $U$ be an $(n+1)$\nbd dimensional molecule in a constructible directed complex. Then $\loopd{n}{U}$ is acyclic.
\end{prop}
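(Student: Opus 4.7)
The plan is to prove this by induction on the structure of $U$ as a molecule, after first observing what the graph $\loopd{n}{U}$ looks like when $U$ is $(n+1)$-dimensional. Because the top-dimensional elements of $U$ are exactly those of dimension $n+1$, and because $\bord{n}{\alpha}x$ for an $(n+1)$-dimensional atom $x$ is simply $\bord{}{\alpha}x$ (which is pure $n$-dimensional, so $\bord{n}{\alpha}x \setminus \bord{n-1}{}x = \sbord{}{\alpha}x$), every edge of $\loopd{n}{U}$ has one endpoint an $(n+1)$-dimensional element $x$ and the other an $n$-dimensional element $y$ with $y \in \sbord{}{\alpha}x$. A cycle in $\loopd{n}{U}$ is therefore a sequence $x_1 \to y_1 \to x_2 \to \ldots \to x_m \to y_m \to x_1$ with $y_i \in \sbord{}{+}x_i \cap \sbord{}{-}x_{i+1}$ (indices mod $m$), and Lemma \ref{lem:basic} immediately forces each $y_i$ to be covered in $U$ by exactly the two $(n+1)$-cells $x_i, x_{i+1}$, with opposite orientations.

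For the base case, if $U$ is an atom with greatest element $x$ of dimension $n+1$, then $x$ is the only $(n+1)$-dimensional vertex, and a cycle would have to contain a length-2 loop $x \to y \to x$, forcing $y \in \sbord{}{+}x \cap \sbord{}{-}x$, which is empty by Lemma \ref{lem:basic}.(a). For the inductive step, we write $U = U_1 \cp{k} U_2$ with $k \leq n$ and $U_i$ properly contained molecules (necessarily $(n+1)$-dimensional if the decomposition is proper at level $k=n$, while when $U_i$ is $\leq n$-dimensional the graph $\loopd{n}{U_i}$ is trivially edge-free). When $k < n$, the intersection $U_1 \cap U_2$ contains no elements of dimension $\geq n$, so the $(n+1)$- and $n$-dimensional elements of $U$ are partitioned disjointly between $U_1$ and $U_2$; every edge of $\loopd{n}{U}$ already lies in $\loopd{n}{U_1}$ or $\loopd{n}{U_2}$, and the induction closes.

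The essential case is $k = n$, where $U_1 \cap U_2 = \bord{n}{+}U_1 = \bord{n}{-}U_2$ is a nontrivial $n$-dimensional molecule and an $n$-cell $y_i$ can genuinely belong to both. Here I would prove the directional claim that no edge of a cycle can go from $U_2$ to $U_1$, in the following sense: if $x_i \in U_2$ and $x_{i+1} \in U_1$, then $y_i \in \sbord{}{+}x_i \cap \sbord{}{-}x_{i+1}$ forces $y_i \in U_1 \cap U_2 = \bord{n}{-}U_2 = \sbord{n}{-}U_2$, so every $(n+1)$-cell of $U_2$ covering $y_i$ does so with orientation $-$; but $x_i \in U_2$ covers $y_i$ with $+$, contradiction. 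Consequently, as we travel around the cycle, once we enter $U_2$ we never return to $U_1$; since the cycle must close, all $x_i$ lie in a single $U_j$, and the cycle lies entirely in $\loopd{n}{U_j}$, contradicting the inductive hypothesis.

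The main obstacle will just be verifying this ``no-return'' asymmetry cleanly; it hinges on Lemma \ref{lem:basic}.(b) which pins down exactly how a non-boundary $n$-cell is covered. The dual case (assuming no edge goes $U_1 \to U_2$) is symmetric but never needed, since the one-way direction plus cyclicity already gives the contradiction. Everything else is bookkeeping about how $\sbord{}{\alpha}$ and $\bord{n}{\alpha}$ restrict to submolecules, which is already established in the earlier lemmas of Section \ref{sec:properties}.
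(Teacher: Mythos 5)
Your overall strategy — induction on proper decompositions, treating the atom case via disjointness of $\sbord{}{+}U$ and $\sbord{}{-}U$, splitting into $k<n$ and $k=n$, and arguing that a path can cross the interface $U_1\cap U_2$ at most once and only in one direction — is exactly the paper's. However, your opening reduction contains a genuine error that propagates into the $k=n$ case.

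You assert that for an $(n+1)$\nbd dimensional atom $x$, since $\bord{}{\alpha}x$ is pure and $n$\nbd dimensional, $\bord{n}{\alpha}x\setminus\bord{n-1}{}x = \sbord{}{\alpha}x$, so that every low vertex $y$ incident to $x$ in $\loopd{n}{U}$ is $n$\nbd dimensional. This is false. By globularity, $\bord{n-1}{}x = \bord{}{}(\bord{}{\alpha}x)$ is the $(n-1)$\nbd dimensional \emph{boundary} of the constructible $n$\nbd molecule $\bord{}{\alpha}x$, which is a proper subset of the set of all elements of $\bord{}{\alpha}x$ of dimension $\leq n-1$ whenever $\bord{}{\alpha}x$ is not an atom: the ``interior'' $(n-1)$\nbd cells of $\bord{}{\alpha}x$ (those covered by two $n$\nbd cells of $\bord{}{\alpha}x$ with opposite orientations, as in Lemma~\ref{lem:basic}(b)) and their closures are in $\bord{n}{\alpha}x\setminus\bord{n-1}{}x$ but not in $\sbord{}{\alpha}x$. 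So a cycle in $\loopd{n}{U}$ need not alternate through $n$\nbd dimensional low vertices.

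This matters precisely where you need it. Your ``no-return'' claim for $k=n$ reads: if $x_i\in U_2$, $x_{i+1}\in U_1$, then $y_i\in\sbord{}{+}x_i\cap\sbord{}{-}x_{i+1}$, hence $y_i$ is $n$\nbd dimensional, hence $y_i\in U_1\cap U_2 = \bord{n}{-}U_2$ puts $y_i\in\sbord{n}{-}U_2$, contradicting that $x_i$ covers $y_i$ with orientation $+$. This chain uses the false premise twice: once to conclude $y_i\in\sbord{}{+}x_i$ (so that $x_i$ \emph{covers} $y_i$), and once to read $y_i\in\bord{n}{-}U_2$ as $y_i\in\sbord{n}{-}U_2$. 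When $y_i$ is of dimension $<n$, $x_i$ does not cover $y_i$, and membership in $\bord{n}{-}U_2$ only gives $y_i\leq w$ for some $w\in\sbord{n}{-}U_2$, which need not be related to the $n$\nbd cell of $\sbord{}{+}x_i$ witnessing $y_i\in\bord{n}{+}x_i$; the covering-based contradiction evaporates. The paper's proof avoids this by arguing directly that $(\bord{n}{-}y\setminus\bord{n-1}{}y)\cap\bord{n}{+}U_1=\emptyset$ and $(\bord{n}{+}y'\setminus\bord{n-1}{}y')\cap\bord{n}{-}U_2=\emptyset$ for $(n+1)$\nbd cells $y\in U_1$, $y'\in U_2$, without ever assuming the low vertex is top-dimensional. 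You would need to prove those disjointness claims in full generality (for interface elements of any dimension) to close the gap. The base case, incidentally, survives even with the fix: $\bord{}{+}x\cap\bord{}{-}x=\bord{n-1}{}x$ by globularity, so $(\bord{n}{+}x\setminus\bord{n-1}{}x)\cap(\bord{n}{-}x\setminus\bord{n-1}{}x)=\emptyset$ regardless of dimension — but your write-up only covers the $n$\nbd dimensional subcase.
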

\begin{proof}
We proceed by induction on proper subsets of $U$. If $U$ is an atom, the statement is true because $\sbord{}{-}U$ and $\sbord{}{+}U$ are disjoint. Otherwise, suppose $U$ has a proper decomposition $U_1 \cp{k} U_2$, where necessarily $k \leq n$; it is easy to check that $\loopd{n}{U}$ contains both $\loopd{n}{U_1}$ and $\loopd{n}{U_2}$.  

If $k < n$, then $U_1 \cap U_2 = \bord{k}{+}U_1 = \bord{k}{-}U_2$ is contained in the union of the $\bord{n-1}{}x$ for $x \in U$, so it cannot contain any element of $\bord{n}{\alpha}x\setminus\bord{n-1}{}x$. It follows that there can be no edges connecting vertices of $\loopd{n}{U_1}$ to vertices of $\loopd{n}{U_2}$, and any path stays within one or the other. 

If $k = n$, finite paths in $\loopd{n}{U}$ are either contained in $\loopd{n}{U_1}$ or in $\loopd{n}{U_2}$, or they reach an element $x \in \bord{}{+}U_1 = \bord{}{-}U_2 = U_1 \cap U_2$ from an $(n+1)$\nbd dimensional element $y \in U_1$, before entering an $(n+1)$\nbd dimensional element $y' \in U_2$. But $y$ can only be reached from an element of $\bord{}{-}y\setminus\bord{n-1}{}y$, which does not belong to $\bord{}{+}U_1$, and from $y'$ the path can only reach an element of $\bord{}{+}y'\setminus\bord{n-1}{}y'$, which does not belong to $\bord{}{-}U_2$. Hence, any path in $\loopd{n}{U}$ consists of a path in $\loopd{n}{U_1}$, followed by a path in $\loopd{n}{U_2}$; by the inductive hypothesis, $\loopd{n}{U}$ is acyclic.
\end{proof}

\begin{cons}
Let $P$ be a constructible directed complex. By Proposition \ref{prop:moleculeglobelike}, the diagram
\begin{equation*}
\begin{tikzpicture}
	\node[scale=1.25] (0) at (0,0) {$\molec{0}{P}$};
	\node[scale=1.25] (1) at (2.5,0) {$\molec{1}{P}$};
	\node[scale=1.25] (2) at (5,0) {$\ldots$};
	\node[scale=1.25] (3) at (7.5,0) {$\molec{n}{P}$};
	\node[scale=1.25] (4) at (10,0) {$\ldots$};
	\draw[1c] (1.west |- 0,.15) to node[auto,swap] {$\bord{}{+}$} (0.east |- 0,.15);
	\draw[1c] (1.west |- 0,-.15) to node[auto] {$\bord{}{-}$} (0.east |- 0,-.15);
	\draw[1c] (2.west |- 0,.15) to node[auto,swap] {$\bord{}{+}$} (1.east |- 0,.15);
	\draw[1c] (2.west |- 0,-.15) to node[auto] {$\bord{}{-}$} (1.east |- 0,-.15);
	\draw[1c] (3.west |- 0,.15) to node[auto,swap] {$\bord{}{+}$} (2.east |- 0,.15);
	\draw[1c] (3.west |- 0,-.15) to node[auto] {$\bord{}{-}$} (2.east |- 0,-.15);
	\draw[1c] (4.west |- 0,.15) to node[auto,swap] {$\bord{}{+}$} (3.east |- 0,.15);
	\draw[1c] (4.west |- 0,-.15) to node[auto] {$\bord{}{-}$} (3.east |- 0,-.15);
\end{tikzpicture}
\end{equation*}
is an $\omega$\nbd graph $\molec{}{P}$. 

For any $n$\nbd molecule $U$, let $\idd{}(U) := U$ as an $(n+1)$\nbd molecule, and for any pair $U_1, U_2$ of $n$\nbd molecules, let $U_1 \cp{k} U_2$ be defined if and only if $U_1 \cap U_2 = \bord{k}{+}U_1 = \bord{k}{-}U_2$, and in that case be equal to $U_1 \cup U_2$. By \cite[Proposition 2.9]{steiner1993algebra}, this makes $\molec{}{P}$ a partial $\omega$\nbd category. 
\end{cons}

For any partial $\omega$\nbd category $X$, if $X^*$ is the $\omega$\nbd category generated by $X$, we obtain a functor $X \to X^*$ from the unit of the adjunction between $\pomegacat$ and $\omegacat$. 

\begin{prop}\emph{\cite[Theorem 2.13]{steiner1993algebra}} \label{thm:steiner213} Let $P$ be a directed complex. Then the unit $\molec{}P \to (\molec{}P)^*$ is an inclusion of partial $\omega$\nbd categories.
\end{prop}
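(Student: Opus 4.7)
My plan is to reduce to a concrete chain-complex model. By the universal property of the free $\omega$-category functor $(-)^* : \pomegacat \to \omegacat$, any morphism of partial $\omega$-categories $\Phi: \molec{}P \to C$ into an $\omega$-category $C$ factors uniquely through the unit $\eta: \molec{}P \to (\molec{}P)^*$. Hence it suffices to produce a \emph{dimensionwise injective} such $\Phi$, which will force $\eta$ to be an inclusion.

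The natural choice is Steiner's $\omega$-category $\nu(KP,KP^*)$ associated to the augmented directed complex of Construction \ref{cons:adc}: an $n$-cell is a sequence $\bigl((c_0^-,c_0^+),\ldots,(c_n^-,c_n^+)\bigr)$ of positive chains $c_k^\alpha \in KP_k^*$ satisfying $e c_0^\alpha = 1$, $d c_{k+1}^\alpha = c_k^+ - c_k^-$, and $c_n^+ = c_n^-$; boundaries truncate, while $k$-composition adds the entries in dimensions strictly above $k$ and glues those in dimensions $\le k$ along the shared boundary. I would define
\begin{equation*}
	\Phi(U) := \bigl((\langle U\rangle_0^-, \langle U\rangle_0^+), \ldots, (\langle U\rangle_n^-, \langle U\rangle_n^+)\bigr), \qquad \langle U\rangle_k^\alpha := \sum_{x \in \sbord{k}{\alpha}U} x,
\end{equation*}
for each $n$-molecule $U$ of $P$.

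There are three things to verify. First, $\Phi(U)$ is well-defined: the identity $d \langle U\rangle_{k+1}^\alpha = \langle U\rangle_k^+ - \langle U\rangle_k^-$ follows from globelikeness of molecules (Proposition \ref{prop:moleculeglobelike}) together with an atom-wise application of Lemma \ref{lem:basic}, paralleling the proof of $dd = 0$ in Construction \ref{cons:adc}; and $\langle U\rangle_n^+ = \langle U\rangle_n^-$ because no element of $U$ has dimension above $n$. Second, $\Phi$ preserves units (trivially) and the partial compositions $\cp{k}$: one splits into the regimes $j \le k$ and $j > k$ and invokes both clauses of Corollary \ref{cor:globelikecor} to match the formulae defining composition in $\nu(KP,KP^*)$. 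Third, $\Phi$ is injective: by Remark \ref{rmk:molec_ndim}, an $n$-cell of $\molec{}P$ is either properly $n$-dimensional, hence pure and recovered as $U = \clos{(\mathrm{supp}\,\langle U\rangle_n^+)}$, or it lies in $\molec{n-1}{P}$ and is recovered inductively from the lower entries of $\Phi(U)$.

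The main obstacle will be step two, specifically the bookkeeping for $\cp{k}$. When $j > k$, elements of the shared boundary $U_1 \cap U_2 = \bord{k}{+}U_1 = \bord{k}{-}U_2$ may lie in $\sbord{j}{\alpha}U_1$ and $\sbord{j}{-\alpha}U_2$ with opposite polarities and cancel in the union, while elements outside the intersection keep their orientation; making this precise is essentially Corollary \ref{cor:globelikecor}(b) extended inductively over the molecule structure, but one must additionally invoke Proposition \ref{prop:loopd-acyclic} to exclude the kind of pathological looping that could cause an element to contribute more than once and spoil the cancellation pattern demanded by composition in $\nu(KP, KP^*)$.
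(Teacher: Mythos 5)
First, note that the paper does not prove this statement: it is cited verbatim from Steiner \cite[Theorem 2.13]{steiner1993algebra}, so there is no ``paper's own proof'' to compare against. Steiner's 1993 argument is, to my knowledge, purely combinatorial (an analysis of iterated composites of molecules); your proposal routes instead through an augmented-directed-complex model, which is closer in spirit to Steiner's later 2004 paper. That is a genuinely different, and in principle attractive, strategy: produce an injective morphism of partial $\omega$\nbd categories $\Phi: \molec{}P \to C$ into an actual $\omega$\nbd category $C$ and conclude that the unit must be injective. The top-level reduction is correct.

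However, there are concrete gaps. Most of the lemmas you invoke --- Construction \ref{cons:adc}, Lemma \ref{lem:basic}, Corollary \ref{cor:globelikecor}, Proposition \ref{prop:loopd-acyclic} --- are stated in the paper only for \emph{constructible} directed complexes, whereas the statement at hand is about arbitrary directed complexes. In particular, the chain-level identity $d\langle U\rangle_{k+1}^\alpha = \langle U\rangle_k^+ - \langle U\rangle_k^-$, which underwrites well-definedness of $\Phi(U)$ as a cell of $\nu(KP,KP^*)$, is proved in the constructible case via thinness (Lemma \ref{lem:basic}: each $(k-1)$\nbd element is covered by at most two $k$\nbd elements, with opposite orientations). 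In a general directed complex an element may be covered by arbitrarily many cells, and the cancellation pattern is not obvious; you would need to reprove this from Steiner's general machinery, not cite the paper's constructible-only version. Similarly, the appeal to Proposition \ref{prop:loopd-acyclic} for the $\cp{k}$ bookkeeping is misplaced: the disjointness of the $j$\nbd dimensional elements of $U_1$ and $U_2$ for $j > k$ follows directly from $U_1 \cap U_2 = \bord{k}{+}U_1$ being at most $k$\nbd dimensional; acyclicity of $\loopd{n}{-}$ is not needed there.

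Finally, the injectivity argument asserts that a properly $n$\nbd dimensional molecule is pure and therefore recovered as $\clos{(\mathrm{supp}\,\langle U\rangle_n^+)}$. Purity is not established for general directed complexes, and without it this formula strictly undershoots $U$. The recovery can be rescued --- the set of $k$\nbd dimensional maximal elements of $U$ is exactly $\sbord{k}{+}U \cap \sbord{k}{-}U$, so $U$ is the closure of $\bigcup_{k\le n}\bigl(\mathrm{supp}\,\langle U\rangle_k^+ \cap \mathrm{supp}\,\langle U\rangle_k^-\bigr)$ --- but you should state this rather than rely on a purity claim you haven't justified.
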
 

The fact that $\molec{}P \to (\molec{}P)^*$ is an inclusion allows us to identify the cells in its image with unique molecules of $P$.

\begin{prop}
The assignment $P \mapsto (\molec{}{P})^*$ for constructible directed complexes extends to a faithful functor $(\molec{}{-})^*: \globpos \to \omegacat$.
\end{prop}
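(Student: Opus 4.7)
The plan is to verify the two parts separately: first, that the assignment extends to a functor, and second, that this functor is faithful.

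For functoriality, given an inclusion $\imath : P \incl Q$ in $\globpos$, I will show that it induces a morphism $\molec{}{\imath} : \molec{}{P} \to \molec{}{Q}$ of partial $\omega$\nbd categories by sending a molecule $U \subseteq P$ to its image $\imath(U) \subseteq Q$. The key observation is that $\imath$ is an injective, closed, order- and orientation-preserving embedding, so it commutes with the closure operation and with the boundary operations $\bord{n}{\alpha}$. By induction on the molecule decomposition (using $\imath(U_1 \cap U_2) = \imath(U_1) \cap \imath(U_2)$ from injectivity, and $\imath(\bord{k}{\alpha}V) = \bord{k}{\alpha}\imath(V)$), images of molecules are molecules of the same dimension, and $k$\nbd composable pairs are sent to $k$\nbd composable pairs with composition preserved; units are preserved since both are given by the identity on the underlying closed subset. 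Applying $(-)^* : \pomegacat \to \omegacat$ yields $(\molec{}{\imath})^* : (\molec{}{P})^* \to (\molec{}{Q})^*$. Preservation of identities and composition follow immediately from the functoriality of $\molec{}{-}$ and of $(-)^*$.

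For faithfulness, suppose $\imath, \jmath : P \incl Q$ are two inclusions with $(\molec{}{\imath})^* = (\molec{}{\jmath})^*$. Writing $\eta_P : \molec{}{P} \to (\molec{}{P})^*$ and $\eta_Q : \molec{}{Q} \to (\molec{}{Q})^*$ for the units of the adjunction, naturality of $\eta$ gives
\begin{equation*}
\eta_Q \circ \molec{}{\imath} = (\molec{}{\imath})^* \circ \eta_P = (\molec{}{\jmath})^* \circ \eta_P = \eta_Q \circ \molec{}{\jmath}.
\end{equation*}
By Proposition \ref{thm:steiner213}, $\eta_Q$ is an inclusion of partial $\omega$\nbd categories, hence injective on cells, so $\molec{}{\imath} = \molec{}{\jmath}$. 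Now for each $x \in P$, the closure $\clos\{x\}$ is an atom, and in particular a molecule, so $\imath(\clos\{x\}) = \clos\{\imath(x)\}$ equals $\clos\{\jmath(x)\}$ as subsets of $Q$. Since $\imath(x)$ and $\jmath(x)$ are both the unique greatest element of this atom, they coincide, giving $\imath = \jmath$.

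The main content sits in the first step, where one must confirm that the set-theoretic image of a constructible molecule under an inclusion really is a molecule of $\molec{}{Q}$ and that the partial composition is preserved strictly; this is essentially bookkeeping given that inclusions preserve all of $\clos$, $\bord{k}{\alpha}$, and the covering relation. The faithfulness step is formal once Steiner's inclusion result (Proposition \ref{thm:steiner213}) is available, as it reduces the question to checking that inclusions of oriented graded posets are determined by their action on atoms.
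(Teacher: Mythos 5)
Your proof is correct and follows essentially the same approach as the paper: inclusions of constructible directed complexes send atoms to atoms and, being compatible with boundaries and compositions, give morphisms of the partial $\omega$\nbd categories $\molec{}{-}$, which after applying $(-)^*$ yield functoriality; faithfulness then comes from Steiner's Proposition~\ref{thm:steiner213}. Your version spells out the functoriality and faithfulness steps in more detail (in particular the naturality-of-the-unit argument for faithfulness and the reduction to action on greatest elements of atoms) than the paper's somewhat terse proof, but the underlying ideas are the same.
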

\begin{proof}
The $\omega$\nbd category $(\molec{}{P})^*$ is composition-generated by the atoms of $P$, and an inclusion $\imath: P \hookrightarrow Q$  determines a sequence of functions $\clos{\{x\}} \mapsto \clos{\{\imath(x)\}}$ sending generators of $(\molec{}{P})^*$ to generators of $(\molec{}{Q})^*$. These are compatible with boundaries and compositions (because they are between the partial $\omega$\nbd categories $\molec{}{P}$ and $\molec{}{Q}$), so they determine a unique map of $\omega$\nbd categories. Faithfulness is a consequence of Proposition \ref{thm:steiner213}.
\end{proof}

Restricting to the subcategory $\globe$, we obtain a functor $\hatto{-}: \globe \to \omegacat$. Now, $\globe$ is a small category (there are countably many constructible atoms, and only finitely many morphisms between each two), $\cpol$ is locally small, and $\omegacat$ is cocomplete \cite{batanin1998monoidal}. By \cite[Theorem X.4.1-2]{maclane1971cats}, the left Kan extension of $(\molec{}{-})^*$ along the Yoneda embedding $\globe \hookrightarrow \cpol$ exists, and is given, for each constructible polygraph $X$, by the coend
\begin{equation*} 
	X \mapsto \hatto{X} := \int^{U \in \globe} \hatto{U} \times X(U).
\end{equation*}
This functor has a right adjoint $P: \omegacat \to \cpol$, defined by
\begin{equation*}
	PX(-) := \mathrm{Hom}_{\omegacat}(\hatto{-},X)
\end{equation*}
for each $\omega$\nbd category $X$. 

The lax Gray product of $\omega$\nbd categories was defined by Al-Agl, Brown, and Steiner \cite{al2002multiple}, and later studied by Crans \cite{crans1995pasting}. A definition based on augmented directed complexes was given by Steiner in \cite{steiner2004omega}; the proof of its correctness was completed by Ara and Maltsiniotis \cite{ara2016joint}, who also defined the join of $\omega$\nbd categories. 

Let $(\omegacat, \tensor, 1)$ and $(\omegacat, \join, \emptyset)$ denote the monoidal structures on $\omegacat$ corresponding, respectively, to the lax Gray product and the join. It seems almost inevitable that the following should hold.

\begin{conj} \label{conj:monoidality}
The functor $\hatto{-}: \cpol \to \omegacat$ is monoidal both from $(\cpol, \tensor, 1)$ to $(\omegacat, \tensor, 1)$ and from $(\cpol, \join, \emptyset)$ to $(\omegacat, \join, \emptyset)$.
\end{conj}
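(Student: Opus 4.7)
The plan is to reduce the statement to the behaviour of $\hatto{-}$ on representables and then transport it through the augmented directed complex functor $K \colon \globpos \to \adc$ introduced in Construction \ref{cons:adc}. Since $\hatto{-}$ is the left Kan extension of $U \mapsto (\molec{}{U})^*$ along the Yoneda embedding $\globe \incl \cpol$, it preserves all colimits; and the monoidal structures on $\cpol$ are Day convolutions of the monoidal structures on $\globe$ and $\globe_+$, so they also preserve colimits in each variable. Ara and Maltsiniotis \cite{ara2016joint} show that the lax Gray product and join on $\omegacat$ preserve colimits in each variable. Consequently, it suffices to produce natural isomorphisms $\hatto{U \tensor V} \simeq \hatto{U} \tensor \hatto{V}$ and $\hatto{U \join V} \simeq \hatto{U} \join \hatto{V}$ for constructible atoms $U, V$; these then extend uniquely along the colimit formula defining $\hatto{-}$, and the coherence conditions also transport along the colimit, since they can be checked on representables.

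For the case of atoms, the natural candidate for the isomorphism is the one induced by the monoidality of $K$ established in Proposition \ref{prop:adcmonoidal}. Steiner constructs in \cite{steiner2004omega} a realisation functor $\nu \colon \adc \to \omegacat$, defined on a suitable full subcategory containing the ADCs associated to loop-free atoms, and proves that $\nu$ is monoidal there; Ara and Maltsiniotis extend this to the join. What I would verify first is that for every constructible atom $U$, there is a canonical isomorphism $\hatto{U} \simeq \nu(KU)$ of $\omega$-categories. On the one hand, $\nu(KU)$ is composition-generated by its atoms, with relations coming precisely from the boundary formulas in $KU$; on the other, $\hatto{U} = (\molec{}{U})^*$ is generated by its atoms subject to the composition relations coming from decompositions of molecules. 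Given Lemma \ref{lem:globtensor} and Proposition \ref{lem:globjoin}, which guarantee that lax Gray products and joins of constructible molecules are constructible molecules, one can present both $\hatto{U}$ and $\nu(KU)$ by the same generators and relations, yielding the isomorphism. Combining this with Proposition \ref{prop:adcmonoidal} would give
\begin{equation*}
	\hatto{U \tensor V} \simeq \nu(K(U \tensor V)) \simeq \nu(KU \otimes KV) \simeq \nu(KU) \tensor \nu(KV) \simeq \hatto{U} \tensor \hatto{V},
\end{equation*}
and analogously for the join using the suspension/join translation in Proposition \ref{lem:globjoin}.

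The main obstacle is precisely the identification $\hatto{U} \simeq \nu(KU)$, because Steiner's construction of $\nu$ and its proof of monoidality rely crucially on loop-freeness (or at least on loop-freeness of the tensor factors), and by Example \ref{exm:nonloopfree} constructible atoms need not be loop-free. Proposition \ref{prop:loopd-acyclic} gives a weaker restricted loop-freeness: in any $(n+1)$-dimensional molecule the graph $\loopd{n}{U}$ is acyclic. The plan is to combine this restricted acyclicity with the structural control provided by the merger trees of Section \ref{sec:properties} to inductively build, for a constructible atom $U$, a well-defined basis of $\hatto{U}$ indexed by the elements of $U$, matching the distinguished basis of $\nu(KU)$. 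Equivalently, one would show that the canonical map $\hatto{U} \to \nu(KU)$ (defined on generators and extended by universal property) is bijective on hom-sets in each dimension, by exhibiting each molecule as a unique iterated composition of atoms. A secondary difficulty is verifying naturality of the isomorphism with respect to inclusions of atoms, but once the identification on objects is fixed, this follows formally because inclusions of atoms induce morphisms both in $\adc$ and in the categories of molecules.
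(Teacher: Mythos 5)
The statement you are attempting to prove is, in the paper, only a \emph{conjecture}: the paper explicitly does not prove it, and the text surrounding Lemma~\ref{lem:restrictmonoidal} makes clear that the authors were unable to carry the argument past the totally loop-free case. Your plan in fact retraces the paper's own partial progress almost exactly: the reduction to atoms via preservation of colimits by $\hatto{-}$, Day convolution, and the fact that the lax Gray product and join of $\omega$\nbd categories preserve (connected) colimits in each variable, is precisely Lemma~\ref{lem:restrictmonoidal}; and the factorisation through $\nu \circ K$ on the loop-free subcategory is precisely the unnumbered lemma that follows it. So the first two paragraphs of your plan are correct, but they only establish what the paper already proves.

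The genuine gap is the one you yourself flag, and it is exactly the obstruction the paper names at the end of Section~\ref{sec:omegacat}: to extend beyond $\globe^{\mathrm{tlf}}$ one would need both that $\nu K(U)$ is naturally isomorphic to $(\molec{}{U})^*$ for \emph{all} constructible atoms $U$, and that $\nu$ is (strongly, not merely lax) monoidal on the image of $K$. Neither is known. Your suggestion to ``present both $\hatto{U}$ and $\nu(KU)$ by the same generators and relations'' is not available off the shelf: $\nu$ is not defined by a presentation, and the identification of $\nu K(U)$ with a polygraph freely generated by the elements of $U$ is Steiner's Theorem~6.1 of~\cite{steiner2004omega}, which requires $KU$ to be a \emph{strong} Steiner complex --- a loop-freeness condition that fails for constructible atoms by Example~\ref{exm:nonloopfree}. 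Independently, the identification $\hatto{U} \simeq (\molec{}{U})^*$ uses Proposition~\ref{prop:molec_hatto_iso}, which is itself conditional on $U$ being freely generating, i.e.\ on the separate open Conjecture~\ref{conj:freegen}. Your proposed workaround, combining Proposition~\ref{prop:loopd-acyclic} with merger trees to build a basis, is plausible as a research direction but is not worked out, so the argument as written does not close the gap. In short: correct setup, correctly identified obstruction, but no proof --- which is consistent with the statement's status as a conjecture.
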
 
Proving this, however, seems to require developing the relation between our theory and that of Steiner, Ara, and Maltsiniotis further than we intended. We give a few partial results, and leave a full proof to future work.

\begin{lem} \label{lem:restrictmonoidal}
The following hold:
\begin{enumerate}[label=(\alph*)]
	\item $\hatto{-}: (\cpol, \tensor, 1) \to (\omegacat, \tensor, 1)$ is a monoidal functor if and only if its restriction to $\globe$ is monoidal;
	\item $\hatto{-}: (\cpol, \join, \emptyset) \to (\omegacat, \join, \emptyset)$ is a monoidal functor if and only if its restriction to $\globe_+$ is monoidal.
\end{enumerate} 
\end{lem}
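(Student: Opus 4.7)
The ``only if'' direction is immediate: the Yoneda embeddings $\globe \hookrightarrow \cpol$ and $\globe_+ \hookrightarrow \cpol_+$ are strong monoidal with respect to the Day convolutions, which are defined precisely to extend the monoidal structures on the small sites. Precomposing $\hatto{-}$ with the embedding recovers its restriction, so if the full functor is monoidal, so is the restriction.

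For the ``if'' direction in the lax Gray case, I would combine three facts: that $\hatto{-}: \cpol \to \omegacat$ is a left adjoint, hence preserves all colimits; that $\tensor$ on $\cpol$ preserves colimits in each variable by construction of the Day convolution; and that $\tensor$ on $\omegacat$ also preserves colimits in each variable, by biclosedness of the Gray product of $\omega$\nbd categories \cite{al2002multiple, steiner2004omega}. Starting from the canonical density presentation $X \simeq \int^{U \in \globe} \homset{\globe}(-,U) \times X(U)$, direct manipulation yields
\[
\hatto{X \tensor Y} \simeq \int^{U,V \in \globe} \hatto{U \tensor V} \times X(U) \times Y(V);
\]
the hypothesised isomorphism $\hatto{U \tensor V} \simeq \hatto{U} \tensor \hatto{V}$ on atoms, combined with the splitting of coends afforded by the cocontinuity of $\tensor$ on $\omegacat$, then reduces this to $\hatto{X} \tensor \hatto{Y}$. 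The unit comparison is inherited directly from the hypothesis on atoms.

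For the join, I would replay this argument inside $\cpol_+$, where the Day convolution $\jointil$ is genuinely biclosed: extend $\hatto{-}$ to $\hatto{-}_+: \cpol_+ \to \omegacat$ by left Kan extension along $\globe_+ \hookrightarrow \cpol_+$ (sending the extra object $\emptyset$ to the empty $\omega$\nbd category), and then recover the statement on $\cpol$ via the identity $X \join Y = r(X_+ \jointil Y_+)$. The main obstacle is that, in contrast to the lax Gray case, the join on $\omegacat$ is only locally biclosed and hence not directly cocontinuous in each variable. To circumvent this I would invoke the Ara-Maltsiniotis \cite{ara2016joint} presentation of the join on $\omegacat$ via the biclosed tensor on augmented directed complexes; transporting the coend splitting through the augmentation and comparing it to the Day convolution on $\cpol_+$ by Proposition \ref{prop:adcmonoidal} should make the argument go through. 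This last verification is the only step that requires substantial work beyond the formal manipulation of left Kan extensions and Day convolution.
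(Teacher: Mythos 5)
Your ``only if'' direction and your argument for part (a) are essentially the paper's own: both proceed via density of $\globe$ in $\cpol$, cocontinuity of $\hatto{-}$, and cocontinuity of $\tensor$ in each variable on both $\cpol$ (by biclosedness of Day convolution) and $\omegacat$ (by \cite[Th\'eor\`eme A.15]{ara2016joint}), yielding the desired isomorphism by a coend/colimit interchange.

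For part (b), however, your proposed route would not go through, and it is in fact circular in the context of this paper. You correctly identify the obstruction: $\join$ on $\omegacat$ is not cocontinuous in each variable (it does not even preserve the initial object, since $X \join \emptyset \simeq X$), so the coend-splitting argument from (a) does not transfer directly. But the fix you propose --- transporting the coend through the augmented-directed-complex presentation and comparing via Proposition \ref{prop:adcmonoidal} --- is precisely the route the paper rules out in the discussion following this lemma: $\nu$ is only lax monoidal in general, so making it work would require first establishing that $\nu K(P) \simeq (\molec{}{P})^*$ naturally and that $\nu$ is strong monoidal on the image of $K$, which is essentially the content of Conjecture \ref{conj:monoidality} that this lemma is meant to help attack. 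The actual fix is considerably more elementary: while $\join$ on $\omegacat$ does not preserve all colimits in each variable, by \cite[Th\'eor\`eme 7.28]{ara2016joint} it preserves \emph{connected} colimits in each variable. Since $\globe_+$ contains an initial object $\emptyset$, the canonical diagram of atoms over any constructible polygraph is a connected diagram in $\globe_+$, and every constructible polygraph is the colimit of such a connected diagram. Rerunning the argument from (a) with ``colimit'' replaced by ``connected colimit'' then closes the gap; no detour through $\adc$ is needed.
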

\begin{proof}
For the first point: $\globe$ is a dense subcategory of $\cpol$, and the lax Gray product, being part of a biclosed structure, preserves colimits separately in each argument. Thus, for any pair of constructible polygraphs $X, Y$,
\begin{equation*}
	X \tensor Y \simeq \colim{U \to X}U \tensor \colim{V \to Y}V \simeq \colim{U \to X, V \to Y}(U \tensor V),
\end{equation*}
where $U, V$ range over atoms. Moreover, $\hatto{-}$, being a left adjoint, preserves colimits, and by \cite[Th\'eor\`eme A.15]{ara2016joint} the lax Gray product of $\omega$\nbd categories also preserves colimits separately in each variable: therefore, if $\hatto{-}: (\globe, \tensor, 1) \to (\omegacat, \tensor, 1)$ is monoidal,
\begin{equation*}
	\hatto{(X \tensor Y)} \simeq \colim{\hatto{(U \to X)}, \hatto{(V \to Y)}}\hatto{(U \tensor V)} \simeq \colim{\hatto{(U \to X)}, \hatto{(V \to Y)}}(\hatto{U} \tensor \hatto{V}) \simeq \hatto{X} \tensor \hatto{Y}.
\end{equation*}
and all isomorphisms are natural in $X$ and $Y$. The other direction is obvious.

The proof of the second point is analogous, using the fact that any constructible polygraph is the colimit of a \emph{connected} diagram in $\globe_+$ (because the latter has an initial object), and that joins of $\omega$\nbd categories preserve connected colimits separately in each argument \cite[Th\'eor\`eme 7.28]{ara2016joint}.
\end{proof}

Let $\globe^\mathrm{tlf}$ and $\globe^\mathrm{tlf}_+$ be the full subcategories of $\globe$ and $\globe_+$ on totally loop-free constructible atoms (Definition \ref{dfn:loopfree}). By \cite[Theorem 2.19]{steiner1993algebra}, they form monoidal subcategories of $(\globe, \tensor, 1)$ and of $(\globe_+, \join, \emptyset)$, respectively.

\begin{lem}
The restriction of $\hatto{-}$ to $\globe^\mathrm{tlf}$ is monoidal from $(\globe^\mathrm{tlf}_+, \tensor, 1)$ to $(\omegacat, \tensor, 1)$ and from $(\globe^\mathrm{tlf}_+, \join, \emptyset)$ to $(\omegacat, \join, \emptyset)$.
\end{lem}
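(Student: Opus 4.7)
The plan is to identify $\hatto{-}$ on totally loop-free constructible atoms with the composite $\nu \circ K$, where $\nu : \adc \to \omegacat$ is Steiner's functor from augmented directed complexes to $\omega$\nbd categories, and then transport the known monoidality of $\nu \circ K$ on totally loop-free objects through this identification.

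First, I would recall Steiner's representation theorem (\cite{steiner1993algebra}, Theorem~5.11, or equivalently the comparison results in \cite{steiner2004omega}): if $P$ is a totally loop-free directed complex whose atoms have a \emph{unital loop-free atomic basis} in the sense of Steiner, then the canonical comparison $(\molec{}{P})^\ast \to \nu(KP, KP^\ast)$ sending an atom $\clos\{x\}$ to its chain-level representative is an isomorphism of $\omega$\nbd categories. For a totally loop-free constructible atom $U$, Theorem~\ref{thm:globmolec} and Construction~\ref{cons:adc} provide $U$ with precisely such a basis (the atomic bases of Steiner and constructible atoms coincide, and the submolecule order makes the basis unital and loop-free in his sense), so one obtains a natural isomorphism $\hatto{U} \incliso \nu(KU, KU^\ast)$ for $U \in \globe^{\mathrm{tlf}}$. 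Naturality in inclusions $U \hookrightarrow V$ between totally loop-free atoms follows because both $\hatto{-}$ and $\nu K$ are defined on generators by the same prescription.

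Next, I would invoke Proposition~\ref{prop:adcmonoidal} to get natural isomorphisms $K(U \tensor V) \simeq KU \otimes KV$ and $K(U \join V) \simeq KU \join KV$ in $\adc$, and invoke the main monoidality results of Ara--Maltsiniotis \cite{ara2016joint}, which together with Steiner's \cite{steiner2004omega} show that $\nu$ is monoidal both as $(\adc,\otimes,I) \to (\omegacat,\tensor,1)$ and as $(\adc,\join,0) \to (\omegacat,\join,\emptyset)$ \emph{when restricted} to augmented directed complexes with loop-free unital atomic basis --- a class that is stable under $\otimes$ and $\join$ and contains the image of $\globe^{\mathrm{tlf}}$ under $K$ (lax Gray products and joins preserve total loop-freeness by \cite[Theorem 2.19]{steiner1993algebra}). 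Chaining these natural isomorphisms yields
\begin{equation*}
\hatto{(U \tensor V)} \simeq \nu K(U \tensor V) \simeq \nu(KU \otimes KV) \simeq \nu(KU) \tensor \nu(KV) \simeq \hatto{U} \tensor \hatto{V},
\end{equation*}
and analogously for the join, with the required compatibility with the associators and units of the monoidal structures inherited from those on $\adc$ and $\omegacat$.

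The main obstacle is the bookkeeping in the first step: one has to check that Steiner's basis axioms are literally verified by our totally loop-free constructible atoms (this should be direct from Lemma~\ref{lem:basic}, Proposition~\ref{prop:moleculeglobelike}, Proposition~\ref{prop:loopd-acyclic}, and total loop-freeness, together with the submolecule relation providing the unitality), and that the isomorphism $\hatto{U} \incliso \nu K U$ is natural with respect to inclusions. Once this identification is in place, the monoidality statement is a formal transport along a natural monoidal equivalence, and the two cases (lax Gray product and join) are handled uniformly.
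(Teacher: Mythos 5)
Your proposal takes essentially the same route as the paper: identify $\hatto{-}$ with $\nu\circ K$ on totally loop-free constructible atoms (via Steiner's representation of strong Steiner complexes / unital loop-free atomic bases, which is what the paper invokes through Proposition~\ref{prop:molec_hatto_iso} and \cite[Theorem 6.1]{steiner2004omega}), then compose the monoidality of $K$ from Proposition~\ref{prop:adcmonoidal} with the Ara--Maltsiniotis monoidality of $\nu$ on that subcategory. The bookkeeping you flag as the main obstacle is exactly what the paper's sketch also leaves to the reader.
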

\begin{proof}[Sketch of the proof]
If $U$ is a totally loop-free constructible atom, then the augmented directed complex $KU$ (Definition \ref{cons:adc}) is a strong Steiner complex (\emph{complexe de Steiner fort}) with basis $U$, in the sense of \cite[2.10]{ara2016joint}. Strong Steiner complexes form a full monoidal subcategory $\cat{St}_\mathrm{f}$ of $\adc$ both with the tensor product and the join.

There is a functor $\nu: \adc \to \omegacat$, described in \cite[Definition 2.8]{steiner2004omega}, which restricted to $\cat{St}_\mathrm{f}$ is full and faithful, and exhibits $(\cat{St}_\mathrm{f}, \otimes, I)$ and $(\cat{St}_\mathrm{f}, \join, 0)$ as full monoidal subcategories of $(\omegacat, \tensor, 1)$ and of $(\omegacat, \join, \emptyset)$, respectively \cite[Th\'eor\`eme A.15 and Th\'eor\`eme 7.28]{ara2016joint}. 

Moreover, by \cite[Theorem 6.1]{steiner2004omega}, if $K$ is a strong Steiner complex with basis $B$, the $\omega$\nbd category $\nu K$ is a polygraph, whose generators correspond to elements of $B$. We can use this and Proposition \ref{prop:molec_hatto_iso} to establish an isomorphism between $(\molec{}{U})^*$ and $\nu K(U)$, natural in $U$, for all totally loop-free molecules $U$. 

It follows that $\hatto{-}$ restricted to $\globe^\mathrm{tlf}$ or $\globe^\mathrm{tlf}_+$ factors as $\nu K(-)$ through $\adc$, and we conclude by Proposition \ref{prop:adcmonoidal}.
\end{proof}

By the two lemmas combined, we have reduced the task of proving Conjecture \ref{conj:monoidality} to proving that the definitions of lax Gray products and joins coincide on non-totally-loop-free molecules. Unfortunately, the monoidality of $K: \globpos \to \adc$ does not seem directly useful for this extension, as $\nu$ is in general only lax monoidal: we would need to prove both that $\nu K(P)$ is naturally isomorphic to $(\molec{}{P})^*$ for all constructible directed complexes $P$, and that $\nu$ is monoidal on the image of $K$.

\begin{remark}
Something we can do is to restrict $K$ to $\globe$, and extend it along colimits to obtain a monoidal functor $K: (\cpol,\tensor,1) \to (\adc, \otimes, I)$. Post-composed with the forgetful functor from augmented directed complexes to chain complexes of abelian groups, this coincides, on constructible polygraphs, with the linearisation functor of \cite[Subsection 3.3]{metayer2003resolutions}.
\end{remark}

We conclude this section with the proof that $\hatto{-}$ is compatible with $J$\nbd duals.

\begin{prop}
Let $J \subseteq \mathbb{N}^+$. There exist isomorphisms $\hatto{(\oppn{J}{X})} \simeq \oppn{J}{\hatto{X}}$, natural in the constructible polygraph $X$.
\end{prop}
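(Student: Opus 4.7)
The plan is to first verify the claim on constructible atoms, and then extend to arbitrary constructible polygraphs by a colimit argument. At both levels, the essential point is that $\oppn{J}{-}$ is involutive, hence is an autoequivalence, on each of the categories involved ($\globpos$, $\cpol$, $\pomegacat$, $\omegacat$), so it preserves all colimits and commutes with the free--forgetful adjunctions that build $\hatto{-}$.

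First I would show that, for every constructible directed complex $P$, there is a natural isomorphism of partial $\omega$\nbd categories
\begin{equation*}
\molec{}{\oppn{J}{P}} \simeq \oppn{J}{\molec{}{P}}.
\end{equation*}
Since the underlying poset of $\oppn{J}{P}$ equals that of $P$, both sides have the same closed subsets as underlying set of cells. Proceeding by induction on the definition of molecule, one verifies that $U$ is an $n$\nbd molecule of $P$ iff it is an $n$\nbd molecule of $\oppn{J}{P}$, with the identifications $\bord{k}{\alpha}U \leftrightarrow \bord{k}{\mp\alpha}U$ depending on whether $k+1 \in J$, and that a decomposition $U = U_1 \cp{k} U_2$ in $P$ (glued along $\bord{k}{+}U_1 = \bord{k}{-}U_2$) corresponds in $\oppn{J}{P}$ to the decomposition $U_2 \cp{k} U_1$ if $k+1 \in J$ and $U_1 \cp{k} U_2$ if $k+1 \notin J$. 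The base case is handled by Proposition \ref{prop:oppglobpos} applied to $\clos\{x\}$; the inductive step is a direct translation of the gluing condition.

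Next I would apply the left adjoint $(-)^* \colon \pomegacat \to \omegacat$. Because $\oppn{J}{-}$ is an involutive autoequivalence of $\omegacat$ (and of $\pomegacat$, with matching definitions), it is in particular a left adjoint and preserves the unit of the adjunction; hence the canonical comparison $(\oppn{J}{X})^* \to \oppn{J}{X^*}$ is invertible for every $X \in \pomegacat$. Combining this with the previous isomorphism yields, for every atom $U \in \globe$,
\begin{equation*}
\hatto{(\oppn{J}{U})} = (\molec{}{\oppn{J}{U}})^* \simeq (\oppn{J}{\molec{}{U}})^* \simeq \oppn{J}{(\molec{}{U})^*} = \oppn{J}{\hatto{U}},
\end{equation*}
naturally in $U$ (since each step is natural in the input oriented graded poset).

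Finally I would extend to constructible polygraphs via the coend formula for $\hatto{-}$. The functor $\hatto{-} \colon \cpol \to \omegacat$, being a left adjoint, preserves colimits, and $\oppn{J}{-}$ preserves colimits on both $\cpol$ and $\omegacat$ for the same reason (involutivity). Moreover, as an endofunctor on $\globe$, $\oppn{J}{-}$ is an involutive autoequivalence, so precomposition with it is an autoequivalence of presheaf categories, and $\oppn{J}{X}$ is the colimit, over atoms $U \to X$, of $\oppn{J}{U}$. Therefore
\begin{equation*}
\hatto{(\oppn{J}{X})} \simeq \colim{U \to X}\hatto{(\oppn{J}{U})} \simeq \colim{U \to X}\oppn{J}{\hatto{U}} \simeq \oppn{J}{\hatto{X}},
\end{equation*}
and the composite isomorphism is natural in $X$. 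The main obstacle is the bookkeeping for the first step: one must verify that flipping orientations at dimensions in $J$ in an oriented thin poset corresponds exactly, under the molecule recursion, to swapping boundary signs and reversing composition order at the matching dimensions of the partial $\omega$\nbd category structure; this is not deep, but it requires a careful parallel induction along the clauses of the definition of molecule.
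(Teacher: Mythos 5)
Your proposal is correct and follows essentially the same route as the paper: reduce to atoms via invertibility and colimit-preservation of $\oppn{J}{-}$, establish the isomorphism $\molec{}{(\oppn{J}{P})} \simeq \oppn{J}{\molec{}{P}}$ by induction on the molecule recursion, and then show that $(-)^*$ commutes with $\oppn{J}{-}$ (which the paper also dispatches as ``a simple exercise''). The only cosmetic point is that ``preserves the unit of the adjunction'' is a loose way to justify the last step; the cleaner statement is that $\oppn{J}{-}$ commutes strictly with the forgetful functor $\omegacat \to \pomegacat$, hence by uniqueness of adjoints it commutes up to canonical natural isomorphism with the left adjoint $(-)^*$.
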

\begin{proof}
It suffices to construct isomorphisms $\hatto{(\oppn{J}{U})} \simeq \oppn{J}{\hatto{U}}$ natural in atoms $U$ and inclusions, because $\oppn{J}{-}$ is an invertible endofunctor both in $\cpol$ and in $\omegacat$, so in particular it preserves colimits; we will then have
\begin{align*}
	\hatto{(\oppn{J}{X})} & \simeq \hatto{(\colim{U \to X}\oppn{J}{U})} \simeq \colim{\hatto{(U \to X)}}\hatto{(\oppn{J}{U})} \simeq \colim{\hatto{(U \to X)}}\oppn{J}{\hatto{U}} \simeq \\
	& \simeq \oppn{J}{\colim{\hatto{(U \to X)}} \hatto{U}} \simeq \oppn{J}{\hatto{X}}.
\end{align*}
Let $P$ be a constructible directed complex. We will prove that $U$ is an $n$\nbd molecule in $P$ if and only if $\oppn{J}{U}$ is an $n$\nbd molecule in $\oppn{J}{P}$, and $\bord{k}{\alpha}(\oppn{J}{U}) = \oppn{J}{\bord{k}{\pm \alpha}U}$, the sign depending on whether $k+1 \in J$ or not. 

If $U$ is an atom, this is immediate from Proposition \ref{prop:oppglobpos} and the characterisation of $k$\nbd boundaries of constructible molecules in Proposition \ref{prop:globelike}. If $U$ has a proper decomposition $U_1 \cp{k} U_2$, then $U_1 \cap U_2 = \bord{k}{+}U_1 = \bord{k}{-}U_2$, and, equivalently by the inductive hypothesis, $\oppn{J}{U_1} \cap \oppn{J}{U_2} = \bord{k}{\pm}\oppn{J}{U_1} = \bord{k}{\mp}\oppn{J}{U_2}$, which means that either $\oppn{J}{U_1} \cp{k} \oppn{J}{U_2}$ or $\oppn{J}{U_2} \cp{k} \oppn{J}{U_1}$ is defined and equal to $\oppn{J}{U}$. The relation between the $k$\nbd boundaries of $U$ and of $\oppn{J}{U}$ is then a consequence of Corollary \ref{cor:globelikecor}.

This determines an isomorphism of the underlying $\omega$\nbd graphs of $\molec{}{(\oppn{J}{P})}$ and $\oppn{J}{\molec{}{P}}$; naturality in $P$ and compatibility with units and compositions are obvious. Restricting to atoms and whiskering with the functor $(-)^*$, we obtain a natural isomorphism between $\hatto{(\oppn{J}{-})}$ and $(\oppn{J}{\molec{}{-}})^*$;  to conclude, it suffices to show that $\oppn{J}{-}^*$ and $\oppn{J}{-^*}$ are naturally isomorphic, which is a simple exercise.
\end{proof}

\section{Constructible polygraphs as polygraphs} \label{sec:polygraphs}

We expect that the realisation of a constructible polygraph should admit the structure of a polygraph, with cells of the former corresponding to the generators of the latter. Steiner's results allow us to state this as a theorem for some specific subclasses of constructible polygraphs, but we were not able, so far, to prove this in the general case, where it remains conjectural. In this section, we give some partial results.

Let us recall, first, the definition of polygraphs.

\begin{dfn}
Let $X$ be a partial $\omega$\nbd category and $n \in \mathbb{N}$. The \emph{$n$\nbd skeleton} $\skel{n}{X}$ of $X$ is the partial $\omega$\nbd category whose underlying $\omega$\nbd graph is
\begin{equation*}
\begin{tikzpicture}
	\node[scale=1.25] (0) at (0,0) {$X_0$};
	\node[scale=1.25] (1) at (2,0) {$\ldots$};
	\node[scale=1.25] (2) at (4,0) {$X_n$};
	\node[scale=1.25] (3) at (6,0) {$\idd{}(X_n)$};
	\node[scale=1.25] (4) at (8,0) {$\ldots$};
	\node[scale=1.25] (5) at (10,0) {$\idd{m}(X_n)$};
	\node[scale=1.25] (6) at (12,0) {$\ldots$};
	\draw[1c] (1.west |- 0,.15) to node[auto,swap] {$\bord{}{+}$} (0.east |- 0,.15);
	\draw[1c] (1.west |- 0,-.15) to node[auto] {$\bord{}{-}$} (0.east |- 0,-.15);
	\draw[1c] (2.west |- 0,.15) to node[auto,swap] {$\bord{}{+}$} (1.east |- 0,.15);
	\draw[1c] (2.west |- 0,-.15) to node[auto] {$\bord{}{-}$} (1.east |- 0,-.15);
	\draw[1c] (3.west |- 0,.15) to node[auto,swap] {$\bord{}{+}$} (2.east |- 0,.15);
	\draw[1c] (3.west |- 0,-.15) to node[auto] {$\bord{}{-}$} (2.east |- 0,-.15);
	\draw[1c] (4.west |- 0,.15) to node[auto,swap] {$\bord{}{+}$} (3.east |- 0,.15);
	\draw[1c] (4.west |- 0,-.15) to node[auto] {$\bord{}{-}$} (3.east |- 0,-.15);
	\draw[1c] (5.west |- 0,.15) to node[auto,swap] {$\bord{}{+}$} (4.east |- 0,.15);
	\draw[1c] (5.west |- 0,-.15) to node[auto] {$\bord{}{-}$} (4.east |- 0,-.15);
	\draw[1c] (6.west |- 0,.15) to node[auto,swap] {$\bord{}{+}$} (5.east |- 0,.15);
	\draw[1c] (6.west |- 0,-.15) to node[auto] {$\bord{}{-}$} (5.east |- 0,-.15);
	\node[scale=1.25] at (12.5,-.25) {$,$};
\end{tikzpicture}
\end{equation*}
that is, the restriction of $X$ to cells $x$ with $\dmn{x} \leq n$; unit and composition operations are restricted as appropriate. 

There is an obvious inclusion $\skel{n}X \hookrightarrow X$, which factors through $\skel{m}{X} \hookrightarrow X$ for all $m > n$. Any partial $\omega$\nbd category is the sequential colimit of its skeleta.
\end{dfn}

For each $n \in \mathbb{N}$, let $\hatto{O^n}$ be the $n$\nbd globe as an $\omega$\nbd category: $\hatto{O^n}$ has two $k$\nbd dimensional cells $\underline{k}^+, \underline{k}^-$ for each $k < n$, and a single $n$\nbd dimensional cell $\underline{n}$, such that $\bord{k}{\alpha}\underline{n} = \underline{k}^\alpha$ for all $k < n$. Let $\bord{}{}\hatto{O^n}$ be the $(n-1)$\nbd skeleton of $\hatto{O^n}$. There is a bijection between $n$\nbd cells $x$ of a partial $\omega$\nbd category $X$ and functors $x: \hatto{O^n} \to X$, and we will identify the two. We write $\bord{}{}x$ for the precomposition of $x$ with the inclusion $\bord{}{}\hatto{O^n} \hookrightarrow \hatto{O^n}$.

For a family $X_i$ of $\omega$\nbd categories, let $\coprod_{i \in I} X_i$ be its coproduct in $\omegacat$; the sets of $n$\nbd cells and the boundary, unit, and composition operations of the coproduct are all induced pointwise by coproducts of sets and functions. Given a family of functors $\{f_i: X_i \to Y\}_{i \in I}$, let $(f_i)_{i \in I}: \coprod_{i \in I}X_i \to Y$ be the functor produced by the universal property of coproducts.

\begin{dfn}
A \emph{polygraph} is an $\omega$\nbd category $X$ together with families $\{\atom{n}{X}\}_{n \in \mathbb{N}}$ of $n$\nbd dimensional cells of $X$, such that, for all $n$,
\begin{equation*}
\begin{tikzpicture}[baseline={([yshift=-.5ex]current bounding box.center)}]
	\node[scale=1.25] (0) at (0,2) {$\displaystyle \coprod_{x\in\atom{n}{X}} \bord{}{} \hatto{O^n}$};
	\node[scale=1.25] (1) at (3.5,2) {$\displaystyle\coprod_{x\in\atom{n}{X}} \hatto{O^n}$};
	\node[scale=1.25] (2) at (0,0) {$\skel{n-1}{X}$};
	\node[scale=1.25] (3) at (3.5,0) {$\skel{n}{X}$};
	\draw[1c] (0) to node[auto,swap] {$(\bord{}{}x)_{x\in\atom{n}{X}}$} (2);
	\draw[1c] (1) to node[auto] {$(x)_{x\in\atom{n}{X}}$} (3);
	\draw[1cinc] (0) to (1);
	\draw[1cinc] (2) to (3);
	\draw[edge] (2.5,0.2) to (2.5,0.8) to (3.3,0.8);
\end{tikzpicture}
\end{equation*}
is a pushout diagram in $\omegacat$. The cells in $\atom{n}{X}$ are called $n$\nbd dimensional \emph{generators} of $X$.

A \emph{map} of polygraphs is a functor $f: X \to Y$ of $\omega$\nbd categories that sends $n$\nbd dimensional generators of $X$ to $n$\nbd dimensional generators of $Y$; that is, $f$ restricts to, and is essentially determined by a sequence of functions $\{f_n: \atom{n}{X} \to \atom{n}{Y}\}_{n \in \mathbb{N}}$. Polygraphs and their maps form a category $\pol$.
\end{dfn}

\begin{remark}
This is a concise definition of polygraph; there are more explicit ones, based on iterated free algebra constructions, including the original \cite{burroni1993higher}. 

In our definition, we include the generators as structure on an $\omega$\nbd category. Occasionally, ``being a polygraph'' may be used as the property of an $\omega$\nbd category which admits the structure of a polygraph: this is the sense in which ``polygraphs are the cofibrant objects'' in the model structure on $\omegacat$ defined by Lafont, M\'etayer, and Worytkiewicz \cite{lafont2010folk}.
\end{remark}

\begin{dfn} \label{dfn:freegen}
Let $P$ be a directed complex. We say that $P$ is \emph{freely generating} if $(\molec{}{P})^*$ admits the structure of a polygraph, whose $n$\nbd dimensional generators are the $n$\nbd dimensional atoms of $P$.
\end{dfn}

\begin{remark}
Clearly, the atoms of $P$ are composition-generators of $(\molec{}{P})^*$; what is not guaranteed is that they are freely generating in the sense of polygraphs. More specifically, it is possible in general that there are two expressions of a molecule of $P$ as an $\omega$\nbd categorical composite of atoms which are not equal modulo the axioms of $\omega$\nbd categories.
\end{remark}

So far, we have not been able to prove the following conjecture.
\begin{conj} \label{conj:freegen}
Every constructible directed complex is freely generating.
\end{conj}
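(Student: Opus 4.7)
The plan is to proceed by induction on $n$, proving that for every constructible directed complex $P$ the canonical comparison functor
\[
\Phi_n \colon Y_n \longrightarrow \skel{n}{(\molec{}{P})^*}
\]
is an isomorphism, where $Y_n$ is the free $\omega$-category obtained by attaching an $n$-globe for each $n$-atom of $P$ along its boundary (using the polygraph structure on $\skel{n-1}{(\molec{}{P})^*}$ granted by the inductive hypothesis). The base case $n=0$ is the trivial identification of the set of $0$-cells of $(\molec{}{P})^*$ with $P^{(0)}$. Surjectivity of $\Phi_n$ in the inductive step is a direct consequence of Lemma \ref{lem:atomicmerge}: any $n$-molecule $U$ has a merger reduction $U \leadsto \ldots \leadsto \tilde{U}$ to an atom, and running the simple mergers in reverse yields an explicit expression of $U$ as an iterated $\omega$-categorical composite of $n$-atoms of $P$ that lifts to a cell of $Y_n$ mapped by $\Phi_n$ to $U$.

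The core difficulty is injectivity of $\Phi_n$: if two expressions of $U \in \molec{n}{P}$ as iterated composites of $n$-atoms give the same element of $(\molec{}{P})^*$, they must already be equal in $Y_n$ modulo the $\omega$-category axioms. My approach would be rewriting-theoretic. By Proposition \ref{prop:loopd-acyclic}, the graph $\loopd{n-1}{U}$ is acyclic, so the $n$-atoms of $U$ carry a canonical partial flow order. I would try to show that this order, combined with the merger calculus of Section \ref{sec:properties}, lets one reduce any two $\omega$-categorical expressions of $U$ to a common normal form indexed by a linear extension of the flow order, using at each step either a direct atomic peeling $U = V \cp{k} \clos\{x\}$ (or $\clos\{x\} \cp{k} V$) for a maximal atom $x$, or, when peeling is not immediately possible, a substitution of submolecules (Construction \ref{cons:substitution} and Lemma \ref{lem:substitution}) that replaces a non-atomic configuration by an atomic one without changing the image under $\Phi_n$. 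The inductive hypothesis applied to the intermediate $(n-1)$-boundaries, which by Lemma \ref{lem:boundarymove} are themselves submolecules, would then close the step.

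The main obstacle, as emphasised in the discussion of Example \ref{exm:nonsplit}, is precisely that a constructible $n$-molecule need not admit any decomposition of the form $V \cp{k} \clos\{x\}$ or $\clos\{x\} \cp{k} V$: the ``peel off a generator'' strategy underlying every previous proof of free generation does not apply verbatim. I expect the genuine work to lie in setting up the promised normal-form reduction at the level of merger trees rather than of $\omega$-categorical composites: one would enumerate the local moves between merger trees of the same molecule (swapping independent branchings, reassociating nested ones), and prove a coherence theorem showing that each such move translates to an identity or interchange law already holding in $Y_n$. The restricted loop-freeness of Proposition \ref{prop:loopd-acyclic} is the combinatorial resource I expect to be indispensable for the termination of this rewrite, since it is what rules out loops at the top dimension inside any single molecule; however, I fully expect that establishing the confluence half of the Church--Rosser property, in the presence of non-splitting molecules, will require an additional structural insight beyond anything currently isolated in the paper, which is consistent with the author's own remark that a different proof strategy is needed.
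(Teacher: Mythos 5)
The paper does not prove this statement: it is Conjecture~\ref{conj:freegen}, stated explicitly as open, with the author noting that ``the strategy used essentially in all previous proofs of free generation does not apply to general constructible directed complexes'' because of Example~\ref{exm:nonsplit}. There is therefore no paper proof to compare your attempt against, and your final paragraph, where you concede that ``establishing the confluence half of the Church--Rosser property \ldots will require an additional structural insight beyond anything currently isolated in the paper'', is an accurate self-assessment: what you have written is a plan, not a proof, and the gap you flag is the real one.

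Two comments on the parts you do claim. First, your surjectivity argument is not quite a ``direct consequence'' of Lemma~\ref{lem:atomicmerge}: running a sequence of simple mergers in reverse yields a \emph{merger} decomposition of $U$ (a pasting of two atoms along a shared $(n-1)$\nbd dimensional atom), which is not itself an $\omega$\nbd categorical $\cp{k}$\nbd composite. Converting a merger tree into a bona fide expression in the $\cp{k}$\nbd operations is exactly the content of Theorem~\ref{thm:globmolec} and Lemma~\ref{lem:moleculesubs}, so surjectivity should be routed through those rather than through mergers-in-reverse; but this is not where the difficulty lies. Second, and more seriously, your injectivity plan leans on the acyclicity of $\loopd{n-1}{U}$ (Proposition~\ref{prop:loopd-acyclic}) to produce a ``canonical flow order'' and then a normal form. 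Proposition~\ref{prop:loopd-acyclic} rules out loops at the top dimension but does not by itself select a distinguished splitting of the form $U_1 \cp{k} U_2$ with $k = \frdmn{U}$ and $U_1 \cap U_2 \not\subseteq \bord{k}{}U$, which is precisely what fails for the non-split molecule of Example~\ref{exm:nonsplit}; so the partial order you obtain does not obviously linearise into a peeling sequence, and the local-move coherence theorem you postulate for merger trees is unargued. In short, the proposal correctly locates the obstruction (non-split molecules render the classical ``peel off a generator'' argument of Proposition~\ref{prop:split_generates} inapplicable) and sketches a plausible direction (rewriting on merger trees, confluence modulo $\omega$\nbd category axioms), but the missing confluence argument is the entire content of the conjecture, and nothing in the paper nor in your sketch supplies it.
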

This is certainly not true of general directed complexes: see \cite{forest2019unifying} and \cite[Section 2.5]{henry2019non} for some counterexamples which can be rephrased in Steiner's theory. However, all such counterexamples seem to involve some looping which is not permitted in a constructible directed complex, such as the input and output boundary of an atom sharing some non-boundary elements.

\begin{remark} \label{rmk:regularity}
More generally, we conjecture that a directed complex which is \emph{regular} in the sense of \cite{henry2018regular} should be freely generating: that is, a directed complex such that, for all $n$\nbd dimensional atoms and $k < n$,  $\bord{k}{-}x \cap \bord{k}{+}x = \bord{k-1}{}x$. 
\end{remark}

At present, we have two potentially different ways of realising a constructible directed complex as an $\omega$\nbd category, extending $\hatto{-}: \globe \to \omegacat$: one is $(\molec{}{-})^*$, and the other is the left Kan extension of $\hatto{-}$ along $\globe \incl \globpos$, which we denote by $\hatto{-}: \globpos \to \omegacat$. 

\begin{prop} \label{prop:molec_hatto_iso}
Let $P$ be a freely generating constructible directed complex. Then $(\molec{}{P})^*$ and $\hatto{P}$ are isomorphic.
\end{prop}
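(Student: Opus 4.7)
The plan is to proceed by induction on the dimension of $P$, using the skeletal filtration from Construction \ref{cons:nskeleton} and the pushout description of Proposition \ref{prop:polygraph_ext}. The universal property of the left Kan extension defining $\hatto{-}: \cpol \to \omegacat$ yields a canonical natural transformation $\phi: \hatto{-} \Rightarrow (\molec{}{-})^*$ that is the identity on atoms, and I aim to show that $\phi_P$ is an isomorphism whenever $P$ is freely generating. The base case $\dmn{P} = 0$ is immediate, as both sides reduce to the discrete $\omega$\nbd category on $P^{(0)}$. Before the inductive step proper, I would first verify that being freely generating is inherited by the relevant sub\nbd complexes of $P$: namely $\skel{n-1}{P}$ and, for each atom $\clos\{x\} \subseteq P$, the atom itself and its boundary $\bord{}{}\clos\{x\}$. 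This should follow because the polygraph structure on $(\molec{}{P})^*$ restricts along the inclusions of the corresponding sub\nbd $\omega$\nbd categories, with the restricted set of generators being precisely the atoms of the sub\nbd complex.

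For the inductive step with $\dmn{P} = n$, I would apply $\hatto{-}$ to the pushout of Proposition \ref{prop:polygraph_ext} in $\cpol$; since $\hatto{-}$ is a left adjoint, it preserves this pushout, yielding
\begin{equation*}
\hatto{P} \simeq \hatto{\skel{n-1}{P}} \cup_{\coprod_x \hatto{\bord{}{}U(x)}} \coprod_x \hatto{U(x)}
\end{equation*}
in $\omegacat$, with $x$ ranging over $n$\nbd atoms of $P$. The assumed polygraph structure on $(\molec{}{P})^*$ gives, in turn,
\begin{equation*}
(\molec{}{P})^* \simeq \skel{n-1}{(\molec{}{P})^*} \cup_{\coprod_x \bord{}{}\hatto{O^n}} \coprod_x \hatto{O^n}.
\end{equation*}
By the inductive hypothesis applied to the sub\nbd complexes just mentioned, $\hatto{\skel{n-1}{P}} \simeq \skel{n-1}{(\molec{}{P})^*}$, and each atom $U(x)$ is itself freely generating with its greatest element as the unique top\nbd dimensional generator, so $\hatto{U(x)}$ can be expressed as the pushout $\hatto{\bord{}{}U(x)} \cup_{\bord{}{}\hatto{O^n}} \hatto{O^n}$. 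Substituting these identifications into the first pushout and invoking the pasting law reduces it to the second, and checking that the resulting isomorphism agrees with $\phi_P$ is a matter of unfolding universal properties.

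The main obstacle will be the preliminary step of propagating the freely generating property to the sub\nbd complexes, and in particular to each atom $U$ of $P$. This amounts to showing that the inclusion $(\molec{}{U})^* \hookrightarrow (\molec{}{P})^*$ sends generators to generators and reflects the polygraph pushout at the top cell of $U$, so that $\hatto{U}$ inherits the desired pushout decomposition in terms of $\hatto{\bord{}{}U}$ and $\hatto{O^n}$. Once this inheritance is secured, the remainder of the argument is formal manipulation of pushouts in $\omegacat$; in particular, since $(\molec{}{-})^*$ is faithful by Proposition \ref{thm:steiner213}, identifying the comparison map $\phi_P$ with the constructed isomorphism reduces to tracking the image of a set of generators.
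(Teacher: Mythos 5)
Your proposal follows essentially the same route as the paper: induction combined with the pushout decomposition of Proposition \ref{prop:polygraph_ext}, preservation of colimits by the left adjoint $\hatto{-}$, and Corollary \ref{cor:globpos_is_colimit}, then a pasting of pushouts to identify the skeleta of the two sides. The paper organises the induction on $n$-skeleta of the fixed $P$ rather than on $\dmn{P}$, but that difference is cosmetic; more notably, the obstacle you single out --- inheritance of free generation to closed sub-complexes, in particular to each atom $\clos{\{x\}}$ and its boundary --- is a genuine point the paper also leaves implicit, as it simply asserts that each $n$-dimensional atom of $P$ is freely generating and that $\hatto{(\bord{}{}x)} = (\molec{}{\bord{}{}x})^*$ without argument, so your version is if anything more honest about where the remaining work lies.
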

\begin{proof}
It suffices to show that the $n$\nbd skeleta are isomorphic for each $n$. If $n = 0$, this is obvious, so suppose $n > 0$, and assume $\skel{n-1}{(\molec{}{P})^*}$ is isomorphic to $\skel{n-1}{\hatto{P}}$.

Any $n$\nbd dimensional atom $\clos\{x\} \subseteq P$ is freely generating. Let $x: \hatto{O^n} \to \hatto{\clos\{x\}}$ be the unique functor sending $\underline{n}$ to $\clos\{x\}$, and $\bord{}{}x: \bord{}{}\hatto{O^n} \to \hatto{\bord{}{}x}$ its restriction to $(n-1)$\nbd skeleta, where we use the fact that $\hatto{(\bord{}{}x)} = (\molec{}{\bord{}{}x})^*$. Then
\begin{equation*}
\begin{tikzpicture}[baseline={([yshift=-.5ex]current bounding box.center)}]
	\node[scale=1.25] (0) at (0.5,2) {$\hatto{\bord{}{}O^n}$};
	\node[scale=1.25] (1) at (3.5,2) {$\hatto{O^n}$};
	\node[scale=1.25] (2) at (0.5,0) {$\hatto{\bord{}{}x}$};
	\node[scale=1.25] (3) at (3.5,0) {$\hatto{\clos\{x\}}$};
	\draw[1c] (0) to node[auto,swap] {$\bord{}{}x$} (2);
	\draw[1c] (1) to node[auto] {$x$} (3);
	\draw[1cinc] (0) to (1);
	\draw[1cinc] (2) to (3);
	\draw[edge] (2.5,0.2) to (2.5,0.8) to (3.3,0.8);
\end{tikzpicture}
\end{equation*}
is a pushout diagram in $\omegacat$. This allows us to replace $n$\nbd globes with constructible $n$\nbd atoms in the polygraphic extension of $\skel{n-1}{\hatto{P}}$:
\begin{equation} \label{eq:modcolimit_cpx}
\begin{tikzpicture}[baseline={([yshift=-.5ex]current bounding box.center)}]
	\node[scale=1.25] (0) at (-.5,2) {$\displaystyle \coprod_{x\in P^{(n)}} \hatto{\bord{}{}x}$};
	\node[scale=1.25] (1) at (3.5,2) {$\displaystyle\coprod_{x\in P^{(n)}} \hatto{\clos\{x\}}$};
	\node[scale=1.25] (2) at (-.5,0) {$\skel{n-1}{\hatto{P}}$};
	\node[scale=1.25] (3) at (3.5,0) {$\skel{n}{(\molec{}{P})^*}$.};
	\draw[1c] (0) to node[auto,swap] {$(\bord{}{}x)_{x\in P^{(n)}}$} (2);
	\draw[1c] (1) to node[auto] {$(\clos\{x\})_{x\in P^{(n)}}$} (3);
	\draw[1cinc] (0) to (1);
	\draw[1cinc] (2) to (3);
	\draw[edge] (2.5,0.2) to (2.5,0.8) to (3.3,0.8);
\end{tikzpicture}
\end{equation}
By Corollary \ref{cor:globpos_is_colimit}, $\skel{n-1}{P}$ is the colimit of the diagram of inclusions of its atoms, and $\hatto{-}$ preserves colimits. Thus (\ref{eq:modcolimit_cpx}) exhibits $\skel{n}{(\molec{}{P})^*}$ as the colimit of the image through $\hatto{-}$ of the diagram of inclusions of atoms of $\skel{n}{P}$, but this is the same universal property satisfied by $\skel{n}{\hatto{P}}$.
\end{proof}

Let $\globefree$ be the full subcategory of $\globe$ on the freely generating atoms, and $\cpolfree$ the category of presheaves on $\globefree$, which can be identified with a full subcategory of $\cpol$; conjecturally, these coincide with $\globe$ and $\cpol$. 

\begin{thm} \label{thm:rpolintopol}
Let $X$ be a constructible polygraph in $\cpolfree$. Then the $\omega$\nbd category $\hatto{X}$ admits the structure of a polygraph, whose $n$\nbd dimensional generators are indexed by the $n$\nbd cells of $X$. This extends to a full and faithful functor $\hatto{-}: \cpolfree \to \pol$.
\end{thm}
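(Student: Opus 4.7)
The plan is to prove the theorem by induction on skeletal dimension, using the pushout description of $\skel{n}{X}$ from Proposition \ref{prop:polygraph_ext} together with the fact that $\hatto{-}: \cpol \to \omegacat$, being a left adjoint, preserves all colimits. For the base case $n=0$, $\skel{0}{\hatto{X}}$ is discrete on $\atom{0}{X} = X(1)$, which matches the required $0$-polygraph structure.

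For the inductive step, suppose $\hatto{\skel{n-1}{X}}$ has been equipped with an $(n-1)$-polygraph structure whose $k$-generators are indexed by $\atom{k}{X}$ for $k<n$. Applying $\hatto{-}$ to the pushout of Proposition \ref{prop:polygraph_ext} exhibits $\hatto{\skel{n}{X}}$ as a pushout in $\omegacat$ of $\hatto{\skel{n-1}{X}}$ along $\coprod_{x} \hatto{\bord{}{}U(x)} \hookrightarrow \coprod_{x} \hatto{U(x)}$, where $x$ ranges over $\atom{n}{X}$. Since $X \in \cpolfree$, each shape $U(x)$ is freely generating, and the argument in the proof of Proposition \ref{prop:molec_hatto_iso} shows that $\hatto{\bord{}{}U(x)} \hookrightarrow \hatto{U(x)}$ itself arises as a pushout of $\bord{}{}\hatto{O^n} \hookrightarrow \hatto{O^n}$ along the classifying map of the top cell of $U(x)$. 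Pasting these pushouts with the previous one exhibits $\hatto{\skel{n}{X}}$ as the polygraphic extension of $\hatto{\skel{n-1}{X}}$ by $n$-generators indexed by $\atom{n}{X}$. Since $\hatto{-}$ preserves the sequential colimit $X = \colim{n} \skel{n}{X}$, the polygraph structures on the skeleta assemble into one on $\hatto{X}$.

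For the functor, a morphism $f: X \to Y$ in $\cpolfree$ sends each $n$-cell $x \in X(U)$ to a cell $f_U(x) \in Y(U)$ of the same shape, so $\hatto{f}$ sends the $n$-generator indexed by $x$ to the one indexed by $f_U(x)$ and is a map of polygraphs; functoriality is immediate. Faithfulness follows because a map of polygraphs is determined by its action on generators, which by construction are indexed by cells of the underlying presheaf. Fullness is the main obstacle: given a polygraph map $g: \hatto{X} \to \hatto{Y}$, one must reconstruct a morphism of presheaves by induction on dimension. For each $x \in X(U)$, $g$ sends the associated $n$-generator to some $n$-generator of $\hatto{Y}$ indexed by an element of $Y(V)$ for some $n$-atom $V$, and the delicate step is to verify that $V \cong U$; this should follow by comparing, via the inductive hypothesis, the boundary map $\hatto{\bord{}{}U} \to \hatto{\bord{}{}V}$ induced by $g$ to the sub-atom structure of $X$ and $Y$, and then invoking Lemma \ref{lem:noautomorph} to pin down the identification into a unique element of $Y(U)$, natural in $U$.
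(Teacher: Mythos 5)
Your proof of the polygraph structure and of faithfulness is essentially the paper's: you apply $\hatto{-}$ to the pushout of Proposition \ref{prop:polygraph_ext}, use that each $U(x)$ is freely generating to rewrite the attaching of $\hatto{U(x)}$ as a pushout of $\hatto{O^n}$ along its boundary, and paste. Faithfulness is handled the same way.

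The fullness argument is where your sketch is too loose, and it is precisely the delicate part. Two specifics. First, the inductive hypothesis you need is stronger than what ``induction on dimension'' suggests: the paper assumes that for every $k<n$ and every $k$\nbd dimensional constructible polygraph $Z$, any map of polygraphs $\hatto{Z}\to\hatto{Y}$ is in the image of $\hatto{-}$; this universally quantified form is what lets you conclude that the restriction of $g$ to the boundary of a generator already lies in the image of $\hatto{-}$ from $\slice{\cpolfree}{Y}$. Second, the decisive identification $U = U'$ does not come from Lemma \ref{lem:noautomorph}. Instead, the paper uses the colimit presentation of $\bord{}{\alpha}g(x)$ in the slice $\slice{\pol}{\hatto{Y}}$, preserved by $\hatto{-}$ from $\slice{\cpolfree}{Y}$, to produce a map $\bord{}{\alpha}U' \to \bord{}{\alpha}U$ which is forced to be an isomorphism; then Proposition \ref{prop:atomicnglobe} (atoms classified by their pair of $(n-1)$\nbd boundaries) and skeletality of $\globefree$ give $U = U'$. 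Lemma \ref{lem:noautomorph} is only in the background, guaranteeing that the isomorphisms involved are unique, but it is not the classification result that pins down the shape. Your outline points in the right direction but, as written, leaves the reconstruction of the presheaf morphism from $g$ unverified.
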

\begin{proof}
For each freely generating atom $U$ with greatest element $x$, let $x: \hatto{O^n} \to \hatto{U}$ be the unique functor sending $\underline{n}$ to $\clos\{x\}$, and $\bord{}{}x: \bord{}{}\hatto{O^n} \to \skel{n-1}{\hatto{U}}$ its restriction to $(n-1)$\nbd skeleta. By Proposition \ref{prop:molec_hatto_iso}, 
\begin{equation*}
\begin{tikzpicture}[baseline={([yshift=-.5ex]current bounding box.center)}]
	\node[scale=1.25] (0) at (0.5,2) {$\bord{}{}\hatto{O^n}$};
	\node[scale=1.25] (1) at (3.5,2) {$\hatto{O^n}$};
	\node[scale=1.25] (2) at (0.5,0) {$\bord{}{}\hatto{U}$};
	\node[scale=1.25] (3) at (3.5,0) {$\hatto{U}$};
	\draw[1c] (0) to node[auto,swap] {$\bord{}{}x$} (2);
	\draw[1c] (1) to node[auto] {$x$} (3);
	\draw[1cinc] (0) to (1);
	\draw[1cinc] (2) to (3);
	\draw[edge] (2.5,0.2) to (2.5,0.8) to (3.3,0.8);
\end{tikzpicture}
\end{equation*}
is a pushout diagram in $\omegacat$. Since $\hatto{-}$ preserves colimits, the pushout diagrams of Proposition \ref{prop:polygraph_ext} are sent to pushout diagrams
\begin{equation*}
\begin{tikzpicture}[baseline={([yshift=-.5ex]current bounding box.center)}]
	\node[scale=1.25] (0) at (0,2) {$\displaystyle \coprod_{x\in\atom{n}{X}} \hatto{\bord{}{} U(x)}$};
	\node[scale=1.25] (1) at (3.5,2) {$\displaystyle\coprod_{x\in\atom{n}{X}} \hatto{U(x)}$};
	\node[scale=1.25] (2) at (0,0) {$\skel{n-1}{\hatto{X}}$};
	\node[scale=1.25] (3) at (3.5,0) {$\skel{n}{\hatto{X}}$,};
	\draw[1c] (0) to node[auto,swap] {$(\bord{}{}x)_{x\in\atom{n}{X}}$} (2);
	\draw[1c] (1) to node[auto] {$(x)_{x\in\atom{n}{X}}$} (3);
	\draw[1cinc] (0) to (1);
	\draw[1cinc] (2) to (3);
	\draw[edge] (2.5,0.2) to (2.5,0.8) to (3.3,0.8);
\end{tikzpicture}
\end{equation*}
and proceeding as in the proof of Proposition \ref{prop:molec_hatto_iso}, we see that these exhibit $\hatto{X}$ as a polygraph with the specified structure. 

The fact that a map of regular polygraphs $f: X \to Y$ induces a map of polygraphs is immediate from the description of the generators of $\hatto{X}$ and $\hatto{Y}$, since $f$ sends $x \in X(U)$ to $f(x) \in Y(U)$. Faithfulness is also immediate: two maps of polygraphs $\hatto{f}, \hatto{g}: \hatto{X} \to \hatto{Y}$ are equal if and only if they are equal on the generators of $\hatto{X}$, if and only if $f_U, g_U: X(U) \to Y(U)$ are equal for all globes $U$, equivalently, if $f, g$ are equal as maps of regular polygraphs.

Finally, if $f: \hatto{X} \to \hatto{Y}$ is a map of polygraphs, it must send an $n$\nbd dimensional generator indexed by $x \in X(U)$ to another $n$\nbd dimensional generator indexed by $f(x) \in X(U')$; it suffices to show that $U = U'$. If $U$ is the only 0-globe, this is obvious, and it is enough if $X$ is 0-dimensional, that is, $X(U)$ is empty when $U$ has dimension $n > 0$.

For $n > 0$, we can assume the inductive hypothesis that, for all $k < n$ and $k$\nbd dimensional regular polygraphs $Z$, any map of polygraphs $g: \hatto{Z} \to \hatto{Y}$ is in the image of $\hatto{-}$. Now, $\bord{}{\alpha}f(x): \bord{}{\alpha}\hatto{U'} \to \hatto{Y}$, which is a colimit in $\slice{\pol}{\hatto{Y}}$ of the generators $f(y): \hatto{V} \to \hatto{Y}$ in $\bord{}{\alpha}f(x)$ and their inclusions, can be computed as a colimit in $\slice{\cpolfree}{Y}$, preserved by $\hatto{-}$. 

By the inductive hypothesis, the post-composition with $f$ of the diagram consisting of the $\bord{}{\alpha}x: \bord{}{\alpha}\hatto{U} \to \hatto{X}$, the generators $y: \hatto{V} \to \hatto{X}$ in $\bord{}{\alpha}x$, and their inclusions is also the image through $\hatto{-}$ of a diagram in $\slice{\cpolfree}{Y}$; from the universal property of colimits, we obtain a map $\bord{}{\alpha}U' \to \bord{}{\alpha}U$ in $\cpolfree$, which is necessarily an isomorphism. By Proposition \ref{prop:atomicnglobe} atomic $n$\nbd globes are classified up to isomorphism by their $(n-1)$\nbd boundaries; since $\globefree$ is skeletal, $U = U'$.
\end{proof}

\begin{remark}
Let $P'$ be the right adjoint to $\hatto{-}: \cpolfree \to \omegacat$. By Theorem \ref{thm:rpolintopol} and the classification of constructible atoms in Proposition \ref{prop:atomicnglobe}, given an $\omega$\nbd category $X$, the counit $\hatto{P'X} \to X$ coincides with the standard resolution by polygraphs of $X$, as defined by M\'etayer \cite[Subsection 4.2]{metayer2003resolutions}, when at each level the boundary of $n$\nbd generators is restricted to constructible diagrams of $(n-1)$\nbd generators.
\end{remark}

We conclude this section by proving that certain classes of constructible directed complexes are freely generating. The most general criterion currently available, to our knowledge, is provided by Steiner's theory of \emph{split molecules}.

\begin{dfn}
Let $U$ be a closed subset of an oriented graded poset. The \emph{frame dimension} of $U$ is the integer $\frdmn{U} := \dmn{\bigcup\{\clos\{x\} \cap \clos\{y\} \,|\, x, y$ maximal in $U$, $x \neq y\}}$.
\end{dfn}

\begin{exm}
If $U$ has a greatest element, then $\frdmn{U} = \dmn{\emptyset} = -1$. If $U$ is a non-atomic constructible $n$\nbd molecule, there are two maximal elements $x, y$ of $U$ such that $\clos\{x\} \cap \clos\{y\}$ is a constructible $(n-1)$\nbd molecule, hence $\frdmn{U} = n-1$.
\end{exm}

\begin{dfn}
A molecule $U$ in an oriented graded poset $P$ is \emph{split} if any factor $V$ in a decomposition of $U$ as an iterated composite admits an expression as an iterated composite of atoms using only the compositions $\cp{k}$ for $k \leq \frdmn{V}$.
\end{dfn}

\begin{prop}\emph{\cite[Theorem 2.13]{steiner1993algebra}} \label{prop:split_generates} Let $P$ be a directed complex whose molecules are all split. Then $P$ is freely generating.
\end{prop}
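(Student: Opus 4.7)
The plan is to build the polygraph structure on $(\molec{}{P})^*$ by induction on dimension, verifying at each stage the universal property of the pushout in Proposition~\ref{prop:polygraph_ext}. At dimension $0$, the set of $0$-dimensional generators is simply $P^{(0)}$, and there is nothing to check beyond the fact that $0$-molecules are all atoms. For the inductive step, assuming $\skel{n-1}(\molec{}{P})^*$ has been shown to be the $(n-1)$-polygraph freely generated by the atoms of $P$ of dimension at most $n-1$, I would construct a candidate pushout cone from
\[
	\coprod_{x \in P^{(n)}} \bord{}{}\hatto{O^n} \rightrightarrows \coprod_{x \in P^{(n)}} \hatto{O^n} \cup \skel{n-1}(\molec{}{P})^*
\]
determined by sending the generator in the $x$-th copy of $\hatto{O^n}$ to the atom $\clos\{x\}$, and verify that the induced functor from the pushout to $\skel{n}(\molec{}{P})^*$ is an isomorphism.

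The functor is surjective on cells because every $n$-dimensional molecule $U$ admits, by the molecule definition applied recursively, an expression as an iterated composite of atoms, which lifts to a cell of the pushout via the obvious evaluation map. The real content is injectivity: I need to show that if two iterated composites of atoms $(\tau_1, \tau_2)$ represent the same cell of $\skel{n}(\molec{}{P})^*$, then they represent the same cell in the pushout. Equivalently, any two such expressions for the same molecule $U$ must be identified by the $\omega$-category axioms (associativity, interchange, units, compatibility with boundaries). This is where the split hypothesis enters.

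The key lemma to prove is that for each $n$-molecule $U$ and each decomposition $U = U_1 \cp{k} U_2$ witnessed in the inductive definition of molecules, one can restrict attention to decompositions in which, at every level, the composition index $k$ is bounded above by $\frdmn{V}$ where $V$ is the factor being decomposed. This defines a class of ``normal'' expressions. Given this, one shows by a double induction on $\dmn{U}$ and on the number of atoms appearing in an expression that any iterated composite expression for $U$ can be rewritten, using the $\omega$-category axioms, into a normal one; and that any two normal expressions for $U$ are equal, by showing that a normal expression is determined by the combinatorial data of which subsets of atoms are grouped at each level (which in turn is determined by $U$ itself via the frame dimension). The interchange law is what allows one to shuffle compositions and reach a normal form; the bound $k \leq \frdmn{V}$ prevents degenerate unit-insertions that would destroy uniqueness.

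The main obstacle is the normalisation and uniqueness argument: carrying out the bookkeeping for the rewriting of a general expression into a normal form, and verifying that two normal forms for the same molecule coincide, requires a careful inductive analysis based on how the maximal elements of $U$ are partitioned by each decomposition. Once this normalisation result is in place, injectivity of the canonical functor from the pushout to $\skel{n}(\molec{}{P})^*$ follows, completing the inductive step and hence the proof that $P$ is freely generating. Note that since Proposition~\ref{prop:molec_hatto_iso} identifies $(\molec{}{P})^*$ with $\hatto{P}$ for freely generating complexes, this also confirms that the two realisations agree for split directed complexes.
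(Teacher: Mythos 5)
The paper does not actually give a proof of this proposition: it is stated as a citation of Steiner's Theorem~2.13, so there is no ``paper proof'' to compare against. That said, your proposal is a reasonable attempt to reconstruct Steiner's argument, and the high-level structure --- building the polygraph skeleton by skeleton, reducing the pushout universal property to a word problem about iterated composite expressions, and invoking the split hypothesis to control decompositions --- matches the broad shape of what such a proof must do.

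However, there is a real gap in your uniqueness step. You claim that ``a normal expression is determined by the combinatorial data of which subsets of atoms are grouped at each level (which in turn is determined by $U$ itself via the frame dimension).'' This is false as stated. A molecule $U$ with $\frdmn U = k$ generally admits \emph{many} different decompositions $U = U_1 \cp{k} U_2$ satisfying the split condition $U_1 \cap U_2 \not\subseteq \bord{k}{}U$, and these give rise to genuinely distinct normal expressions --- just as a constructible molecule has many merger trees (Construction~\ref{cons:substitution} in the paper exhibits two such trees for a simple 2-molecule). Bounding the composition index by the frame dimension narrows the class of expressions but does not make it a singleton. What you actually need is that any two normal expressions for the same molecule are identified by the $\omega$-category axioms, i.e.\ a confluence or common-refinement argument, and that is where all the real work sits. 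Your proposal collapses this into a claim of uniqueness that skips exactly the step the split hypothesis was introduced to address. In Steiner's development the split property is used to guarantee a ``simultaneous submolecule'' or domain-replacement phenomenon --- that one can interleave two competing $\cp{k}$ decompositions --- which is what lets the interchange and associativity axioms carry one expression to another; you will need to formulate and prove a lemma of that kind (roughly in the spirit of Lemma~\ref{lem:splitcondition}'s characterisation together with an induction over $\submol$) rather than appeal to uniqueness of the decomposition itself.
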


\begin{remark}
The condition on split molecules is technical and somewhat mysterious. In practice, it is used as a combinatorial counterpart to Power's topological ``domain replacement'' condition \cite[Definition 3.9]{power1991pasting}: it ensures, for example, that the $(k+1)$\nbd boundaries of two $n$\nbd dimensional atoms $\clos\{x_1\}$, $\clos\{x_2\}$ in an $n$\nbd molecule $U$ of frame dimension $k$ are ``simultaneous'' submolecules of $\bord{k}{\alpha}U$, in the sense that there exist expressions of $\bord{k}{\alpha}U$ as an iterated composite which contain $\bord{k}{\alpha}x_1$ and $\bord{k}{\alpha}x_2$ at the same time.
\end{remark}

\begin{exm}
By \cite[Proposition 6.7]{steiner1993algebra}, all molecules are split in loop-free directed complexes, and in particular in totally loop-free directed complexes. Since $\vec{I}$ is totally loop-free and the property is preserved by lax Gray products and joins, both cubes and oriented simplices are freely generating. Thus via Theorem \ref{thm:rpolintopol}, restricted to the full subcategories on cubes and oriented simplices, we recover the embedding of pre-cubical sets and of semi-simplicial sets into $\pol$.
\end{exm}

\begin{exm} \label{exm:nonsplit}
Not all molecules in a constructible directed complex are split; in particular, the non-split molecule of \cite[Section 8]{steiner1993algebra} is a constructible directed complex (although it is not a constructible molecule). 
\end{exm} 

By the last example, a general proof of Conjecture \ref{conj:freegen} cannot be based purely on Proposition \ref{prop:split_generates}. It may, however, use it as an intermediate step. Thus, in the remainder of the section, we develop this theory further, by exhibiting some criteria for recognising split molecules that are weaker than loop-freeness, but simpler to check than the direct definition.

The following result is implied by Steiner's proofs, but not made explicit.
\begin{lem} \label{lem:splitcondition}
Let $P$ be a directed complex. The following are equivalent:
\begin{enumerate}[label=(\alph*)]
	\item all molecules in $P$ are split;
	\item each molecule $U$ in $P$ is either an atom, or it has a proper decomposition $U_1 \cp{n} U_2$ such that $n = \frdmn{U}$ and $U_1 \cap U_2 \not\subseteq \bord{n}{}U$.
\end{enumerate}
\end{lem}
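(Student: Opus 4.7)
The plan is to prove the two implications separately.

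For (a) $\Rightarrow$ (b), I would take a non-atomic molecule $U$ with $n := \frdmn{U}$. By assumption $U$ is split, so it admits an expression as an iterated composite of atoms using only the operations $\cp{k}$ with $k \leq n$. Using associativity and the interchange law, I would reorganise this expression into the form $U_1 \cp{n} U_2$, grouping all sub-expressions joined only by $\cp{n}$-operations at the top level. The existence of at least one top-level $\cp{n}$-composition is forced by the frame dimension: if every outermost composition were at some level $k < n$, then maximal elements of any two distinct top-level blocks $V, W$ would satisfy $\clos\{x\} \cap \clos\{y\} \subseteq V \cap W = \bord{k}{}V$, of dimension at most $k < n$, and an inner induction on the nested block structure would eventually bound $\frdmn{U}$ strictly below $n$, a contradiction. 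For the non-boundary condition $U_1 \cap U_2 \not\subseteq \bord{n}{}U$, I would pick a pair of distinct maximal elements $x, y$ of $U$ realising $\frdmn{U} = n$, so that $\clos\{x\} \cap \clos\{y\}$ contains an $n$-dimensional element $z$; such a pair must straddle the $\cp{n}$ decomposition, placing $z \in U_1 \cap U_2$ but not in $\bord{n}{}U$ (since $z$ is below two distinct maximal $n+1$-dimensional elements of $U$).

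For (b) $\Rightarrow$ (a), I would prove by induction on $|U|$ that every molecule $U$ admits an expression as an iterated composite of atoms using only $\cp{k}$ with $k \leq \frdmn{U}$. This implies (a) because every factor in such an expression is itself a molecule, to which the statement then applies. The base case is an atom (vacuous, with $\frdmn{U} = -1$). For the inductive step, apply (b) to obtain $U = U_1 \cp{n} U_2$ with $n = \frdmn{U}$ and $U_1 \cap U_2 \not\subseteq \bord{n}{}U$, and apply the inductive hypothesis to $U_1, U_2$, which are properly contained in $U$. The key monotonicity lemma I would need is $\frdmn{U_i} \leq n$: since $U$ is pure of its top dimension (by Remark \ref{rmk:molec_ndim}), every maximal element of $U_i$ is also maximal in $U$, so pairs of distinct maximal elements of $U_i$ contribute to the frame-dimension calculation of $U$ too, giving $\frdmn{U_i} \leq \frdmn{U} = n$. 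Combining the inductive expressions for $U_1$ and $U_2$ via $\cp{n}$ yields the required expression for $U$.

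The main obstacle, as I see it, is the reorganisation step in (a) $\Rightarrow$ (b): rewriting a split expression so that its outermost operation is $\cp{n}$. Formally, this amounts to producing a normal form for iterated composites modulo associativity and interchange, grouped by the maximal level of composition used. The role of the non-boundary condition $U_1 \cap U_2 \not\subseteq \bord{n}{}U$ is then to certify that this factorisation is ``genuine'' at level $n$ rather than a mere boundary whiskering; pinning down a witness element of $U_1 \cap U_2$ away from $\bord{n}{}U$, tracked through the reorganisation, is where the bulk of the combinatorial work will lie.
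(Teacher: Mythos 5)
The paper's own proof is essentially a citation: (a)$\Rightarrow$(b) is Steiner's Proposition 4.3, and (b)$\Rightarrow$(a) follows by inspecting Steiner's proof that all loop-free molecules are split and noting that the only properties of loop-freeness actually used there are (b), disjointness of $\sbord{}{+}x$ and $\sbord{}{-}x$, and stability of these under sub-molecules. You instead attempt a direct proof of both directions, which is a genuinely different (and much harder) route; as written it has two real gaps.

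In (b)$\Rightarrow$(a), your justification of $\frdmn{U_i}\le n$ is wrong. Remark \ref{rmk:molec_ndim} does not say molecules are pure, and they aren't: the whiskered $2$-globe given right after the definition of molecule is a molecule whose maximal elements have dimensions $2$ and $1$. Nor would purity give what you want: a maximal element of $U_1$ need not be maximal in $U$, since it may lie in $U_1\cap U_2=\bord{n}{+}U_1$ below an element of $U_2\setminus U_1$. The inequality is nevertheless true, but one must argue it by a dichotomy --- a maximal element $x$ of $U_1$ is either maximal in $U$ (so pairs of such contribute at most $\frdmn{U}=n$), or else $x<z$ for some $z\in U_2\setminus U_1$, forcing $x\in U_1\cap U_2=\bord{n}{+}U_1$ and hence $\dmn{x}\le n$; in either case $\dmn{\clos\{x\}\cap\clos\{y\}}\le n$.

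In (a)$\Rightarrow$(b), you explicitly flag the reorganisation of a split expression to a top-level $\cp{n}$ as the main obstacle without resolving it; but that step is the whole content of Steiner's Proposition 4.3, so nothing is actually proved here. The argument you sketch for $U_1\cap U_2\not\subseteq\bord{n}{}U$ also does not stand: an $n$-dimensional $z$ lying below two distinct maximal elements does not by itself exclude it from $\bord{n}{}U$, since the maximal elements need not be $(n{+}1)$-dimensional, and membership of $z$ in $\bord{n}{\alpha}U$ depends on the orientations of the covers of $z$ in $U$, not on how many elements lie above it (Lemma \ref{lem:framedim_intersec} is the kind of tool one needs). You also have not shown that a witness pair realising $\frdmn{U}=n$ must straddle the particular $\cp{n}$-split your reorganisation would produce.
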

\begin{proof}
The implication from $(a)$ to $(b)$ is \cite[Proposition 4.3]{steiner1993algebra}. Conversely, by an analysis of the proof of [Proposition 6.7, \emph{ibid.}] that ``all loop-free molecules are split'', we find that the only properties of loop-free molecules $U$ that are used are:
\begin{enumerate}
	\item for all $x \in U$, the sets $\sbord{}{+}x$ and $\sbord{}{-}x$ are disjoint;
	\item $U$ is either an atom, or it has a proper decomposition $U_1 \cp{n} U_2$ with $n = \frdmn{U}$, and $U_1 \cap U_2 \not\subseteq \bord{n}{}U$;
	\item if $V \subseteq U$ is another molecule, then it also satisfies the first two properties.
\end{enumerate}
The first property is trivial for oriented graded posets, and the other two are also satisfied by the family of ``all molecules in $P$'', conditionally to $(b)$.
\end{proof}

\begin{lem}  \label{lem:framedim_intersec}
Let $U$ be a molecule in a constructible directed complex, $\frdmn{U} = n$, and let $x, y$ be distinct maximal elements of $U$. Then 
\begin{equation*}
	\clos\{x\} \cap \clos\{y\} \subseteq (\bord{n}{+}x \cap \bord{n}{-}y) \cup (\bord{n}{+}y \cap \bord{n}{-}x).
\end{equation*}
\end{lem}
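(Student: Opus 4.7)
The plan is to proceed by induction on the molecule complexity of $U$, following the inductive definition of molecules. The base case, where $U$ is an atom, is vacuous since $U$ has a unique maximal element. For the inductive step, fix any proper decomposition $U = U_1 \cp{k} U_2$ with $U_1 \cap U_2 = \bord{k}{+}U_1 = \bord{k}{-}U_2$, and take distinct maximal elements $x, y$ of $U$, both necessarily of dimension $\dmn{U}$.

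If $x$ and $y$ both lie in the same factor, say $U_1$, then $\dmn{U_1} = \dmn{U}$, and they are distinct maximal elements of the submolecule $U_1$. Since $\frdmn{U_1} \leq \frdmn{U} = n$, the inductive hypothesis applied to $U_1$ yields
\begin{equation*}
    \clos\{x\} \cap \clos\{y\} \subseteq (\bord{\frdmn{U_1}}{+}x \cap \bord{\frdmn{U_1}}{-}y) \cup (\bord{\frdmn{U_1}}{+}y \cap \bord{\frdmn{U_1}}{-}x).
\end{equation*}
By Proposition \ref{prop:globelike} combined with Proposition \ref{prop:moleculeglobelike}, $\bord{\frdmn{U_1}}{\alpha}x = \bord{\frdmn{U_1}}{\alpha}(\bord{n}{\alpha}x) \subseteq \bord{n}{\alpha}x$, and similarly for $y$, so the required inclusion follows. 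The case $x, y \in U_2$ is symmetric.

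Otherwise $x \in U_1 \setminus U_2$ and $y \in U_2 \setminus U_1$, and $\clos\{x\} \cap \clos\{y\} \subseteq U_1 \cap U_2 = \bord{k}{+}U_1 = \bord{k}{-}U_2$. The key reduction is a sub-claim: for any maximal element $x$ of a molecule $V$, $\clos\{x\} \cap \bord{k}{+}V \subseteq \bord{k}{+}x$. Granting this, we obtain $\clos\{x\} \cap \clos\{y\} \subseteq \bord{k}{+}x \cap \bord{k}{-}y$; if $k \leq n$ this is already contained in $\bord{n}{+}x \cap \bord{n}{-}y$ by globularity, while if $k > n$ the intersection has dimension at most $n$ by definition of $\frdmn{U}$, and we iterate the argument through a secondary decomposition of the constructible $k$-molecules $\bord{k}{+}x$ and $\bord{k}{-}y$ to descend to level $n$.

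The main obstacle is the sub-claim $\clos\{x\} \cap \bord{k}{+}V \subseteq \bord{k}{+}x$. For a $k$-dimensional element $z$ of the intersection, this is straightforward: $z \in \sbord{k}{+}V$ means every element of $V$ covering $z$ has orientation $+$, and since $\clos\{x\} \subseteq V$ the same holds inside $\clos\{x\}$, giving $z \in \sbord{k}{+}x$. The delicate part is when $\dmn{z} < k$, in which case $z \in \clos(\sbord{k}{+}V)$ only guarantees the existence of a witness $w \in \sbord{k}{+}V$ with $z \leq w$, and this $w$ might lie in a different atom of $V$ than $x$. Here one argues by a nested induction on the molecule structure of $V$, using oriented thinness to transport witnesses: whenever $z \leq w$ and $z \leq x$ with $w \not\leq x$, the axioms of an oriented thin poset, combined with the structure of $V$ as an iterated composite, produce an alternative witness $w' \in \sbord{k}{+}V \cap \clos\{x\}$ above $z$. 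Establishing this transport cleanly is the bulk of the technical work.
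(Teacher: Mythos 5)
Your proof is incomplete, and the gap you yourself acknowledge is the decisive one. The paper does not prove this lemma by a fresh induction at all: it cites Steiner's proof of \cite[Proposition 6.4]{steiner1993algebra} for loop-free molecules and observes that the argument only uses two properties of atoms $x$ of dimension greater than $n$, namely that $\bord{n}{+}x \cap \bord{n}{-}x = \bord{n-1}{}x$ and that $\bord{n}{\alpha}x$ is pure and $n$-dimensional, both of which hold for constructible atoms. Your direct induction is a genuinely different route, but the sub-claim $\clos\{x\}\cap\bord{k}{+}V\subseteq\bord{k}{+}x$ is left unproven in exactly the case that matters, $\dmn{z}<k$: the witness $w\in\sbord{k}{+}V$ above $z$ may lie in a different atom than $x$, and the ``transport of witnesses'' you invoke is not a routine application of oriented thinness but is effectively the content of Steiner's proposition. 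Without it, neither your cross-factor case nor the ``iterate through a secondary decomposition'' step for $k>n$ (itself only gestured at) is secured.

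There are also smaller errors worth correcting. The assertion that the maximal elements $x,y$ are ``both necessarily of dimension $\dmn{U}$'' is false: Steiner molecules need not be pure --- the whiskering of a $2$\nbd globe with a $1$\nbd globe is a $2$\nbd molecule with maximal elements of dimensions $2$ and $1$ --- so $x,y\in U_1$ does not force $\dmn{U_1}=\dmn{U}$, and the same-factor case does not set up cleanly as written. Likewise, $\frdmn{U_1}\leq\frdmn{U}$ is not automatic: $U_1$ can acquire maximal elements inside $U_1\cap U_2=\bord{k}{+}U_1$ that are not maximal in $U$, and their pairwise intersections contribute to $\frdmn{U_1}$ but not to $\frdmn{U}$. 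If you wish to complete a direct proof you will need to establish the sub-claim by an induction that tracks how $\bord{k}{+}V$ decomposes atom by atom using the two atom properties above --- at which point you are essentially reconstructing the relevant portion of Steiner's argument, which is the shortcut the paper takes.
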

\begin{proof}
The proof of \cite[Proposition 6.4]{steiner1993algebra}, showing that the statement holds for loop-free molecules in a directed complex, only uses two properties of loop-free atoms $x$ of dimension greater than $n$: that $\bord{n}{+}x \cap \bord{n}{-}x = \bord{n-1}{}x$, and that $\bord{n}{\alpha}x$ is pure and $n$\nbd dimensional. Both are also true of atoms in a constructible directed complex.
\end{proof}

\begin{dfn}
Given a closed subset $U$ of a constructible directed complex, for each $n \in \mathbb{N}$, let $\maxd{n}{U}$ be the bipartite directed graph with 
\begin{equation*}
	\{x \in U \,|\, \dmn{x} \leq n\} + \{x \in U \,|\, x \text{ is maximal and } \dmn{x} > n\}
\end{equation*}
as set of vertices, and an edge $y \to x$ if and only if 
\begin{itemize}
	\item $\dmn{y} \leq n$, $\dmn{x} > n$, and $y \in \bord{n}{-}x \setminus \bord{n-1}{}x$, or
	\item $\dmn{y} > n$, $\dmn{x} \leq n$, and $x \in \bord{n}{+}y \setminus \bord{n-1}{}y$.
\end{itemize}
This is the subgraph of $\loopd{n}{U}$ whose vertices of dimension greater than $n$ are restricted to the maximal elements of $U$.
\end{dfn}

We have the following strengthening of \cite[Proposition 6.7]{steiner1993algebra}.
\begin{prop} \label{prop:max_free}
Let $P$ be a constructible directed complex. Suppose that, for all molecules $U$ of $P$, if $\frdmn{U} = n$, then $\maxd{n}{U}$ is acyclic. Then all molecules of $P$ are split, and $P$ is freely generating.
\end{prop}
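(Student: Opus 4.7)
The second claim follows from the first via Proposition \ref{prop:split_generates}. For the first, by Lemma \ref{lem:splitcondition} it suffices to show that every non-atomic molecule $U$ of $P$ with $\frdmn{U} = n$ admits a proper decomposition $U = U_1 \cp{n} U_2$ satisfying $U_1 \cap U_2 \not\subseteq \bord{n}{}U$. The plan is to adapt the strategy of \cite[Proposition 6.7]{steiner1993algebra}, tracking that the only loop-freeness hypothesis actually invoked there can be weakened to the acyclicity of $\maxd{n}{U}$, which is a subgraph of the $\loopd{n}{U}$ appearing in Steiner's argument.

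Because $\frdmn{U} = n$, there is a pair $x, y$ of distinct maximal elements of $U$ with $\dim(\clos\{x\} \cap \clos\{y\}) = n$; maximality forces both to have dimension $> n$. Contract $\maxd{n}{U}$ by identifying each length-two path $v \to u \to v'$ through a vertex $u$ of dimension $\leq n$ into a single edge $v \to v'$; this produces an acyclic directed graph $G$ on the maximal elements of $U$ of dimension $> n$, which therefore has a sink $v$. Concretely, no element of $\sbord{n}{+}v \setminus \bord{n-1}{}v$ lies in $\sbord{n}{-}v' \setminus \bord{n-1}{}v'$ for any other maximal $v'$ of dimension $> n$. Set $U_2 := \clos\{v\}$ and $U_1 := \clos(U \setminus \{v\})$; both are properly contained in $U$ because there are at least two maximal elements of dimension $> n$. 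Using Lemma \ref{lem:framedim_intersec} applied pairwise to $v$ and each other maximal $v'$, together with the sink property (which excludes any contribution to $\bord{n}{+}v \cap \bord{n}{-}v'$ outside $\bord{n-1}{}v$), we obtain $U_1 \cap U_2 \subseteq \bord{n}{-}v$. The reverse inclusion follows from globularity (Theorem \ref{thm:globularity}) and Proposition \ref{prop:molecdim}, since each element of $\sbord{n}{-}v$ either lies in $\bord{}{}U$ or is shared with another maximal element, in either case lying in $U_1$. This yields the proper decomposition $U = U_1 \cp{n} U_2$ at the correct level $n = \frdmn{U}$.

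To see $U_1 \cap U_2 \not\subseteq \bord{n}{}U$, choose the sink $v$ among those maximal elements participating in an $n$-dimensional pairwise intersection witnessing $\frdmn{U} = n$ (such a sink exists because the restriction of $G$ to such vertices is itself acyclic and non-empty). For the corresponding $v'$, the intersection $\clos\{v\} \cap \clos\{v'\}$ contains an $n$-dimensional element lying in $\bord{n}{-}v \cap \bord{n}{+}v'$; this element cannot belong to $\bord{n}{}U$, because an $n$-element of $\bord{n}{}U$ covered in $U$ only by maximal elements of dimension $> n$ would have to appear in $\sbord{n}{\alpha}U$ for some $\alpha$, contradicting its being covered by two distinct such maximals with opposite local orientations along $\bord{}{}U$. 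Then Lemma \ref{lem:splitcondition} and Proposition \ref{prop:split_generates} conclude the argument.

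The main obstacle is the careful verification of $U_1 \cap U_2 = \bord{n}{-}v$. The ``$\subseteq$'' direction is where the sink condition is really used, and must be combined uniformly with Lemma \ref{lem:framedim_intersec} across all pairs $(v, v')$; the ``$\supseteq$'' direction depends on delicate bookkeeping about which elements of $\bord{n}{-}v$ are boundary-facing versus shared. A secondary subtlety is the choice of sink compatible with a witness of $\frdmn{U} = n$, which requires checking that the restriction of $G$ to such witnesses inherits acyclicity — this is immediate from the fact that $G$ itself is acyclic.
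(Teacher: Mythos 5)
Your overall strategy matches the paper's: reduce to Lemma \ref{lem:splitcondition}, use acyclicity of $\maxd{n}{U}$ to topologically sort the high-dimensional maximal elements, and exhibit a proper $n$-composite decomposition with $U_1 \cap U_2 \not\subseteq \bord{n}{}U$. The gap is in how you build $U_1$ and $U_2$.

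You set $U_2 := \clos\{v\}$ and $U_1 := \clos(U \setminus \{v\})$ for a sink $v$. Read literally, since $v$ is maximal, $\clos(U \setminus \{v\}) = U \setminus \{v\}$, so $U_1 \cap U_2 = \clos\{v\} \setminus \{v\} = \bord{}{}v$, which has dimension $\dmn{v} - 1$; this is generically $> n$, so it cannot coincide with $\bord{n}{-}v$, and the claimed inclusion $U_1 \cap U_2 \subseteq \bord{n}{-}v$ is simply false. Read charitably as $U_1 := \bigcup_{u \neq v \text{ maximal}}\clos\{u\}$, the decomposition still fails to be an $n$-composition: you need $U_1 \cap U_2 = \bord{n}{+}U_1 = \bord{n}{-}U_2$, and in particular $\bord{n}{-}v \subseteq U_1$. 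But $\sbord{n}{-}v$ typically contains elements lying below \emph{no} maximal other than $v$ (these are exactly the elements of $\sbord{n}{-}v \cap \sbord{n}{-}U$; they occur as soon as $\bord{n}{-}v$ is not entirely shared with the other maximals), and such elements are not in your $U_1$. Your dichotomy ``each element of $\sbord{n}{-}v$ either lies in $\bord{}{}U$ or is shared with another maximal'' does not help: $z \in \bord{}{}U$ does not imply $z \in \clos\{u\}$ for some $u \neq v$. You also never address the equality $\bord{n}{+}U_1 = U_1 \cap U_2$, which is the other half of composability at level $n$.

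The paper (following Steiner's Propositions 6.5 and 6.6) avoids all of this by \emph{padding}: after ordering the dim-$> n$ maximals $x_1,\ldots,x_m$ compatibly with $\maxd{n}{U}$ and choosing $p$ with $\sbord{n}{+}x_p \cap \sbord{n}{-}x_q$ inhabited for some $q > p$, it sets $U_1 := \bord{n}{-}U \cup \bigcup_{i \leq p}\clos\{x_i\}$ and $U_2 := \bord{n}{+}U \cup \bigcup_{j > p}\clos\{x_j\}$. The explicit union with $\bord{n}{-}U$ (resp.\ $\bord{n}{+}U$) is exactly what absorbs the ``unshared'' boundary pieces you are missing and makes $U_1 \cp{n} U_2$ well-defined and equal to $U$. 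A secondary issue: in your last paragraph you re-choose $v$ to be a sink only of the restriction of $G$ to ``witnesses,'' but that need not be a sink of $G$ itself (edges $v \to v'$ can arise from shared elements of dimension $< n$ in the interior of $\bord{n}{+}v$, in which case $v, v'$ need not witness $\frdmn U = n$); your earlier argument that $U_1 \cap U_2 \subseteq \bord{n}{-}v$ requires the genuine sink. The paper sidesteps this by letting the witness pair $(x_p, x_q)$ determine the split point rather than peeling a single atom off the end.
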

\begin{proof}
If $U$ is an atom, any factor in a decomposition of $U$ is either $U$ or a factor of $\bord{k}{\alpha}U$ for some $k$, so suppose $U$ is not an atom.

By Lemma \ref{lem:framedim_intersec}, if $\maxd{n}{U}$ is acyclic, we can list the maximal elements of $U$ of dimension greater than $n$ as $x_1,\ldots,x_m$, in such a way that there is no path from $x_j$ to $x_i$ in $\maxd{n}{U}$ if $j > i$. Then, as in the proof of \cite[Proposition 6.5]{steiner1993algebra}, we show that $\clos\{x_i\} \cap \clos\{x_j\} \subseteq \bord{n}{+}x_i \cap \bord{n}{-}x_j$ if $i < j$.

Because $U$ has frame dimension $n$, there exist $p < q \leq m$ such that $\sbord{n}{+}x_p \cap \sbord{n}{-}x_q$ is inhabited. Let
\begin{equation*}
	U_1 := \bord{n}{-}U \cup \bigcup_{i=1}^p \clos\{x_i\}, \quad \quad \quad U_2 := \bord{n}{+}U \cup \bigcup_{j=p+1}^m \clos\{x_j\}.
\end{equation*}
Then, $U = U_1 \cp{n} U_2$ and $U_1 \cap U_2 \not\subseteq \bord{n}{}U$ is proved as in \cite[Proposition 6.6]{steiner1993algebra}, and we conclude by Lemma \ref{lem:splitcondition}.
\end{proof}

\begin{exm}
In Example \ref{exm:nonloopfree}, $U$ is not loop-free, but there are no molecules of $U$ which contain both $y_1$ and $y_2$ as maximal elements, thus Proposition \ref{prop:max_free} applies.
\end{exm}

We give a sufficient (but not necessary) condition for the applicability of Proposition \ref{prop:max_free}. Recall the definition of flow-connected molecules (Definition \ref{dfn:flowconnect}). 
\begin{dfn}
Let $P$ be a constructible directed complex, $x \in P$. We say that $x$ has \emph{flow-connected boundaries} if, for all $k < \dmn{x}$, the constructible $k$\nbd molecules $\bord{k}{+}x, \bord{k}{-}x$ are flow-connected.
\end{dfn}

\begin{lem} \label{lem:flow-connect-maxd}
Let $U$ be a flow-connected constructible $(n+1)$\nbd molecule. Then for all $x \in \bord{n}{-}U \setminus \bord{n-1}{}U$ and $x' \in \bord{n}{+}U \setminus \bord{n-1}{}U$, there is a path from $x$ to $x'$ in $\maxd{n}{U}$.
\end{lem}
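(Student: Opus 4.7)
The plan is to reduce the statement to a direct translation between the two graphs. Flow-connectedness, applied to the $(n+1)$-molecule $U$, yields a path in $\hasso{U}$ from $x$ to $x'$ whose intermediate vertices all lie in dimensions $n$ and $n+1$; I will show that such a path is literally a path in $\maxd{n}{U}$.

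First, I would identify the endpoints. By Proposition \ref{prop:globelike} the sets $\bord{n}{\alpha}U$ coincide with $\bord{}{\alpha}U$ and are constructible $n$-molecules, hence pure of dimension $n$, and by globularity (Theorem \ref{thm:globularity}) we have $\bord{n-1}{}U = \bord{}{}(\bord{n}{\alpha}U)$, so
\begin{equation*}
\bord{n}{\alpha}U \setminus \bord{n-1}{}U = \sbord{n}{\alpha}U
\end{equation*}
for $\alpha \in \{+,-\}$. Thus $x \in \sbord{n}{-}U$ and $x' \in \sbord{n}{+}U$ are both $n$-dimensional.

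Next, by Definition \ref{dfn:flowconnect} applied to the flow-connected $(n+1)$-molecule $U$, there is a path $x = w_0 \to w_1 \to \ldots \to w_k = x'$ in $\hasso{U}$ in which every $w_i$ has dimension $n$ or $n+1$. Because Hasse edges only join elements of adjacent dimensions, the $w_i$ necessarily alternate between $n$ and $n+1$. Since $U$ is pure of dimension $n+1$, every $(n+1)$-dimensional element of $U$ is maximal in $U$; together with the $n$-dimensional vertices this shows that every $w_i$ is a vertex of $\maxd{n}{U}$.

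Finally, I would verify that each edge $w_i \to w_{i+1}$ of the path is an edge of $\maxd{n}{U}$. Suppose $w_i$ has dimension $n$ and $w_{i+1}$ dimension $n+1$; an edge $w_i \to w_{i+1}$ in $\hasso{U}$ means, by Construction following the definition of $\hasso{P}$, that $w_{i+1}$ covers $w_i$ with orientation $-$, i.e.\ $w_i \in \sbord{n}{-}w_{i+1}$, which is exactly the defining condition for $w_i \to w_{i+1}$ in $\maxd{n}{U}$. The symmetric case, where $w_i$ has dimension $n+1$ and $w_{i+1}$ dimension $n$, gives $w_{i+1} \in \sbord{n}{+}w_i$, again matching the definition of $\maxd{n}{U}$. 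Hence the same sequence of vertices and edges is a path from $x$ to $x'$ in $\maxd{n}{U}$, and there is no real obstacle beyond this unpacking of definitions.
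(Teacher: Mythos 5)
Your identification of the endpoints is wrong, and this is a genuine gap. The claim
\begin{equation*}
\bord{n}{\alpha}U \setminus \bord{n-1}{}U = \sbord{n}{\alpha}U
\end{equation*}
does not follow from what you cite and is in fact false. Since $U$ is pure and $(n+1)$\nbd dimensional, $\bord{n}{\alpha}U = \clos{(\sbord{n}{\alpha}U)}$ is a constructible $n$\nbd molecule and $\bord{n-1}{}U = \bord{}{}(\bord{n}{\alpha}U)$ is its boundary, so $\bord{n}{\alpha}U \setminus \bord{n-1}{}U$ is the whole \emph{interior} of that $n$\nbd molecule: it contains $\sbord{n}{\alpha}U$, but also every lower\nbd dimensional element that is interior to $\bord{}{\alpha}U$. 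For instance with $U = K^3$, the $2$\nbd molecule $\bord{}{-}K^3$ has an interior vertex $m$ (the central node in the paper's picture of $\bord{}{-}(K^2 \tensor \vec{I})$), and $m \in \bord{2}{-}K^3 \setminus \bord{1}{}K^3$ with $\dmn{m} = 0$. This case matters: in the proposition that applies Lemma \ref{lem:flow-connect-maxd}, the endpoints are arbitrary elements of $\bord{n}{\pm}y \setminus \bord{n-1}{}y$, which can certainly be lower\nbd dimensional. The paper itself signals this by phrasing its proof as ``any element of $\bord{n}{\alpha}U \setminus \bord{n-1}{}U$ is \emph{in the closure of} an $n$\nbd dimensional element'', rather than claiming it \emph{is} one.

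Your translation of a flow path (alternating between dimensions $n$ and $n+1$ in $\hasso{U}$) into a path in $\maxd{n}{U}$ is correct and is the right core observation, matching the implicit step in the paper's proof; but as written it only establishes the statement when both $x$ and $x'$ lie in $\sbord{n}{\pm}U$. What is missing is the reduction from a general $x$ to an $n$\nbd dimensional $z \geq x$: once flow\nbd connectedness gives a path $z \to w_1 \to \ldots$ in $\maxd{n}{U}$, you would still need to verify that $x \to w_1$ is an edge, i.e.\ that $x \notin \bord{n-1}{}w_1$, which does not follow merely from $x \notin \bord{n-1}{}U$ (the boundary of $\bord{}{-}w_1$ can be strictly larger than the part of $\bord{}{}(\bord{}{-}U)$ it sees). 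That extra step is what your proposal omits.
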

\begin{proof}
Any element of $\bord{n}{\alpha}U \setminus \bord{n-1}{}U$ is in the closure of an $n$\nbd dimensional element, so the statement immediately follows from the definition of flow-connectedness.
\end{proof}

\begin{prop}
Let $U$ be a molecule in a constructible directed complex, and suppose the atoms of $U$ have flow-connected boundaries. If $\frdmn{U} = n$, then $\maxd{n}{U}$ is acyclic.
\end{prop}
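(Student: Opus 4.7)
I would argue by contradiction. Suppose $\maxd{n}{U}$ contains a cycle
\[
x_1 \to y_1 \to x_2 \to y_2 \to \cdots \to x_k \to y_k \to x_1
\]
with each $x_i$ maximal in $U$ of dimension greater than $n$, each $y_i$ of dimension $n$, and $y_i \in \sbord{n}{+}x_i \cap \sbord{n}{-}x_{i+1}$ (indices cyclic). The plan is to lift this cycle, using the flow-connectedness hypothesis, to a cycle in $\loopd{n}{W}$ for a constructible $(n+1)$\nbd molecule $W \subseteq U$, contradicting Proposition \ref{prop:loopd-acyclic}.

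For each $i$ let $M_i := \bord{n+1}{-}x_i$ when $\dmn{x_i} > n+1$, and $M_i := \clos\{x_i\}$ when $\dmn{x_i} = n+1$. In each case $M_i$ is a constructible $(n+1)$\nbd molecule, and is flow-connected: in the first case by the flow-connected boundaries hypothesis applied to the atom $\clos\{x_i\}$, in the second trivially, since $M_i$ is itself an atom. Globularity (Proposition \ref{prop:globelike}) gives $\sbord{n}{\alpha}M_i = \sbord{n}{\alpha}x_i$ for $\alpha \in \{+,-\}$, so $y_{i-1}$ and $y_i$ satisfy the hypotheses of Lemma \ref{lem:flow-connect-maxd} for $M_i$. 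That lemma yields a path in $\maxd{n}{M_i} = \loopd{n}{M_i}$ (the equality because $M_i$ is pure $(n+1)$\nbd dimensional, so all its $(n+1)$\nbd dimensional elements are maximal) from $y_{i-1}$ to $y_i$ consisting entirely of $(n+1)$\nbd and $n$\nbd dimensional cells. Concatenating cyclically, we obtain a closed walk in $\hasso{U}$, lying inside $W := \bigcup_{i=1}^{k} M_i$ and visiting only $n$\nbd and $(n+1)$\nbd dimensional cells; this gives a cycle in $\loopd{n}{W}$.

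It remains to identify $W$ with a constructible $(n+1)$\nbd molecule, at which point Proposition \ref{prop:loopd-acyclic} yields the contradiction. The frame-dimension hypothesis enters through Lemma \ref{lem:framedim_intersec}: for distinct indices $i, j$, $M_i \cap M_j \subseteq \clos\{x_i\} \cap \clos\{x_j\}$ has dimension at most $n$; moreover, cyclically adjacent $M_i, M_{i+1}$ share the $n$\nbd dimensional element $y_i$. These intersection constraints provide the essential combinatorial input to produce a merger tree for $W$, recovering it as an iterated merger of the $M_i$. This last step is the main technical obstacle: while the flow-connected expansion provides the natural candidate $W$, verifying the merger-tree conditions (matching each $M_i \cap M_j$ with appropriate $\bord{n}{\pm}M_i$ data and discharging the constructible submolecule conditions) is likely to require a secondary induction on $k$, peeling off the $M_i$ in a cycle-respecting order dictated by the directional structure. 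The alternative of replacing $W$ with a larger, more naturally molecular subset of $U$ such as $\bord{n+1}{-}U$ or $\bord{n+1}{+}U$ does not work in general, since the $x_i$ are interior maximal elements and the $M_i$ need not lie in either $(n+1)$\nbd boundary of $U$.
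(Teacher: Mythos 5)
Your cycle-lifting argument, using Lemma \ref{lem:flow-connect-maxd} on the boundaries of the atoms $\clos\{x_i\}$, is essentially the paper's, and is sound. The gap is at the very end, where you explicitly dismiss the completion the paper actually uses. You claim that ``$\bord{n+1}{-}U$ or $\bord{n+1}{+}U$ does not work in general, since the $x_i$ are interior maximal elements and the $M_i$ need not lie in either $(n+1)$\nbd boundary of $U$.'' This is false, and the hypothesis $\frdmn{U} = n$ is there precisely to make it false. Fix any sign $\alpha$ and take any maximal $x \in U$ and any $z \in \sbord{n+1}{\alpha}x$. Since $\dmn{z} = n+1 > n = \frdmn{U}$, the element $z$ cannot lie in $\clos\{x\} \cap \clos\{x'\}$ for a second maximal element $x' \ne x$; hence any $w \in U$ covering $z$ satisfies $w \in \clos\{x\}$, and therefore $o(c_{w,z}) = \alpha$ because $z \in \sbord{n+1}{\alpha}x$. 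This shows $z \in \sbord{n+1}{\alpha}U$, and taking closures, $\bord{n+1}{\alpha}x \subseteq \bord{n+1}{\alpha}U$ for every maximal $x$. In particular all your $M_i$ lie inside $\bord{n+1}{\alpha}U$ (with $M_i = \clos\{x_i\}$ when $\dmn{x_i} = n+1$, since then $x_i$ is vacuously in $\sbord{n+1}{\alpha}U$).

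Once that inclusion is in hand the rest is clean: lift each two-step segment $y_{i-1} \to x_i \to y_i$ of your putative cycle to a path in $\maxd{n}{(\bord{n+1}{\alpha}x_i)}$, hence in $\maxd{n}{(\bord{n+1}{\alpha}U)}$, and concatenate to obtain a cycle in $\maxd{n}{(\bord{n+1}{\alpha}U)}$. But $\bord{n+1}{\alpha}U$ is itself an $(n+1)$\nbd dimensional molecule (it is the $(n+1)$\nbd boundary of the molecule $U$, and Proposition \ref{prop:molecdim} guarantees it is $(n+1)$\nbd dimensional), so $\maxd{n}{(\bord{n+1}{\alpha}U)} = \loopd{n}{(\bord{n+1}{\alpha}U)}$ is acyclic by Proposition \ref{prop:loopd-acyclic}, a contradiction. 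There is no need to assemble a molecule $W = \bigcup_i M_i$ by mergers at all --- as you correctly sense, that would be hard --- because the ambient $(n+1)$\nbd molecule you want already exists; what you are missing is that the frame-dimension hypothesis makes it contain every $M_i$.
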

\begin{proof}
Suppose $U$ is an $(n+1)$\nbd dimensional molecule. Then $\maxd{n}{U} = \loopd{n}{U}$, which is acyclic by Proposition \ref{prop:loopd-acyclic}. 

Otherwise, since $\frdmn{U} = n$, for all pairs $x, y$ of maximal elements $\clos{\{x\}} \cap \clos{\{y\}}$ is at most $n$\nbd dimensional, so any $z \in \sbord{n+1}{}x$ is only covered by elements in the closure of $x$, and $z \in \sbord{n+1}{\alpha}x$ implies $z \in \sbord{n+1}{\alpha}U$. It follows that $\bord{n+1}{\alpha}U$ contains $\bord{n+1}{\alpha}x$ for all maximal elements $x \in U$. 

Suppose $\maxd{n}{U}$ has a cycle. This cycle is a concatenation of two-step paths $x \to y \to x'$ where $y \in U$ is a maximal element, and $x \in \bord{n}{-}y\setminus\bord{n-1}{}y$, $x' \in \bord{n}{+}y\setminus\bord{n-1}{}y$. By Lemma \ref{lem:flow-connect-maxd} applied to $\bord{n+1}{\alpha}y$, there is a path $x \to \ldots \to x'$ in $\maxd{n}{(\bord{n+1}{\alpha}y)}$, hence in $\maxd{n}{(\bord{n+1}{\alpha}U)}$. 

Replacing each two-step path in the cycle with such a path, we obtain a cycle in $\maxd{n}{(\bord{n+1}{\alpha}U)}$, and $\bord{n+1}{\alpha}U$ is an $(n+1)$\nbd dimensional molecule, which contradicts the first part of the proof. Therefore $\maxd{n}{U}$ is acyclic.
\end{proof}

\begin{cor} \label{cor:flow-conn-split}
Let $P$ be a constructible directed complex whose atoms have flow-connected boundaries. Then all molecules of $P$ are split, and $P$ is freely generating.
\end{cor}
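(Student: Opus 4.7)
The plan is that this corollary should follow immediately by combining the preceding proposition with Proposition \ref{prop:max_free}. First I would observe that if $U$ is a molecule in $P$, then every atom of $U$ is the closure $\clos\{x\}$ of some $x \in U \subseteq P$, hence is an atom of $P$. Therefore the hypothesis that atoms of $P$ have flow-connected boundaries transfers verbatim to every molecule $U$ of $P$.

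Given this observation, for any molecule $U$ of $P$ with $\frdmn{U} = n$, the preceding proposition applies and yields that $\maxd{n}{U}$ is acyclic. This is precisely the hypothesis of Proposition \ref{prop:max_free}, which then allows us to conclude both that every molecule of $P$ is split and that $P$ is freely generating.

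Since the entire argument is a direct chaining of two already-established facts, there is no real obstacle; the proof reduces to one sentence invoking the preceding proposition and Proposition \ref{prop:max_free}. The only minor care is noting that ``atom of $U$'' and ``atom of $P$ whose greatest element lies in $U$'' coincide, which is immediate because atoms are determined by the closure of a single element and closures are computed in the ambient directed complex.
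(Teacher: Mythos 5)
Your proposal is correct and is exactly the intended argument: the paper gives no separate proof for this corollary because it follows immediately from the preceding proposition together with Proposition~\ref{prop:max_free}. Your observation that atoms of a molecule $U \subseteq P$ coincide with atoms of $P$ whose greatest element lies in $U$ is the right (and only) thing to check.
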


\begin{exm}
Let $\pope$ to be the full subcategory of $\globe$ on positive opetopes, and $\popeset$ its category of presheaves. By Proposition \ref{prop:pope-flowconn}, all positive opetopes are flow-connected, and all boundaries of positive opetopes are positive opetopes. It follows from Corollary \ref{cor:flow-conn-split} that positive opetopes are freely generating.

From Theorem \ref{thm:rpolintopol}, we obtain a full and faithful functor $\popeset \to \pol$ whose image consists of ``positive-to-one'' polygraphs, in the sense of Zawadowski \cite{zawadowski2007positive}, hence positive opetopic sets in the sense of \cite{zawadowski2017positive}. 
\end{exm}

\section{Geometric realisation} \label{sec:geometric}

We informally stated that constructible $n$\nbd molecules have $k$\nbd boundaries shaped as $k$\nbd balls for each $k < n$. In this section, we make this precise, by defining the geometric realisation of a constructible directed complex and of a constructible polygraph.

First, we need to recall some basic notions from algebraic and combinatorial topology; we refer to any textbook, for example \cite{may1999concise} and \cite{goerss2009simplicial}, for more details. We will work with the ``convenient'' category $\cghaus$ of compactly generated Hausdorff spaces. We assume that the definitions of simplicial set, their category $\sset$, and the geometric realisation $\realis{-}_\Delta: \sset \to \cghaus$ are known by the reader. Throughout this section, we will not always distinguish between an oriented graded poset and its underlying poset; the context should decide which one we mean.

\begin{dfn}
Let $P$ be a poset. The \emph{nerve} of $P$ is the simplicial set $NP$ whose
\begin{itemize}
	\item $n$\nbd simplices are chains $(x_0 \leq \ldots \leq x_n)$ of length $(n+1)$ in $P$,
	\item the $k$\nbd th face map $d_k: NP_n \to NP_{n-1}$ is defined by 
	\begin{equation*}
		(x_0 \leq \ldots \leq x_n) \mapsto (x_0 \leq \ldots \leq x_{k-1} \leq x_{k+1} \leq \ldots \leq x_n),
	\end{equation*}
	\item the $k$\nbd th degeneracy map $s_k: NP_n \to NP_{n+1}$ is defined by 
	\begin{equation*}
		(x_0 \leq \ldots \leq x_n) \mapsto (x_0 \leq \ldots \leq x_{k} \leq x_{k} \leq \ldots \leq x_n),
	\end{equation*} 
\end{itemize}
for $0 \leq k \leq n$. 
\end{dfn}

The nerve extends to a functor $N: \pos \to \sset$. Precomposing with the forgetful functor $\globpos \to \pos$, we obtain a functor $k: \globpos \to \sset$. Finally, we can post-compose with $\realis{-}_\Delta: \sset \to \cghaus$ to obtain a functor $\realis{-}: \globpos \to \cghaus$, which we call the \emph{geometric realisation} of a constructible directed complex.

The functor $\realis{-}_\Delta$ has the property that $\realis{K \times L}_\Delta$ is homeomorphic to the product of spaces $\realis{K}_\Delta \times \realis{L}_\Delta$, and $\realis{K \join L}_\Delta$ to the join of spaces $\realis{K}_\Delta \join \realis{L}_\Delta$, for any pair of simplicial sets $K$, $L$. The nerve functor also preserves products and joins, and the underlying poset of the lax Gray product of two oriented graded posets is the product of their underlying posets. It follows that $\realis{-}$ becomes a monoidal functor from $(\globpos, \tensor, 1)$ to $(\cghaus, \times, 1)$ and from $(\globpos, \join, \emptyset)$ to $(\cghaus, \join, \emptyset)$.

\begin{remark}
In combinatorics, it is more common to consider the \emph{order complex} of a poset, an ordered simplicial complex, rather than the nerve, a simplicial set. The two notions of geometric realisation coincide up to homeomorphism.
\end{remark}

In what follows, let $D^n$ be a model of the closed $n$\nbd ball, and $\bord{}{}D^n$ its boundary, homeomorphic to the $(n-1)$\nbd sphere; for a map $x: D^n \to X$, let $\bord{}{}x$ be its restriction to $\bord{}{}D^n$. 
\begin{dfn}
A \emph{CW complex} is a topological space $X$ together with a non-decreasing sequence $\{\skel{n}{X} \hookrightarrow X\}_{n \in \mathbb{N}}$ of subspaces, the \emph{$n$\nbd skeleta}, and families $\{\atom{n}{X}\}_{n \in \mathbb{N}}$ of maps $x: D^n \to \skel{n}{X}$, such that $X = \bigcup_{n\in \mathbb{N}} \skel{n}{X}$ and, for all $n$,
\begin{equation*}
\begin{tikzpicture}[baseline={([yshift=-.5ex]current bounding box.center)}]
	\node[scale=1.25] (0) at (0,2) {$\displaystyle \coprod_{x\in\atom{n}{X}} \bord{}{} D^n$};
	\node[scale=1.25] (1) at (3.5,2) {$\displaystyle\coprod_{x\in\atom{n}{X}} D^n$};
	\node[scale=1.25] (2) at (0,0) {$\skel{n-1}{X}$};
	\node[scale=1.25] (3) at (3.5,0) {$\skel{n}{X}$};
	\draw[1c] (0) to node[auto,swap] {$(\bord{}{}x)_{x\in\atom{n}{X}}$} (2);
	\draw[1c] (1) to node[auto] {$(x)_{x\in\atom{n}{X}}$} (3);
	\draw[1cinc] (0) to (1);
	\draw[1cinc] (2) to (3);
	\draw[edge] (2.5,0.2) to (2.5,0.8) to (3.3,0.8);
\end{tikzpicture}
\end{equation*}
is a pushout diagram in $\cghaus$. 

A CW complex is \emph{regular} if, for all $n$ and $x \in \atom{n}{X}$, the map $x: D^n \to X$ is a homeomorphism onto its image.
\end{dfn}

By analogy with polygraphs, we will call $x \in \atom{n}{X}$ a \emph{generating} $n$\nbd cell of the CW complex $X$, even though the term is more commonly associated to an algebraic setting.

\begin{dfn}
Let $X$ be a CW complex. The \emph{face poset} $\face{X}$ of $X$ is the poset whose elements are the generating cells of $X$, and for any pair of generating cells $x: D^k \to X$ and $y: D^n \to X$ we have $x \leq y$ if and only if $x(D^k) \subseteq y(D^n)$.
\end{dfn}

The face poset is, arguably, the simplest non-trivial combinatorial structure that one can associate to a CW complex, yet for regular CW complexes, it specifies the type of the underlying topological space up to homeomorphism.
\begin{thm}\emph{\cite[Theorem 1.7]{lundell1969topology}}
Let $X$ be a regular CW complex. Then $|N(\face{X})|$ is homeomorphic to $X$.
\end{thm}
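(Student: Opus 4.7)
The plan is to proceed by induction on the skeletal filtration $\{\skel{n}{X}\}_{n \in \mathbb{N}}$, showing at each stage that there is a homeomorphism $|N(\face{\skel{n}{X}})| \to \skel{n}{X}$ compatible with the inclusions $\skel{n-1}{X} \incl \skel{n}{X}$. The base case $n = 0$ is immediate: $\skel{0}{X}$ is a discrete set of points, $\face{\skel{0}{X}}$ is an antichain, and its nerve is a disjoint union of $0$-simplices, whose realisation is the same discrete set.

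For the inductive step, one uses the pushout description of $\skel{n}{X}$ from $\skel{n-1}{X}$ together with the following key lemma: for each generating $n$-cell $x: D^n \to X$, the subposet $\face{x(D^n)}$ of $\face{X}$ has $x$ as a unique greatest element, so $\face{x(D^n)}$ is the ordinal sum $\face{x(\bord{}{}D^n)} \join \{x\}$. Under the nerve and geometric realisation functors, which convert ordinal sum into the topological join, this gives
\[
|N(\face{x(D^n)})| \;\cong\; |N(\face{x(\bord{}{}D^n)})| \join \{x\},
\]
i.e.\ the topological cone on $|N(\face{x(\bord{}{}D^n)})|$. Regularity implies that $x(\bord{}{}D^n)$, equipped with the subcomplex structure it inherits from $X$, is itself a regular CW complex on the sphere $S^{n-1}$, of dimension strictly less than $n$. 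Applying the inductive hypothesis to it gives $|N(\face{x(\bord{}{}D^n)})| \cong S^{n-1}$, whence $|N(\face{x(D^n)})| \cong \mathrm{Cone}(S^{n-1}) \cong D^n$.

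To conclude the induction, one glues these pointwise homeomorphisms via the universal property of pushouts in $\cghaus$. Writing $P_n := \face{\skel{n}{X}}$, the decomposition $P_n = P_{n-1} \cup \bigcup_{x \in \atom{n}{X}} \face{x(D^n)}$, combined with the fact that $N$ and $|-|$ preserve the relevant colimits of posets, exhibits $|N(P_n)|$ as the pushout of $\coprod_x |N(\face{x(\bord{}{}D^n)})| \to |N(P_{n-1})|$ along $\coprod_x |N(\face{x(\bord{}{}D^n)})| \incl \coprod_x |N(\face{x(D^n)})|$. This is the same pushout, up to the homeomorphisms already constructed, that defines $\skel{n}{X}$ from $\skel{n-1}{X}$, so the extension is produced and is a homeomorphism. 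Passing to the colimit over $n$ yields the result.

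The main obstacle will be not step 2 (the conceptual cone argument) but the bookkeeping of step 3: one must set up the induction so that the homeomorphism constructed is natural with respect to inclusions of subcomplexes, and in particular agrees along each attaching sphere with the homeomorphism produced by the induction on the lower-dimensional regular CW complex $x(\bord{}{}D^n)$. Concretely, this requires knowing that the homeomorphism $|N(\face{x(\bord{}{}D^n)})| \cong S^{n-1}$ from the induction, postcomposed with the attaching restriction $S^{n-1} \to \skel{n-1}{X}$, coincides with the restriction of the induced map $|N(P_{n-1})| \to \skel{n-1}{X}$ to the subcomplex $|N(\face{x(\bord{}{}D^n)})|$. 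Once naturality is built into the inductive statement from the outset, this compatibility is automatic; without it, there is nothing to glue.
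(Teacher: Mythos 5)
This theorem is cited, not proved, in the paper; the reference is Lundell and Weingram, who establish it via an explicit barycentric construction of a triangulation of $X$. Your argument --- induction on skeleta, with a cone/join lemma to realise each closed cell --- is a valid and essentially equivalent alternative route, so there is no proof in the paper to contrast against, and the outline itself is correct.

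Two points deserve a flag. First, the phrase ``$N$ and $|-|$ preserve the relevant colimits of posets'' passes quietly over the fact that $N : \pos \to \sset$ is a right adjoint and does not preserve pushouts in general. It preserves this one for a special reason: a chain in $\face{\skel{n}{X}}$ is determined by its top element, so if the top cell has dimension $< n$ the chain lies entirely in $\face{\skel{n-1}{X}}$, and if the top is an $n$-cell $x$ the chain lies entirely in $\face{x(D^n)}$; moreover for distinct $n$-cells $x \neq x'$ neither is a face of the other, so $\face{x(D^n)} \cap \face{x'(D^n)} \subseteq \face{\skel{n-1}{X}}$. These observations exhibit $N(\face{\skel{n}{X}})$ directly as a union of simplicial subcomplexes with the required pairwise intersections, hence as the pushout you need, after which $|-|$, a left adjoint, does carry it to a pushout of spaces. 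Second, your closing concern is exactly the sharp point: the weakest form of the inductive statement (``there exists a homeomorphism'') is not strong enough to close the induction, since nothing forces the cone homeomorphism on a new cell to agree on $|N(\face{x(\bord{}{}D^n)})|$ with the restriction of the homeomorphism already built on $|N(\face{\skel{n-1}{X}})|$. Strengthening the inductive hypothesis to ``there is a homeomorphism $h_Y$ for each regular CW complex $Y$ of dimension $\leq n$, natural with respect to inclusions of subcomplexes'' is the standard and correct fix, and once it is in place the gluing is automatic.
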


In \cite{bjorner1984posets}, Bj\"orner studied criteria for a poset to be the face poset of a regular CW complex. The following is [Definition 2.1, \emph{ibid.}].
\begin{dfn}
A poset with a least element $P_\bot$ is a \emph{CW poset} if $P$ has at least one element, and, for all $x \in P$, the geometric realisation $|N(\bord{}{}x)|$ is homeomorphic to a sphere.
\end{dfn}

\begin{prop}\emph{\cite[Proposition 3.1]{bjorner1984posets}} A poset $P$ is the face poset of a regular CW complex if and only if $P_\bot$ is a CW poset.
\end{prop}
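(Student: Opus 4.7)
The plan is to prove both directions via the Lundell--Weingram theorem (Theorem 1.7 quoted just above).

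For the ``only if'' direction, assume $P = \face{X}$ for a regular CW complex $X$. Fix $x \in P$ corresponding to a generating $n$\nbd cell $x: D^n \to X$. By regularity, $x$ is a homeomorphism onto its image, and $x(D^n)$ is a CW subcomplex whose generating cells correspond bijectively to $\clos\{y \in P : y \leq x\}$. Restricting to the boundary sphere $x(\bord{}{}D^n) \cong S^{n-1}$ gives a regular CW subcomplex whose face poset is $\{y \in P : y < x\}$, which is the open interval below $x$ in $P_\bot$. Applying Lundell--Weingram to this subcomplex yields $|N(\bord{}{}x)| \cong S^{n-1}$, so $P_\bot$ is a CW poset.

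For the converse, assume $P_\bot$ is a CW poset. I would equip $|NP|$ with a CW structure whose generating cells are indexed by $P$. For each $x \in P$ of rank $n$, set $C_x := |N(\{y \in P : y \leq x\})|$. Because $x$ is the maximum of this sub\nbd poset, every chain extends to one ending at $x$, so the nerve is the simplicial join of $N(\bord{}{}x)$ with the vertex $x$; hence $C_x$ is the topological cone on $|N(\bord{}{}x)|$, and by the CW poset hypothesis this cone is homeomorphic to $D^n$. Declare $C_x$ to be a closed $n$\nbd cell, with characteristic map the cone homeomorphism and attaching map the composite $S^{n-1} \cong |N(\bord{}{}x)| \hookrightarrow X^{n-1}$, where $X^{n-1}$ is the union of all $C_y$ with rank less than $n$.

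I would then verify the CW axioms: the open cells $C_x \setminus \bigcup_{y < x} C_y$ partition $|NP|$ because each chain in $P$ has a greatest element; closure\nbd finiteness reduces to finiteness of principal ideals (assumed, since $P_\bot$ is graded with finite rank); and the weak topology agrees with the simplicial topology on $|NP|$. The face poset is $P$ because $C_y \subseteq C_x$ if and only if $y \leq x$. Regularity is immediate: by construction each characteristic map is a homeomorphism onto $C_x$.

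The main obstacle is the bookkeeping in the converse: one must verify inductively on rank that the newly attached $C_x$ meets the lower skeleton exactly in $|N(\bord{}{}x)|$, so that the attaching map is well\nbd defined and the resulting cellular filtration really does match rank in $P$. Once this compatibility is established, the homeomorphism $C_x \cong D^n$ and the cone decomposition of the nerve deliver both regularity and the identification of the face poset with $P$ without further effort.
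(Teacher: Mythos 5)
The paper cites this result directly from Bj\"orner's paper (Proposition~3.1 of \cite{bjorner1984posets}) without reproducing a proof, so there is no in-paper argument to compare against. Your sketch is, however, essentially the classical argument that Bj\"orner gives, and it is correct. In the forward direction the key background fact you implicitly use --- that in a regular CW complex the closure of each cell is a subcomplex, so restricting to the boundary sphere of a cell yields a regular CW subcomplex with face poset $\{y : y < x\}$ --- is standard and makes Lundell--Weingram applicable as you say. In the converse direction the cone decomposition $|N(\{y \leq x\})| \cong \mathrm{Cone}\bigl(|N(\{y < x\})|\bigr) \cong D^n$ is exactly the right mechanism, and the remaining CW axioms (closure-finiteness, weak topology) are automatic in the finite-poset setting that is relevant to this paper.

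The one place where your write-up underestimates how clean things are is the ``main obstacle'' you flag at the end. You do not actually need an induction on rank to check that $C_x$ meets the lower skeleton in $|N(\bord{}{}x)|$: since each $C_y$ is the realised nerve of the principal ideal $\{z : z \leq y\}$, and nerve commutes with intersections and unions of down-closed subsets, one has directly
\begin{equation*}
C_x \cap X^{n-1} = \Bigl|N\Bigl(\{z : z \leq x\} \cap \bigcup_{\mathrm{rank}(y) < n} \{z : z \leq y\}\Bigr)\Bigr| = |N(\{z : z < x\})|,
\end{equation*}
where the last equality holds because an element $z \leq x$ lying below some strictly lower-rank $y$ cannot equal $x$, and conversely any $z < x$ is witnessed by $y = z$. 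So the attaching maps are well-defined by a purely poset-level computation, and the identification of the face poset with $P$ and the regularity of the resulting CW structure then follow as you describe.
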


We will prove that if $P$ is a constructible directed complex, then $P_\bot$ is a CW poset. For this purpose, we go through the intermediate notion of a recursively dividable poset, defined in \cite{hachimori2000constructible}. The following is a rephrasing of [Definition 3.1, \emph{ibid.}]; note that we use a pure $n$\nbd dimensional subset $U$ where Hachimori would add an $(n+1)$\nbd dimensional greatest element.

\begin{dfn}
Let $P$ be a graded poset. We define a class of pure subsets $U$ of $P$, that we call \emph{recursively dividable}.

The empty subset is recursively dividable. Suppose $U$ is inhabited, pure and $n$\nbd dimensional. Then $U$ is recursively dividable if and only if $\bord{}{}x$ is recursively dividable for all $x \in U^{(n)}$, and, inductively on the number of maximal elements of $U$, either
\begin{itemize} 
	\item $U$ has a greatest element, or
	\item $U = U_1 \cup U_2$, where 
	\begin{enumerate}
		\item $U_1$ and $U_2$ are pure, $n$\nbd dimensional, and recursively dividable, and
		\item $U_1 \cap U_2$ is pure, $(n-1)$\nbd dimensional, and recursively dividable.
	\end{enumerate}
\end{itemize}
\end{dfn}

\begin{prop} \label{prop:recdivid}
Let $U$ be a constructible molecule. Then $U$ and $\bord{}{}U$ are recursively dividable.
\end{prop}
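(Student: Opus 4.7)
The plan is to proceed by induction on the dimension $n$ of $U$, with an inner induction on the number of maximal elements. The definitions of constructible molecule and recursively dividable are closely parallel: both are built from atoms by unions whose pieces intersect in a codimension-one piece. The key differences are that recursive dividability omits the input/output matching condition on the intersection $U_1 \cap U_2$, but adds the requirement that $\bord{}{}x$ be recursively dividable for every top-dimensional element $x \in U^{(n)}$. So the work is mostly a translation between these two sets of conditions, with Theorem \ref{thm:globularity} providing the essential glue.

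The first step is to show that $\bord{}{}U$ is recursively dividable. Since $\bord{}{+}U$ and $\bord{}{-}U$ are constructible $(n-1)$-molecules by Definition \ref{dfn:globes}, the outer inductive hypothesis says that both are recursively dividable, and they are automatically pure and $(n-1)$-dimensional. By globularity, their intersection $\bord{}{+}U \cap \bord{}{-}U$ equals $\bord{}{}(\bord{}{+}U)$, which is recursively dividable by the same inductive hypothesis applied to the constructible $(n-1)$-molecule $\bord{}{+}U$, and is pure, $(n-2)$-dimensional. Thus $\bord{}{}U = \bord{}{+}U \cup \bord{}{-}U$ is a valid decomposition witnessing recursive dividability.

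With this in hand, $U$ itself is treated by inner induction on maximal elements. If $U$ is an atom with greatest element $x$, the atomic clause of the recursively dividable definition fires, and the only remaining requirement is that $\bord{}{}x = \bord{}{}U$ be recursively dividable, which is precisely what the first step delivers. If instead $U = U_1 \cup U_2$ non-atomically, then by inner induction both $U_i$ are recursively dividable, pure, and $n$-dimensional, while by outer induction $U_1 \cap U_2$, being a constructible $(n-1)$-molecule, is recursively dividable, pure, and $(n-1)$-dimensional. For each $n$-dimensional $y \in U^{(n)}$, $y$ belongs to one of the $U_i$, and the inner inductive hypothesis already guarantees that $\bord{}{}y$ is recursively dividable.

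The main delicacy, rather than obstacle, is ensuring the $\bord{}{}y$ condition is carried through at every level: it has no direct counterpart in Definition \ref{dfn:globes}, so one must check it independently. This is why the first step (recursive dividability of $\bord{}{}U$ for \emph{every} constructible molecule) has to precede the second, and why globularity is indispensable — without the identification $\bord{}{+}U \cap \bord{}{-}U = \bord{}{}(\bord{}{+}U)$, the intersection in the decomposition of $\bord{}{}U$ would not be visibly pure of codimension one, and the translation between the two recursive schemes would fail.
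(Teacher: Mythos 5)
Your proof is correct and follows essentially the same route as the paper's: establish recursive dividability of $\bord{}{}U$ first (using globularity to identify $\bord{}{+}U \cap \bord{}{-}U$ with $\bord{}{}(\bord{}{\alpha}U)$), then conclude for $U$ by cases on atomicity, with a double induction on dimension and number of maximal elements. You make explicit the inner/outer induction structure and the verification of the $\bord{}{}y$ clause, which the paper leaves tacit, but the argument is the same one.
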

\begin{proof}
If $U$ is a 0\nbd molecule, this is immediate from the definition, so suppose $U$ is a constructible $n$\nbd molecule with $n > 0$. 

We have $\bord{}{}U = \bord{}{-}U \cup \bord{}{+}U$, where $\bord{}{-}U$ and $\bord{}{+}U$ are constructible $(n-1)$\nbd molecules, and $\bord{}{-}U \cap \bord{}{+}U = \bord{}{}(\bord{}{\alpha}U)$ is the boundary of a constructible $(n-1)$\nbd molecule: all are recursively dividable by the inductive hypothesis. It follows that $\bord{}{}U$ is recursively dividable.

If $U$ is atomic, we are done. Otherwise, $U$ splits as $U_1 \cup U_2$, where $U_1$, $U_2$ are constructible $n$\nbd molecules with fewer maximal elements, and their intersection $U_1 \cap U_2$ is a constructible $(n-1)$\nbd molecule. All are recursively dividable by the inductive hypothesis, so $U$ is recursively dividable.
\end{proof}

We do not know, at the moment, whether constructible molecules are also strongly dividable in the sense of \cite[Definition 4.2]{hachimori2000constructible}.

\begin{thm} \label{thm:regularball}
Let $U$ be a constructible $n$\nbd molecule. Then $\realis{U}$ is homeomorphic to a closed $n$\nbd ball, and $\realis{\bord{}{}U}$ is homeomorphic to an $(n-1)$\nbd sphere.
\end{thm}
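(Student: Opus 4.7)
The proof naturally proceeds by induction on the dimension $n$, and for fixed $n$, on the number of maximal elements of $U$. The base case $n=0$ is immediate: $U$ is a single point, so $\realis{U}$ is a point (a $0$-ball), and $\realis{\bord{}{}U} = \realis{\emptyset} = \emptyset$, taken as the $(-1)$-sphere. For $n > 0$, I would first establish the boundary claim, which only depends on lower-dimensional data. Writing $\bord{}{}U = \bord{}{+}U \cup \bord{}{-}U$ with $\bord{}{+}U \cap \bord{}{-}U = \bord{}{}(\bord{}{\alpha}U)$ by Theorem \ref{thm:globularity}, and using that the nerve preserves unions and intersections of closed subposets, $\realis{\bord{}{}U}$ is the pushout of two realisations which, by the inductive hypothesis applied to the constructible $(n-1)$-molecules $\bord{}{\pm}U$, are $(n-1)$-balls glued along their entire common boundary $(n-2)$-sphere. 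This standard pushout description yields an $(n-1)$-sphere.

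Next I would treat the atomic case: if $U$ has greatest element $x$, Lemma \ref{lem:basic} implies that every element of $U \setminus \{x\}$ lies in $\bord{}{}U$. Hence every chain in $U$ is either a chain in $\bord{}{}U$ or obtained from such a chain by appending $x$ at the top; consequently the nerve $NU$ is isomorphic to the simplicial join $N(\bord{}{}U) \join \{x\}$, that is, the simplicial cone on $N(\bord{}{}U)$. Since geometric realisation commutes with joins of simplicial sets, $\realis{U}$ is the topological cone on the $(n-1)$-sphere $\realis{\bord{}{}U}$, hence an $n$-ball.

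For the non-atomic case, let $U$ split as $U_1 \cup U_2$ with $U_1, U_2$ having strictly fewer maximal elements. By the inductive hypothesis on the number of maximal elements, $\realis{U_1}$ and $\realis{U_2}$ are $n$-balls; by the inductive hypothesis on dimension applied to the constructible $(n-1)$-molecule $U_1 \cap U_2$, $\realis{U_1 \cap U_2}$ is an $(n-1)$-ball. The constructible submolecule conditions give $U_1 \cap U_2 = \bord{}{+}U_1 \cap \bord{}{-}U_2 \subseteq \bord{}{}U_i$, so this $(n-1)$-ball lies in the boundary sphere of each $\realis{U_i}$. One then invokes the classical PL topology lemma that the union of two PL $n$-balls along a PL $(n-1)$-subball of their boundaries is again a PL $n$-ball to conclude that $\realis{U} = \realis{U_1} \cup \realis{U_2}$ is an $n$-ball.

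The main obstacle is a careful verification that the glued pieces carry compatible PL structures, i.e.\ that the inductively produced $(n-1)$-ball $\realis{U_1 \cap U_2}$ sits in each boundary sphere $\realis{\bord{}{}U_i}$ as a PL subball. Since $|N(-)|$ yields a simplicial complex, the relevant PL structure is canonical, but one must ensure that the inductive homeomorphisms can be chosen to restrict correctly along the included subcomplexes. An alternative route, given Proposition \ref{prop:recdivid}, is to appeal directly to Hachimori's results on recursively dividable posets, which packages exactly the PL gluing arguments needed and short-circuits the bookkeeping.
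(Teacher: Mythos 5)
Your closing remark---deduce recursive dividability from Proposition~\ref{prop:recdivid} and appeal to Hachimori---is not an alternative route but is exactly the paper's proof: $U$ and $\bord{}{}U$ are recursively dividable, so their order complexes are constructible simplicial complexes, and since each $(n-1)$\nbd simplex of the order complex lies in at most two $n$\nbd simplices (by thinness and Lemma~\ref{lem:basic}), Hachimori's theorem on constructible pseudomanifolds yields a PL ball for $\realis{U}$ and a PL sphere for $\realis{\bord{}{}U}$.

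The direct induction you spend most of the proposal on has a genuine gap, which you sense but do not quite diagnose. Your inductive hypothesis produces only \emph{topological} homeomorphisms $\realis{U_i}\cong D^n$ and $\realis{U_1\cap U_2}\cong D^{n-1}$, yet the gluing lemma you then invoke is a statement about \emph{PL} balls. A simplicial complex carrying its canonical PL structure is not automatically a PL ball merely because it is a topological ball (compare the double suspension of a non-simply-connected homology sphere, which is a simplicial complex homeomorphic to a sphere but not PL homeomorphic to one), and a topological $(n-1)$\nbd ball subcomplex of a sphere need not be locally flat, so the gluing lemma is simply not available with only topological input. The correct fix is to carry the stronger inductive hypothesis ``PL $n$\nbd ball'' throughout: then $\realis{U_1\cap U_2}$ is a PL $(n-1)$\nbd ball subcomplex of the PL $(n-1)$\nbd sphere $\realis{\bord{}{}U_i}$, Newman's theorem guarantees it sits there as a PL subball, and the PL gluing lemma applies. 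This PL bookkeeping, run by induction exactly as you sketch, is what Hachimori's (ultimately Zeeman's) constructibility machinery packages, which is why the paper simply cites it rather than re-proving it.
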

\begin{proof}
By Proposition \ref{prop:recdivid} and \cite[Proposition 3.6]{hachimori2000constructible}, the order complexes of $U$ and $\bord{}{}U$ are constructible simplicial complexes [Definition 2.7, \emph{ibid.}]. Moreover, by thinness and Lemma \ref{lem:basic}, each $(n-1)$\nbd simplex is a face of at most two $n$\nbd simplices. The statement then follows from \cite[Proposition 2.16]{hachimori2000combinatorics}.
\end{proof}

\begin{cor} \label{cor:cwposet}
If $P$ is a constructible directed complex, then $P_\bot$ is a CW poset.
\end{cor}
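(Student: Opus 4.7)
The plan is to verify the single non-trivial condition in the definition of a CW poset: that for every $x \in P$, the geometric realisation of the nerve of the open interval below $x$ in $P_\bot$ is homeomorphic to a sphere. The degenerate ``$P$ has at least one element'' clause is vacuous if one allows the empty CW poset, and otherwise trivial; the content is entirely in the link between $x$ and its lower boundary.

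First I would reconcile the two meanings of ``boundary of $x$'' in play. In Bj\"orner's definition, the relevant set is the open interval $\{y \in P_\bot \mid \bot < y < x\}$, i.e.\ $\{y \in P \mid y < x\}$; in the paper's notation this is exactly $\bord{}{}\clos\{x\}$. To justify this identification for $n = \dmn{x} > 0$, I would note that $\clos\{x\}$ is a pure $n$-dimensional subset with $x$ as its unique maximal element, so every $(n-1)$-dimensional element $y$ of $\clos\{x\}$ is covered only by $x$, hence lies in $\sbord{}{}\clos\{x\}$; by grading, every $y < x$ is below some such $(n-1)$-dimensional element, so it lies in $\clos(\sbord{}{}\clos\{x\}) = \bord{}{}\clos\{x\}$.

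Second, I would invoke Theorem \ref{thm:regularball}. By the definition of a constructible directed complex, $\clos\{x\}$ is a constructible $n$-atom. For $n > 0$, Theorem \ref{thm:regularball} gives directly that $\realis{\bord{}{}\clos\{x\}}$ is homeomorphic to an $(n-1)$-sphere, as required. For the case $n = 0$, the set $\bord{}{}\clos\{x\}$ is empty, so its nerve is empty and $|N(\bord{}{}\clos\{x\})|$ is the empty space, which is the $(-1)$-sphere by convention; this matches Bj\"orner's convention for minimal elements of a CW poset.

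I do not expect an obstacle: once the notational matching is pinned down, the corollary is a one-line application of Theorem \ref{thm:regularball}. The only mild care needed is to cover the $n = 0$ case separately (since Theorem \ref{thm:regularball} concerns $n$-molecules with $n \geq 0$ whose boundary is an $(n-1)$-sphere, and for $n = 0$ the statement reduces to the empty boundary being the $(-1)$-sphere).
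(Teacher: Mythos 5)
Your proof is correct and matches the paper's intended (unstated) argument: Corollary~\ref{cor:cwposet} is an immediate consequence of Theorem~\ref{thm:regularball} applied to each atom $\clos\{x\}$, with the $n=0$ case handled by the convention that the empty space is the $(-1)$-sphere. The notational reconciliation you perform is already implicit in the paper's restatement of Bj\"orner's definition in terms of $\bord{}{}x$, but your verification that $\bord{}{}\clos\{x\}$ really is the open interval $\{y \mid y < x\}$ (using purity and grading) is the right sanity check and would be worth keeping.
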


Therefore, the geometric realisation of a constructible directed complex $P$ admits the structure of a CW complex whose generators are the geometric realisations of the atoms of $P$, and this CW complex is regular.

\begin{remark} \label{rmk:nonconstructible}
Not all regular CW decompositions of balls have recursively dividable face posets; similarly, not all molecules whose geometric realisation is a regular CW ball are constructible. The following example of a non-constructible 3-molecule, whose atoms are constructible and whose geometric realisation is a 3-ball, was suggested to us by S.\ Henry; the shaded area in each diagram is the input boundary of the following 3\nbd atom.
\begin{equation*}
\begin{tikzpicture}[baseline={([yshift=-.5ex]current bounding box.center)},scale=.8]
\begin{scope}
	\path[fill, color=gray!20] (-2,.625) to [out=-90,in=150] (-1,-1.25) to [out=22,in=-140] (.125,-.625) to [out=35,in=-120] (.875,.25) to [out=75,in=-90] (1,1.25) to [out=165,in=45] (-2,.625);
	\node[0c] (0) at (-2,.625) {};
	\node[0c] (s) at (-1,-1.25) {};
	\node[0c] (t) at (1,1.25) {};
	\node[0c] (d) at (.875,.25) {};
	\node[0c] (b) at (.125,-.625) {};
	\node[0c] (1) at (2,-.625) {};
	\draw[1c, out=-90, in=150] (0) to (s);
	\draw[1c, out=-30, in=90] (t) to (1);
	\draw[1c, out=-15, in=-135] (s) to (1);
	\draw[1c, out=45, in=165] (0) to (t);
	\draw[1c, out=-8, in=-172] (b) to (1);
	\draw[1c,out=-30,in=128] (d) to (1);
	\draw[1c, out=22, in=-140] (s) to (b);
	\draw[1c, out=35, in=-120] (b) to (d);
	\draw[1c, out=75, in=-90] (d) to (t);
	\draw[2c] (1.35,0) to (1.35,1);
	\draw[2c] (1.1,-.8) to (1.1,.1);
	\draw[2c] (.4,-1.5) to (.4,-.6);
	\draw[2c] (-.6,-.8) to (-.6,1.2);
	\draw[3c1] (2.25,0) to (3.5,0);
	\draw[3c2] (2.25,0) to (3.5,0);
	\draw[3c3] (2.25,0) to (3.5,0);
\end{scope}
\begin{scope}[shift={(5.75,0)}]
	\path[fill, color=gray!20] (-2,.625) to [out=-52,in=150] (-.875,-.25) to (.125,-.625) to [out=-8,in=-172] (2,-.625) to [out=128,in=-30] (.875,.25) to (-.125,.625) to [out=172,in=8] (-2,.625);
	\node[0c] (0) at (-2,.625) {};
	\node[0c] (s) at (-1,-1.25) {};
	\node[0c] (t) at (1,1.25) {};
	\node[0c] (a) at (-.875,-.25) {};
	\node[0c] (d) at (.875,.25) {};
	\node[0c] (b) at (.125,-.625) {};
	\node[0c] (c) at (-.125,.625) {};
	\node[0c] (1) at (2,-.625) {};
	\draw[1c, out=-90, in=150] (0) to (s);
	\draw[1c, out=-30, in=90] (t) to (1);
	\draw[1c, out=-15, in=-135] (s) to (1);
	\draw[1c, out=45, in=165] (0) to (t);
	\draw[1c, out=-52, in=150] (0) to (a);
	\draw[1c] (a) to (b);
	\draw[1c, out=-8, in=-172] (b) to (1);
	\draw[1c,out=8,in=172] (0) to (c);
	\draw[1c] (c) to (d);
	\draw[1c,out=-30,in=128] (d) to (1);
	\draw[1c, out=90, in=-105] (s) to (a);
	\draw[1c, out=60, in=-145] (a) to (c);
	\draw[1c, out=40, in=-158] (c) to (t);
	\draw[1c, out=22, in=-140] (s) to (b);
	\draw[1c, out=35, in=-120] (b) to (d);
	\draw[1c, out=75, in=-90] (d) to (t);
	\draw[2c] (-1.35,-1) to (-1.35,0);
	\draw[2c] (1.35,0) to (1.35,1);
	\draw[2c] (-.6,-1.1) to (-.6,-.3);
	\draw[2c] (.6,.3) to (.6,1.1);
	\draw[2c] (1.1,-.8) to (1.1,.1);
	\draw[2c] (-1.1,-.1) to (-1.1,.8);
	\draw[2c] (.4,-1.5) to (.4,-.6);
	\draw[2c] (-.4,.6) to (-.4,1.5);
	\draw[2c] (0,-.5) to (0,.5);
	\draw[3c1] (2.25,0) to (3.5,0);
	\draw[3c2] (2.25,0) to (3.5,0);
	\draw[3c3] (2.25,0) to (3.5,0);
\end{scope}
\end{tikzpicture}
\end{equation*}
\begin{equation*}
\begin{tikzpicture}[baseline={([yshift=-.5ex]current bounding box.center)},scale=.8]
\begin{scope}
	\draw[3c1] (2.25,0) to (3.5,0);
	\draw[3c2] (2.25,0) to (3.5,0);
	\draw[3c3] (2.25,0) to (3.5,0);
\end{scope}
\begin{scope}[shift={(5.75,0)}]
	\path[fill, color=gray!20] (-1,-1.25) to [out=90,in=-105] (-.875,-.25) to [out=60,in=-145] (-.125,.625) to [out=40,in=-158] (1,1.25) to [out=-30,in=90] (2,-.625) to [out=-135,in=-15] (-1,-1.25);
	\node[0c] (0) at (-2,.625) {};
	\node[0c] (s) at (-1,-1.25) {};
	\node[0c] (t) at (1,1.25) {};
	\node[0c] (a) at (-.875,-.25) {};
	\node[0c] (d) at (.875,.25) {};
	\node[0c] (b) at (.125,-.625) {};
	\node[0c] (c) at (-.125,.625) {};
	\node[0c] (1) at (2,-.625) {};
	\draw[1c, out=-90, in=150] (0) to (s);
	\draw[1c, out=-30, in=90] (t) to (1);
	\draw[1c, out=-15, in=-135] (s) to (1);
	\draw[1c, out=45, in=165] (0) to (t);
	\draw[1c, out=-52, in=150] (0) to (a);
	\draw[1c] (a) to (b);
	\draw[1c, out=-8, in=-172] (b) to (1);
	\draw[1c,out=8,in=172] (0) to (c);
	\draw[1c] (c) to (d);
	\draw[1c,out=-30,in=128] (d) to (1);
	\draw[1c, out=90, in=-105] (s) to (a);
	\draw[1c, out=60, in=-145] (a) to (c);
	\draw[1c, out=40, in=-158] (c) to (t);
	\draw[1c, out=22, in=-140] (s) to (b);
	\draw[1c, out=35, in=-120] (b) to (d);
	\draw[1c, out=75, in=-90] (d) to (t);
	\draw[2c] (-1.35,-1) to (-1.35,0);
	\draw[2c] (1.35,0) to (1.35,1);
	\draw[2c] (-.6,-1.1) to (-.6,-.3);
	\draw[2c] (.6,.3) to (.6,1.1);
	\draw[2c] (1.1,-.8) to (1.1,.1);
	\draw[2c] (-1.1,-.1) to (-1.1,.8);
	\draw[2c] (.4,-1.5) to (.4,-.6);
	\draw[2c] (-.4,.6) to (-.4,1.5);
	\draw[2c] (0,-.5) to (0,.5);
	\draw[3c1] (2.25,0) to (3.5,0);
	\draw[3c2] (2.25,0) to (3.5,0);
	\draw[3c3] (2.25,0) to (3.5,0);
\end{scope}
\begin{scope}[shift={(11.5,0)}]
	\node[0c] (0) at (-2,.625) {};
	\node[0c] (s) at (-1,-1.25) {};
	\node[0c] (t) at (1,1.25) {};
	\node[0c] (a) at (-.875,-.25) {};
	\node[0c] (c) at (-.125,.625) {};
	\node[0c] (1) at (2,-.625) {};
	\draw[1c, out=-90, in=150] (0) to (s);
	\draw[1c, out=-30, in=90] (t) to (1);
	\draw[1c, out=-15, in=-135] (s) to (1);
	\draw[1c, out=45, in=165] (0) to (t);
	\draw[1c, out=-52, in=150] (0) to (a);
	\draw[1c,out=8,in=172] (0) to (c);
	\draw[1c, out=90, in=-105] (s) to (a);
	\draw[1c, out=60, in=-145] (a) to (c);
	\draw[1c, out=40, in=-158] (c) to (t);
	\draw[2c] (-1.35,-1) to (-1.35,0);
	\draw[2c] (-1.1,-.1) to (-1.1,.8);
	\draw[2c] (-.4,.6) to (-.4,1.5);
	\draw[2c] (.6,-1.2) to (.6,.8);
	\node[scale=1.25] at (2.25,-1.25) {.};
\end{scope}
\end{tikzpicture}
\end{equation*}
\end{remark}

\begin{cons}
Restricting the geometric realisation functor to $\globe$, and taking its left Kan extension along the Yoneda embedding of $\globe$ into $\cpol$, we obtain a functor $|-|: \cpol \to \cghaus$, which sends any regular polygraph $X$ to the coend
\begin{equation*}
	\realis{X} := \int^{U \in \globe} \realis{U} \times X(U);
\end{equation*}
we call this the geometric realisation of constructible polygraphs.

This functor has a right adjoint $S: \cghaus \to \cpol$, defined on a space $X$ by
\begin{equation*}
	SX(-) := \mathrm{Hom}_{\cghaus}(\realis{-}, X);
\end{equation*}
we call $SX$ the \emph{singular constructible polygraph} of the space $X$.
\end{cons}

\begin{prop}
The functor $\realis{-}: \cpol \to \cghaus$ is monoidal from $(\cpol, \tensor, 1)$ to $(\cghaus, \times, 1)$ and from $(\cpol, \join, \emptyset)$ to $(\cghaus, \join, \emptyset)$.
\end{prop}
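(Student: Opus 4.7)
The plan is to reduce the claim, via a colimit/Kan-extension argument analogous to Lemma \ref{lem:restrictmonoidal}, to checking monoidality on the representables $\globe$ and $\globe_+$, where it follows by composing monoidal structures through the intermediate categories $\pos$ and $\sset$.

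First, I would verify the monoidality of the restriction of $\realis{-}$ to $\globe$ and $\globe_+$. By construction, the geometric realisation factors as $\realis{-}_\Delta \circ N \circ \pi$, where $\pi: \globpos \to \pos$ is the forgetful functor, $N: \pos \to \sset$ is the nerve, and $\realis{-}_\Delta: \sset \to \cghaus$ is the simplicial realisation. By Constructions \ref{cons:laxgray} and \ref{cons:join}, $\pi$ sends $\tensor$ to the product of posets and $\join$ to the join of posets; the nerve preserves both products and joins of posets (as recalled in the paragraph introducing the functor $\realis{-}$); and $\realis{-}_\Delta$ preserves products (since we work in $\cghaus$) and joins. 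Composing these natural isomorphisms gives the desired monoidal isomorphisms $\realis{U \tensor V} \simeq \realis{U} \times \realis{V}$ on $\globe$ and $\realis{U \join V} \simeq \realis{U} \join \realis{V}$ on $\globe_+$, with the units $\realis{1} \simeq 1$ and $\realis{\emptyset} \simeq \emptyset$.

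Second, for the lax Gray product, I would extend to all of $\cpol$ by the same colimit argument as in Lemma \ref{lem:restrictmonoidal}(a). The functor $\realis{-}: \cpol \to \cghaus$ is a left adjoint (with right adjoint $S$), hence preserves all colimits; $\tensor$ on $\cpol$, being part of a biclosed structure, preserves colimits separately in each argument; and $\times$ on $\cghaus$ preserves colimits separately in each argument because $\cghaus$ is cartesian closed. Writing $X \simeq \colim_{U \to X} U$ and $Y \simeq \colim_{V \to Y} V$ over atoms, the chain
\[
\realis{X \tensor Y} \simeq \colim_{U \to X,\, V \to Y} \realis{U \tensor V} \simeq \colim_{U \to X,\, V \to Y} (\realis{U} \times \realis{V}) \simeq \realis{X} \times \realis{Y}
\]
then produces a natural monoidal isomorphism.

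Third, for the join I would pass through the augmented category $\cpol_+$ by extending $\realis{-}$ along $\globe_+ \hookrightarrow \cpol_+$ with $\realis{\emptyset} := \emptyset$, the empty space. The same argument then works using the connected-colimits formulation of Lemma \ref{lem:restrictmonoidal}(b): $\globe_+$ has an initial object, so every augmented constructible polygraph is a connected colimit of representables; the join on $\cpol_+$ preserves connected colimits separately in each argument (being the Day convolution of a join with an initial unit); and the topological join $-\join-$ on $\cghaus$ also preserves connected colimits separately in each argument (the standard model $X \join Y = (X \times I \times Y)/\sim$ realises joins as a pushout involving $X$, $Y$, and $X \times Y$, each of which commutes with connected colimits in the appropriate variable). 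Composing with the reflection $r: \cpol_+ \to \cpol$, which is compatible with both realisations on objects of the form $X_+ \jointil Y_+$ (since $\realis{-}$ sends the trivial augmentation to the addition of a disjoint point, matching the convention on $\emptyset$), one obtains $\realis{X \join Y} \simeq \realis{X} \join \realis{Y}$.

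The main obstacle will be the bookkeeping in the join case: verifying that the colimit-preservation properties of the topological join match precisely those of the Day-convolution join on $\cpol_+$, and that the reflection back to $\cpol$ together with the choice $\realis{\emptyset} := \emptyset$ produces the right unit and associativity coherences. The lax Gray case is by contrast essentially formal once monoidality on $\globe$ is in hand.
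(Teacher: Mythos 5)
Your proposal follows the same route as the paper, which simply states that $\realis{-}$ is compatible with lax Gray products and joins on $\globe$ (this was already established in the paragraph introducing the geometric realisation, since $\pi$, $N$, and $\realis{-}_\Delta$ each preserve the relevant operation) and then invokes the colimit argument of Lemma \ref{lem:restrictmonoidal}; you spell out the details that proof leaves implicit, including the point that the topological join preserves connected colimits separately in each argument because its double-mapping-cylinder presentation is a pushout of functors that each do.

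One slip: extending $\realis{-}$ to $\cpol_+$ by left Kan extension along $\globe_+$ with $\realis{\emptyset} := \emptyset$ does \emph{not} send the trivial augmentation $X_+$ to $\realis{X}$ with an added disjoint point. Since $\emptyset$ is \emph{initial} in $\globe_+$, its representable is sent to the empty space, and the extra object in the category of elements of $X_+$ is an initial cone leg that contributes nothing new to the colimit, so $\realis{X_+} \simeq \realis{X}$ exactly. This equality is precisely what makes the reflection $r$ compatible with the two realisations; the conclusion you draw is correct, but for this reason rather than the one you state.
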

\begin{proof}
The functor $\realis{-}$ is compatible with lax Gray products and joins on $\globe$, and the argument of Lemma \ref{lem:restrictmonoidal} applies.
\end{proof}

\begin{thm} \label{thm:georegpoly}
Let $X$ be a constructible polygraph. Then the topological space $\realis{X}$ admits the structure of a CW complex, whose generating $n$\nbd cells are indexed by the $n$\nbd cells of $X$.
\end{thm}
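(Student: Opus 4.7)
The plan is to build the CW structure on $\realis{X}$ inductively, one skeleton at a time, by transporting the pushout description of Proposition \ref{prop:polygraph_ext} through the geometric realisation functor and then passing to the sequential colimit.

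First, observe that $\realis{-}: \cpol \to \cghaus$ is a left adjoint (to the singular complex functor $S$), hence preserves all colimits. In particular, applying $\realis{-}$ to the pushout square of Proposition \ref{prop:polygraph_ext} yields, for each $n > 0$, a pushout in $\cghaus$:
\begin{equation*}
\begin{tikzpicture}[baseline={([yshift=-.5ex]current bounding box.center)}]
	\node[scale=1.25] (0) at (0,2) {$\displaystyle \coprod_{x\in\atom{n}{X}} \realis{\bord{}{} U(x)}$};
	\node[scale=1.25] (1) at (4.5,2) {$\displaystyle\coprod_{x\in\atom{n}{X}} \realis{U(x)}$};
	\node[scale=1.25] (2) at (0,0) {$\realis{\skel{n-1}{X}}$};
	\node[scale=1.25] (3) at (4.5,0) {$\realis{\skel{n}{X}}.$};
	\draw[1c] (0) to (2);
	\draw[1c] (1) to (3);
	\draw[1cinc] (0) to (1);
	\draw[1cinc] (2) to (3);
	\draw[edge] (3.5,0.2) to (3.5,0.8) to (4.3,0.8);
\end{tikzpicture}
\end{equation*}
By Theorem \ref{thm:regularball}, since $U(x)$ is a constructible $n$\nbd atom, $\realis{U(x)}$ is homeomorphic to $D^n$ and $\realis{\bord{}{}U(x)}$ to $S^{n-1}$, and these homeomorphisms may be chosen compatibly since the inclusion $\bord{}{}U(x) \incl U(x)$ corresponds to the inclusion $\bord{}{}D^n \incl D^n$ of boundary spheres into the closed $n$\nbd ball.

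I would now proceed by induction on $n$. The base case $n = 0$: the 0-skeleton $\skel{0}{X}$ is a discrete presheaf concentrated at the unique 0-atom, so $\realis{\skel{0}{X}}$ is a discrete space with one point per 0-cell of $X$ --- a CW complex with one generating 0-cell per element of $\atom{0}{X}$. For the inductive step, assume $\realis{\skel{n-1}{X}}$ admits a CW structure whose generating $k$\nbd cells are indexed by $\atom{k}{X}$ for $k < n$. The pushout square above then presents $\realis{\skel{n}{X}}$ as the adjunction space obtained from $\realis{\skel{n-1}{X}}$ by attaching one copy of $D^n$ for each $x \in \atom{n}{X}$ along a continuous map $S^{n-1} \simeq \realis{\bord{}{}U(x)} \to \realis{\skel{n-1}{X}}$; by the standard characterisation of CW complexes via cell attachments, this endows $\realis{\skel{n}{X}}$ with a CW structure extending that of $\realis{\skel{n-1}{X}}$ and adding one generating $n$\nbd cell per element of $\atom{n}{X}$.

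Finally, by Construction \ref{cons:nskeleton}, $X$ is the colimit of the sequence $\skel{0}{X} \to \skel{1}{X} \to \ldots$; since $\realis{-}$ preserves this colimit, $\realis{X}$ is the sequential colimit of the $\realis{\skel{n}{X}}$, with each inclusion $\realis{\skel{n-1}{X}} \incl \realis{\skel{n}{X}}$ a closed cellular embedding as above. The colimit of such a sequence is, by definition, a CW complex whose $n$\nbd skeleton is $\realis{\skel{n}{X}}$ and whose generating $n$\nbd cells are indexed by $\atom{n}{X}$, as required.

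The only genuinely delicate step is the inductive attachment: we must verify that the induced map $\realis{U(x)} \to \realis{\skel{n}{X}}$ is a valid characteristic map, i.e.\ that its restriction to the open $n$\nbd disk is a homeomorphism onto its image. This follows, however, from general properties of pushouts in $\cghaus$ along closed inclusions of pairs $(D^n, S^{n-1})$: the characteristic map is injective on the interior because distinct elements of $\atom{n}{X}$ give distinct summands in the coproduct, and the attaching map sends only the boundary sphere into $\realis{\skel{n-1}{X}}$.
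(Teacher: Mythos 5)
Your proof is correct and follows essentially the same route as the paper: apply the left-adjoint $\realis{-}$ to the pushouts of Proposition \ref{prop:polygraph_ext}, invoke Theorem \ref{thm:regularball} to identify the realisations of atoms and their boundaries with $(D^n, S^{n-1})$, and induct on skeleta before passing to the sequential colimit. You spell out some points the paper leaves implicit --- compatibility of the ball/sphere homeomorphisms and the characteristic-map condition --- but the argument is the one the paper gives.
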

\begin{proof}
Since $\realis{-}$ is a left adjoint, it takes the sequence of pushouts of Proposition \ref{prop:polygraph_ext} to a sequence of pushouts 
\begin{equation*}
\begin{tikzpicture}[baseline={([yshift=-.5ex]current bounding box.center)}]
	\node[scale=1.25] (0) at (0,2) {$\displaystyle \coprod_{x\in\atom{n}{X}} \realis{\bord{}{} U(x)}$};
	\node[scale=1.25] (1) at (3.5,2) {$\displaystyle\coprod_{x\in\atom{n}{X}} \realis{U(x)}$};
	\node[scale=1.25] (2) at (0,0) {$\realis{\skel{n-1}{X}}$};
	\node[scale=1.25] (3) at (3.5,0) {$\realis{\skel{n}{X}}$,};
	\draw[1c] (0) to node[auto,swap] {$(\realis{\bord{}{}x})_{x\in\atom{n}{X}}$} (2);
	\draw[1c] (1) to node[auto] {$(\realis{x})_{x\in\atom{n}{X}}$} (3);
	\draw[1cinc] (0) to (1);
	\draw[1cinc] (2) to (3);
	\draw[edge] (2.5,0.2) to (2.5,0.8) to (3.3,0.8);
\end{tikzpicture}
\end{equation*}
and by Theorem \ref{thm:regularball} the $\realis{U(x)}$ are homeomorphic to $D^n$ and the $\realis{\bord{}{}U(x)}$ to $\bord{}{}D^n$. By induction on $n$\nbd skeleta, we see that this gives $\realis{X}$ the appropriate structure of a CW complex. 
\end{proof}

\bibliographystyle{alpha}
\small \bibliography{main}

\normalsize
\appendix
\section{Lax Gray product of constructible molecules} \label{sec:appendix}

In what follows, let $n \lor m$ denote the greatest, and $n \land m$ the least between two natural numbers $n, m$.

\begin{lem}
Let $U$ be a constructible $n$\nbd molecule and $V$ a constructible $m$\nbd molecule. For all $k < n+m$,
\begin{equation} \label{eq:grayboundary}
	\underbrace{\bord{}{\alpha}\ldots\bord{}{\beta}}_{n+m-k}(U \tensor V) = \bord{k}{\alpha}(U \tensor V) = \bigcup_{i=(k-m) \lor 0}^{n \land k} \bord{i}{\alpha}U \tensor \bord{k-i}{(-)^i\alpha}V.
\end{equation}
\end{lem}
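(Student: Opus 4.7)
The first equality is an immediate application of Proposition \ref{prop:globelike} to the constructible $(n+m)$-molecule $U \tensor V$, whose constructibility is precisely Lemma \ref{lem:globtensor}. That proposition tells us that iterating $\bord{}{\cdot}$ on any constructible molecule depends only on the outermost sign, so the mixed-sign expression $\bord{}{\alpha}\ldots\bord{}{\beta}$ collapses to $\bord{k}{\alpha}(U\tensor V)$.

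For the second equality, I would argue element-wise, rather than by induction on $k$. Because $U$ and $V$ are pure of dimensions $n$ and $m$, the lax Gray product $U \tensor V$ is pure of dimension $n+m$, and therefore $\bord{k}{\alpha}(U \tensor V) = \clos(\sbord{k}{\alpha}(U \tensor V))$ for every $k \leq n+m$. The key observation is that, by the orientation prescribed in Construction \ref{cons:laxgray}, an element $x \tensor y$ of $U \tensor V$ of dimension $k$ is covered only by elements of the form $x'' \tensor y$, with orientation $o_P(c_{x'', x})$, and $x \tensor y''$, with orientation $(-)^{\dmn{x}} o_Q(c_{y'', y})$. Setting $i := \dmn{x}$, the condition that all covers carry orientation $\alpha$ splits cleanly into $x \in \sbord{i}{\alpha}U$ and $y \in \sbord{k-i}{(-)^i\alpha}V$, yielding
\[ \sbord{k}{\alpha}(U \tensor V) = \bigcup_{i=0}^{k} \sbord{i}{\alpha}U \tensor \sbord{k-i}{(-)^i\alpha}V, \]
where any term with $i > n$ or $k-i > m$ is vacuous, leaving the effective index set $(k-m)\lor 0 \leq i \leq n \land k$.

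To finish, I would take the closure of both sides. Closure distributes over lax Gray products because the underlying partial order is the product order, i.e.\ $\clos(A \tensor B) = \clos{A} \tensor \clos{B}$ for $A \subseteq U$ and $B \subseteq V$. Moreover, by Proposition \ref{prop:globelike} each $\bord{i}{\alpha}U$ is a constructible $i$-molecule, hence pure $i$-dimensional, which gives $\clos(\sbord{i}{\alpha}U) = \bord{i}{\alpha}U$ for all $i \leq n$; the boundary case $i = n$ reduces to $\clos(U^{(n)}) = U$ by purity of $U$, and symmetrically for $V$. Combining these ingredients yields the claimed union formula.

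The main potential pitfall is the tempting inductive strategy of computing $\bord{k}{\alpha}(U \tensor V)$ as $\bord{}{\alpha}\bord{k+1}{\alpha}(U \tensor V)$ and distributing $\bord{}{\alpha}$ over the union $(\bord{}{\alpha}U \tensor V) \cup (U \tensor \bord{}{(-)^n\alpha}V)$ from Lemma \ref{lem:globtensor}. This fails, because as noted in equation (\ref{eq:laxgray_decomp}) of the introduction, that union is a merger rather than a globular pasting, so $\bord{}{\alpha}$ does not distribute over it (Corollary \ref{cor:globelikecor} only applies to true globular composites). The element-wise approach via $\sbord{k}{\alpha}$ bypasses this obstruction entirely, reducing the statement to a purely local computation of orientations in the Hasse diagram of $U \tensor V$.
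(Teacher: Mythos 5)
Your element-wise identification of $\sbord{k}{\alpha}(U \tensor V)$ is correct and arguably more transparent than the paper's argument: the paper instead runs a downward induction on $k$, identifying at each step $\sbord{}{\alpha}\bigl(\bord{k+1}{\beta}(U \tensor V)\bigr)$ by a local computation on covers that is essentially the calculation you carry out directly. Your observation that closure distributes over products and unions, and that $\clos(\sbord{i}{\alpha}U) = \bord{i}{\alpha}U$ by purity of $U$ (including the degenerate case $i = n$), closes the second equality cleanly.

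The real problem is your treatment of the first equality, which is circular. You invoke Lemma \ref{lem:globtensor}, i.e.\ the constructibility of $U \tensor V$, in order to apply Proposition \ref{prop:globelike} (and the sign-independence observation preceding it) to $U \tensor V$. But this lemma is stated and proved in the appendix \emph{before} Lemma \ref{lem:globtensorapp}, which is the detailed proof of Lemma \ref{lem:globtensor}; indeed Sub-Lemma \ref{globtensor_sub1} inside that proof invokes equation (\ref{eq:grayboundary}) repeatedly. So constructibility of $U \tensor V$ is not yet available, and the first equality must be established by hand. The paper's downward induction on $k$ does exactly this: at each step it computes $\sbord{}{\alpha}\bigl(\bord{k+1}{\beta}(U \tensor V)\bigr)$, using Lemma \ref{lem:basic} applied to $U$ and $V$ separately to control the covers inside the restricted subset $\bord{k+1}{\beta}(U \tensor V)$, and checks that the answer is the same for either choice of $\beta$ and coincides with $\sbord{k}{\alpha}(U \tensor V)$. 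Your direct computation of $\sbord{k}{\alpha}(U \tensor V)$, taken alone, only gives the last equality in (\ref{eq:grayboundary}); it says nothing about iterated boundaries with mixed signs, because the cover relation in $\bord{k+1}{\beta}(U \tensor V)$ is, a priori, not the restriction of the cover relation in $U \tensor V$. One way to repair your argument without circularity is to adapt your element-wise computation to covers \emph{inside} $\bord{k+1}{\beta}(U \tensor V)$ and show that the extra constraint makes no difference --- but that is precisely the content of the paper's inductive step.

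A minor remark on your closing paragraph: the strategy you flag as a pitfall (distributing $\bord{}{\alpha}$ over the union from Lemma \ref{lem:globtensor}) is not what the paper's proof does, so you are warning against a straw man; the paper's induction identifies $\sbord{}{\alpha}$ locally rather than distributing the boundary operator across the merger decomposition.
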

\begin{proof}
If $n = 0$ or $m = 0$, $U \tensor V$ is isomorphic to $V$ and $U$, respectively, and the equation becomes $\bord{k}{\alpha}(U \tensor V) = U \tensor \bord{k}{\alpha}V$ and $\bord{k}{\alpha}(U \tensor V) = \bord{k}{\alpha}U \tensor V$, respectively.

Suppose $n, m > 0$. The product $U \tensor V$ is pure and $(n+m)$\nbd dimensional, so its boundary $\bord{}{\alpha}(U \tensor V)$ is equal to $\clos\sbord{}{\alpha}(U \tensor V)$, and by construction
\begin{equation*}
	\sbord{}{\alpha}(U \tensor V) = (\sbord{}{\alpha}U \tensor V) + (U \tensor \sbord{}{(-)^n\alpha}V).
\end{equation*}
Ths proves (\ref{eq:grayboundary}) for $k = n+m-1$. 

Supposing we have proved the equation for $k+1$, $\bord{k+1}{\beta}(U \tensor V)$ is pure and $(k+1)$\nbd dimensional as a union of pure $(k+1)$\nbd dimensional sets. It then suffices to identify $\sbord{}{\alpha}(\bord{k+1}{\beta}(U \tensor V))$. Suppose $x \tensor y$ is $k$\nbd dimensional with $\dmn{x} = i$ and $\dmn{y} = k-i$. If $x \tensor y \in \sbord{}{\alpha}(\bord{k+1}{\beta}(U \tensor V))$, and $x'$ covers $x$, then $x' \tensor y$ covers $x \tensor y$ with orientation $\alpha$, so $x'$ covers $x$ with orientation $\alpha$. Therefore $x \in \sbord{i}{\alpha}U$. Similarly, if $y'$ covers $y$, then $x \tensor y'$ covers $x \tensor y$ with orientation $\alpha$, so $y'$ covers $y$ with orientation $(-)^i\alpha$, and $y \in \sbord{k-i}{(-)^i\alpha}V$. 

This proves one inclusion, and the converse is proved similarly. The proof does not change if we assume $x \tensor y \in \sbord{k}{\alpha}(U \tensor V)$. 
\end{proof}

\begin{lem} \label{lem:globtensorapp}
Let $U$ be a constructible $n$\nbd molecule and $V$ a constructible $m$\nbd molecule. Then:
\begin{enumerate}[label=(\alph*)]
	\item $U \tensor V$ is a constructible $(n+m)$\nbd molecule;
	\item if $n,m >0 $, $\bord{}{\alpha}(U \tensor V)$ splits as $(\bord{}{\alpha}U \tensor V) \cup (U \tensor \bord{}{(-)^n\alpha}V)$;
	\item if $U' \sqsubseteq U$ and $V' \sqsubseteq V$, then $U' \tensor V' \sqsubseteq U \tensor V$.
\end{enumerate}
\end{lem}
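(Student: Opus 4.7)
The plan is to prove (a), (b), and (c) simultaneously by a nested induction: the outer variable is $\dmn{U}+\dmn{V}$, and the inner variables are the numbers of maximal elements of $U$ and of $V$, in lexicographic order. The base cases $n=0$ or $m=0$ are handled directly, since $U\tensor V$ is isomorphic to $V$ or $U$ respectively, and (b) is vacuous. Throughout, Lemma A.1 is the workhorse: it turns every question about iterated boundaries into a statement about a union of tensor products of lower-dimensional molecules, which the inductive hypothesis can address.

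For the atomic case (both $U$ and $V$ atoms, $n,m > 0$), the product $U\tensor V$ has a greatest element $x_U\tensor x_V$, so by Proposition 3.9 it suffices to show that $\bord{}{\alpha}(U\tensor V)$ is a constructible $(n+m-1)$\nbd molecule for both $\alpha$, and that globularity holds. Globularity is forced by the explicit description in Lemma A.1. For the former, I would write $A:=\bord{}{\alpha}U\tensor V$ and $B:=U\tensor\bord{}{(-)^n\alpha}V$, both constructible by inductive hypothesis (a), then verify that $A\cup B$ is a constructible splitting in the sense of Definition 2.11, identifying $U_1,U_2$ with $A,B$ (or the reverse, depending on the sign of $(-)^n\alpha$, so that the orientations at the shared face match). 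The intersection $A\cap B = \bord{}{\alpha}U\tensor\bord{}{(-)^n\alpha}V$ is constructible by induction, and equals the expected merger interface by Lemma A.1 applied to $A$ and $B$ in dimension $n+m-2$.

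For the non-atomic case, suppose $U$ splits as $U_1\cup U_2$ (the case of $V$ splitting is symmetric, noting that the sign on the second factor reverses the roles of $U_1,U_2$). Then $U\tensor V=(U_1\tensor V)\cup(U_2\tensor V)$, each piece is a constructible $(n+m)$\nbd molecule with fewer maximal elements by inductive hypothesis (a), and their intersection is $(U_1\cap U_2)\tensor V$, also constructible. The submolecule conditions $\bord{}{-}(U_1\tensor V)\sqsubseteq\bord{}{-}(U\tensor V)$, etc., follow by combining the splitting $U_1\cap U_2\sqsubseteq\bord{}{+}U_1$ with inductive hypothesis (c) applied to the pair $(U_1\cap U_2,V)\sqsubseteq(\bord{}{+}U_1,V)$, and likewise for the other three conditions. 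Clause (b) in this case is deduced by first applying (b) for atoms to obtain the splitting of $\bord{}{\alpha}(U_1\tensor V)$ and $\bord{}{\alpha}(U_2\tensor V)$, then gluing and simplifying using Lemma~A.1. Clause (c) is maintained: if $U'\sqsubseteq U$ is witnessed by some merger tree, we may assume inductively that $U'\sqsubseteq U_i$ for some $i$, and then $U'\tensor V'\sqsubseteq U_i\tensor V'\sqsubseteq U_i\tensor V\sqsubseteq U\tensor V$ using the split just established and a further application of (c) in lower dimensions.

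The main obstacle, as forewarned in the body of the paper, is the sheer bookkeeping in the atomic case: one must check four submolecule conditions for each of the two splittings $\bord{}{-}(U\tensor V)$ and $\bord{}{+}(U\tensor V)$, and each condition is itself an inductive statement of type (c) for boundaries of tensor products, whose own proof rehearses the same pattern one dimension lower. The cleanest way to control this is to prove, as an auxiliary invariant of the induction, that the characterisation of $\bord{k}{\alpha}(U\tensor V)$ from Lemma A.1 is compatible with the submolecule order in the sense that the canonical inclusions $\bord{i}{\alpha}U\tensor\bord{k-i}{(-)^i\alpha}V\hookrightarrow\bord{k}{\alpha}(U\tensor V)$ are submolecule inclusions; once this is in hand, the four conditions reduce to monotonicity of $\sqsubseteq$ under boundary operations and under the tensor product, the latter being hypothesis (c) itself.
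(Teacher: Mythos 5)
Your overall plan --- nested induction on $\dmn{U}+\dmn{V}$ and on numbers of maximal elements, driven by the formula of Lemma A.1, with the substance concentrated in checking submolecule conditions --- is the same as the paper's, and your treatment of the atomic case is essentially what the paper does via its first sub-lemma (which proves, by a downward induction on $k$, that the iterated boundaries are constructible molecules and that $\bord{k}{\alpha}(\bord{}{\alpha}U\tensor V)$ and $\bord{k}{\alpha}(U\tensor\bord{}{(-)^n\alpha}V)$ are constructible submolecules). But there is a genuine gap in your non-atomic case, namely the verification that $\bord{}{-}(U_1\tensor V)\sqsubseteq\bord{}{-}(U\tensor V)$ and $\bord{}{+}(U_2\tensor V)\sqsubseteq\bord{}{+}(U\tensor V)$. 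You claim these ``follow by combining the splitting $U_1\cap U_2\sqsubseteq\bord{}{+}U_1$ with inductive hypothesis (c),'' but (c) gives you $X'\tensor Y'\sqsubseteq X\tensor Y$ for single tensor factors, whereas $\bord{}{-}(U_1\tensor V)=(\bord{}{-}U_1\tensor V)\cup(U_1\tensor\bord{}{(-)^{n-1}}V)$ is a \emph{union}, not a tensor product. Knowing that each piece of this union sits $\sqsubseteq$-below the corresponding piece of $\bord{}{-}(U\tensor V)=(\bord{}{-}U\tensor V)\cup(U\tensor\bord{}{(-)^{n-1}}V)$ does not produce a merger tree for the big union with the small union as a node: $\sqsubseteq$ is defined by decomposition trees, not componentwise. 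And the natural shortcut --- first establish $U_1\tensor V\sqsubseteq U\tensor V$, then use that $\sqsubseteq$ implies $\bord{}{-}$ of one is $\sqsubseteq$ $\bord{}{-}$ of the other --- is circular, since $\bord{}{-}(U_1\tensor V)\sqsubseteq\bord{}{-}(U\tensor V)$ is one of the four conditions you need \emph{before} you may conclude $U_1\tensor V\sqsubseteq U\tensor V$.

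The paper closes this gap with a second, dedicated sub-lemma. It introduces, for each constructible molecule $W$ with $\bord{}{-}U_1\sqsubseteq W\sqsubseteq\bord{}{-}U$, the set $\tilde{W}:=(W\tensor V)\cup(U_1\tensor\bord{}{(-)^{n-1}}V)$, and proves by an independent induction along a merger tree from $\bord{}{-}U_1$ up to $\bord{}{-}U$ that each $\tilde{W}$ is a constructible molecule with $\bord{}{-}(U_1\tensor V)\sqsubseteq\tilde{W}$; this inductive step again requires checking boundary conditions of the form $\bord{k}{\alpha}\tilde{W}_1\sqsubseteq\bord{k}{\alpha}\tilde{W}$ via auxiliary Sub-Lemma-A.3-style inductions, this time using the substitution $W_1[\bord{}{+}U_1/\bord{}{-}U_1]$ to relate input and output boundaries. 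At $W=\bord{}{-}U$ this gives a middle term in a chain $\bord{}{-}(U_1\tensor V)\sqsubseteq\tilde{U}_1\sqsubseteq\bord{}{-}(U\tensor V)$, with the last inclusion proved by yet another small induction. Your proposed auxiliary invariant (that the individual terms $\bord{i}{\alpha}U\tensor\bord{k-i}{(-)^i\alpha}V$ are submolecules of $\bord{k}{\alpha}(U\tensor V)$) is aimed at the atomic splitting and does not deliver this; the inner induction over the interval $[\bord{}{-}U_1,\bord{}{-}U]$ in the $\sqsubseteq$-order, together with the substitution construction, is the missing ingredient.
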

\begin{proof}
We proceed by double induction on the dimension and number of maximal elements of $U$ and $V$. If $U$ or $V$ is 0-dimensional, then $U \tensor V$ is isomorphic to $V$ or $U$, respectively, and there is nothing to prove. 

Suppose $n, m > 0$; then $U \tensor V$ is pure and $(n+m)$\nbd dimensional. We first need to show that $\bord{}{\alpha}(U \tensor V)$ is a constructible molecule, splitting as in $(b)$. This will be a consequence of the following, more general statement.
\begin{sublem} \label{globtensor_sub1}
For all $k < n+m$, $\underbrace{\bord{}{\alpha}\ldots\bord{}{\alpha}}_{n+m-k}(U \tensor V)$ is a constructible $k$\nbd molecule, and it has $\bord{k}{\alpha}(\bord{}{\alpha}U \tensor V)$ and $\bord{k}{\alpha}(U \tensor \bord{}{(-)^n\alpha}V)$ as constructible submolecules.
\end{sublem}
\begin{proof}[Proof of the Sub-Lemma]
If $k < n$, only non-trivial boundaries of $U$ appear in (\ref{eq:grayboundary}), which is therefore equal to 
\begin{equation*}
	\underbrace{\bord{}{\alpha}\ldots\bord{}{\alpha}}_{(n+m-1)-k}(\bord{}{\alpha}U \tensor V) = \bord{k}{\alpha}(\bord{}{\alpha}U \tensor V),
\end{equation*} 
a constructible $k$\nbd molecule by the inductive hypothesis that $\bord{}{\alpha}U \tensor V$ is a constructible molecule, and trivially a constructible submolecule of itself. Similarly if $k < m$ only non-trivial boundaries of $V$ appear, and by the inductive hypothesis $\bord{k}{\alpha}(U \tensor \bord{}{(-)^n\alpha}V)$ is a constructible $k$\nbd molecule and a constructible submolecule of itself.

Suppose $k \geq n$. Then (\ref{eq:grayboundary}) splits as
\begin{equation} \label{eq:graykdecomp}
(U \tensor \bord{k-n}{(-)^n\alpha}V) \cup \bigcup_{i=(k-m)\lor 0}^{n-1} \bord{i}{\alpha}U \tensor \bord{k-i}{(-)^i\alpha}V = (U \tensor \bord{k-n}{(-)^n\alpha}V) \cup \bord{k}{\alpha}(\bord{}{\alpha}U \tensor V).
\end{equation}
Both sets in the right-hand side are constructible $k$\nbd molecules by the inductive hypothesis.
Moreover,
\begin{align*}
	(U \tensor \bord{k-n}{(-)^n\alpha}V) \cap \bord{k}{\alpha}(\bord{}{\alpha}U \tensor V) & = \bord{}{\alpha}U \tensor \bord{k-n}{(-)^n\alpha}V \\
	& = \bord{}{\alpha}(U \tensor \bord{k-n}{(-)^n\alpha}V) \cap \bord{k-1}{-\alpha}(\bord{}{\alpha}U \tensor V).
\end{align*}
By the inductive hypothesis on the main statement, $\bord{}{\alpha}U \tensor \bord{k-n}{(-)^n\alpha}V$ is a constructible submolecule of $\bord{}{\alpha}(U \tensor \bord{k-n}{(-)^n\alpha}V)$ and of $\bord{k-1}{-\alpha}(\bord{}{\alpha}U \tensor V)$, while by the inductive hypothesis for the sub-lemma
\begin{align*}
	\bord{k-1}{\alpha}(\bord{}{\alpha}U \tensor V) & \sqsubseteq \underbrace{\bord{}{\alpha}\ldots\bord{}{\alpha}}_{n+m-(k-1)}(U \tensor V), \\
	\bord{}{-\alpha}(U \tensor \bord{k-n}{(-)^n\alpha}V) & \sqsubseteq \underbrace{\bord{}{-\alpha}\ldots\bord{}{-\alpha}}_{n+m-(k-1)}(U \tensor V).
\end{align*}
This proves that $\underbrace{\bord{}{\alpha}\ldots\bord{}{\alpha}}_{n+m-k}(U \tensor V)$ is a constructible molecule, splitting as in (\ref{eq:graykdecomp}), with $\bord{k}{\alpha}(\bord{}{\alpha}U \tensor V)$ as a constructible submolecule. Similarly, when $k \geq m$, from the decomposition of (\ref{eq:grayboundary})
\begin{equation*}
(\bord{k-m}{\alpha}U \tensor V) \cup \bigcup_{i=k-(m-1)}^{n\land k} \bord{i}{\alpha}U \tensor \bord{k-i}{(-)^i\alpha}V = (\bord{k-m}{\alpha}U \tensor V) \cup \bord{k}{\alpha}(U \tensor \bord{}{(-)^n\alpha}V)
\end{equation*}
we obtain that $\bord{k}{\alpha}(U \tensor \bord{}{(-)^n\alpha}V)$ is a constructible submolecule.
\end{proof}

If $U$ and $V$ are atoms, there is nothing else to prove. Suppose $U$ is non-atomic and splits as $U_1 \cup U_2$, so $U \tensor V = (U_1 \tensor V) \cup (U_2 \tensor V)$. By the inductive hypothesis, $(U_1 \tensor V)$ and $(U_2 \tensor V)$ are both constructible $(n+m)$\nbd molecules, their intersection $(U_1 \cap U_2) \tensor V$ is a constructible $(n+m-1)$\nbd molecule, and
\begin{align*}
	(U_1 \tensor V) \cap (U_2 \tensor V) & \sqsubseteq \bord{}{+}U_1 \tensor V \sqsubseteq \bord{}{+}(U_1 \tensor V), \\
	(U_1 \tensor V) \cap (U_2 \tensor V) & \sqsubseteq \bord{}{-}U_2 \tensor V \sqsubseteq \bord{}{-}(U_2 \tensor V).
\end{align*}
It only remains to show the following.

\begin{sublem}
$\bord{}{-}(U_1 \tensor V) \sqsubseteq \bord{}{-}(U \tensor V)$ and $\bord{}{+}(U_2 \tensor V) \sqsubseteq \bord{}{+}(U \tensor V)$.
\end{sublem}
\begin{proof}[Proof of the Sub-Lemma]
We can assume the statement is true for $U \tensor V'$ where $V'$ has a lower dimension than $V$. Split $\bord{}{-}(U \tensor V)$ as $\tilde{U}_1 \cup \tilde{U}_2$, where 
\begin{equation*}
	\tilde{U}_1 := (\bord{}{-}U \tensor V) \cup (U_1 \tensor \bord{}{(-)^{n-1}}V), \quad \quad \quad \tilde{U}_2 := U_2 \tensor \bord{}{(-)^{n-1}}V. 
\end{equation*}
We want to prove that this is a decomposition into constructible submolecules, and that $\bord{}{-}(U_1 \tensor V) \sqsubseteq \tilde{U}_1$. First of all,
\begin{equation*}
	\tilde{U}_1 \cap \tilde{U}_2 = \bord{}{-}U_2 \tensor \bord{}{(-)^{n-1}}V
\end{equation*}
is a constructible molecule, and a constructible submolecule of $\bord{}{-}\tilde{U}_2$. By the inductive hypothesis for the sub-lemma, we also have 
\begin{equation*}
	\bord{}{+}\tilde{U}_2 = \bord{}{+}(U_2 \tensor \bord{}{(-)^{n-1}}V) \sqsubseteq \bord{}{+}(U \tensor \bord{}{(-)^{n-1}}V) \sqsubseteq \bord{n+m-2}{+}(U \tensor V).
\end{equation*}
We still have to show that $\bord{}{-}\tilde{U}_1 \sqsubseteq \bord{n+m-2}{-}(U \tensor V)$, that $\tilde{U}_1 \cap \tilde{U}_2 \sqsubseteq \bord{}{+}\tilde{U}_1$, and that $\tilde{U}_1$ is a constructible molecule with $\bord{}{-}(U_1 \tensor V) \sqsubseteq \tilde{U}_1$.

First, let us show that the boundaries $\bord{}{\alpha}\tilde{U}_1$ are constructible molecules. For all $k < n$, we have
\begin{align*}
	\underbrace{\bord{}{-}\ldots\bord{}{-}}_{(n+m-1)-k}\tilde{U}_1 & = \bord{k}{-}(\bord{}{-}U \tensor V), \\
	\underbrace{\bord{}{+}\ldots\bord{}{+}}_{(n+m-1)-k}\tilde{U}_1 & = \bord{k}{+}((\bord{}{+}U_1 \cup \bord{}{-}U_2) \tensor V),
\end{align*}
which are constructible molecules by the inductive hypothesis and by Lemma \ref{lem:boundarymove}. For $k \geq n$,
\begin{align*}
	\underbrace{\bord{}{-}\ldots\bord{}{-}}_{(n+m-1)-k}\tilde{U}_1 & = \bord{k}{-}(\bord{}{-}U \tensor V) \cup (U_1 \tensor \bord{k-n}{(-)^{n-1}}V), \\
	\underbrace{\bord{}{+}\ldots\bord{}{+}}_{(n+m-1)-k}\tilde{U}_1 & = \bord{k}{+}((\bord{}{+}U_1 \cup \bord{}{-}U_2) \tensor V) \cup (U_1 \tensor \bord{k-n}{(-)^{n}}V),
\end{align*} 
and by an inductive step similar to the proof of Sub-Lemma \ref{globtensor_sub1}, these are decompositions into constructible submolecules. Moreover,
\begin{equation*}
	\tilde{U}_1 \cap \tilde{U}_2 \sqsubseteq (\bord{}{+}U_1 \cup \bord{}{-}U_2) \tensor \bord{}{(-)^{n-1}}V \sqsubseteq \bord{}{+}((\bord{}{+}U_1 \cup \bord{}{-}U_2) \tensor V) \sqsubseteq \bord{}{+}\tilde{U}_1.
\end{equation*}

Next, let us show that $\tilde{U}_1$ is a constructible molecule with $\bord{}{-}(U_1 \tensor V) \sqsubseteq \tilde{U}_1$. We will show that, for all $\bord{}{-}U_1 \sqsubseteq W \sqsubseteq \bord{}{-}U$,
\begin{equation*}
	\tilde{W} := (W \tensor V) \cup (U_1 \tensor \bord{}{(-)^{n-1}}V)
\end{equation*}
is a constructible molecule, and $\bord{}{-}(U_1 \tensor V) \sqsubseteq \tilde{W}$, by induction on increasing $W$. For $W = \bord{}{-}U_1$, we have $\tilde{W} = \bord{}{-}(U_1 \tensor V)$, and we are done.

Suppose that $W \sqsubseteq \bord{}{-}U$ splits as $W_1 \cup W_2$, with $\bord{}{-}U_1 \sqsubseteq W_i$, and $\bord{}{-}(U_1 \tensor V) \sqsubseteq \tilde{W_i}$; without loss of generality, let $i = 1$. Then, $\tilde{W} = \tilde{W}_1 \cup \tilde{W}_2$, for
\begin{equation*}
	\tilde{W}_1 := (W_1 \tensor V) \cup (U_1 \tensor \bord{}{(-)^{n-1}}V), \quad \quad \quad \tilde{W}_2 := W_2 \tensor V;
\end{equation*}
both of these are constructible molecules, and so is their intersection 
\begin{equation*}
	\tilde{W}_1 \cap \tilde{W}_2 = (W_1 \cap W_2) \tensor V.
\end{equation*}
The relations $\tilde{W}_1 \cap \tilde{W}_2 \sqsubseteq \bord{}{-}\tilde{W}_2$ and $\bord{}{+}\tilde{W}_2 \sqsubseteq \bord{}{+}\tilde{W}$ are both immediate. For $k < n$,
\begin{equation*}
	\bord{k}{+}\tilde{W}_1 = \bord{k}{+}(W_1[\bord{}{+}U_1/\bord{}{-}U_1] \tensor V),
\end{equation*}
while for $k \geq n$
\begin{equation*}
	\bord{k}{+}\tilde{W}_1 = \bord{k}{+}(W_1[\bord{}{+}U_1/\bord{}{-}U_1] \tensor V) \cup (U_1 \tensor \bord{k-n}{(-)^{n}}V),
\end{equation*}
and we show by another induction in the style of Sub-Lemma \ref{globtensor_sub1} that these are decompositions into constructible submolecules. Then
\begin{equation*}
	\tilde{W}_1 \cap \tilde{W}_2 \sqsubseteq \bord{}{+}W_1 \tensor V = \bord{}{+}W_1[\bord{}{+}U_1/\bord{}{-}U_1] \tensor V \sqsubseteq \bord{}{+}\tilde{W}_1.
\end{equation*}
Moreover, for $k < n$ and $k \geq n$, respectively,
\begin{equation*}
	\bord{k}{-}\tilde{W}_1 = \bord{k}{-}(W_1 \tensor V) \quad \text{ or } \quad \bord{k}{-}(W_1 \tensor V) \cup (U_1 \tensor \bord{k-n}{(-)^{n-1}}V)
\end{equation*}
and another inductive argument leads us to $\bord{}{-}\tilde{W_1} \sqsubseteq \bord{}{-}\tilde{W}$. 

This proves that $\tilde{W} = \tilde{W}_1 \cup \tilde{W}_2$ is a decomposition into constructible submolecules, and $\bord{}{-}U_1 \sqsubseteq W$. Because chains $\bord{}{-}U_1 \sqsubset \ldots \sqsubset \bord{}{-}U$ are finite, $\tilde{U}_1$ is a constructible molecule and $\bord{}{-}(U_1 \tensor V) \sqsubseteq \tilde{U}_1$.

The only thing left to check is that $\bord{}{-}\tilde{U}_1 \sqsubseteq \bord{n+m-2}{-}(U \tensor V)$: this is immediate if $m = 1$; otherwise, we have reduced the problem of showing
\begin{equation*}
	(\bord{}{-}U \tensor V) \cup (U_1 \tensor \bord{}{(-)^{n-1}}V) \sqsubseteq (\bord{}{-}U \tensor V) \cup (U \tensor \bord{}{(-)^{n-1}}V)
\end{equation*}
to showing that
\begin{equation*}
	\bord{}{-}(\bord{}{-}U \tensor V) \cup (U_1 \tensor \bord{m-2}{(-)^{n-1}}V) \sqsubseteq \bord{}{-}(\bord{}{-}U \tensor V) \cup (U \tensor \bord{m-2}{(-)^{n-1}}V),
\end{equation*}
and we can turn this into another inductive argument in the style of Sub-Lemma \ref{globtensor_sub1}. 

We have proved that $\bord{}{-}(U_1 \tensor V) \sqsubseteq \tilde{U}_1 \sqsubseteq \bord{}{-}(U \tensor V)$. A dual argument proves that $\bord{}{+}(U_2 \tensor V) \sqsubseteq \bord{}{+}(U \tensor V)$.
\end{proof}

This completes the proof that $U \tensor V = (U_1 \tensor V) \cup (U_2 \tensor V)$ is a decomposition into constructible submolecules. By a dual argument, we prove that if $V$ splits as $V_1 \cup V_2$, then $U \tensor V$ splits as $(U \tensor V_1) \cup (U \tensor V_2)$, and we are done.
\end{proof}

\section{Representable constructible polygraphs} \label{sec:represent}

The theory developed in this work may serve as a basis for an approach to weak higher categories, modelled by constructible polygraphs that satisfy a \emph{representability} condition, similar to opetopic higher categories. In the light of \cite{henry2018regular}, this could be a good framework to study semi-strictification in the sense of C.\ Simpson's conjecture, beyond the ``groupoidal'' case (all cells are weakly invertible). In \cite{hadzihasanovic2018weak}, we developed this programme in the special case of dimension 2, proving an equivalence between bicategories and representable merge-bicategories --- a truncation of representable constructible polygraphs to dimension 2 --- which we then exploited in a semi-strictification argument.

For the interested reader, we give some basic definitions here, without much comment. In what follows, we identify a constructible directed complex with its embedding into $\cpol$. 

\begin{dfn} 
Let $X$ be a constructible polygraph and $U$ a constructible $n$\nbd molecule. A \emph{constructible $n$\nbd diagram} $x$ of shape $U$ in $X$ is a map $x: U \to X$. If $U$ is an atom, we call $x$ an \emph{$n$\nbd cell} of $X$.
\end{dfn}

\begin{dfn}
For $n > 0$, a \emph{ternary} $(n+1)$\nbd atom is a constructible $(n+1)$\nbd atom with three $n$\nbd dimensional elements. Necessarily, one boundary of $W$ splits into two atoms $W_+$ and $W_-$, with $W_+ \cap W_- \subseteq \bord{}{+}W_+ \cap \bord{}{-}W_-$, and the other boundary is a single atom $W_0$. 

A \emph{horn} of $W$ is any of the following subsets:
\begin{equation*}
	\Lambda^W_0 := W_+ \cup W_-, \quad \quad \Lambda^W_- := W_+ \cup W_0, \quad \quad \Lambda^W_+ := W_- \cup W_0.
\end{equation*}
The horn $\Lambda^W_\zeta \incl W$ is called a \emph{composition horn} if $\zeta = 0$, a \emph{left division horn} if $\zeta = -$, and a \emph{right division horn} if $\zeta = +$.
\end{dfn}

\begin{dfn}
Let $X$ be a constructible polygraph. A \emph{horn of $W$ in $X$} is a pair of a horn $\Lambda \incl W$ and a morphism $\lambda: \Lambda \to X$. A \emph{filler} for the horn is an $(n+1)$\nbd cell $h: W \to X$ such that
\begin{equation*}
\begin{tikzpicture}[baseline={([yshift=-.5ex]current bounding box.center)}]
	\node[scale=1.25] (0) at (0,1.5) {$\Lambda$};
	\node[scale=1.25] (1) at (2,1.5) {$X$};
	\node[scale=1.25] (2) at (0,0) {$W$};
	\draw[1c] (0) to node[auto] {$\lambda$} (1);
	\draw[1cinc] (0) to (2);
	\draw[1c] (2) to node[auto,swap] {$h$} (1);
\end{tikzpicture}
\end{equation*}
commutes. 

Let $x: U \to X$ be an $n$\nbd cell of $X$ and $V \sqsubseteq \bord{}{\alpha}U$ a constructible submolecule of its boundary. A \emph{horn for $x$ at $V$} is a pair of an inclusion $\imath: U \incl W$ such that $\imath(U) = W_\alpha$ and $\imath(V) = W_+ \cap W_-$, and a horn $\lambda: \Lambda^W_\zeta \to X$ with $\zeta \in \{-\alpha,0\}$ such that
\begin{equation*}
\begin{tikzpicture}[baseline={([yshift=-.5ex]current bounding box.center)}]
	\node[scale=1.25] (0) at (0,1.5) {$U$};
	\node[scale=1.25] (1) at (2,0) {$X$};
	\node[scale=1.25] (2) at (0,0) {$\Lambda^W_\zeta$};
	\draw[1c] (0) to node[auto] {$x$} (1);
	\draw[1cinc] (0) to (2);
	\draw[1c] (2) to node[auto,swap] {$\lambda$} (1);
\end{tikzpicture}
\end{equation*}
commutes.
\end{dfn}

\begin{remark} \label{rmk:dual_horn}
If $W$ is a ternary $(n+1)$\nbd atom, then $\oppn{n+1}{W}$ is still ternary, and every horn of $W$ is isomorphic to a horn of $\oppn{n+1}{W}$. Thus, for each horn for $x: U \to X$ at $V$, given by $U \incl \Lambda \incl W$ and $\lambda: \Lambda \to X$, there is a \emph{dual} horn, where $\Lambda$ is seen as a horn of $\oppn{n+1}{W}$.
\end{remark}

\begin{dfn} \label{dfn:universality}
Let $X$ be a constructible polygraph. Coinductively, we define families $\divis{n}{X}$ of pairs $(x \in X(U), V \sqsubseteq \bord{}{\alpha}U)$ of an $n$\nbd cell of shape $U$ and a constructible submolecule of $\bord{}{\alpha}U$, to be read ``$x$ is \emph{universal} at $V \sqsubseteq \bord{}{\alpha}U$''. If $\divis{n}{P}$ is defined, we also define a family $\equi{n}{X}$ of $n$\nbd cells, the \emph{$n$-equivalences} of $X$, by
\begin{equation*}
	\equi{n}{X} := \{x \in X(U) \,|\, \dmn{U} = n, (x, \bord{}{+}U) \text{ and } (x, \bord{}{-}U) \in \divis{n}{X}\}.
\end{equation*} 

We let $(x, V \sqsubseteq \bord{}{\alpha}U) \in \divis{n}{X}$ if and only if all horns $\Lambda \incl W, \lambda: \Lambda \to X$ for $x$ at $V$ have a filler $h: W \to X$ which
\begin{enumerate}
	\item is an $(n+1)$\nbd equivalence, and
	\item is universal at $W_{-\alpha}$.
\end{enumerate}
\end{dfn}

\begin{remark}
Similarly to simplicial and opetopic approaches to higher categories, the idea is that an equivalence in $X$ of shape $W$, a ternary atom, exhibits the image of $W_0$ as a weak composite of the images of $W_+$ and $W_-$. 

The notion of a cell of shape $U$ universal at $\bord{}{\alpha}U$ is analogous to a universal cell in the opetopic or multitopic approach \cite{baez1998higher}. One novelty of our approach is to consider, at the same time, universality at a part of the boundary, which captures universal properties such as those of Kan extensions and of Kan lifts. 

We think that our second universality requirement on horn fillers may subsume, with a purely existential statement, what was achieved through a universal quantification on higher-dimensional universal cells in the opetopic approach. This allows us to obtain a proper coinductive definition, and tackle genuinely infinite-dimensional higher categories, whereas the opetopic approach had to resort to a truncation to dimension $n$.
\end{remark}

\begin{exm} 
A 1-cell $e$ of a constructible polygraph $X$, which is of shape $\vec{I}$, can only be universal at $\bord{}{+}\vec{I}$ or at $\bord{}{-}\vec{I}$. If it is universal at $\bord{}{+}\vec{I}$, the relevant horns are those of the forms
\begin{equation*} 
\begin{tikzpicture}[baseline={([yshift=-.5ex]current bounding box.center)}]
\begin{scope}
	\node[0c] (0) at (-1, 0) {};
	\node[0c] (1) at (1,0) {};
	\node[0c] (o) at (0,.625) {};
	\draw[1c,out=-30,in=-150] (0) to node[auto,swap] {$x$} (1);
	\draw[1c,out=45,in=-165] (0) to node[auto] {$e$} (o);
	\node[scale=1.25] at (1.375,-.125) {,};
\end{scope}
\begin{scope}[shift={(4,0)}]
	\node[0c] (0) at (-1, 0) {};
	\node[0c] (1) at (1,0) {};
	\node[0c] (o) at (0,.625) {};
	\draw[1c,out=-15,in=135] (o) to node[auto] {$y$} (1);
	\draw[1c,out=45,in=-165] (0) to node[auto] {$e$} (o);
	\node[scale=1.25] at (1.375,-.125) {,};
\end{scope}
\end{tikzpicture}
\end{equation*}
which must have fillers
\begin{equation} \label{eq:1horn}
\begin{tikzpicture}[baseline={([yshift=-.5ex]current bounding box.center)}]
\begin{scope}
	\node[0c] (0) at (-1, 0) {};
	\node[0c] (1) at (1,0) {};
	\node[0c] (o) at (0,.625) {};
	\draw[1c,out=-30,in=-150] (0) to node[auto,swap] {$x$} (1);
	\draw[1c,out=-15,in=135] (o) to (1);
	\draw[1c,out=45,in=-165] (0) to node[auto] {$e$} (o);
	\draw[2c] (0,-.3) to (o);
	\draw[follow] (1.125,.625) to (.625,.375);
	\node[scale=1.25] at (1.375,-.125) {,};
\end{scope}
\begin{scope}[shift={(3,0)}]
	\node[0c] (0) at (-1, 0) {};
	\node[0c] (1) at (1,0) {};
	\node[0c] (o) at (0,.625) {};
	\draw[1c,out=-30,in=-150] (0) to (1);
	\draw[1c,out=-15,in=135] (o) to node[auto,pos=.3] {$y$} (1);
	\draw[1c,out=45,in=-165] (0) to node[auto] {$e$} (o);
	\draw[2c] (0,-.3) to (o);
	\draw[follow] (1.125,.625) to (.625,.375);
	\node[scale=1.25] at (1.375,-.125) {,};
\end{scope}
\begin{scope}[shift={(6,0)}]
	\node[0c] (0) at (-1, .25) {};
	\node[0c] (1) at (1,.25) {};
	\node[0c] (i) at (0,-.375) {};
	\draw[1c,out=30,in=150] (0) to node[auto] {$x$} (1);
	\draw[1c,out=15,in=-135] (i) to (1);
	\draw[1c,out=-45,in=165] (0) to node[auto,swap] {$e$} (i);
	\draw[2c] (i) to (0,.55);
	\draw[follow] (1.125,-.375) to (.625,-.125);
	\node[scale=1.25] at (1.375,-.125) {,};
\end{scope}
\begin{scope}[shift={(9,0)}]
	\node[0c] (0) at (-1, .25) {};
	\node[0c] (1) at (1,.25) {};
	\node[0c] (i) at (0,-.375) {};
	\draw[1c,out=30,in=150] (0) to (1);
	\draw[1c,out=15,in=-135] (i) to node[auto,swap,pos=0.3] {$y$} (1);
	\draw[1c,out=-45,in=165] (0) to node[auto,swap] {$e$} (i);
	\draw[2c] (i) to (0,.55);
	\draw[follow] (1.125,-.375) to (.625,-.125);
	\node[scale=1.25] at (1.375,-.125) {,};
\end{scope}
\end{tikzpicture}
\end{equation}
all of which are both equivalences, and universal at the constructible submolecule indicated by the grey arrow. 

In merge-bicategories, seen as ``2-truncated'' constructible polygraphs, $e$ is what we called (tensor and par) left universal in \cite{hadzihasanovic2018weak}. Intuitively, existence of fillers for the first horn mean that compatible 1-cells can be \emph{factorised} as $e$ followed by another 1-cell, while the existence of fillers for the second horn mean that compatible 1-cells can be \emph{composed} with $e$ on the left. The fact that the 2-cells in (\ref{eq:1horn}) have pairwise the same universal properties implies that by factorising then composing, or composing then factorising, we will obtain the same result, up to equivalence.

Dually, if $e$ is universal at $\bord{}{-}\vec{I}$, the relevant horns are
\begin{equation*} 
\begin{tikzpicture}[baseline={([yshift=-.5ex]current bounding box.center)}]
\begin{scope}
	\node[0c] (0) at (-1, 0) {};
	\node[0c] (1) at (1,0) {};
	\node[0c] (o) at (0,.625) {};
	\draw[1c,out=-30,in=-150] (0) to node[auto,swap] {$x'$} (1);
	\draw[1c,out=-15,in=135] (o) to node[auto] {$e$} (1);
	\node[scale=1.25] at (1.375,-.125) {,};
\end{scope}
\begin{scope}[shift={(4,0)}]
	\node[0c] (0) at (-1, 0) {};
	\node[0c] (1) at (1,0) {};
	\node[0c] (o) at (0,.625) {};
	\draw[1c,out=-15,in=135] (o) to node[auto] {$e$} (1);
	\draw[1c,out=45,in=-165] (0) to node[auto] {$y'$} (o);
	\node[scale=1.25] at (1.375,-.125) {,};
\end{scope}
\end{tikzpicture}
\end{equation*}
with fillers
\begin{equation} \label{eq:1hornR}
\begin{tikzpicture}[baseline={([yshift=-.5ex]current bounding box.center)}]
\begin{scope}
	\node[0c] (0) at (-1, 0) {};
	\node[0c] (1) at (1,0) {};
	\node[0c] (o) at (0,.625) {};
	\draw[1c,out=-30,in=-150] (0) to node[auto,swap] {$x'$} (1);
	\draw[1c,out=-15,in=135] (o) to node[auto] {$e$} (1);
	\draw[1c,out=45,in=-165] (0) to (o);
	\draw[2c] (0,-.3) to (o);
	\draw[follow] (-1.125,.625) to (-.625,.375);
	\node[scale=1.25] at (1.375,-.125) {,};
\end{scope}
\begin{scope}[shift={(3,0)}]
	\node[0c] (0) at (-1, 0) {};
	\node[0c] (1) at (1,0) {};
	\node[0c] (o) at (0,.625) {};
	\draw[1c,out=-30,in=-150] (0) to (1);
	\draw[1c,out=-15,in=135] (o) to node[auto] {$e$} (1);
	\draw[1c,out=45,in=-165] (0) to node[auto,pos=.7] {$y'$} (o);
	\draw[2c] (0,-.3) to (o);
	\draw[follow] (-1.125,.625) to (-.625,.375);
	\node[scale=1.25] at (1.375,-.125) {,};
\end{scope}
\begin{scope}[shift={(6,0)}]
	\node[0c] (0) at (-1, .25) {};
	\node[0c] (1) at (1,.25) {};
	\node[0c] (i) at (0,-.375) {};
	\draw[1c,out=30,in=150] (0) to node[auto] {$x'$} (1);
	\draw[1c,out=15,in=-135] (i) to node[auto,swap] {$e$} (1);
	\draw[1c,out=-45,in=165] (0) to (i);
	\draw[2c] (i) to (0,.55);
	\draw[follow] (-1.125,-.375) to (-.625,-.125);
	\node[scale=1.25] at (1.375,-.125) {,};
\end{scope}
\begin{scope}[shift={(9,0)}]
	\node[0c] (0) at (-1, .25) {};
	\node[0c] (1) at (1,.25) {};
	\node[0c] (i) at (0,-.375) {};
	\draw[1c,out=30,in=150] (0) to (1);
	\draw[1c,out=15,in=-135] (i) to node[auto,swap] {$e$} (1);
	\draw[1c,out=-45,in=165] (0) to node[auto,swap,pos=.7] {$y'$} (i);
	\draw[2c] (i) to (0,.55);
	\draw[follow] (-1.125,-.375) to (-.625,-.125);
	\node[scale=1.25] at (1.375,-.125) {;};
\end{scope}
\end{tikzpicture}
\end{equation}
in a merge-bicategory, $e$ is what we called (tensor and par) right universal. Finally, $e$ is an equivalence if both the fillers (\ref{eq:1horn}) and (\ref{eq:1hornR}) exist.

A 2-cell $t$ of $X$ has shape $U^{(n,m)}$ for some $n, m >0$, with $\bord{}{-}U^{(n,m)} = \#_0^n \vec{I}$ and $\bord{}{+}U^{(n,m)} = \#_0^m \vec{I}$. The submolecules $W_{[i_1,i_2]}$ of $\bord{}{-}U^{(n,m)}$ can be identified with sub-intervals $[i_1,i_2]$ of $[1,n]$, and the submolecules $W_{[j_1,j_2]}$ of $\bord{}{+}U^{(n,m)}$ with sub-intervals $[j_1,j_2]$ of $[1,m]$; in a merge-bicategory, universality of $t$ at $W_{[i_1,i_2]}$ or at $W_{[j_1,j_2]}$ is what we called universality at $\bord{[i_1,i_2]}{-}$ or at $\bord{[j_1,j_2]}{+}$ in \cite{hadzihasanovic2018weak}. 

To see how the universality of horn fillers can subsume a condition of uniqueness of factorisations, consider the first 2-cell in $(\ref{eq:1horn})$, which we relabel
\begin{equation} \label{eq:uniqueness1}
\begin{tikzpicture}[baseline={([yshift=-.5ex]current bounding box.center)}]
\begin{scope}
	\node[0c] (0) at (-1, 0) {};
	\node[0c] (1) at (1,0) {};
	\node[0c] (o) at (0,.625) {};
	\draw[1c,out=-30,in=-150] (0) to node[auto,swap] {$x$} (1);
	\draw[1c,out=-15,in=135] (o) to node[auto] {$e\backslash x$} (1);
	\draw[1c,out=45,in=-165] (0) to node[auto] {$e$} (o);
	\draw[2c] (0,-.3) to node[auto] {$t\;$} (o);
	\node[scale=1.25] at (1.375,-.125) {.};
\end{scope}
\end{tikzpicture}
\end{equation}
If there is another 2-cell
\begin{equation*} 
\begin{tikzpicture}[baseline={([yshift=-.5ex]current bounding box.center)}]
\begin{scope}
	\node[0c] (0) at (-1, 0) {};
	\node[0c] (1) at (1,0) {};
	\node[0c] (o) at (0,.625) {};
	\draw[1c,out=-30,in=-150] (0) to node[auto,swap] {$x$} (1);
	\draw[1c,out=-15,in=135] (o) to node[auto] {$z$} (1);
	\draw[1c,out=45,in=-165] (0) to node[auto] {$e$} (o);
	\draw[2c] (0,-.3) to node[auto] {$s\;$} (o);
	\node[scale=1.25] at (1.375,-.125) {,};
\end{scope}
\end{tikzpicture}
\end{equation*}
the two together form a horn in $X$, which by the universal property of $t$ has a 3-dimensional filler
\begin{equation} \label{eq:uniqueness3}
\begin{tikzpicture}[baseline={([yshift=-.5ex]current bounding box.center)}]
\begin{scope}
	\node[0c] (0) at (-1, 0) {};
	\node[0c] (1) at (1,0) {};
	\node[0c] (o) at (0,1.5) {};
	\draw[1c,out=-30,in=-150] (0) to node[auto,swap] {$x$} (1);
	\draw[1c,out=-90,in=165] (o) to node[auto,swap,pos=.6] {$e\backslash x\!\!\!\!$} (1);
	\draw[1c,out=-15,in=90] (o) to node[auto] {$z$} (1);
	\draw[1c,out=90,in=-165] (0) to node[auto] {$e$} (o);
	\draw[2c] (-.3,-.1) to node[auto] {$t\;$} (-.3,1.2);
	\draw[2c] (.5,.3) to (.5,1.2);
	\draw[3c1] (1.25,.625) to (2.75,.625);
	\draw[3c2] (1.25,.625) to (2.75,.625);
	\draw[3c3] (1.25,.625) to (2.75,.625);
	\draw[follow] (1.25,.8) to (.6,.5);
\end{scope}
\begin{scope}[shift={(4,0)}]
	\node[0c] (0) at (-1, 0) {};
	\node[0c] (1) at (1,0) {};
	\node[0c] (o) at (0,1.5) {};
	\draw[1c,out=-30,in=-150] (0) to node[auto,swap] {$x$} (1);
	\draw[1c,out=-15,in=90] (o) to node[auto] {$z$} (1);
	\draw[1c,out=90,in=-165] (0) to node[auto] {$e$} (o);
	\draw[2c] (0,-.1) to node[auto] {$s\;$} (0,1.2);
	\node[scale=1.25] at (1.375,-.125) {,};
\end{scope}
\end{tikzpicture}
\end{equation}
which is an equivalence and universal at the indicated submolecule. If $X$ is ``1-truncated'', for example if it is the nerve of a 1-category $C$ --- so all cells of dimension 2 or higher correspond to identities in $C$ --- then the existence of the filler (\ref{eq:uniqueness1}) implies that $x$ factors in $C$ as the composite $e \cp{0} e\backslash x$, while the existence of the filler (\ref{eq:uniqueness3}) implies that, if $x$ factors as $e \cp{0} z$ for some other $z$, then $e\backslash x = z$. Since $x$ is arbitrary, this is a ``unique factorisation'' property exhibiting $e$ as an isomorphism in $C$.

In a similar fashion, if $X$ is ``2-truncated'', for example if it is the nerve of a 2-category, the universality of (\ref{eq:uniqueness3}) where indicated by the grey arrow implies uniqueness of the factorisation of $s$ through $t$. The existence of the horn filler (\ref{eq:uniqueness3}) is equivalent to $t$ exhibiting $e \backslash x$ as a left Kan extension of $x$ along $e$; by considering more general horns, we can see in fact that $e \backslash x$ is an \emph{absolute} Kan extension.

In general, universality of 2-cells at different submolecules of their boundary captures the universal properties of natural isomorphisms, absolute Kan extensions and absolute Kan lifts in 2-categories.
\end{exm}

\begin{dfn} 
A map $f: X \to Y$ of constructible polygraphs is \emph{strong} if it sends $n$\nbd equivalences of $X$ to $n$\nbd equivalences of $Y$.
\end{dfn}

\begin{dfn} \label{dfn:represent}
A constructible polygraph $X$ is \emph{representable} if, for all constructible $n$\nbd diagrams $x$ of $X$, there exist an $n$\nbd cell $\underline{x}$ and an $(n+1)$\nbd equivalence $c(x)$ with $\bord{}{\alpha}c(x) = x$ and $\bord{}{-\alpha}c(x) = \underline{x}$.

Representable constructible polygraphs and strong maps form a category $\rcpol$.
\end{dfn}

\begin{remark}
If $x$ is of shape $U$, then $c(x)$ is of shape $U \celto \tilde{U}$ or $\tilde{U} \celto U$, as in Construction \ref{cons:ou}. As in the 2-dimensional case, it should suffice to require that $\underline{x}$ and $c(x)$ exist when $x$ is a cell, or when it is a ``binary'' $n$\nbd diagram containing exactly two $n$\nbd cells.
\end{remark}

There is a functor $-_\cat{O}: \cpol \to \globset$, obtained by restricting presheaves on $\globe$ to its full subcategory $\cat{O}$. We expect the following to be true, for an adequate algebraic definition of weak higher category with an underlying $\omega$\nbd graph (possibly, a variant of Batanin's \cite{batanin1998monoidal} or Leinster's \cite{leinster2004higher}).
\begin{conj}
Let $f: X \to Y$ be a strong map of representable constructible polygraphs. Then $X_\cat{O}$ and $Y_\cat{O}$ admit the structure of an algebraic weak higher category, and $f_\cat{O}: X_\cat{O} \to Y_\cat{O}$ of a functor of weak higher categories.
\end{conj}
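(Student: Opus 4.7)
The plan is to exhibit, from any representable constructible polygraph $X$, the algebraic data of a weak higher category on the underlying $\omega$\nbd graph $X_\cat{O}$, with the universality structure of Definition \ref{dfn:universality} providing the coherence operations. The natural target framework is Batanin's contractible globular operads, or Leinster's variant, since our conditions precisely match the ``contractibility'' intuition: every horn admits a filler whose own universality witnesses further coherence. Throughout, for any diagram $x$ of shape $U$, I will use the representing $n$\nbd cell $\underline{x}$ and the equivalence $c(x)$ of shape $U \celto \tilde{U}$ or $\tilde{U} \celto U$ provided by Definition \ref{dfn:represent} as the primitive source of composites and coherence cells.

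First I would construct on $X_\cat{O}$ a family of pasting operations indexed by constructible atoms: to each $n$\nbd dimensional atom $U$ we associate a partial function sending each diagram $x$ of shape $U$ in $X$ to the representing cell $\underline{x} \in X_\cat{O}$. Restricted to diagrams built from binary mergers of $n$\nbd atoms along $(n-1)$\nbd dimensional intersections, this specialises to globular composition $\cp{k}$ and recovers a partial $\omega$\nbd category structure. Next I would produce the higher coherence cells by iteration: given two merger trees for the same diagram $x$, repeated application of $c$ yields two composites $\underline{x}, \underline{x}'$ together with an $(n+1)$\nbd equivalence between them, produced via fillers to composition and division horns. This mechanism should deliver associators, unitors, and interchange cells uniformly from the structure, and strong maps preserve them by definition of strongness.

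The main obstacle, by a wide margin, is organising these operations and coherence cells into a genuine algebra over some globular operad $\cat{W}$ whose algebras are, by definition, weak higher categories. A promising strategy is to construct $\cat{W}$ intrinsically from our combinatorics, taking as operations the constructible pasting schemes modulo a universal equivalence relation, and then to prove that $\cat{W}$ is contractible in Batanin's sense by a coinductive argument mirroring Definition \ref{dfn:universality}: the coinductive step reduces the existence of a coherence cell to the existence of horn fillers, which representability of $X$ provides. The action of $\cat{W}$ on $X_\cat{O}$ should then follow from representability, and the functor statement for $f_\cat{O}$ from strongness via preservation of equivalences. The hardest technical work is verifying that every coherence datum demanded by $\cat{W}$ is extractable from $X$ without choice-related ambiguities, and then matching $\cat{W}$ to some pre-existing definition of weak higher category in the literature; this second matching step is precisely what the caveat ``adequate algebraic definition'' in the statement alludes to, and where the proof will be most delicate.
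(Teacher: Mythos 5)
The statement you have attempted to prove is explicitly labelled a \emph{conjecture} in the paper: the author writes ``We expect the following to be true, for an adequate algebraic definition of weak higher category'' and offers no proof. There is therefore nothing to compare your argument against, and your proposal should be read on its own terms as a research plan rather than as a competing proof.

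Read that way, the outline is sensible and tracks the Nikolaus-style strategy (extracting an algebraic structure from a geometric/representability condition via a contractibility argument), but there are concrete gaps beyond the ones you flag. The most serious is in your first step: the assignment $x \mapsto \underline{x}$ is not a partial function, because Definition \ref{dfn:represent} only asserts \emph{existence} of a representing cell $\underline{x}$ and an equivalence $c(x)$, not uniqueness or a canonical choice. To obtain an algebra for a globular operad you need genuine operations $X_\cat{O}(U_1)\times\cdots \to X_\cat{O}(U)$, so you must first fix a global system of choices and then prove that any two such systems give canonically equivalent algebraic structures; this is not a ``choice-related ambiguity'' to be dispatched at the end, it is the central theorem, and your sketch gives no mechanism for it. Second, your contractibility argument conflates a property of $X$ with a property of the operad $\cat{W}$: contractibility of $\cat{W}$ is a statement about $\cat{W}$ alone, independent of any particular representable $X$, so ``representability of $X$ provides'' cannot be the engine of that proof. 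If $\cat{W}$ is built universally from constructible pasting schemes and horn-filler shapes, contractibility has to be established by a purely combinatorial argument about constructible atoms and ternary atoms (e.g.\ via Lemma \ref{lem:standardmerge} and Proposition \ref{prop:atomicnglobe}), not by appealing to a particular model. Third, for the functoriality claim you rely on ``strong maps preserve $n$\nbd equivalences by definition,'' but a strong map need not preserve the \emph{chosen} representing cells $\underline{x}$ and coherence fillers $c(x)$ on the nose, only up to further equivalence; showing that this yields a map of $\cat{W}$\nbd algebras (or a pseudo-morphism in whatever 2-categorical sense is appropriate) is an additional coherence argument you have not addressed. These three points together are where the real difficulty lies, and why the paper leaves the statement conditional on a yet-to-be-fixed definition of algebraic weak higher category.
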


\end{document}